\theoremstyle{plain}
\newtheorem{theorem}{Theorem}[section] 
\newtheorem{thmintro}{Theorem}
\newtheorem{corollary}[theorem]{Corollary}
\newtheorem{lemma}[theorem]{Lemma}
\newtheorem{prop}[theorem]{Proposition}
\newtheorem{question}{Question}
\theoremstyle{definition}
\newtheorem{definition}[theorem]{Definition}
\theoremstyle{remark}
\newtheorem{remark}[theorem]{Remark}
\newtheorem{remarkintro}{Remark}
\newcommand{\link}{\text{Lk}}
\newcommand{\N}{\mathcal{N}}
\newcommand{\C}{\mathcal{C}}
\newcommand{\Cp}{\mathcal{C}^1}
\newcommand{\Css}{\mathcal{C}^{ss}}
\newcommand{\ov}{\overline}
\newcommand{\Cdt}{\mathcal{C}/DT_K}
\newcommand{\Cpdt}{\mathcal{C}^1/DT_K}
\newcommand{\Cssdt}{\mathcal{C}^{ss}/DT_K}
\newcommand{\nest}{\sqsubseteq}
\newcommand{\propnest}{\sqsubsetneq}
\newcommand{\orth}{\bot}
\newcommand{\transverse}{\pitchfork}
\newcommand{\frakS}{\mathfrak{S}}
\title{Rigidity of mapping class groups mod powers of twists}
\author{Giorgio Mangioni}
\address{Maxwell Institute and Department of Mathematics, Heriot-Watt University, Edinburgh, UK}
    \email{gm2070@hw.ac.uk}
\author{Alessandro Sisto}
    \address{Maxwell Institute and Department of Mathematics, Heriot-Watt University, Edinburgh, UK}
    \email{a.sisto@hw.ac.uk}
\begin{document}

\begin{abstract}
\setlength\parindent{0pt}
We study quotients of mapping class groups of punctured spheres by suitable large powers of Dehn twists, showing an analogue of Ivanov's theorem for the automorphisms of the corresponding quotients of curve graphs. Then we use this result to prove quasi-isometric rigidity of these quotients, answering a question of Behrstock, Hagen, Martin, and Sisto in the case of punctured spheres.  Finally, we show that the automorphism groups of our quotients of mapping class groups are "small", as are their abstract commensurators. This is again an analogue of a theorem of Ivanov about the automorphism group of the mapping class group.

In the process we develop techniques to extract combinatorial data from a quasi-isometry of a hierarchically hyperbolic space, and use them to give a different proof of a result of Bowditch about quasi-isometric rigidity of pants graphs of punctured spheres.

\textbf{Keywords}: Mapping class group, quasi-isometric rigidity, Ivanov’s Theorem, Dehn twist quotients.

MSC class: 20F65 (primary), 57K20 (secondary)
\end{abstract}

\maketitle
\setcounter{tocdepth}{1}
\tableofcontents

\section{Introduction}

\subsubsection*{Dehn twist quotients} Given a finite-type surface $S$, let $MCG(S)$ be its mapping class group. For every $K\in\mathbb{N}_{>0}$, let $DT_K$ be the subgroup generated by the collection $\{T^K_\alpha\}$, where $\alpha$ varies among all isotopy classes of essential simple closed curves on $S$, and $T_\alpha$ denotes the Dehn Twist around $\alpha$. In this paper we study the properties of quotients of the form $MCG/DT_K$, where $K$ is chosen to be “sufficiently large”.

There are plenty of reasons of interest in these quotients. Firstly, they can be regarded as "Dehn filling quotients" of mapping class groups, as pointed out and explored in \cite{dfdt}, where they are proven to be acylindrically hyperbolic. In fact, these and similar quotients are used in \cite{BHMS} (where they are proven to be hierarchically hyperbolic) to relate questions on finite quotients of mapping class groups to residual finiteness of certain hyperbolic groups; this should be compared with the use of Dehn fillings in the resolution of the virtual Haken conjecture \cite{vhak}.

Moreover, when the surface is closed and hyperbolic, these quotients are conjectured to be the outer automorphism group of the Burnside quotient $\Gamma/\Gamma^n$, where $\Gamma$ is the fundamental group of the surface \cite{CoulonSela}. This fact would be the analogue of the celebrated Dehn-Nielsen-Baer theorem \cite{Dehn,Nielsen}.

These quotients of mapping class groups also appear naturally in a somewhat unrelated area of mathematics, namely the study of topological quantum field theories (see e.g. \cite{TQFT}). As it turns out, if $G$ is a compact Lie group, every homomorphism $MCG(S_g) \to G$ (with $g\ge 3$) factors through the quotient by large enough powers of Dehn twists \cite[Corollary 2.6]{Aramayona_Souto}, and this applies in particular to the so-called quantum representations of mapping class groups.

Yet another reason of interest in these quotients is simply that given a group $G$ and a collection of elements $g_1,\dots,g_n$, it is natural to study the normal closure of the set, and the corresponding quotient. In the case of our quotients of mapping class groups, we are considering a collection of conjugacy representatives of Dehn twists, and we are "stabilising" by taking powers.
\subsubsection*{Rigidity results} We have three main results, analogous to results for mapping class groups, that illustrate three different forms of rigidity of the groups we are considering. In all cases, we consider punctured spheres; we discuss below what would need to be done to cover surfaces with genus. We note that, up to commensurability, we could equivalently regard the quotient groups we consider as quotients of braid groups.

The first result we state is that our quotients are quasi-isometrically rigid, thus answering \cite[Question 3]{BHMS} for the case of punctured spheres. We say that two groups $G$ and $H$ are \emph{weakly commensurable} if there exist two finite normal subgroups $L\unlhd H$ and $M\unlhd G$ such that the quotients $H/L$ and $G/M$ have two finite index subgroups that are isomorphic.

\begin{thmintro}[Quasi-isometric rigidity]\label{qimcgdtn}
    Let $S=S_{0,b}$ be a punctured sphere, with $b\ge 7$ punctures. There exists $K_0\in\mathbb{N}_{>0}$ such that, if $K$ is a non-trivial multiple of $K_0$, then $H=MCG(S)/DT_K$ is quasi-isometrically rigid, meaning that if a finitely generated group $G$ is quasi-isometric to $H$ then $G$ and $H$ are weakly commensurable. 
\end{thmintro}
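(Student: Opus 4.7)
The plan is to follow the three-step strategy that has proved successful for mapping class groups (Behrstock--Kleiner--Minsky--Mosher, Hamenst\"adt, Bowditch), adapted here via the hierarchically hyperbolic structure on $H$ established in \cite{BHMS} together with the Ivanov-type rigidity theorem for $\Cdt$ announced in the abstract.

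First I would show that any self-quasi-isometry of $H$ induces an automorphism of the quotient curve graph $\Cdt$. Since $H$ is hierarchically hyperbolic with $\Cdt$ playing the role of the maximal domain, a self-QI of $H$ should coarsely preserve the HHS factor system, and in particular the standard product regions associated with proper domains. This yields a quasi-isometry of the hyperbolic top-level space $\Cdt$, which a Bowditch-style boundary rigidity argument then upgrades to an honest automorphism of $\Cdt$. Invoking our Ivanov-type theorem identifies this automorphism with (the action of) an element of $H$, and thus produces a homomorphism $\mathrm{QI}(H)\to H$ with finite kernel and finite-index image.

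The second half of the argument is then the standard quasi-action machinery: given a finitely generated group $G$ quasi-isometric to $H$, left-multiplication followed by the QI yields a quasi-action of $G$ on $H$, and hence a homomorphism $G\to \mathrm{QI}(H)$ with finite kernel and finite-index image. Composing with the identification $\mathrm{QI}(H)\sim H$ from the previous step produces precisely a weak commensurability between $G$ and $H$, with the ambient finite normal subgroups accounting for the kernels, and the finite-index subgroups matching the cokernels.

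The main obstacle I expect is the descent step: showing that a self-QI of $H$ really restricts to a QI of $\Cdt$ rather than scrambling hierarchy domains. This requires a canonical, QI-invariant characterisation of $\Cdt$ among the HHS coordinate spaces of $H$ (for instance, as the unique unbounded factor supporting a particular pattern of product regions, or as the factor with the largest Gromov boundary), together with sufficient hyperbolicity/rigidity of $\Cdt$ itself to run the Bowditch-type promotion from QI to isometry. The standing hypotheses $b\ge 7$ and $K_0\mid K$ are presumably dictated by exactly these rigidity requirements, namely ensuring that enough proper domains and automorphisms are visible for the Ivanov-type argument to go through.
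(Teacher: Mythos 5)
Your overall skeleton---extract combinatorial data from a self-quasi-isometry via the HHS structure, identify it with a group element via a combinatorial rigidity theorem, and then run the Schwarz-type quasi-action argument---matches the paper, and the final step (your ``second half'') is essentially the paper's Lemma~\ref{lem:schwarz}. But the middle step, where the real work is, has a genuine gap.

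You claim that coarsely preserving standard product regions ``yields a quasi-isometry of the hyperbolic top-level space $\Cdt$, which a Bowditch-style boundary rigidity argument then upgrades to an honest automorphism of $\Cdt$.'' Neither half of this sentence works. A self-quasi-isometry of $H$ does \emph{not} descend to a quasi-isometry of the top-level hyperbolic space $\Cdt$: the HHS distance formula entangles all the coordinate spaces, and there is no projection-invariance statement that would let one pass from a QI of $H$ to a QI of $\Cdt$ alone. What preserving product regions actually gives you (after adapting \cite[Theorem 5.7]{quasiflats}, which is the content of Theorem~\ref{hingesauto} and requires replacing the three hypotheses of \cite{quasiflats} by a new Lemma~\ref{preciseint}) is an automorphism of the \emph{hinge graph}, which records complete support sets of pairwise-orthogonal domains and their boundary directions---a very different object from $\Cdt$. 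Moreover, even if one somehow had a QI of $\Cdt$, there is no ``boundary rigidity'' argument that promotes a quasi-isometry of a curve graph (or its quotient) to an automorphism; Bowditch's arguments for pants graphs are not of that type at all. They are combinatorial: they detect the $1$-separating curve graph inside the hinge graph via ``terminal supports'' and then promote automorphisms of that subgraph.

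What the paper actually does, and what your proposal omits entirely, is the chain of combinatorial extension results in Sections~\ref{section:1tostrong} and~\ref{section:strongtoall}: the hinge-graph automorphism is shown to induce an automorphism of $\Cpdt$ (the quotient of the $1$-separating curve graph), which is then extended to $\Cssdt$ (strongly separating curves) via a description of ``divisions'' and ``filled divisions,'' and further extended to $\Cdt$ via surrounding pairs/triples, carefully re-proving Bowditch's identifications in the quotient using the deep-enough machinery (Proposition~\ref{cor3.6}) and repeated cut-set/lifting arguments. Only at the very end does the Ivanov-type theorem (Theorem~\ref{CombRig}) enter to identify the resulting automorphism of $\Cdt$ with an element of $MCG^\pm(S_b)/DT_K$. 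These intermediate steps are the technical heart of the proof and cannot be replaced by the boundary-rigidity heuristic you sketch. You are also missing the need to verify the hypothesis of Theorem~\ref{hingesauto} for $MCG/DT_K$ (that each $\C(\ov\Delta)$ is bounded or has at least four boundary points, Lemma~\ref{CovDelta}), which uses convex-cocompact subgroups and is by no means automatic.
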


Quasi-isometric rigidity of mapping class groups was first proven in \cite{BKMM}, see also \cite{Hamenstadt:geometry}, but our argument is closer to the proof given in \cite[Section 5]{quasiflats}. The key result from that paper is \cite[Theorem 5.7]{quasiflats}, which roughly speaking allows one to extract an automorphism of a certain graph from a quasi-isometry of a hierarchically hyperbolic space satisfying suitable assumptions. We cannot use this result as stated, since our quotients do not satisfy its assumptions; instead, we will show that a similar result holds under different hypotheses that our quotient groups do satisfy. This is done in Theorem \ref{hingesauto}, which we believe to be of independent interest since we enlarge the family of hierarchically hyperbolic spaces to which one can apply the arguments from \cite[Section 5]{quasiflats} to study quasi-isometric rigidity.

In our case of interest, the graph from Theorem \ref{hingesauto} can be related to a certain subgraph of the quotient of the curve graph. From there we are able to prove quasi-isometric rigidity of our quotient groups, adapting arguments due to Bowditch, especially from \cite{BowPants}, where the author proved quasi-isometric rigidity of pants graphs. 


Combining quasi-isometric rigidity and results on automorphisms of acylindrically hyperbolic groups from \cite{Commensurating}, we are able to deduce the following, which can be regarded as a form of algebraic rigidity. Here, $MCG^\pm$ denotes the extended mapping class group, where orientation-reversing mapping classes are allowed. 

\begin{thmintro}[Algebraic rigidity]\label{thm:intro2}
Let $S=S_{0,b}$ be a punctured sphere, with $b\ge 7$ punctures. There exists $K_0\in\mathbb{N}_{>0}$ such that, if $K$ is a non-trivial multiple of $K_0$, then
\begin{enumerate}
\item $\text{Aut}(MCG(S)/DT_K)=MCG^\pm(S)/DT_K$;
\item $\text{Out}(MCG(S)/DT_K)\cong \mathbb Z/2\mathbb{Z}$;
\item The abstract commensurator of $MCG^\pm(S)/DT_K$ is isomorphic to its inner automorphism group. In other words, any isomorphism between finite index subgroups of $MCG^\pm(S)/DT_K$ is the restriction of an inner automorphism.
\end{enumerate}
\end{thmintro}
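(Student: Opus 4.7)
The approach combines the quasi-isometric rigidity of Theorem \ref{qimcgdtn}, or more precisely the machinery underlying it (Theorem \ref{hingesauto} together with the Ivanov-style description of automorphisms of the quotient curve graph announced in the abstract), with the results of \cite{Commensurating} on commensurating endomorphisms of acylindrically hyperbolic groups. Throughout, set $G=MCG(S_b)/DT_K$ and $G^\pm=MCG^\pm(S_b)/DT_K$, so that $G$ has index two in $G^\pm$ and both are acylindrically hyperbolic by \cite{dfdt}; we assume that results established earlier in the paper show that both have trivial finite radical, hence trivial center.

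For part (1), the plan is to prove that the natural homomorphism $\Psi\colon G^\pm\to\text{Aut}(G)$, $g\mapsto\text{ad}(g)$, is an isomorphism. Injectivity is straightforward: $\ker\Psi$ is the centralizer of $G$ in $G^\pm$, which is trivial because $Z(G)=\{1\}$ and an orientation-reversing coset representative of $G^\pm/G$ sends (the image of) a Dehn twist to its inverse, so cannot centralize $G$. For surjectivity, given $\phi\in\text{Aut}(G)$ one views $\phi$ as a bi-Lipschitz self-map of $G$; Theorem \ref{hingesauto} then produces a simplicial automorphism $\bar\phi$ of the quotient curve graph induced by $\phi$, and the Ivanov-type rigidity theorem of the paper identifies $\bar\phi$ with the action of some $g\in G^\pm$. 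After replacing $\phi$ by $\phi\circ\text{ad}(g)^{-1}$ we may assume $\phi$ acts as the identity on the quotient curve graph, whence $\phi$ preserves the conjugacy class of every element characterised (up to taking powers) by its action on that graph, in particular of each image of a power of a Dehn twist. Since these elements normally generate $G$, it follows that $\phi$ is a commensurating endomorphism of $G$, and the main theorem of \cite{Commensurating} concludes $\phi$ is inner. The identification $\text{Out}(G)\cong G^\pm/G\cong\mathbb{Z}/2$ follows immediately.

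For part (2), given an isomorphism $f\colon A\to B$ between finite-index subgroups of $G^\pm$, one views $f$ as inducing a self-quasi-isometry of $G^\pm$, because $A$ and $B$ are canonically quasi-isometric to $G^\pm$. Running the proof of part (1) on $G^\pm$ in place of $G$, Theorem \ref{hingesauto} followed by the Ivanov-type rigidity produces $g\in G^\pm$ so that $f$ and $\text{ad}(g)$ agree on the quotient curve graph. Passing to a further finite-index subgroup on which the two maps are both defined, their difference is a commensurating-type map which, by the same argument as in (1) combined with \cite{Commensurating}, is the restriction of an inner automorphism of $G^\pm$. Hence $f$ itself is the restriction of an inner automorphism, so $\text{Comm}(G^\pm)$ is trivial.

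The main obstacle I expect is the step converting \emph{coarse} preservation of the quotient curve graph (the output of Theorem \ref{hingesauto}) into the \emph{exact} algebraic condition needed to invoke \cite{Commensurating}. Concretely, one must identify enough explicit elements of $G$ (most naturally, images of powers of Dehn twists) whose conjugacy classes are forced on $\phi$ by the curve-graph action, and then propagate this information to all of $G$ via normal generation. The large-power assumption on $K$ is presumably crucial here, ensuring that the images of Dehn twist powers are rigid enough to be detected from their curve-graph support.
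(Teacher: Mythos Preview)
Your overall architecture is right---view an isomorphism as a quasi-isometry, produce an element of $G^\pm$ via combinatorial rigidity, then feed the difference into \cite{Commensurating}---but the key step where you verify the commensurating hypothesis has a genuine gap.

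The version of \cite{Commensurating} used here (the paper's Theorem~\ref{7.1}) requires that $\phi(h)\stackrel{G}{\approx}h$ for every \emph{loxodromic WPD} element $h$. Your plan is to check this on images of powers of Dehn twists and then ``propagate via normal generation''. This fails on two counts. First, in $G=MCG/DT_K$ the image of any Dehn twist has finite order (indeed order dividing $K$), so these elements are never loxodromic WPD; knowing $\phi$ preserves their conjugacy classes says nothing about the hypothesis you actually need. Second, even if you had a normally generating set on which $\phi$ commensurates, commensurability does not propagate along products or conjugates, so this would not suffice to cover all loxodromic WPD elements. The paper avoids this entirely: it applies the full quasi-isometric rigidity statement (Theorem~\ref{selfqimcg}), which says $\phi$ is within \emph{uniformly bounded} distance of left multiplication by some $g$, and then argues metrically (Lemma~\ref{commensiso}, via Hruska's coarse-intersection lemma) that $\langle\phi(h)\rangle$ and $\langle ghg^{-1}\rangle$ have bounded Hausdorff distance and hence share a nontrivial power, for \emph{every} infinite-order $h$. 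The commensurating condition for loxodromic WPD elements then comes for free.

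Two smaller points. Theorem~\ref{hingesauto} yields an automorphism of the \emph{hinge graph}, not of $\C/DT_K$; the passage from one to the other is the content of Sections~\ref{section:1tostrong}--\ref{section:strongtoall} and Theorem~\ref{AutoExt}, which you should cite rather than fold into ``Ivanov-type rigidity''. And for injectivity of $\Psi$, your argument that an orientation-reversing element inverts Dehn twists needs the Dehn-twist images not to have order~$2$; the paper instead observes $\ker\Psi$ is a finite normal subgroup and kills it with Lemma~\ref{nofinite}. Finally, note the paper proves part~(2) first (Theorem~\ref{Out}) and deduces part~(1) from it (Theorem~\ref{autmcg}), which is cleaner than your order.
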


In the case of mapping class groups, the analogue of the result above was proven by Ivanov in his famous paper on automorphisms of curve graphs \cite{Ivanov:autC}. Indeed, our quasi-isometric rigidity and in turn algebraic rigidity results rely on an analogue of Ivanov's theorem for quotients of the \emph{curve graph} $\C(S)$ of $S$, that is, the graph whose vertices are isotopy classes of essential simple closed curves and edges correspond to disjointness (up to isotopy). This is our final main result, and it can be thought of as a form of combinatorial rigidity:

\begin{thmintro}[Combinatorial rigidity]\label{mainthm}
Let $S=S_{0,b}$ be a punctured sphere, with $b\ge 7$ punctures, and let $\C(S)$ be its curve graph. There exists $K_0\in\mathbb{N}_{>0}$ such that, if $K$ is a non-trivial multiple of $K_0$, then the natural map $MCG^\pm(S)/DT_K\to\text{Aut}(\C(S)/DT_K)$ is an isomorphism.
\end{thmintro}

In fact, the result applies to other quotients of mapping class groups of punctured spheres, see Theorem \ref{CombRig} and the discussion in the outline section. As explained below, the main tools used in the proof of Theorem \ref{mainthm} are the lifting techniques first used in \cite{dfdt} and the finite rigid set of Aramayona-Leiniger \cite{AL}. Indeed, we actually obtain a finite rigidity result as well, see Corollary \ref{FinRigquot}. An analogue of the theorem also holds for $b=4$, but in that case the map has finite kernel, see Theorem \ref{mainthmb4}.

\begin{remarkintro}
    We believe that an integer $K_0$ for which Theorems \ref{qimcgdtn}, \ref{thm:intro2}, and \ref{mainthm} hold could in principle be explicitly computed. The starting point for this are explicit constants for various properties of subsurface projections, for example the bounded geodesic image theorem \cite{Webb:BGI}. Then one would need to track all the constants at play in previous papers on the subject, in particular \cite[Subsection 1.2.1, Section 3]{dahmani:rotating} (which in turn depends on the construction from \cite{BBF}), \cite[Proof of Theorem 2.1]{dfdt}, and \cite[Notation 8.11]{BHMS}. It would be interesting to have more direct arguments yielding an explicit (and hopefully low) constant, at least for low numbers of punctures in our setting.
\end{remarkintro}

\subsection{Open questions}

Throughout the paper we only cover the case of punctured spheres/braid groups, so the first questions, which we believe to have positive answer, are the following:

\begin{question}
Does Theorem \ref{mainthm} generalise to arbitrary genus?
\end{question}

\begin{question}
Do Theorems \ref{qimcgdtn} and \ref{thm:intro2} generalise to arbitrary genus?
\end{question}

It will be clear from the outline section that the key result to generalise is combinatorial rigidity, Theorem \ref{mainthm}, which is why we stated the first question separately.

It will be very important for us that the quotients we deal with are hierarchically hyperbolic. Proving hierarchical hyperbolicity of other quotients would be key to proving further quasi-isometric and algebraic rigidity results, and there is a natural class of quotients that contains the quotients we consider in this paper:

\begin{question}
Let $g_1,\dots, g_n$ be elements of a mapping class group $MCG(S)$. Does there exist $K_0\in\mathbb N_{>0}$ such that for all non-trivial multiples $K_i$ of $K_0$ we have that $MCG(S)/\langle\langle\{g_i^{K_i}\}\rangle\rangle$ is hierarchically hyperbolic?
\end{question}
In forthcoming work we will give a positive answer for the case where $S$ is a five-holed sphere. Furthermore, if one allows $S$ to be any finite-type surface, there are certain classes of quotients for which the answer is known to be affirmative, such as those covered in \cite{BHMS}, as discussed above, and \cite{HHS:asdim}, where quotients by powers of pseudo-Anosovs are considered. Additional quotients covered by the question include quotients of mapping class groups by suitable powers of Dehn twists around non-separating curves only, for instance, as well as the quotients considered in \cite{CM}.

\subsection{Outline of proofs}

We will first prove combinatorial rigidity, Theorem \ref{mainthm}. The main idea for doing so is the following. Fix a punctured sphere $S$, say with at least 7 punctures, let $\C$ be its curve graph, and let $K$ be a suitable large integer. While the map $\C\to \C/DT_K$ is not a covering map (as it is quite far from being locally injective), there are still various subgraphs of $\C/DT_K$ that can be lifted. This idea was first used in \cite{dfdt} to show that $\C/DT_K$ is hyperbolic by lifting geodesic triangles, and was developed further in \cite{BHMS} to show that $MCG(S)/DT_K$ is in fact hierarchically hyperbolic. We push these techniques even further to show that the graphs constructed in \cite{AL} can be lifted. These are finite rigid sets for $\C$, that is, all isometric embeddings of them into $\C$ differ by an element of the extended mapping class group. Consider now an automorphism $\ov \phi$ of $\C/DT_K$ and a copy $X$ of one such graph in $\C/DT_K$. One can consider the following diagram, where the hats denote lifts and $\pi$ is the projection map:

$$\begin{tikzcd}
\widehat{X}\ar[r,"g"]\ar{d}{\pi}&\widehat{\phi(X)}\ar{d}{\pi}\\
X\ar[r,"\ov\phi"]&\phi(X)\\
\end{tikzcd}$$

From the diagram we obtain a candidate element of the extended mapping class group that induces $\phi$, namely the element $g$ mapping $\widehat{X}$ to $\widehat{\phi(X)}$. Showing that this candidate is the desired element requires more care, and further lifts, but this is the basic idea. We note that we do not know of a graph that cannot be lifted from $\C/DT_K$ to $\C$, but even a path of length 2 does not have a \emph{unique} $DT_K$-orbit of lifts, see Remark \ref{rmk:lifts_exist?}, while uniqueness of orbits of lifts is crucial in many arguments.

In order to prove quasi-isometric rigidity we will also need combinatorial rigidity for some subgraphs of the quotient graphs. This is shown in Sections \ref{section:1tostrong} and \ref{section:strongtoall}, where we combine arguments due to Bowditch with further lifting techniques.

\begin{remarkintro}
The first step towards generalising our results to the case of surfaces with genus, using the same outline, would be to show that some finite rigid sets admit lifts in those cases as well. The finite rigid sets from \cite{AL} are more complicated for surfaces with genus, but they can hopefully be used for this purpose.
\end{remarkintro}

\subsubsection{Quasi-isometric rigidity} To prove quasi-isometric rigidity, we will use a version of \cite[Theorem 5.7]{quasiflats} which states, roughly, that any quasi-isometry $f$ of a hierarchically hyperbolic group satisfying three assumptions induces an automorphism of a certain graph called the hinge graph. This is a graph that encodes the "standard flats" of the group, as well as their intersection patterns. The three assumptions do not apply to our case, but we show that a similar statement still holds under different assumptions that do apply; this is Theorem \ref{hingesauto}. 
Afterwards, we use combinatorial arguments to show that an automorphism of the hinge graph induces automorphisms of the graphs from Sections \ref{section:1tostrong} and \ref{section:strongtoall}. Then, by combinatorial rigidity of such graphs, these automorphisms are induced by some element $g$ of the mapping class group, and with some more effort one can show that $f$ and $g$ are at bounded distance.


\subsubsection{Algebraic rigidity}
Since automorphisms of a group, and more generally isomorphisms between finite index subgroups, induce quasi-isometries, one can expect to obtain strong algebraic rigidity results from quasi-isometric rigidity. We do just that in Theorem \ref{Out}, using results from \cite{Commensurating}. Finite normal subgroups could cause the outer automorphism group to be finite rather than trivial, so the key technical results we need is that $MCG^\pm(S)/DT_K$ does not contain non-trivial finite normal subgroups, Lemma \ref{nofinite}.

\subsection{Outline of sections}
In Section \ref{sec:setting} we introduce the basic lifting tools that we will use throughout the paper, and we also discuss finite rigid sets. Section \ref{sec:proj} studies the projection maps from curve graphs to their relevant quotients, and a key result here is that our chosen finite rigid sets map injectively, see Theorem \ref{projisometry}. In Section \ref{sec:lifting} we show that (graphs isomorphic to our) finite rigid sets can be lifted from quotients of curve graphs to curve graphs. We then use this in Section \ref{sec:comb_rig} to show combinatorial rigidity, Theorem \ref{mainthm}, see Theorem \ref{CombRig}. Further combinatorial rigidity results, that will be needed in Section \ref{sec:qi}, are proved in Sections \ref{section:1tostrong} and \ref{section:strongtoall}.

The goal of Section \ref{sec:extract} is to show that quasi-isometries of suitable hierarchically hyperbolic spaces induce automorphisms of an associated graph, Theorem \ref{hingesauto}. Then in Section \ref{sec:qi} we combine Theorem \ref{hingesauto} and the combinatorial rigidity results from Sections \ref{section:1tostrong} and \ref{section:strongtoall} to prove Theorem \ref{selfqimcg}, which says that quasi-isometries are all at controlled distance from left-multiplications. Then Theorem \ref{qimcgdtn} follows by combining Theorem \ref{selfqimcg} and the general Lemma \ref{lem:schwarz}. Finally, in Section \ref{sec:alg} we show algebraic rigidity, Theorem \ref{thm:intro2}, see Theorems \ref{Out} and \ref{autmcg}.

 
\subsection*{Acknowledgements}

GM would like to thank Roberto Frigerio for his good early career advice, indirectly leading to this paper. The authors would also like to thank Beatrice Pelloni for her crucial help in making it possible for GM to visit AS. Finally, we thank the referee for the numerous suggestions on how to improve the paper.

\section{Setting and main tools}
\label{sec:setting}

\subsection{Mapping classes and curves}
Let $S_b=S_{0,b}$ be the sphere with $b$ punctures, on which we fix some orientation. By a \emph{curve} on $S_b$ we will always mean the isotopy class of a simple closed curve which is essential, meaning that it does not bound a disk with less than two punctures. Let $\C_b=\mathcal{C}(S_b)$ be the curve graph of $S_b$, that is, the simplicial graph whose vertices are curves and two curves are adjacent if and only they admit disjoint representatives. We will often denote $S_b$ by $S$ and $\C_b$ by $\C$ whenever the dependence on $b$ is not relevant. Given two curves $\alpha,\beta$, let $i(\alpha,\beta)$ denote their \emph{geometric intersection number}, that is, the minimum number of transverse intersection points between a representative for $\alpha$ and a representative for $\beta$.

Let $MCG^\pm(S_b)$ be the extended mapping class group of $S_b$, where we allow orientation-reversing homeomorphisms. We are interested in (powers of) the following type of mapping classes:

\begin{definition}[Dehn Twist]
Consider the Euclidean plane $\mathbb{R}^2$ with polar coordinates $(\theta, r)$, and consider the annulus $A = \{(\theta,r) |1\le r\le 2\}$. Choose the orientation on $A$ such that a counterclockwise rotation is positive. Let $T :\, A \to A$ be the twist map of $A$ given by the formula $T(\theta, r)=(\theta+2\pi r, r)$. We call $T$ the \emph{left twist} of our annulus. See Figure~\ref{fig:dt_image}.

\begin{figure}[htp]
    \centering
    \includegraphics[width=0.85\textwidth]{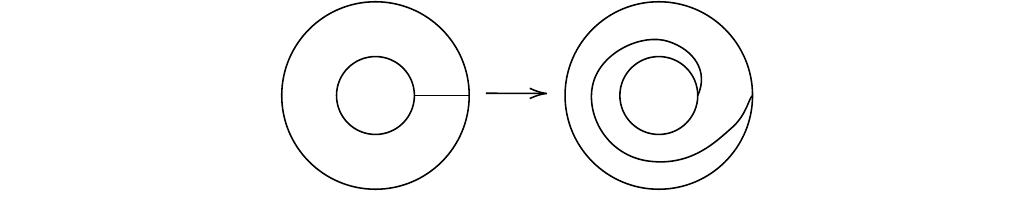}
    \caption{The left twist $T$ of an annulus.}
    \label{fig:dt_image}
\end{figure}

Now let $S$ be an oriented surface and let $\alpha$ be a curve in S. Let $N$ be a regular neighborhood of $\alpha$, which we call an \emph{annulus} with \emph{core curve} $\alpha$, and choose an orientation-preserving homeomorphism $\phi:\,A\to N$. The \emph{(left) Dehn twist} about $\alpha$, denoted $T_\alpha$, is the homeomorphism
$$x\to \begin{cases} \phi \circ T \circ \phi^{-1}(x) \mbox{ if }x \in N,\\
x \mbox{ if }x \in S \setminus N.
\end{cases}$$
The isotopy class of the Dehn twist $T_\alpha$ does not depend on the choice of $N$ and $\phi$, nor on the choice of $\alpha$ within its free isotopy class. Thus we will often abuse notation slightly and view $T_\alpha$ as a mapping class. 
\end{definition}

For every $K\in\mathbb{N}_{>0}$, let $DT_K$ be the subgroup generated by the collection $\{T_\alpha^K\}$, where $\alpha$ varies among all curves on $S$. Furthermore, let $\pi:\,\C\to\Cdt$ be the quotient projection.

\begin{definition}[{\cite{MM}}]
    Let $\gamma$ be a curve, and let $p_\gamma:\,S_\gamma\to S$ be the \emph{cyclic covering} associated to $\gamma$. There is a natural compactification of $S_\gamma$ to a closed annulus $\widehat{S_\gamma}$.
    
    The \emph{annular curve graph} associated to the curve $\gamma$ is the simplicial graph $\C(\gamma)$ whose vertices are the paths connecting the two boundary components of $\widehat{S_\gamma}$, modulo homotopies that fix the endpoints, and where two such paths are adjacent if they have representatives with disjoint interiors (but may share one or both endpoints). 
\end{definition}
If a curve $\alpha$ intersects $\gamma$, we denote by $\pi_\gamma(\alpha)$ the \emph{annular projection} of $\alpha$ onto $\gamma$, that is, a lift of any representative of $\alpha$ to $\widehat{S_\gamma}$, which is an arc connecting the two boundary components. This is well-defined, as any two lifts of (any two representatives of) $\alpha$ are homotopic relative to the endpoints. If $\alpha$ and $\beta$ both intersect $\gamma$, we shall write $d_\gamma(\alpha,\beta)$ as shorthand for $d_{\C(\gamma)}(\pi_\gamma(\alpha),\pi_\gamma(\beta))$.

\subsection{Complexity reduction and lifting properties}
In all our arguments, we will only need one property of $DT_K$ (for suitable $K$), shown in \cite{dfdt} using \cite{dahmani:rotating}, which we now state.

\begin{prop}\label{cor3.6}
There exists $K_0\in\mathbb{N}$ such that for every $\Theta>0$ there exists $K'$ such that for all multiples $K$ of $K_0$ larger than $K'$ the following holds. There exists a well-ordered set $\mathcal O$ and a map $\alpha:DT_K\to\mathcal O$, which we call \emph{complexity}, such that, for all $x\in \C$ and $g\in DT_K-\{1\}$ there exist $s\in \C$ and some $\gamma_s\in DT_K$, which is a power of the Dehn twist around $s$, such that $\alpha(\gamma_sg)<\alpha(g)$ and one of the following holds:
\begin{itemize}
\item $i(x,s)=0$, or
\item $d_s(x,g(x))>\Theta$.
\end{itemize}
\end{prop}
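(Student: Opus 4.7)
The plan is to derive this from the very rotating families framework of \cite{dahmani:rotating}, applied to the $MCG(S)$-action on $\C$. The first step is to choose $K_0$ so that, for any multiple $K$ of $K_0$, the collection $\{\langle \tau_s^K\rangle\}_{s\in\C}$ is a very rotating family on $\C$ with apex set $\C$, where $\tau_s$ denotes the Dehn twist around $s$. This is possible by the acylindricity of $MCG(S)\curvearrowright\C$ and the WPD-type behaviour of individual Dehn twists acting on the curve graph. The very rotating property provides both a strong geometric displacement statement (geodesics from $x$ to $\tau_s^{kK}(x)$ must pass near the star of $s$ once $|k|K$ is large) and a quantitative annular estimate: whenever $i(x,s) \ne 0$, one has $d_s(x,\tau_s^{kK}(x)) \ge c|k|K - C$ for constants $c>0$ and $C$ depending only on $S$.

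I would then invoke the normal-form theorem for very rotating families to write every non-trivial $g \in DT_K$ as a reduced product $g = \gamma_{s_1}\cdots \gamma_{s_n}$ with each $\gamma_{s_i} \in \langle \tau_{s_i}^K\rangle \setminus\{1\}$ and consecutive apices distinct (and transverse in the sense of \cite{dahmani:rotating}). Take $\mathcal{O} = \mathbb{N}$ with its usual well-order and define $\alpha(g) = n$, the minimal length of such a reduced normal form. For the choice $\gamma_s := \gamma_{s_1}^{-1}$, one has $\gamma_s g = \gamma_{s_2}\cdots \gamma_{s_n}$, strictly shorter, so $\alpha(\gamma_s g) < \alpha(g)$.

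The remaining work is to choose the apex $s = s_1$ with care so that the required dichotomy holds for the given $x$. Using the greedy decomposition of \cite{dahmani:rotating}, one can cyclically reorder the normal form so that $s_1$ is the first apex visited by a geodesic from $x$ to $g(x)$ in $\C$. Then either $i(x, s_1) = 0$, in which case we are in the first alternative; or $x$ lies outside the star of $s_1$, so this geodesic is forced to enter and leave a neighbourhood of $s_1$. In the latter case, the very rotating property at $s_1$ translates this geometric passage into an annular lower bound $d_{s_1}(x, g(x)) \ge cK - C'$, and choosing $K' \ge (\Theta + C')/c$ then guarantees $d_{s_1}(x, g(x)) > \Theta$.

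The main obstacle is to orchestrate the normal-form reduction and the geometric dichotomy simultaneously: the apex $s_1$ must both enable a strict decrease of $\alpha$ and be positioned correctly with respect to a geodesic from $x$ to $g(x)$. This bookkeeping is exactly what the greedy decomposition of \cite{dahmani:rotating} is designed to coordinate, and the argument in \cite{dfdt} specialises this machinery to the action of $MCG(S)$ on $\C$; the only quantitative cost is that the threshold $K'$ grows linearly in $\Theta$ (with constants depending only on $S$), which is permitted by the statement.
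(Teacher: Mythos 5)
The paper's own proof is a pure citation: it invokes \cite[Corollary 3.6]{dfdt}, noting it applies to $DT_K$ via \cite[Proposition 5.1]{dfdt}. Your proposal, by contrast, tries to reconstruct that cited result from the machinery of \cite{dahmani:rotating}. The high-level spirit is right, but the reconstruction has genuine gaps.

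First, the claim that $\{\langle\tau_s^K\rangle\}_{s\in\C}$ is a very rotating family on $\C$ \emph{with apex set $\C$} cannot hold. Very rotating families (in the sense of \cite{DGO} or \cite{dahmani:rotating}) require the apex set to be uniformly separated, with pairwise distances bounded below by a large multiple of the hyperbolicity constant; in $\C$ there are arbitrarily many pairwise-disjoint curves at mutual distance $1$, so the apices are as badly non-separated as possible. This is exactly the obstruction that \cite{dahmani:rotating} is designed to overcome via a \emph{composite} rotating-family / windmill construction, not via the naive setup you describe. Relatedly, taking $\mathcal O=\mathbb N$ with $\alpha$ equal to minimal syllable length presupposes a clean reduced-normal-form theory, which fails here: twists around disjoint curves commute while twists around intersecting curves interact freely, so there is no canonical normal form and no well-defined syllable length for elements of $DT_K$. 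The statement's deliberate phrasing ``a well-ordered set $\mathcal O$'' is a signal that the actual complexity arising from the windmill induction is more elaborate than $\mathbb N$.

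Second, the step where you ``cyclically reorder the normal form so that $s_1$ is the first apex visited by a geodesic from $x$ to $g(x)$'' is not a manoeuvre that is available: even in an honest small-cancellation/rotating-family setting, one cannot freely rotate a reduced word and keep it reduced, and there is no ``greedy decomposition'' in \cite{dahmani:rotating} producing such a reordering. The coordination you need --- that a \emph{single} apex both strictly decreases $\alpha$ \emph{and} satisfies the dichotomy relative to the given $x$ --- is precisely the hard technical content of \cite[Corollary 3.6]{dfdt}, obtained through the transfinite windmill induction rather than through normal-form bookkeeping. As written, your argument asserts the conclusion of that induction but does not reproduce the mechanism that makes it work.
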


\begin{proof}
This is \cite[Corollary 3.6]{dfdt} (which applies to $DT_K$ in view of \cite[Proposition 5.1]{dfdt}).
\end{proof}

The second case will always be used in conjunction with the Bounded geodesic image Theorem:

\begin{theorem}[\cite{MM}]\label{bgit}
There exists a constant $B$ such that the following holds for all finite-type surfaces. For all vertices $s,x,y\in \C(S)$, if $d_s(x,y)$ is defined and larger than $B$, then any geodesic from $x$ to $y$ intersects the star of $s$.
\end{theorem}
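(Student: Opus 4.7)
I would prove the contrapositive: assume there is a geodesic $\gamma = (x = v_0, v_1, \ldots, v_n = y)$ in $\C(S)$ whose every vertex intersects $s$ (so $\gamma$ avoids the star of $s$), and derive a uniform bound on $d_s(x, y)$.

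The first step is a local Lipschitz property of the annular projection: if $u, v \in \C(S)$ are disjoint curves both intersecting $s$, then $d_s(u, v) \leq 2$. This is a short computation in the annular cover associated to $s$, where lifts of $u$ and $v$ that cross the core annulus must be disjoint, so the associated arcs on the compactified cover have intersection number at most $1$. Applied along $\gamma$, this estimate gives $d_s(x, y) \leq 2n$, which is useless when $n$ is large.

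The main obstacle, and the heart of the argument, is to upgrade this into a uniform bound. My approach would be the combinatorial technique of \emph{unicorn paths} (Hensel--Przytycki--Webb). Fixing tight transverse representatives of $x$ and $y$, unicorns produce a uniform unparameterised quasi-geodesic $(x = u_0, \ldots, u_m = y)$ in $\C(S)$ with the following key property: whenever every $u_i$ intersects $s$, the sequence of annular projections $\pi_s(u_i)$ fellow-travels a geodesic in $\C(s) \cong \mathbb{Z}$ in an essentially monotone fashion. Combined with the Lipschitz bound above, this forces $d_s(x, y)$ to be universally bounded. If instead some $u_i$ is disjoint from $s$, then hyperbolicity of $\C(S)$ (Masur--Minsky) together with stability of quasi-geodesics forces the original geodesic $\gamma$ to pass within a uniformly bounded distance of the star of $s$, and the Lipschitz estimate finishes the bound on $d_s(x,y)$ up to enlarging $B$.

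The genuinely delicate step, and the one that carries the theorem, is proving the monotonicity of $\pi_s(u_i)$ along unicorns: converting ``no backtracking in the annulus along a unicorn'' into an estimate independent of the length of $\gamma$. This is where the combinatorial structure of unicorns is essential; alternative approaches such as Masur--Minsky's original hierarchy/tight-geodesic machinery, or Teichm\"uller-geodesic arguments, all concentrate their main effort on this very point.
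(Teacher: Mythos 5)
The paper does not prove this statement; it is the Bounded Geodesic Image Theorem and is cited directly from \cite{MM}, so there is no in-paper proof to compare against. Evaluating your sketch on its own terms: the architecture (pass to the contrapositive, compare the geodesic against a unicorn quasi-geodesic, use a local Lipschitz bound for disjoint curves, invoke Morse stability) does track a genuine modern route to BGI, essentially that of Webb and Hensel--Przytycki--Webb. But, as you yourself flag, the claimed ``monotonicity of $\pi_s(u_i)$ along a unicorn path that avoids the star of $s$'' \emph{is} the content of the theorem, and the sketch does not supply it. What unicorn paths actually deliver is a surgery lemma controlling how lifts of consecutive $u_{i}$ interact in the annular cover over $s$; one then needs a no-backtracking estimate (not literal monotonicity) to upgrade the local ``disjoint curves have $d_s\le 2$'' bound into something independent of the length of the path. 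Without that, the argument is stuck at the useless $d_s(x,y)\le 2n$.

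The branch where some $u_i$ is disjoint from $s$ is also not closed. Stability of quasi-geodesics gives only that the geodesic $\gamma$ passes within some uniform distance $R$ of the star of $s$. Under the contrapositive hypothesis every vertex of $\gamma$ intersects $s$, so this neither produces a contradiction nor directly bounds $d_s(x,y)$: the theorem asserts genuine intersection with the star, not bounded-distance proximity. Bridging that requires an additional argument --- either the Masur--Minsky induction on distance from the original proof, or splitting the unicorn path on either side of $u_i$ and bounding the projection of each half separately --- and the sketch omits it.
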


\begin{definition}\label{Ndeep}
    For short, in the statements below we will say that a normal subgroup $\mathcal N\unlhd MCG(S_b)$ is \emph{deep enough} if it satisfies the conclusion of Proposition \ref{cor3.6} for some $\Theta$ depending on the data that has been fixed up to that point. 
\end{definition}
\begin{remark}\label{gendatwist}
Notice that a deep enough subgroup $\N$ is generated by powers of Dehn twists. This is proved by induction on the complexity $\alpha(g)$ of an element $g\in\N$: if $g=1$ we have nothing to prove, otherwise there exists $\gamma_s\in \langle T_s\rangle\cap \N$ for some $s\in\C$ such that $g'=\gamma_s g$ has lower complexity, and hence it is a product of powers of Dehn twists by induction hypothesis. Thus $g=\gamma_s^{-1} g'$ is a product of Dehn twists.
\end{remark}

The key use of the conclusion of Proposition \ref{cor3.6} will be to lift a variety of subgraphs from a quotient of the curve graph to the curve graph. This is also done in \cite{dfdt} as well as generalised in \cite{BHMS}, and in fact we now state a result from the latter paper. First, a definition from \cite{BHMS}. A \emph{generalised} $m$-gon in a graph is a sequence $\tau_0,\dots,\tau_{m-1}$ such that:
     \begin{itemize}
 \item Each $\tau_j$ is either a simplex, together with non-empty sub-simplices $\tau_j^\pm$, or a 
geodesic in $\link(\Delta_j)$ for some (possibly empty) simplex $\Delta_j$ with endpoints $\tau^\pm_j$.
 \item $\tau^+_j=\tau^-_{j+1}$ (indices are taken modulo $m$). 
\end{itemize}
Note that the second bullet implies that $\tau_j\cap\tau_{j+1}$ is non-empty. See Figure~\ref{fig:genmgon} for an example.

\begin{figure}[htp]
    \centering
    \includegraphics[width=0.75\textwidth]{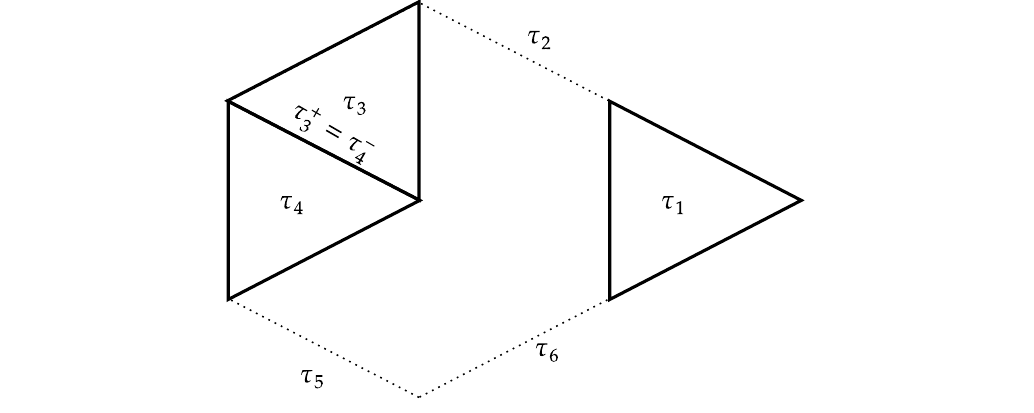}
    \caption{An example of a generalised hexagon. The dotted lines represent geodesics, while the triangles represent simplices.}
    \label{fig:genmgon}
\end{figure}

The following is a small variation on a result from \cite{BHMS}.

\begin{theorem}\label{mgonlift}
For all $m_0\geq 1$ and all deep enough normal subgroups $\N$ the following hold:
\begin{itemize}
    \item For $b\ge4$, any ordered simplex $\overline\Delta\subseteq \C/\N$ admits a unique $\mathcal{N}$-orbit of lifts in $\C$.
    \item For $b\ge4$, given a simplex $\ov \Delta$ in $\C/\N$, a lift $\Delta$ of $\ov \Delta$ in $\C$, and a geodesic $\gamma$ in the link of $\ov\Delta$, we have that $\gamma$ can be lifted to a geodesic in the link of $\Delta$.
    \item For $b\ge5$ and $m\leq m_0$, any generalised $m$-gon in $\C/\N$ can be lifted in $\C$.
\end{itemize}
\end{theorem}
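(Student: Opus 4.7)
The three claims will be deduced from Proposition \ref{cor3.6} combined with the Bounded Geodesic Image Theorem \ref{bgit} (and its standard consequence that annular projections $\pi_s$ are coarsely Lipschitz on curves intersecting $s$). The argument closely follows \cite{BHMS}, with modifications for our setting; I outline the main structure.

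For (1), I would proceed by induction on the dimension $k$ of $\bar\Delta$, the case $k=0$ being immediate. The core of the inductive step is a \emph{one-vertex extension lemma}: given a lift $\Delta=(c_0,\dots,c_{k-1})$ of $\bar\Delta$ and two vertices $v,v'$ both disjoint from $\Delta$ in $\C$ with $\pi(v)=\pi(v')$, there is $h\in\N$ fixing $\Delta$ pointwise with $hv=v'$. Writing $v'=gv$ for some $g\in\N$, I would induct on $\alpha(g)\in\mathcal O$ and apply Proposition \ref{cor3.6} to the pair $(x,g)=(v,g)$. In case (ii), exploit the length-$2$ paths $v,c_i,v'$ in $\C$: since $v,v'$ must intersect $s$ for $d_s(v,gv)>\Theta$ to hold, the coarse Lipschitz property of $\pi_s$ combined with $\Theta$ taken larger than twice the Lipschitz constant forces $c_i\perp s$ for every $i$. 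Hence $\gamma_s$ fixes $\Delta$ pointwise, $\gamma_s v'$ remains a valid lift of $\pi(v)$ disjoint from $\Delta$, and $\gamma_s g$ sends $v$ to $\gamma_s v'$ with strictly smaller $\alpha$-complexity, so the inductive hypothesis closes the argument. Case (i) ($s\perp v$) is handled analogously, invoking the lower-dimensional induction hypothesis to realign the partial lift after applying $\gamma_s$. Existence of a lift of $\bar\Delta$ is then built up from lifts of edges (which exist by definition of $\C/\N$) using the uniqueness of $\N$-orbits at lower dimensions.

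For (2), I would lift $\bar\gamma=(\bar v_0,\dots,\bar v_n)$ vertex by vertex: apply (1) to $\bar\Delta\cup\{\bar v_i\}$ at each step, using uniqueness of $\N$-orbits to keep consecutive lifts adjacent in $\C$. Since $\pi\colon\C\to\C/\N$ is $1$-Lipschitz, the lifted path is automatically a geodesic in $\link(\Delta)$. For (3), I would lift the pieces of the $m$-gon successively around its boundary using (1) at simplex-corners and (2) at geodesic-sides. The only obstruction is closing the loop: by (1), the lift of $\tau_{m-1}^+$ and the initial lift of $\tau_0^-$ differ by some $g\in\N$, and I would induct on $\alpha(g)$. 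Applying Proposition \ref{cor3.6} to $(\tau_0^-,g)$, the key observation is that the total $\pi_s$-projection around the lifted $m$-gon is uniformly bounded by a constant depending only on $m_0$: each simplex piece contributes $O(1)$, and each geodesic piece contributes $O(1)$ by BGIT unless it passes through $\text{st}(s)$, in which case some lifted vertex is already disjoint from $s$. For $\Theta$ chosen larger than this bound, one always produces a vertex on the lifted $m$-gon disjoint from $s$, which lets $\gamma_s$ be applied to the corresponding portion of the lift to reduce $g$ to $\gamma_s g$ while preserving the lift structure.

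The main challenge is the uniform control of projections in (3): the bound on $\pi_s$-distances around the $m$-gon must be independent of the lengths of geodesic sides. This is delivered by BGIT, which forces any geodesic avoiding $\text{st}(s)$ to have bounded $\pi_s$-projection, and the ``deep enough'' hypothesis of Definition \ref{Ndeep} then supplies a $\Theta$ surpassing this bound.
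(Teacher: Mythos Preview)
Your proposal is correct and sketches precisely the argument that the paper invokes by citing \cite[Proposition 8.29]{BHMS} (together with Lemma~\ref{trianglelift} for the $b=4$ addendum): complexity reduction via Proposition~\ref{cor3.6}, with BGIT controlling the total annular projection around the polygon. One minor imprecision: in case~(i) of your one-vertex extension lemma you do not need the lower-dimensional induction hypothesis---since $\gamma_s$ fixes $v$, simply apply $\gamma_s$ to both ordered simplices $(\Delta,v)$ and $(\Delta,v')$ and invoke the induction on $\alpha(\gamma_s g)<\alpha(g)$, exactly as in the paper's proof of Lemma~\ref{trianglelift}.
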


\begin{proof}
    This is essentially \cite[Proposition 8.31]{BHMS}, whose proof only uses \cite[Corollary 3.6]{dfdt} and therefore works for any deep enough subgroup. Said proposition is stated for $m_0=3 \text{Gen}(S)+2p(S)-3$, but the only reason for that particular threshold in \cite{BHMS} is that this is the maximal number of sides of generalised $m$-gons considered in that paper, but the proof works for any fixed $m_0$. Finally, \cite[Proposition 8.31]{BHMS} as stated deals only with the case $b\ge5$, but with similar tools it is easy to prove the uniqueness of the orbit of lifts also for edges and triangles in the Farey complex. This shall be done in Lemmas \ref{edgelift} and \ref{trianglelift}, whose proofs are prototypical of many arguments throughout the paper.
\end{proof}

\begin{lemma}\label{edgelift}
    Whenever $\N$ is deep enough, every edge $\{\ov x, \ov y\}\subset \C_4/\N$ admits a unique $\N$-orbit of lifts.
\end{lemma}

\begin{proof}
    By definition of the quotient graph, there exist two adjacent vertices $x,y\in \C_4$ whose projections are $\ov x$ and $\ov y$, respectively. Therefore there exists at least one lift. Now let $\{x',y'\}$ be another lift of $\{\ov x, \ov y\}$. Up to the action of $\N$ we can assume that $y=y'$. Therefore we have a path $\{x,y,x'\}$ inside $\C_4$, and there is an element $g\in \N$ such that $g(x)=x'$.
    
    If $g$ is the identity we are done, otherwise let $(s,\gamma_s)$ as in Proposition \ref{cor3.6}. If $d_\C(x,s)\le 1$, so that $\gamma_s$ fixes $x$, we can apply $\gamma_s$ to both edges. Now we have a path $\{\gamma_s(x),\gamma_s(y),\gamma_s(x')\}$, and $x=\gamma_s(x)$ is sent to $\gamma_s(x')$ by the element $\gamma_s g$. Since $\alpha(\gamma_s g)<\alpha(g)$ we can proceed by induction on the complexity $\alpha(g)$.

    Otherwise $d_s(x,x')>\Theta$. We claim that $d_\C(y,s)\le 1$. If this is not the case then $\pi_s(y)$ is well-defined, and by triangle inequality for annular projections either $d_s(x,y)>\Theta/2$ or $d_s(x',y)>\Theta/2$. Without loss of generality, we can assume to be in the first case. If we choose $\Theta \ge 2B$, where $B$ is the constant from the bounded geodesic image Theorem \ref{bgit}, we get that every geodesic from $x$ to $y$ (that is, the edge between these vertices) should pass through the star of $s$, which is a contradiction. Therefore $\gamma_s$ must fix $y$, and if we replace $x'$ with $\gamma_s(x')$ we can again reduce the complexity of $g$, while preserving the fact that the two edges share an endpoint.
    
    At the end of the inductive argument we have that $x=x'$, thus proving uniqueness of the orbit of lifts of an edge. 
\end{proof}

\begin{lemma}\label{trianglelift}
Whenever $\N$ is deep enough, every triangle $\ov\Delta\subset \C_4/\N$ admits a unique $\N$-orbit of lifts.
\end{lemma}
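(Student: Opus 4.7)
The plan is a two-step argument following the standard strategy of the paper: first align as many vertices of the two lifts as possible using the unique $\N$-orbit of lifts of ordered edges from the first bullet of Theorem~\ref{mgonlift}, then close the remaining discrepancy by complexity reduction via Proposition~\ref{cor3.6}.

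Given two lifts $\Delta = \{a,b,c\}$ and $\Delta' = \{a',b',c'\}$ of $\ov\Delta$, I match vertex labels by their projections (this is unambiguous because $\pi$ is injective on the vertices of any individual triangle). Applying the first bullet of Theorem~\ref{mgonlift} to the ordered edge $(\ov a, \ov b)$ produces $g_0 \in \N$ with $g_0(a) = a'$ and $g_0(b) = b'$; after replacing $\Delta$ by $g_0\Delta$, I may assume $a = a'$ and $b = b'$. The goal becomes to find $g \in \N$ fixing $a$ and $b$ with $g(c) = c'$.

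Pick any $h \in \N$ with $h(c) = c'$, which exists since $\pi(c) = \pi(c')$, and induct on $\alpha(h)$. If $h$ already fixes both $a$ and $b$, then $h$ is the required $g$. Otherwise, apply Proposition~\ref{cor3.6} to $h$ with test curve $x = a$: this yields $s \in \C_4$ and $\gamma_s \in \langle T_s \rangle \cap \N$ with $\alpha(\gamma_s h) < \alpha(h)$, satisfying either $i(a,s) = 0$ or $d_s(a, h(a)) > \Theta$. In the first case, the fact that disjoint essential simple closed curves on $S_{0,4}$ are isotopic forces $s = a$, so $\gamma_s$ fixes $a$. In the second, Theorem~\ref{bgit} forces every $\C_4$-geodesic from $a$ to $h(a)$ to intersect the star of $s$; since $c' = h(c)$ is adjacent both to $a$ (as $\{a,b,c'\}$ is a triangle) and to $h(a)$ (as $h$ sends the edge $\{a,c\}$ to $\{h(a),c'\}$), the length-two path $a - c' - h(a)$ is such a geodesic, whose internal vertex must lie in the star of $s$. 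This pins $s$ down to a controlled position near $a$ or $c'$, and an analogous argument using $x=b$ pins it near $b$ or $c'$.

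In both cases, combining $\gamma_s$ with a carefully chosen power of the Dehn twist around $a$, $b$, or $c'$ (all of which have appropriate powers in $\N$ by Remark~\ref{gendatwist}) produces a modified element of strictly smaller $\alpha$-complexity that still sends $c$ to $c'$ and is closer to fixing $a$ and $b$. Since $\alpha$ takes values in a well-ordered set, the induction terminates, yielding the required $g$. The main technical obstacle — and the prototypical feature of the arguments used throughout the paper — is precisely this coordinated reduction: the complexity decrease is handed to us by Proposition~\ref{cor3.6}, but producing a correction that strictly decreases $\alpha$ \emph{while} simultaneously preserving the constraints $h(c) = c'$, $\pi(h(a)) = \ov a$ and $\pi(h(b)) = \ov b$ is what requires careful bookkeeping.
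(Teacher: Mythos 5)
Your strategy has the right shape — align two vertices via Theorem~\ref{mgonlift}, then close the gap on the third by complexity reduction — but the reduction step is not correct, for two reasons.

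First, you apply Proposition~\ref{cor3.6} with the \emph{aligned} vertex $a$ as test curve, and then propose to run it again with $b$ and intersect the conclusions to ``pin down'' $s$. But the proposition produces a possibly different pair $(s,\gamma_s)$ for each choice of test curve, so running it with both $a$ and $b$ does not constrain a single $s$. The paper instead uses the \emph{mismatched} vertex (your $c$) as the test curve, and that choice is what makes the dichotomy bite: in the case $i(c,s)=0$ one has $s=c$ in the Farey complex, so $\gamma_s$ fixes $c$ and one may apply $\gamma_s$ to \emph{both} lifts, preserving the shared edge while the tracking element becomes exactly $\gamma_s h$; in the other case one runs the BGI Theorem~\ref{bgit} on the two Farey edges from $c$ to $a$ and from $c$ to $b$ of the lifted triangle to control $s$ relative to the shared edge, and again the next element is $\gamma_s h$. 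In your setup, the case $i(a,s)=0$ gives $s=a$, whose twist moves \emph{both} $b$ and $c$, so applying $\gamma_s$ to anything destroys the alignment you worked to set up.

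Second, and more fundamentally, the closing claim that ``combining $\gamma_s$ with a carefully chosen power of the Dehn twist around $a$, $b$, or $c'$ \ldots\ produces a modified element of strictly smaller $\alpha$-complexity'' is unjustified. The only control over $\alpha$ that Proposition~\ref{cor3.6} supplies is $\alpha(\gamma_s h)<\alpha(h)$ for the specific $\gamma_s$ it hands you; $\alpha$ is otherwise a black box, and post-composing with a further power of a Dehn twist (even one lying in $\N$, per Remark~\ref{gendatwist}) could increase $\alpha$ arbitrarily. You yourself flag this as ``the main technical obstacle,'' but you do not resolve it — and the point of the paper's proof is precisely that no such correction is ever needed, because with the mismatched vertex as test curve, $\gamma_s h$ itself is already compatible with the alignment. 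As written, your induction does not close.
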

\begin{proof}
First we show that $\ov\Delta$ has a lift. Let $\{x,y,z,x'\}$ a lift of the triangle, which we see as a closed path $\{\ov x, \ov y,\ov z,\ov x\}$. Such a lift exists by how edges in the quotient graph are defined, but it is possibly open. Let $g\in \N$ be an element mapping $x$ to $x'$, and suppose that $g$ is not the identity. Then let $(s,\gamma_s)$ as in Proposition \ref{cor3.6}. If $d(x,s)\le 1$ we can apply $\gamma_s$ to the whole path, and proceed by induction on the complexity $\alpha(g)$. 

Otherwise we claim that either $d_{\C}(s,y)\le1$ or $d_{\C}(s,z)\le1$. If none of this happens then $\pi_s(y)$ and $\pi_s(z)$ are both defined, and by triangle inequality we can assume, without loss of generality, that $d_s(x,y)>\Theta/3$. But then again if we choose $\Theta\ge 3B$, where $B$ is the constant from the Bounded Geodesic Image Theorem \ref{bgit}, we have a contradiction. Hence $\gamma_s$ fixes a \emph{cut point} between $y$ and $z$, and we can apply $\gamma_s$ "beyond" this point. Then we get a new lift of the path, whose endpoints are $x$ and $\gamma_s(x')=\gamma_s g(x)$, and again we can proceed by induction.

At the end of the inductive procedure we will have glued $x$ to $x'$, thus we will have a closed lift of the triangle.

Now let $\Delta=\{x,y,z\}$ and $\Delta'=\{x',y',z'\}$ be two lifts inside $\C_4$ of the same triangle $\ov\Delta=\{\ov x,\ov y,\ov z\}$. By Lemma \ref{edgelift} we know that the edge $\ov y,\ov z$ admits a unique $\N$-orbit of lifts, therefore we can assume that $y=y'$ and $z=z'$. Moreover let $g\in\N$ be an element mapping $x$ to $x'$. 

If $g$ is the identity we are done, otherwise let $(s,\gamma_s)$ as in Proposition \ref{cor3.6}. If $d(x,s)\le 1$ then $\gamma_s$ fixes $x$, and we can apply $\gamma_s$ to both triangles and proceed by induction on the complexity $\alpha(g)$. Otherwise $d_s(x,x')>\Theta$, and with the same argument as in Lemma \ref{edgelift} we have that $\gamma_s$ fixes both $y$ and $z$. But then if we replace $\Delta'$ with $\gamma_s(\Delta')$ we can again reduce the complexity of $g$, while preserving the fact that the two triangles share an edge.

At the end of the inductive argument we have that $\Delta=\Delta'$, thus proving uniqueness of the orbit of lifts of a triangle. 
\end{proof}

Later we will need the following refinement to Theorem \ref{mgonlift}, which shows that, when lifting a quadrilateral (that is, a geodesic quadrangle), two opposite sides can be chosen somewhat "rigidly":
\begin{lemma}\label{quadranglelift}
    Let $\N$ be deep enough with respect to some $\Theta>0$, let $\ov Q\subset \C/\N$ be a quadrilateral with vertices $\ov v_1, \ov w_1, \ov v_2, \ov w_2$ and let $Q\subset \C$ be one of its lifts. If the geodesics $[\ov v_1, \ov w_1]$, $ [\ov v_2, \ov w_2]$ have lifts $[v_i, w_i]$ so that $d_s(v_i, w_i)\le \Theta$ whenever the quantity is defined, then the lifts $[v_i', w_i']$ of $[\ov v_i, \ov w_i]$ contained in $Q$ is an $\N$-translate of $[v_i, w_i]$.
\end{lemma}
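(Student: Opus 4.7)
The plan is to adapt the complexity-reduction argument of Lemma~\ref{trianglelift}, with the bounded projection hypothesis playing the role that the triangle's small diameter plays there. By symmetry between the two opposite sides, I would focus on $i = 1$; the case $i = 2$ is analogous.

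First I would apply bullet~1 of Theorem~\ref{mgonlift} to the single-vertex simplex $\{\bar v_1\}$ to translate $Q$ by an element of $\N$ and arrange $v_1' = v_1$; since $\N$ acts by isometries of $\C$, this preserves the bounded projection property of $[v_1, w_1]$. Writing the two lifts as $(x_0 = v_1, x_1, \dots, x_n = w_1)$ and $(x_0 = v_1, x_1', \dots, x_n' = w_1')$, I would then induct on the first index $k$ at which they diverge. Applying the ordered-edge version of bullet~1 to $\{\bar x_{k-1}, \bar x_k\}$ yields some $g \in \N$ with $g x_{k-1} = x_{k-1}$ and $g x_k = x_k'$; the task is to modify $g$ into $g^\ast \in \N$ also fixing the earlier prefix $x_0, \dots, x_{k-2}$, so that applying $g^\ast$ to $[v_1', w_1']$ (equivalently, to $Q$) strictly extends the matching prefix.

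For this inner step I would induct on the complexity $\alpha(g)$ provided by Proposition~\ref{cor3.6}. Each iteration selects a reference vertex within the matched prefix and outputs a Dehn twist power $\gamma_s$ with $\alpha(\gamma_s g) < \alpha(g)$ satisfying either $i(x, s) = 0$ or $d_s(x, g x) > \Theta$. A BGIT-style argument---parallel to Lemma~\ref{trianglelift} but leveraging the global hypothesis $d_s(v_1, w_1) \le \Theta$ along the whole geodesic rather than only at three pairwise-adjacent triangle vertices---should show that $\gamma_s$ in fact fixes every vertex of the prefix as well as $x_k'$, so that replacing $g$ by $\gamma_s g$ preserves the required stabilisation while strictly reducing complexity. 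When the inner induction terminates, $g$ stabilises the entire prefix and the outer induction extends the matching.

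The hard part will be this last claim: that a single $\gamma_s$ fixes every vertex of a possibly long matched prefix. In Lemma~\ref{trianglelift} the BGIT argument is essentially local, exploiting the pairwise adjacency of triangle vertices; here one needs a more global use of the bounded projection condition along the entire geodesic. The resolution I would pursue is to take $\Theta$ large relative to the BGIT constant $B$ (and to the universal constant bounding annular projections between disjoint curves), so that whenever $s$ witnesses a large projection between a prefix vertex and its $g$-image, some sub-geodesic of $[v_1, w_1]$ is forced to pass through the star of $s$ at two widely separated points, contradicting its geodesicity together with the bound $d_s(v_1, w_1) \le \Theta$.
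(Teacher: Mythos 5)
The paper does not give a self-contained proof of this lemma: it simply cites the ``moreover'' part of \cite[Proposition 4.3]{dfdt}. So you are attempting something genuinely harder than what the paper records, and the attempt, as sketched, has a real gap.

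The most serious issue is that your argument never uses the quadrilateral $Q$: after normalising $v_1'=v_1$, the rest of $\ov Q$ plays no role, and you are effectively trying to show that \emph{any} geodesic lift of $[\ov v_1,\ov w_1]$ emanating from $v_1$ is an $\N$-translate of the given small-projection lift $[v_1,w_1]$. That stronger statement is false: Remark~\ref{rmk:lifts_exist?} exhibits two lifts $l=\{\alpha,\beta,\gamma\}$ and $l'=\{\alpha,\beta,T_\beta^K\gamma\}$ of a length-$2$ geodesic, sharing the initial edge, that lie in different $DT_K$-orbits, and $l$ can certainly be chosen so that its annular projections are uniformly bounded by some $\Theta_0$ independent of $K$. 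So whatever argument one uses must exploit that $[v_1',w_1']$ is a \emph{side of a lift of a quadrilateral} (and presumably that the opposite side has small projections too); an argument that only sees one side is doomed.

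There is a second, more technical problem in the inner induction. You want to find a single $\gamma_s$ (a power of a Dehn twist) that fixes every vertex of a matched prefix of the geodesic. Once the matched prefix has geodesic length $\ge 3$ in $\C$, its endpoints fill the surface, so there is no curve $s$ disjoint from all prefix vertices, hence no Dehn-twist power can fix the prefix pointwise. The correct move in this setting, as in the proof of Lemma~\ref{projinlink}, is to apply $\gamma_s$ only \emph{beyond a cut point} where the geodesic passes through the star of $s$. But that operation modifies the lift without keeping it in a fixed $\N$-orbit: it produces another isometric lift, not a translate of the previous one. That is exactly why Lemma~\ref{projinlink} only yields a distance/isometry conclusion and not an orbit conclusion. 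Your induction needs to preserve the invariant ``the current side of $Q$ is an $\N$-translate of $[v_1,w_1]$,'' and neither ``apply $\gamma_s$ globally'' (which destroys the matched prefix) nor ``apply $\gamma_s$ past a cut point'' (which destroys the orbit invariant) accomplishes this. Your proposed resolution---forcing $[v_1,w_1]$ to cross the star of $s$ twice---does not engage with this dichotomy and I do not see how it produces a contradiction.
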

\begin{proof}
    This is the "moreover" part of \cite[Proposition 4.3]{dfdt}, whose proof only uses \cite[Corollary 3.6]{dfdt}.
\end{proof}

\subsection{Special intersections}
For the following definitions, let $\Gamma$ be either $\C$ or $\C/\N$.
\begin{definition}\label{chain}
A \emph{chain} is a collection of vertices $v_1,\ldots,v_k\in\Gamma$ such that $d_{\Gamma}(v_i,v_j)> 1$ iff $|i-j|=1$. A chain is closed if the same holds with indices mod $k$. 
\end{definition}
In the curve graph, a chain is just a sequence of curves such that two of them intersect iff they are consecutive.\\
The following should be compared with the notion of $\mathcal{X}$-detectable intersection from \cite{AL}:
\begin{definition}[Special intersection]\label{specint}
Let $b\ge5$. Given a facet $P\subseteq\Gamma$ (that is, a simplex of codimension one in a maximal simplex) we say that the vertex $\alpha\in\Gamma$ has \emph{special intersection} with the vertex $\beta\in \Gamma$ with respect to $P$ if:
\begin{itemize}
\item $\alpha$ and $\beta$ both complete $P$ to maximal simplices;
\item there exist $\gamma,\,\delta$ such that $\gamma,\, \alpha,\, \beta,\,\delta$ is a chain;
\item there exists $\varepsilon \in P$ such that $\link(\gamma)\cap P=\link(\delta)\cap P= P-\{\varepsilon\}$.
\end{itemize}
We say that $\gamma$ and $\delta$ are the auxiliary vertices that detect the intersection of $\alpha$ and $\beta$.  The situation is illustrated in Figure~\ref{fig:5catena}.
\end{definition}

\begin{remark}\label{rem:specint2}
If $\alpha$ and $\beta$ are in $\C$ and have special intersection then they have intersection number $2$ and lie in the four-punctured sphere that $P$ cuts out (see the remark following \cite[Lemma 2.5]{AL}).
\end{remark}

\begin{remark}\label{crossspecint}
Definition \ref{specint} actually describes an isometrically embedded generalised pentagon $\mathfrak{P}$, as in Figure \ref{fig:specint}. More precisely, if we define $R$ as the codimension $2$ simplex such that $P=\varepsilon\star R$, then $\gamma,\, \alpha,\, \beta,\,\delta,\,\varepsilon$ form a closed chain in the link of $R$; this becomes an isometrically embedded pentagon when the vertices are ordered as $\alpha,\,\varepsilon,\, \beta,\,\gamma,\,\delta$. Note that these five vertices must play symmetric roles, meaning that when two of them are not adjacent in $\Gamma$ they have special intersection, and the special intersection is detected by the others. For example, $\gamma$ and $\varepsilon$ have special intersection with respect to the facet $R\star\beta$, and $\alpha$ and $\delta$ detect it. Notice that having special intersection is a completely combinatorial property, and hence it is preserved by graph automorphisms.
\end{remark}

\begin{figure}[htp]
    \centering
    \includegraphics[width=0.75\textwidth]{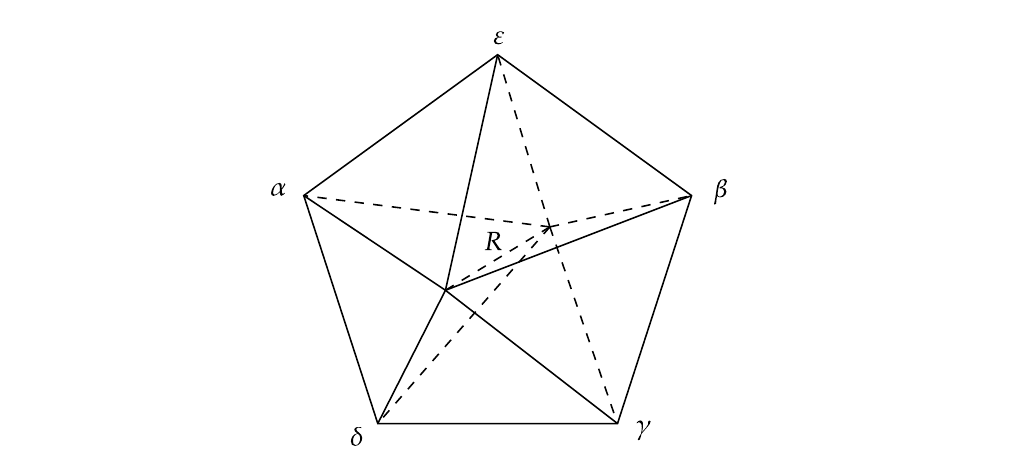}
    \caption{The generalised pentagon $\mathfrak{P}$ described in \ref{specint}, which detects that any two non-adjacent vertices in the link of $R$ have special intersection.}
    \label{fig:specint}
\end{figure}
\begin{definition}\label{specialPent}
We will call an isometrically embedded generalised pentagon $\mathfrak{P}\subset \Gamma$ which is isomorphic to the one in Figure \ref{fig:specint} a \emph{special} pentagon. 
\end{definition}
\begin{figure}[htp]
    \centering
    \includegraphics[width=0.75\textwidth]{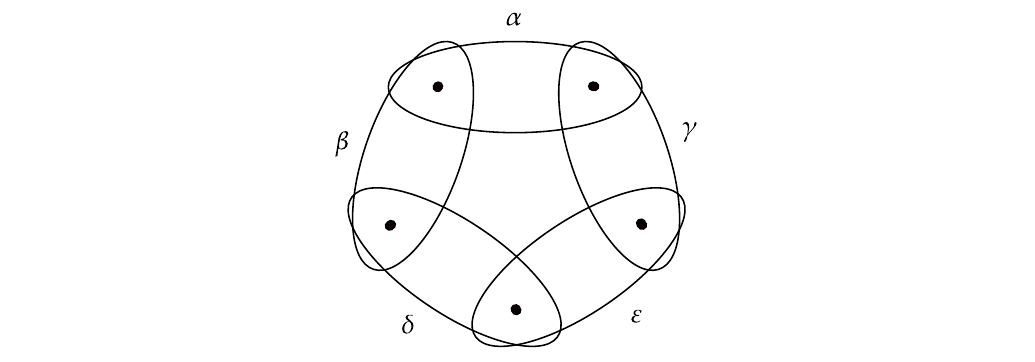}
    \caption{The five curves involved in Definition \ref{specint} for $\Gamma=\C$, forming a chain on the five-holed sphere that $R$ cuts out. Any intersection is special, therefore the intersection number is always $2$.}
    \label{fig:5catena}
\end{figure}

Another consequence of stating special intersection in combinatorial terms is the following lifting property:
\begin{lemma}\label{liftspec}
For every $b\ge5$ and for every $\N$ deep enough, if $\overline\alpha,\, \overline\beta\in \C/\N$ have special intersection then they admit lifts $\alpha,\beta$ with special intersection.
\end{lemma}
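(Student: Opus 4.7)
The plan is to realise the configuration witnessing special intersection as a generalised $5$-gon in $\C/\N$, so that Theorem \ref{mgonlift} can be applied to produce a corresponding configuration in $\C$. By Remark \ref{crossspecint}, the hypothesis provides an isometrically embedded pentagon $\overline{\mathfrak{P}}\subset\link(\overline R)$ whose vertices, cyclically ordered as $\overline v_0,\dots,\overline v_4 := \overline\alpha,\overline\varepsilon,\overline\beta,\overline\gamma,\overline\delta$, have adjacencies exactly along the five edges of the pentagon; here $\overline R$ is the codimension-$2$ simplex with $\overline P = \overline R\star\overline\varepsilon$.

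To make the lifting respect $\overline R$ coherently, I would package it into each side: for $j\in\mathbb{Z}/5$ set
$$\tau_j := \overline R\star\{\overline v_{j-1},\overline v_j\},\qquad \tau_j^- := \overline R\star\overline v_{j-1},\qquad \tau_j^+ := \overline R\star\overline v_j.$$
Each $\tau_j$ is a genuine simplex since $\overline v_{j-1}$ and $\overline v_j$ are adjacent in $\link(\overline R)$, and $\tau_j^+ = \tau_{j+1}^-$ by construction, so $(\tau_j)_j$ is a generalised $5$-gon. Taking $\N$ deep enough with $m_0\ge 5$ and applying Theorem \ref{mgonlift}, one obtains a lifted generalised $5$-gon in $\C$. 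Unpacking the resulting simplices side by side, uniqueness of lifts of ordered simplices forces each lifted $\tau_j$ to have the form $R_j\star\{v_{j-1},v_j\}$, with $R_j$ a lift of $\overline R$ and $v_{j-1},v_j$ lifts of $\overline v_{j-1},\overline v_j$; the identifications $\tau_j^+=\tau_{j+1}^-$ then pin down a single simplex $R := R_j$ (independent of $j$) together with coherent vertices $\alpha,\varepsilon,\beta,\gamma,\delta\in\link(R)$ whose consecutive pairs in the pentagon ordering are adjacent in $\C$.

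It remains to check that these lifts satisfy Definition \ref{specint} for the facet $P := R\star\varepsilon$. Since $\link(P)$ is the Farey graph of the $4$-holed sphere cut out by $P$, any vertex in $\link(P)$ completes $P$ to a pants decomposition, giving the first bullet for $\alpha$ and $\beta$. For the chain condition on $\gamma,\alpha,\beta,\delta$, the non-adjacency of consecutive-in-chain pairs is inherited from $\overline{\mathfrak{P}}$: diagonals of the pentagon are non-adjacent in $\C/\N$, and the simplicial projection $\pi\colon\C\to\C/\N$ cannot turn an adjacency in $\C$ into a non-adjacency in $\C/\N$, so this lifts for free. The adjacency of non-consecutive-in-chain pairs such as $\gamma,\beta$ is exactly the pentagon-edge structure produced by the lift. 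Finally, $\gamma$ and $\delta$ lie in $\link(R)$ and, by the same diagonal argument, are non-adjacent to $\varepsilon$ but adjacent to every other vertex of $P$, giving the third bullet.

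The main subtlety to flag is that the lift must deliver a \emph{single} common $R$ across all five sides of the pentagon rather than five $\N$-translates of it; packaging $\overline R$ into each side via $\tau_j = \overline R\star\{\overline v_{j-1},\overline v_j\}$ is precisely what forces this automatically, through the shared-endpoint conditions $\tau_j^+=\tau_{j+1}^-$. The other ingredient, essentially free but crucial, is that non-adjacency is preserved by $\pi$, which is what retains the pentagon's diagonal intersection pattern after lifting.
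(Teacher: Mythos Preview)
Your proof is correct and follows essentially the same approach as the paper: lift the special pentagon $\overline{\mathfrak{P}}$ as a generalised $5$-gon via Theorem \ref{mgonlift}, and then use that the projection $\pi$ is $1$-Lipschitz (simplicial) to conclude that the non-adjacencies along the diagonals persist in the lift, so the lifted configuration still witnesses special intersection. The paper's argument is more terse---it simply notes that the lifted pentagon remains embedded because $\pi$ is $1$-Lipschitz---whereas you spell out explicitly how $\overline R$ is carried coherently through the lift by packaging it into each side $\tau_j$, and then verify Definition \ref{specint} bullet by bullet; this is a harmless elaboration of the same idea.
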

\begin{proof}
    By Theorem \ref{mgonlift}, whenever $\N$ is deep enough every special pentagon $\ov {\mathfrak{P}}$ as in Definition \ref{specialPent} admits a lift $\mathfrak{P}$, which remains isometrically embedded since the projection map is $1$-Lipschitz. More precisely, if $\ov x,\ov y\in \ov{\Gamma}$ and $x,y\in \Gamma$ are their lift then
$$d_{\Cp}(x,y)\ge d_{\Cpdt}(\ov x, \ov y)=d_{\ov{\mathfrak{P}}}(\ov x, \ov y)=d_{\mathfrak{P}}(x, y).$$
\end{proof}

For the rest of the paper, given a curve $\beta\in \C$ which bounds a twice-punctured disk, let $H_\beta$ be the \emph{half Dehn twist} around $\beta$, with respect to the given orientation of $S_b$. If $b\ge5$ there is only one twice-punctured disk bounded by $\beta$, while if $b=4$ for each disk $D$ bounded by $\beta$ we will refer to the half Dehn twist of $D$ as $H_D$. The following lemma describes the set of curves with special intersection with $\beta$ with respect to some facet $P$:

\begin{lemma}\label{specintunica}
    Let $b\ge4$ and let $\alpha, \alpha'\in \C$ be two curves which both have special intersection with the same $\beta$, with respect to the same $P$. Let $S_4$ be the four-holed sphere that $P$ cuts out, and let $D\subset S_4$ be any of the two disks bounded by $\beta$ inside $S_4$. Then there exists an integer $k\in\mathbb{Z}$ such that $\alpha'=H_D^k(\alpha)$.
\end{lemma}

\begin{proof}
    $P$ cuts out a sphere $S_4$ with four punctures, and by Remark \ref{rem:specint2} having special intersection implies that the intersection number is $2$. Thus we want to show that, if $\alpha,\alpha'$ are both adjacent to $\beta$ in the Farey complex and $D$ is one of the two disks bounded by $\beta$, then $\alpha'=H_D^k(\alpha)$ for some $k\in \mathbb{Z}$. By properties of the Farey complex there exists a sequence of triangles $T_1,\ldots, T_k$ such that each triangle contains $\beta$, $\alpha\in T_1$, $\alpha'\in T_k$ and every two consecutive triangles $T_i$ and $T_{i-1}$ share and edge containing $\beta$. Thus it suffices to prove that if $\alpha,\alpha',\beta$ are the vertices of a triangle then $\alpha'=H^{\pm1}_D(\alpha)$. This is true, since the two curves $H^{\pm1}_D(\alpha)$ are adjacent to both $\alpha$ and $\beta$, and $\alpha$ and $\beta$ belong to exactly two triangles.
\end{proof}

\begin{remark}\label{rem:hdt_in_s4}
    When $b\ge5$ we have to be a little cautious. If $\beta$ already bounds a twice-punctured disk $D$ inside $S_b$ then this disk embeds inside $S_4$, since it cannot contain any curve of $P$. Hence the map $H_D$, defined on $S_4$, extends to the usual half Dehn Twist along $\beta$, which is defined on the whole $S$. Therefore Lemma \ref{specintunica} shows that, whenever $\alpha,\alpha'$ are two curves with special intersection with $\beta$ with respect to the same $P$, there exists $k\in\mathbb{Z}$ such that $\alpha'=H_\beta^k(\alpha)$.
    
    However, if $\beta$ does not bound a twice-punctured disk inside $S_b$ there might be no way to extend $H_D$ to the whole surface $S$, so it is important to underline that the conclusion holds \emph{inside the four-holed sphere that $P$ cuts out}.
\end{remark}

\subsection{The finite rigid set}
For the rest of the section let $b\ge5$. We denote by $X_b$ the full subgraph of $\C$ defined in \cite[Section 3]{AL}. More precisely, we represent $S_b$ as the double of a regular $b$–gon with vertices removed. An arc connecting non-adjacent sides of the $b$-gon doubles to a curve on $S_b$, and let $X_b$ be the subgraph spanned by such curves, as in Figure \ref{fig:esempioX} for the case $b=8$.

\begin{figure}[htp]
    \centering
    \includegraphics[width=0.95\textwidth]{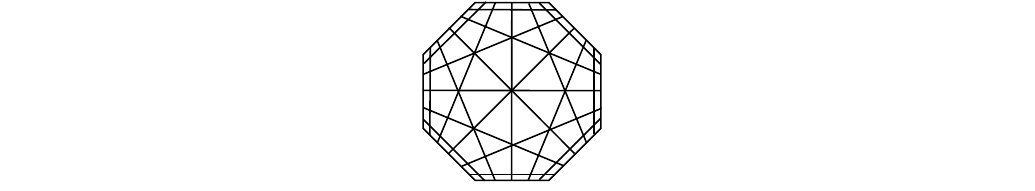}
    \caption{Doubling these arcs gives a copy of $X_8$.}
    \label{fig:esempioX}
\end{figure}
A \emph{copy} of $X_b$ will be the image of an isometric embedding $X_b\to \C$ (respectively $\C/\N$). The reason we are interested in $X_b$ is the following theorem, which is a somewhat refined version of Ivanov's and was proven in \cite[Theorem 3.1]{AL}:
\begin{theorem}
\label{finiterig}
For every $b\ge5$, any locally injective simplicial map $\phi:\, X_b\to \mathcal{C}$ is induced by a mapping class $h\in MCG^\pm$, meaning $\phi=h|_{X_b}$. Moreover, any two such $h$ differ by an element of the pointwise stabiliser $\text{Pstab}(X_b)$, generated by the reflection $r$ that swaps the two copies of the $b$-gon.
\end{theorem}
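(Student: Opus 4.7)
The plan is to adapt Ivanov's strategy for curve graph rigidity to the finite subcomplex $X_b$: identify a small ``core'' of $X_b$ whose image under $\phi$ already determines a mapping class up to the pointwise stabiliser, and then propagate across the rest of $X_b$ using the combinatorial structure of chains and special pentagons that $X_b$ is rich in.

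First I would fix a maximal simplex $\Delta\subset X_b$, which corresponds to a pants decomposition of $S_b$ whose components double from a triangulation of the regular $b$-gon. Since $\phi$ is simplicial, $\phi(\Delta)$ is again a simplex of $\C$, and by the change-of-coordinates principle all pants decompositions of $S_b$ lie in a single $MCG^\pm(S_b)$-orbit. Post-composing $\phi$ with a suitable $h_0\in MCG^\pm(S_b)$, I may assume $\phi(\Delta)=\Delta$ setwise, and after a further element of the finite stabiliser of $\Delta$, that $\phi|_\Delta=\text{id}_\Delta$. The problem is then reduced to showing that such a normalised $\phi$ is either the identity on $X_b$ or the restriction of the reflection $r$.

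The key step is extending this determination to all $\alpha\in X_b\setminus\Delta$. Local injectivity preserves the pattern of adjacency and non-adjacency between $\alpha$ and the vertices of $\Delta$, so the Alexander/change-of-coordinates principle restricts $\phi(\alpha)$ to a specific $\text{Stab}(\Delta)$-orbit. This orbit is further cut down by the presence of the remaining $X_b$-vertices: whenever $\alpha$ has special intersection with another vertex with respect to a facet $P\subset\Delta$, Lemma \ref{specintunica} shows that the remaining ambiguity is a power $H_\beta^k$ of a half-twist, and $k$ is pinned down (up to the overall swap by $r$) by checking disjointness or special intersection with yet another vertex of $X_b$. Iterating over a connected collection of special pentagons (Remark \ref{crossspecint}) determines $\phi$ on all of $X_b$ up to the global choice of $\text{id}$ versus $r$.

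The main obstacle will be arranging the propagation so that the combinatorial constraints coming from nearby $X_b$-vertices really do kill every ambiguity except the global reflection $r$. Here the explicit polygon description of $X_b$ is essential: $X_b$ contains the full family of ``fan'' diagonals based at each vertex of the $b$-gon, which provides a highly overlapping family of triangulations and hence enough redundancy for the local choices at each step to be coherently forced. Piecing these together across the special pentagon structure yields a single global choice between $\text{id}$ and $r$, giving simultaneously the existence of the inducing $h\in MCG^\pm$ and the identification $\text{Pstab}(X_b)=\langle r\rangle$.
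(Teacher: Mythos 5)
The paper does not prove this statement at all --- it is quoted verbatim as a citation of Aramayona--Leininger \cite[Theorem 3.1]{AL}. So there is no in-paper argument against which your sketch can be compared, and your proposal is really a reconstruction of the AL proof. With that caveat, your outline is in the right spirit (it is close to the AL strategy: normalise on a maximal simplex, then propagate using detectable/special intersections), but it has genuine gaps at exactly the two places where the AL argument does real work.

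First, the normalisation step. After choosing $h_0$ with $h_0(\phi(\Delta))=\Delta$ setwise, you still only know that $h_0\circ\phi$ induces some bijection $\sigma$ of the vertices of $\Delta$. You then ``apply a further element of the finite stabiliser of $\Delta$'' to make $\phi|_\Delta$ the identity, but this requires $\sigma$ to be realisable by a mapping class stabilising $\Delta$, which is exactly what needs proving: the curves of $\Delta$ have different topological types (diagonals cutting off different numbers of punctures), and $\text{Stab}(\Delta)$ can only realise permutations that preserve topological type. The content of the rigidity argument is precisely that the combinatorics of $X_b$ (links of vertices of $\Delta$ inside $X_b$, together with the special pentagon structure) forces $\phi$ to preserve topological type; you cannot assume it a priori and fold it into a normalisation. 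Relatedly, you never show that local injectivity on $X_b$ forces $\phi$ to be a global embedding, which AL establish early and use throughout.

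Second, the propagation step. You correctly identify Lemma~\ref{specintunica} (or its AL analogue) as the mechanism reducing the ambiguity to a power of a half-twist $H_\beta^k$, and that $k$ must be pinned down ``up to the global reflection $r$'' using further vertices of $X_b$. But this is where you honestly flag an ``obstacle'' without resolving it: you need to exhibit an explicit scheme --- for each vertex $\alpha\in X_b\setminus\Delta$ --- a chain of already-determined vertices and special pentagons that constrain $\phi(\alpha)$ to exactly two possibilities, and then show that the $\pm$ choices made at different $\alpha$ are globally coherent (i.e., all forced by the single initial choice of orientation). Appealing to ``redundancy'' and ``a highly overlapping family of triangulations'' gestures at the idea but is not a proof. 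In AL this is handled by a careful induction with explicit auxiliary curves; your sketch would need to supply this content to close the argument.
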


Another important fact about $X_b$ is that every intersection is special, regardless of the ambient graph $\Gamma$ because it can be detected using only vertices that belong to $X_b$ (this fact should be compared with \cite[Lemma 3.2]{AL}). For example, the special intersection between $\alpha$ and $\beta$ in Figure \ref{fig:stella} is detected by the special pentagon spanned by $\alpha,\beta$, the simplex $R\subset X_b$ of codimension two and the three curves $x,y,z\in X_b$.

\begin{figure}[htp]
    \centering
    \includegraphics[width=0.75\textwidth]{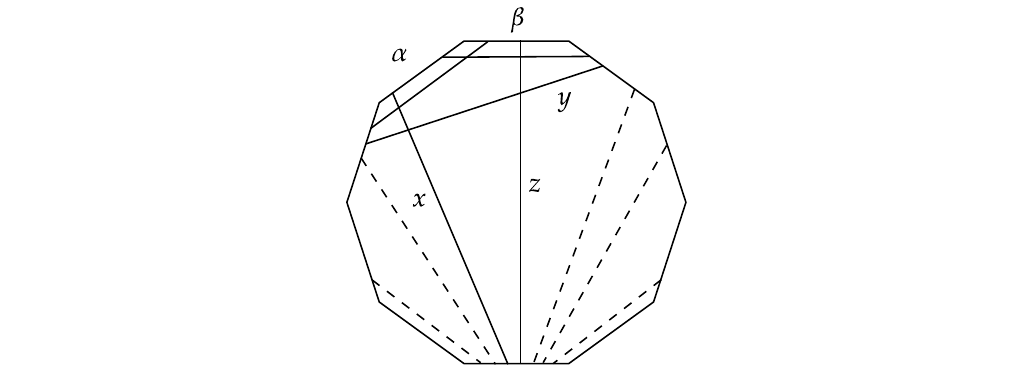}
    \caption{The dashed lines represent the codimension two simplex $R$. Any two intersecting curves of this "star" have special intersection, with respect to some facet that extends $R$.}
    \label{fig:stella}
\end{figure}

The next lemmas show how to construct some copy of $X_b$ starting from a given copy of $X_{b-1}$, and describe how much freedom one has to do so.

\begin{definition}\label{minimalcurve}
    If $b\ge6$, we say that a vertex $\beta$ in a copy of $X_b$ is \emph{minimal} if $\link_{X_b}(\beta)\cong X_{b-1}$.
\end{definition}

\begin{lemma}[Existence of an extension of $X_{b-1}$]\label{completion}
Let $b\ge6$. Every copy of $X_{b-1}$ inside $\C$ which is in the link of a vertex $\beta$ may be completed to a copy of $X_b$ that contains $\beta$. Moreover, if $\alpha$ has special intersection with $\beta$, with respect to some facet $P\subseteq X_{b-1}$, then we can choose $X_b$ to contain $\alpha$.
\end{lemma}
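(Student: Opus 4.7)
The plan is to reduce the problem to the standard Aramayona--Leininger model and then adjust by a half-twist around $\beta$. Let $X_b^{\mathrm{std}} \subset \C(S_b)$ be the standard copy coming from the regular $b$-gon, and let $\beta_0 \in X_b^{\mathrm{std}}$ be a minimal vertex realised by an arc connecting two sides of the $b$-gon separated by exactly one side. Then $\beta_0$ cuts off two punctures of $S_b$, its link in $\C(S_b)$ identifies (after capping the $(b-2)$-punctured side) with $\C(S_{0,b-1})$, and $X_{b-1}^{\mathrm{std}} := \link_{X_b^{\mathrm{std}}}(\beta_0)$ is a standard copy of the $(b-1)$-finite rigid set inside this link.

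For the first assertion, I would show that $\beta$ must be in the $MCG^\pm(S_b)$-orbit of $\beta_0$: since $X_{b-1}$ contains pairs of intersecting vertices (any chain will do), the copy $Y \subseteq \link(\beta)$ cannot split between the two complementary components of $\beta$, so it lies entirely in one component; and since $Y$ contains maximal simplices of size $b-4$, this component must cap to $S_{0,b-1}$, forcing $\beta$ to cut off exactly two punctures. Pick $h_0 \in MCG^\pm(S_b)$ with $h_0(\beta_0) = \beta$; then $h_0(X_{b-1}^{\mathrm{std}})$ and $Y$ are two copies of $X_{b-1}$ sitting in the same $\link(\beta) \cong \C(S_{0,b-1})$, so Theorem~\ref{finiterig} applied to this subsurface supplies a mapping class $g$ supported on the $(b-2)$-punctured side with $g\, h_0(X_{b-1}^{\mathrm{std}}) = Y$. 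Since $g$ fixes $\beta$, the composition $h := g h_0$ sends $\beta_0 \mapsto \beta$ and $X_{b-1}^{\mathrm{std}} \mapsto Y$, and $h(X_b^{\mathrm{std}})$ is the desired extension.

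For the moreover statement, I would exploit the half-twist $H_\beta$: its support lies in the $2$-punctured component of $\beta$, so $H_\beta$ fixes $\beta$ and acts trivially on $Y$. Hence $H_\beta^k \cdot h(X_b^{\mathrm{std}})$ is, for every $k \in \mathbb{Z}$, another extension of $Y$ through $\beta$. If one can produce some $\alpha' \in h(X_b^{\mathrm{std}})$ having special intersection with $\beta$ with respect to $P$, then by Lemma~\ref{specintunica} there exists $k$ with $\alpha = H_\beta^k(\alpha')$, and $H_\beta^k \cdot h(X_b^{\mathrm{std}})$ then contains $\alpha$. Via $h$, the existence of $\alpha'$ reduces to the combinatorial claim that for every facet $P^{\mathrm{std}}$ of $X_{b-1}^{\mathrm{std}} \subset \link(\beta_0)$ there exists a vertex of $X_b^{\mathrm{std}}$ forming, together with $\beta_0$ and $P^{\mathrm{std}}$, the special pentagon of Definition~\ref{specialPent}.

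The main obstacle will be this combinatorial verification in the standard model: for each facet $P^{\mathrm{std}}$ of $X_{b-1}^{\mathrm{std}}$, one must exhibit an explicit arc on the $b$-gon whose doubled curve lies in $X_b^{\mathrm{std}}$, completes $P^{\mathrm{std}}$ to a pants decomposition of $S_b$, and occupies the pentagon position with respect to $\beta_0$. This is a routine but careful inspection of the Aramayona--Leininger construction, relying on the fact that $X_b$ contains enough arcs near $\beta_0$ to realise the Farey-type alternatives available inside the $4$-holed sphere carved out by $P^{\mathrm{std}} \cup \{\beta_0\}$.
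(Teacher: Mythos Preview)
Your approach matches the paper's: construct some extension of $\beta\star X_{b-1}$ by transporting the standard model via finite rigidity (the paper calls this the change-of-coordinates principle, extending a mapping class of $S_{b-1}$ across the twice-punctured disk bounded by $\beta$), then adjust by a power of $H_\beta$ using Lemma~\ref{specintunica}. Your flagged ``main obstacle'' is lighter than you suggest: in the polygon model, the required $\alpha'$ is simply the diagonal obtained by flipping $\beta$ in the triangulation $\beta\cup P$, which is automatically a vertex of $X_b^{\mathrm{std}}$ and has intersection number $2$ with $\beta$---this is exactly what the paper means when it takes $\alpha'$ to be ``the curve in this copy that corresponds to $\alpha$''.
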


\begin{proof}
First we show that $\beta$ bounds a twice-punctured disk. To see this, notice that, by construction of the graph $X_{b-1}$, if two curves $x,y\in X_{b-1}$ are at distance $1$ then there exists a curve $z\in X_{b-1}$ which is at distance 2 from both. This fact still holds if we replace distances in $X_{b-1}$ with distances in the curve graph, since $X_{b-1}$ is isometrically embedded, and it translates to the fact that whenever two curves in $X_{b-1}$ are disjoint there is a third curve which intersects both. Hence the union of all curves of $X_{b-1}$ must lie on the same connected component of $S_b\setminus \beta$, which is a disk that we call $S'$. But then $S'$ contains $b-4$ pairwise disjoint curves (other than the boundary curve), and therefore must contain at least $b-2$ punctures. Thus the other connected component of $S_b\setminus \beta$ must be a disk with at most $2$ punctures, and since $\beta$ is an essential curve it cannot bound disks with less than two punctures.

Now, since $\beta$ bounds a twice-punctured disk it is easy to find some $X_b'$ which contains $\beta$ as a minimal curve. Let $X_{b-1}'=\link_{X_b'}(\beta)$, and consider the $S_{b-1}$ obtained by shrinking $\beta$ to a puncture $B$. By finite rigidity of $X_{b-1}$ there exists a mapping class $f\in MCG(S_{b-1})$ mapping $X_{b-1}'$ to $X_{b-1}$, which we may choose to be the identity in a neighbourhood of $B$, up to rotations of $S_{b-1}$ and isotopy. Therefore $f$ extends to a mapping class $F\in MCG(S_b)$ (for example, by setting $F$ to be the identity in a neighbourhood of the twice-punctured disk surrounded by $\beta$), and now $F(X_b')$ is a copy of $X_b$ that completes $\beta\star X_{b-1}$.

For the "moreover" part, choose a copy of $X_b$ that completes $\beta\star X_{b-1}$ and let $\alpha'$ be the curve in this copy that corresponds to $\alpha$ (i.e., the curve that has special intersection with $\beta$ with respect to the same $P$). By Lemma \ref{specintunica} and the discussion in Remark \ref{rem:hdt_in_s4} there is a suitable power $H_\beta^k$ of the half twist around $\beta$ that maps $\alpha$ to $\alpha'$. Since $H_\beta$ fixes $\beta\star X_{b-1}$ we may apply this mapping class to obtain the desired copy of $X_b$.
\end{proof}

\begin{lemma}[Uniqueness of the extension]\label{alfaiunica}
Let $b\ge 6$. In any copy of $X_b$ inside $\C$, let $X_{b-1}$ be the link of some minimal vertex $\beta$ and let $\alpha \in X_{b}$ be a curve intersecting $\beta$. Then for every other $z\in X_b$ that intersects $\beta$ there is a facet $P\subset X_{b-1}$ such that $z$ is the unique curve in $\C$ that belongs to $\link (P)\cap \link(\alpha)$. In other words, a copy of $X_{b}$ is uniquely determined by the star of a minimal vertex $\beta$ and one of the curves that intersect $\beta$.
\end{lemma}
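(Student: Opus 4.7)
The plan is to work in the explicit $b$-gon model of $X_b$ from \cite{AL}. By the change of coordinates principle we may assume $\beta$ is the double of the arc separating two adjacent punctures $v_1, v_2$ of the $b$-gon, so that the curves of $X_b$ intersecting $\beta$ are precisely the doubles of arcs from the unique side $s_2$ between $v_1$ and $v_2$ to the ``far'' sides $s_m$, for $m\in\{4,\ldots,b\}$. Any two such arcs can be drawn disjointly in the $b$-gon (their chord representatives share the endpoint side $s_2$ and hence do not interlink), so any two curves of $X_b$ intersecting $\beta$ are disjoint; in particular $\alpha$ and $z$ are disjoint curves in $S_b$, and it is meaningful to look for a facet $P$ such that both $z\in\link(P)\cap\link(\alpha)$ and this intersection is a singleton in $\C$.

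For each such $z\neq\alpha$ I construct $P$ as a maximal simplex of $X_{b-1}$, i.e.\ a pants decomposition of the $(b-1)$-punctured sphere $\link(\beta)$ realised by $b-4$ pairwise disjoint $X_{b-1}$-arcs triangulating the corresponding $(b-1)$-gon. The collection $P\cup\{\beta\}$ is then a pants decomposition of $S_b$, and cutting $S_b$ along $P$ alone produces a unique $4$-holed subsurface $\Sigma_P$ (obtained by gluing the small side of $\beta$ to the pair of pants on the big side that abuts $\beta$). I choose $P$ so that $\alpha$ meets $P$ only in two distinguished curves $\varepsilon_1,\varepsilon_2\in P$, each transversely twice, and so that the two corresponding arcs of $\alpha\cap\Sigma_P$ are parallel in $\Sigma_P$ and bound a middle annulus whose core is precisely $z$. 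Concretely, $\varepsilon_1$ and $\varepsilon_2$ are the $X_{b-1}$-arcs whose chords in the $(b-1)$-gon separate the endpoint $s_a$ of $\alpha$ from the endpoint $s_c$ of $z$ on the two sides of the cyclic ordering, while the remaining $b-6$ curves of $P$ are obtained by fanning out from sides of the $(b-1)$-gon that share an endpoint with the polygon-chord of $\alpha$, which prevents them from interlinking with $\alpha$.

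The uniqueness claim then follows from elementary topology: any simple closed curve of $\C$ disjoint from both $P$ and $\alpha$ must be supported in some component of $\Sigma_P\setminus(\alpha\cap\Sigma_P)$; the two ``side'' components are pairs of pants and contain no essential closed curves, so the only candidate is the core of the middle annulus, which is $z$. This gives $\link(P)\cap\link(\alpha)=\{z\}$ in $\C$, and reconstructing each $z$ in this way from $\alpha$ and the combinatorial data of $X_{b-1}$ yields the ``in other words'' statement that a copy of $X_b$ is determined by the star of a minimal vertex $\beta$ together with one curve intersecting $\beta$. The main obstacle is the combinatorial case analysis needed to verify that $\varepsilon_1$ and $\varepsilon_2$ can always be chosen so that $\alpha$ meets $P$ only in them and that the resulting pair of arcs is parallel in $\Sigma_P$ in the required $(2,2)$-separating pattern; this is an elementary but careful check in the cyclic arrangement of sides of the $b$-gon, and is where the specific geometry of the Aramayona--Leininger finite rigid set is used.
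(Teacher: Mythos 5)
Your high-level strategy (pin down $z$ by a well-chosen facet $P\subset X_{b-1}$ and a topological argument in the subsurface cut out by $P$) matches the paper's, and you read ``facet'' correctly: a maximal simplex of $X_{b-1}$ is a codimension-one simplex of $\C(S_b)$, which is what the Lemma means by $P$. But the specific construction you propose differs from the paper's, and the difference is exactly where you leave an acknowledged gap. The paper chooses $P$ so that $\alpha$ meets \emph{exactly one} curve $x\in P$; writing $R=P\setminus\{x\}$ (codimension two, i.e.\ $b-5$ curves), one cuts $S_b$ along $R$ to get a five-holed sphere $S_5$ containing $\alpha,x,z,\beta$ as essential curves, and uses that $\alpha$ and $x$ together cut $S_5$ into three once-punctured disks and one twice-punctured disk, whose boundary is the unique essential curve disjoint from both -- this is the same ``special intersection'' picture used throughout the paper (Figure~\ref{fig:alfaiunica}). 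You instead want $P$ with \emph{two} curves $\varepsilon_1,\varepsilon_2$ meeting $\alpha$ twice each, and argue inside the four-holed sphere $\Sigma_P$ that the two arcs $\alpha\cap\Sigma_P$ cobound a ``middle annulus'' with core $z$.

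The gap is the final clause of your proposal, which you flag yourself: you have not shown that $\varepsilon_1,\varepsilon_2$ can always be chosen so that $\alpha$ meets $P$ in only those two curves, nor that the two arcs of $\alpha\cap\Sigma_P$ land in the specific configuration whose complement has a unique essential curve. This is not a routine sanity check -- it is the whole content of the uniqueness statement. Note in particular that $\Sigma_P$ is a four-holed sphere some of whose boundary components are punctures of $S_b$ and some are curves of $P$, so the position of the two arcs of $\alpha$ relative to the remaining two boundary components of $\Sigma_P$ must be controlled: depending on that position, the region between the arcs can be a disk, a once-punctured disk, or a pair of pants, none of which contain an essential curve, so the ``middle annulus with core $z$'' picture is not automatic. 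Moreover $\beta\subset\Sigma_P$ crosses both arcs, and $z$ must also lie in $\Sigma_P$ disjoint from them, which adds further constraints you have not checked. The paper's one-crossing-curve choice sidesteps this: there $\alpha\transverse x$ with $i(\alpha,x)=2$ inside the five-holed sphere, and the complement of $\alpha\cup x$ has a canonical twice-punctured disk, with no case analysis on the position of auxiliary boundary components. I would recommend either filling in your case analysis explicitly (checking all relative cyclic positions of the sides $s_a$ and $s_c$ in the $b$-gon) or switching to the paper's choice of $P=R\star x$ with a single crossing curve, which makes the uniqueness a one-line topological observation.
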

\begin{proof}
Let $R\subset X_{b-1}$ be a simplex of codimension two such that $\alpha,z\in\link(R)$, and complete it to a codimension one simplex $P$ by adding some curve $x\in \link(\alpha_i)\cap X_{b-1}$ that intersects $\alpha$. For example, $x$ and $R$ can be chosen as in Figure \ref{fig:stella}. We need to show that if some curve $z'$ lies in $\link (P)\cap \link(\alpha)$ then it is unique. First notice that $z'$ must be an essential curve of the five-punctured sphere $S_5$ that $R$ cuts out (that is, it cannot surround only one puncture of $S_5$, or it would coincide with some curve in $R$). Moreover, we know that $\alpha$ and $x$ are both essential curves of $S_5$ and they have intersection number $2$, so we are in the situation depicted in Figure \ref{fig:alfaiunica}. Now, $\alpha$ and $x$ together cut $S_5$ in three once-punctured disk and a twice-punctured disk. Since $z'$ is essential in $S_5$ it must be the boundary of the twice-punctured disk, and thus it is unique.
\end{proof}

\begin{figure}[htp]
    \centering
    \includegraphics[width=0.75\textwidth]{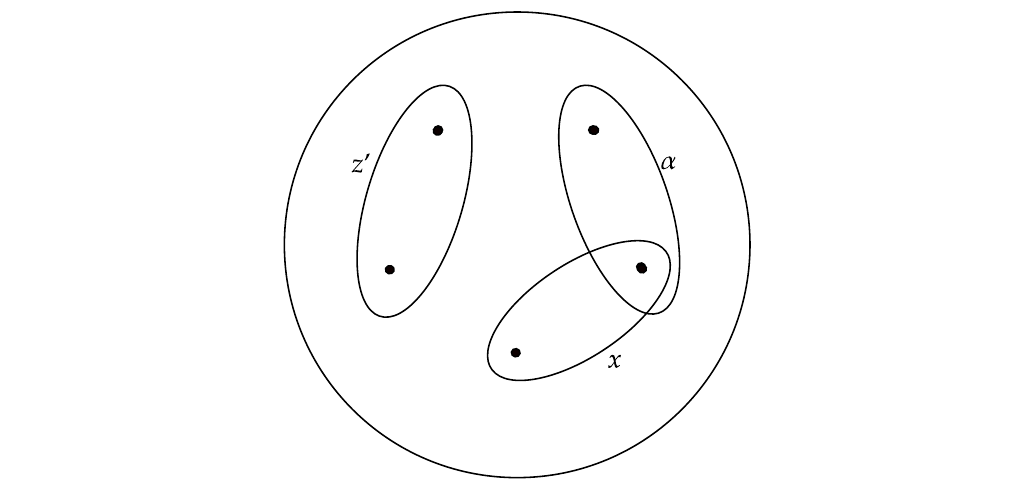}
    \caption{The five-punctured sphere that $R$ cuts out.}
    \label{fig:alfaiunica}
\end{figure}

\begin{remark}
When $b=5$ we cannot define minimal curves as in Definition \ref{minimalcurve}. Thus, with a slight abuse of notation, we will say that every curve in $X_5$ is minimal, because it bounds a twice-punctured disk. Moreover Lemma \ref{completion} is false: $\link_{X_5}(\beta)$ is made of two points, but if we take two vertices $\gamma,\gamma'\in\link_{\C_5}(\beta)$ with intersection number greater than $2$ we cannot complete them to a copy of $X_5$. However, we can run the proof of Lemma \ref{alfaiunica}, with $R=\emptyset$, to get the following:
\end{remark}

\begin{lemma}\label{5alfaiunica}
For $b=5$, if two copies $X_5, X_5'\subset \C_5$ share four curves then they must coincide.
\end{lemma}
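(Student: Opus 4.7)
The plan is to imitate the proof of Lemma~\ref{alfaiunica} with $R=\emptyset$, as anticipated in the preceding remark. A copy of $X_5$ is isomorphic to the pentagon: the five arcs joining non-adjacent sides of the regular $5$-gon pair up into a single $5$-cycle under disjointness, and since $X_5$ is a full subgraph of $\C_5$ the induced adjacencies among any subset of its vertices are determined by $\C_5$ alone.

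First I would observe that if two copies $X_5,\, X_5'\subset \C_5$ share four curves, those four curves span a common induced path $v_1-v_2-v_3-v_4$ in both copies, since removing any vertex from a $5$-cycle leaves a path on four vertices. The missing vertex of each copy, call them $v_5\in X_5$ and $v_5'\in X_5'$, must close the pentagon in its respective copy, so each is a vertex of $\C_5$ adjacent to both $v_1$ and $v_4$ and non-adjacent to $v_2,\,v_3$. It therefore suffices to show that such a vertex is unique.

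The crux of the argument is exactly the second half of the proof of Lemma~\ref{alfaiunica}. The vertices $v_1$ and $v_4$ are non-adjacent in the pentagon, hence they intersect; moreover the realisation of $X_5$ as a special pentagon (Definition~\ref{specialPent}, witnessed by the chain $v_3,\,v_1,\,v_4,\,v_2$ with $\varepsilon=v_5$) forces their geometric intersection number on $S_5$ to be $2$. Setting $\alpha=v_1$, $x=v_4$ and $R=\emptyset$, the two curves together cut $S_5$ into three once-punctured disks and one twice-punctured disk, so any essential simple closed curve on $S_5$ disjoint from both must bound the twice-punctured disk and is therefore unique. This gives $v_5=v_5'$, so $X_5=X_5'$.

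The main subtlety, and the only reason for a separate statement at $b=5$, is the absence of minimal vertices (Definition~\ref{minimalcurve}) in $X_5$, which prevents the direct invocation of Lemma~\ref{alfaiunica} with non-empty $R$. Taking $R=\emptyset$ bypasses this because $\C_5$ is one-dimensional, so the ``five-punctured sphere cut out by $R$'' is simply $S_5$ itself.
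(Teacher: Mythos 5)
Your argument is correct and is precisely what the paper intends: it fills in the details of running the proof of Lemma~\ref{alfaiunica} with $R=\emptyset$, as the remark preceding the lemma prescribes. The key points — that the four shared curves span a common induced $4$-path whose endpoints have special intersection (hence geometric intersection number $2$), and that such a pair cuts $S_5$ into three once-punctured disks and one twice-punctured disk whose boundary is the unique essential curve disjoint from both — are all identified correctly.
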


\section{Projection lemmas}\label{sec:proj}
In this section we gather various results about the nature of the projection map $\pi:\C\to\C/\mathcal{N}$. We start by showing that the restriction of $\pi$ to the curve graph of a subsurface is what one would expect.
\begin{lemma}\label{Cu0}
Let $U$ be a subsurface of $S$. Let $\N$ be a deep enough subgroup, and let $\N(U)$ be the subgroup of $\N$ generated by the elements with support in $U$. If $x,x'$ are curves on $U$ and there exists an element $g\in \N$ mapping $x$ to $x'$ then we may find another element $h\in \N(U)$ mapping $x$ to $x'$. 
\end{lemma}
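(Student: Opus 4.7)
The plan is to argue by induction on the complexity $\alpha(g)$ supplied by Proposition \ref{cor3.6}, which is well-defined on $\N$ in view of Remark \ref{gendatwist}. The base case is $\alpha(g)$ minimal: then $g=1$, so $x=x'$ and $h=1\in\N(U)$ works. For the inductive step I would apply Proposition \ref{cor3.6} to the pair $(x,g)$, with $\Theta$ chosen at least $2B$ where $B$ is the constant of the Bounded Geodesic Image Theorem \ref{bgit}. This produces $s\in\C$ and a power $\gamma_s\in\N$ of $T_s$ satisfying $\alpha(\gamma_s g)<\alpha(g)$ and the dichotomy $i(x,s)=0$ or $d_s(x,x')>\Theta$.

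I would then split into cases according to the position of $s$ relative to $U$. If $s$ is isotopic into $U$, then $\gamma_s$ has support in $U$, so $\gamma_s\in\N(U)$; the element $\gamma_s g\in\N$ has strictly smaller complexity and sends $x$ to $\gamma_s(x')\in U$, so the inductive hypothesis furnishes $h'\in\N(U)$ with $h'(x)=\gamma_s(x')$, and $h:=\gamma_s^{-1}h'\in\N(U)$ sends $x$ to $x'$. If instead $s$ can be isotoped to be disjoint from $U$, then $\gamma_s$ acts trivially on every curve of $U$; in particular $\gamma_s(x')=x'$, so $\gamma_s g$ still sends $x$ to $x'$ with strictly smaller complexity and induction applies directly.

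The main obstacle is the remaining case, in which $s$ essentially crosses $\partial U$: then $\gamma_s$ generically neither preserves $U$ nor fixes curves of $U$, and the naive induction breaks down. To bypass this I would exploit the flexibility in Proposition \ref{cor3.6}, whose first coordinate may be taken to be any vertex of $\C$: by applying it with an auxiliary curve $x_0\subset U$ chosen to intersect every essential arc of $s\cap U$, the dichotomy should force the newly returned cutting curve $s'$ to be contained in $U$ or disjoint from it, bringing us back to the previous two cases. The large-projection branch $d_s(x,x')>\Theta$ with $s$ crossing $\partial U$ should be ruled out by the Bounded Geodesic Image Theorem \ref{bgit}, using that $x$ and $x'$ both lie in $\C(U)$ and so admit a short connecting path that avoids $\mathrm{star}(s)$. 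I expect the most delicate point to be making the choice of auxiliary curve rigorous, since $x_0$ must be tailored to the a priori unknown output $s$ of Proposition \ref{cor3.6}; likely the cleanest route is a secondary induction on the geometric intersection of $s$ with $\partial U$, bootstrapping each successive application of Proposition \ref{cor3.6} until the returned curve lies in the good region.
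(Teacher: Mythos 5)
Your outline agrees with the paper's on the main skeleton --- induction on the complexity $\alpha(g)$ from Proposition \ref{cor3.6} with base case $g=1$ --- but the case analysis diverges in a way that leaves a real gap, and the proposed workaround for the hard case does not go through.

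In the branch $i(x,s)=0$ you try to case-split on whether $s$ lies in $U$, is disjoint from $U$, or crosses $\partial U$, and declare the last case an obstacle. But no such case split is needed here, and in fact trying to make it is what creates the obstacle. The correct move is to apply $\gamma_s$ simultaneously to $x$ and $x'$ (and hence implicitly to $U$), so that $\gamma_s g$, which has strictly smaller complexity, sends $\gamma_s(x)$ to $\gamma_s(x')$ inside $\gamma_s(U)$. The inductive hypothesis then yields some $h\in\N(\gamma_s(U))$ with $h(\gamma_s x)=\gamma_s x'$, and since conjugation by $\gamma_s$ carries $\N(\gamma_s(U))$ to $\N(U)$, the element $\gamma_s^{-1}h\gamma_s\in\N(U)$ does the job. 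This conjugation trick removes any need to know where $s$ sits relative to $U$ in the first branch.

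In the branch $d_s(x,x')>\Theta$ your idea of ruling out $s$ crossing $\partial U$ via BGI and ``a short connecting path between $x$ and $x'$ that avoids $\mathrm{star}(s)$'' does not work: $x$ and $x'$ are both curves on $U$, but that does not make them close in $\C(S)$, and there is no reason a geodesic between them avoids the star of $s$. The paper's device is to fix once and for all a pants decomposition $\Delta$ of $S\setminus U$ (including $\partial U$). Every curve of $\Delta$ is disjoint from both $x$ and $x'$, so running the trianglelift-style BGI argument against each $\delta\in\Delta$ shows $\gamma_s$ fixes $\Delta$ pointwise whenever $\N$ is deep enough. Since $\gamma_s$ is a power of $T_s$, this forces $s$ disjoint from $\Delta$, hence $s\subset U$ (as $s\in\Delta$ is excluded by $i(x,s)\neq 0$), so $\gamma_s\in\N(U)$ and you apply it to $x'$ and recurse. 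Your proposal to re-run Proposition \ref{cor3.6} with a tailored auxiliary curve $x_0$ is not viable as stated: feeding a different base point into the proposition produces a different $(s',\gamma_{s'})$ with no control on its relation to $U$, and more importantly you lose the information relating $x$ to $x'=g(x)$, which is the whole content of the dichotomy.
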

\begin{proof}
    Let $\Delta$ be a pants decomposition of the complement of $U$, including its boundary curves. We proceed by induction on the complexity of $g$. If $g$ is the identity then we are done; otherwise let $(s,\gamma_s)$ be as in Proposition \ref{cor3.6}.\\
    If $d_{\C}(x,s)\le1$ then we can apply $\gamma_s$ to both curves, in order to reduce the complexity of $g$. Then by inductive hypothesis we can find some $h\in \N(\gamma_s(U))$ mapping $\gamma_s(x)$ to $\gamma_s(x')$, and therefore $\gamma_s^{-1} h\gamma_s$ maps $x$ to $x'$ and belongs to $\N(U)$ by properties of Dehn twists under conjugation.\\
    Otherwise $d_s(x,x')>\Theta$. Arguing exactly as in the proof of Lemma \ref{edgelift} we get that $\gamma_s$ fixes $\Delta$ pointwise whenever $\N$ is deep enough. Now recall that, by construction, $\gamma_s\in \langle T^{N}_s\rangle$, where $T_s$ is the Dehn twist around $s$. Then $s$ must be disjoint from all curves in $\Delta$, which means that either $s\in \Delta$ (which is impossible, since otherwise $\gamma_s$ would fix $x$) or $s\subseteq U$. Therefore $\gamma_s\in \N(U)$, and if we apply it to $x'$ we can conclude by induction.
\end{proof}
\begin{corollary}\label{CUdt}
    Let $U$ be a connected subsurface of $S$ of complexity at least $2$, and let $\N$ be a deep enough subgroup. If $\pi:\,\C(S)\to\C(S)/\N$ is the quotient projection then $\pi(\C(U))\cong \C(U)/\N(U)$.
\end{corollary}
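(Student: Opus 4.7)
The plan is to construct the claimed isomorphism as a factoring of $\pi|_{\C(U)}$. Since $\N(U)\le \N$, the simplicial map $\pi|_{\C(U)}:\C(U)\to\C(S)/\N$ is constant on $\N(U)$-orbits and so descends to a well-defined simplicial map
\[
\phi:\C(U)/\N(U)\longrightarrow \pi(\C(U))\subseteq\C(S)/\N,\qquad \N(U)\cdot x\longmapsto \N\cdot x.
\]
By construction $\phi$ is surjective onto the vertex set of $\pi(\C(U))$, and it sends edges to edges since disjoint representatives in $\C(U)$ remain disjoint in $\C(S)$, producing an edge in $\C(S)/\N$ whose lift to $\C(U)$ witnesses that the edge lies in $\pi(\C(U))$.

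The key step is vertex injectivity of $\phi$, which is exactly the content of Lemma \ref{Cu0}: if $\N\cdot x=\N\cdot y$ for $x,y\in\C(U)$, then some $g\in\N$ satisfies $g(x)=y$, and the lemma upgrades $g$ to some $h\in\N(U)$ with $h(x)=y$. Hence $\N(U)\cdot x=\N(U)\cdot y$. Given vertex injectivity, injectivity on edges is formal: two distinct edges of $\C(U)/\N(U)$ have distinct (unordered) endpoint pairs, hence distinct image pairs upstairs. Combined with the surjectivity observations of the first paragraph, this already yields the graph isomorphism when $\pi(\C(U))$ is read as the simplicial image of $\C(U)$.

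If instead $\pi(\C(U))$ is read as the subgraph of $\C(S)/\N$ \emph{induced} on $\pi(V(\C(U)))$, one additional step is needed: given $x,y\in\C(U)$ and $g\in\N$ with $i(x,g(y))=0$, one must produce some $h\in\N(U)$ with $i(x,h(y))=0$. I would prove this by induction on $\alpha(g)$ in the style of Lemma \ref{Cu0}. Apply Proposition \ref{cor3.6} to the curve $x$ and element $g$, obtaining $s$ and $\gamma_s\in\N$: in the case $i(x,s)=0$ the element $\gamma_s$ fixes $x$, so $i(x,\gamma_s g(y))=i(\gamma_s^{-1}x,g(y))=0$, and the inductive hypothesis applies to $\gamma_s g$; in the case $d_s(x,g(x))>\Theta$, I would replicate the triangle-lift argument from Lemma \ref{trianglelift} along the path $x-\delta-g(x)$ for $\delta$ in a pants decomposition of $S\setminus U$, concluding as in Lemma \ref{Cu0} that $s\subseteq U$ and hence $\gamma_s\in\N(U)$. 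The principal obstacle is this last case: the Cu0-style triangle argument exploits that both $x$ and $g(x)$ lie in $U$, whereas here the asymmetric hypothesis $i(x,g(y))=0$ gives direct control only of $x$, so some extra care is needed (choosing the curve on which Proposition \ref{cor3.6} is applied, and tracking which of $\gamma_s(x)$, $\gamma_s(g(y))$ is still in $U$) to close the induction.
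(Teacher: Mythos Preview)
Your core argument matches the paper's: define $\phi$ by sending $\N(U)$-orbits to $\N$-orbits, get surjectivity tautologically, and get vertex injectivity from Lemma~\ref{Cu0}. The paper compresses all of this into one sentence, using ``complexity at least $2$'' exactly as you do, to ensure that adjacency in $\C(U)$ means disjointness.

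Your observation about the two readings of $\pi(\C(U))$ is genuinely sharp and not addressed by the paper's one-line proof. The applications (e.g.\ transporting $\ov X_{b-1}$ into $\C_{b-1}/\N(S_{b-1})$ in Section~\ref{sec:lifting}) really do require the \emph{induced} subgraph version: one needs that an edge of $\C(S)/\N$ between two classes coming from $U$ is already an edge in $\C(U)/\N(U)$.

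However, your proposed Cu0-style induction for that step has the gap you yourself flag, and it does not close. When you apply Proposition~\ref{cor3.6} to $x$ and $g$, the large-projection alternative gives $d_s(x,g(x))>\Theta$; to run the cut-set argument you would need $g(x)$ (or $g(y)$) disjoint from the pants decomposition $\Delta$ of $S\setminus U$, and there is no reason for that. The asymmetry is real, not just a matter of care.

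There is a much cleaner route that avoids this entirely. Extend $\partial U$ to a pants decomposition $\Delta$ of $S\setminus U$, so that $\link_{\C(S)}(\Delta)=\C(U)$ exactly. If $\bar x,\bar y\in\pi(\C(U))$ are adjacent in $\C(S)/\N$, then the edge $\{\bar x,\bar y\}$ is a geodesic of length~$1$ in $\link(\bar\Delta)$. By the second bullet of Theorem~\ref{mgonlift} it lifts to a geodesic in $\link(\Delta)=\C(U)$, i.e.\ to disjoint curves $x',y'\in\C(U)$ with $\pi(x')=\bar x$, $\pi(y')=\bar y$. Now Lemma~\ref{Cu0} gives $x'\in\N(U)\cdot x$ and $y'\in\N(U)\cdot y$, so $\bar x,\bar y$ are adjacent in $\C(U)/\N(U)$. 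This establishes the induced-subgraph reading without any new induction.
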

\begin{proof}
    By Lemma \ref{Cu0} these graphs have the same vertices, and since $U$ has complexity at least $2$ the adjacency relation is having disjoint lifts in both cases. 
\end{proof}

\subsection{Isometric projections}
We move on to show that, whenever $\N$ is deep enough, the projection is an isometry on a variety of subgraphs of the curve graph. All the result of this subsection rely on the following refinement of \cite[Lemma 4.4]{dfdt}, which roughly speaking says that the projection $\pi:\C\to\C/\mathcal{N}$ preserves directions with small projections, in the link of some fixed simplex:
\begin{lemma}\label{projinlink}
Fix $\Theta>0$. Let $\N$ be deep enough and let $\overline\Delta\subseteq \C/\N$ be a simplex and $\Delta\subseteq X$ be one of its lifts. Suppose that $x, y \in \link(\Delta)$ project to $\overline{x}, \overline{y}\in \link(\overline\Delta)$ and have the property that $d_s(x, y) <\Theta$ for every $s\in\C$ for which the quantity is defined. Then $\pi|_{[x,y]}$ is an isometric embedding into $\link(\overline\Delta)$, for any geodesic $[x, y]\subseteq \link(\Delta)$.
\end{lemma}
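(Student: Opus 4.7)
Since $\pi$ is simplicial it is $1$-Lipschitz, hence $d_{\link(\ov\Delta)}(\ov x,\ov y)\le d_{\link(\Delta)}(x,y)$, and it remains to establish the reverse inequality. Let $\ov\sigma\subseteq\link(\ov\Delta)$ be a geodesic of length $m:=d_{\link(\ov\Delta)}(\ov x,\ov y)$. The plan is to lift $\ov\sigma$ to a path of length $m$ from $x$ to $y$ inside $\link(\Delta)$, which will give $d_{\link(\Delta)}(x,y)\le m$ and thus equality.

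By the second bullet of Theorem~\ref{mgonlift} we lift $\ov\sigma$ to a geodesic $\sigma$ inside $\link(\Delta)$, with endpoints $x_0,y_0$ projecting to $\ov x,\ov y$ respectively. The first bullet of Theorem~\ref{mgonlift} applied to the ordered simplex $(\ov\Delta,\ov x)$ yields $g\in\N$ fixing $\Delta$ pointwise with $g x_0=x$; replacing $\sigma$ by $g\sigma$ we may assume $x_0=x$. Again by the first bullet, applied to $(\ov\Delta,\ov y)$, there is $h\in\N$ fixing $\Delta$ pointwise with $hy_0=y$, so it suffices to show $h=1$.

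We argue by induction on the complexity $\alpha(h)\in\mathcal{O}$ coming from Proposition~\ref{cor3.6}. Assume $h\ne 1$, and apply Proposition~\ref{cor3.6} to $h$ with input $y_0$: we obtain $s\in\C$ and a power $\gamma_s\in\N$ of the Dehn twist about $s$ such that $\alpha(\gamma_s h)<\alpha(h)$ and either $i(y_0,s)=0$ or $d_s(y_0,y)=d_s(y_0,hy_0)>\Theta$. The second alternative is to be ruled out using the hypothesis $d_s(x,y)<\Theta$: by the Behrstock-type triangle inequality for subsurface projections, $d_s(x,y)\ge d_s(y_0,y)-d_s(x,y_0)-C$, and an application of the Bounded Geodesic Image Theorem~\ref{bgit} to the geodesic $\sigma$ bounds $d_s(x,y_0)$ in terms of $B$ (otherwise $\sigma$ meets the star of $s$, which after choosing $\Theta$ much larger than $B$ and $C$ is absorbed into the contradiction). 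In the first case, a BGT argument mirroring the proof of Lemma~\ref{trianglelift} forces $s$ to be disjoint from every vertex of $\Delta\cup\{x\}$, so that $\gamma_s$ fixes $\Delta\cup\{x\}$ pointwise. We then replace $y$ by $\gamma_s y$ and $h$ by $\gamma_s h$: the hypothesis $d_t(x,\gamma_s y)<\Theta$ propagates via equivariance of subsurface projections (using $\gamma_sx=x$), while $\gamma_s h$ still fixes $\Delta$ and has strictly smaller complexity. By induction we obtain a path of length $m$ from $x$ to $\gamma_s y$ in $\link(\Delta)$, and applying $\gamma_s^{-1}$ (which fixes both $\Delta$ and $x$) yields the desired path from $x$ to $y$.

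The main obstacle is the quantitative calibration of the constants: the $\Theta$ in the hypothesis, the $\Theta$ used to invoke Proposition~\ref{cor3.6}, the BGT constant $B$ and the Behrstock constant $C$ must all be balanced so that the alternative $d_s(y_0,y)>\Theta$ genuinely contradicts $d_s(x,y)<\Theta$. A secondary technical point is ensuring $i(x,s)=0$ (and not only $i(y_0,s)=0$), which is what makes the BGT-based arguments go through both for the reduction to the simpler case and for the "undoing" step at the end of the induction.
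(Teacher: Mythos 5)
Your high-level plan is sound: reduce, via the $1$-Lipschitz observation, to producing a length-$m$ path from $x$ to $y$ inside $\link(\Delta)$, and obtain it by a complexity induction on the element of $\N$ that carries the lifted geodesic onto the ``correct'' endpoint. But the execution has a structural problem that breaks both branches of the case analysis.

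You apply Proposition~\ref{cor3.6} to $h$ with input $y_0$, where $hy_0=y$. This is the wrong direction. The reduction step you will eventually need in the ``large projection'' case is the cut-and-twist move: apply $\gamma_s$ to only the portion of $\sigma$ lying beyond a vertex in the star of $s$, changing the lifted endpoint $y_0$ to $\gamma_s y_0$ while leaving $x$, $\Delta$, and the original geodesic $[x,y]$ untouched. The new element sending $\gamma_s y_0$ to $y$ is then $h\gamma_s^{-1}$, and Proposition~\ref{cor3.6} tells you nothing about $\alpha(h\gamma_s^{-1})$ --- it only controls $\alpha(\gamma_s h)$. The application must instead be to $h^{-1}$ (which sends $y$ to $y_0$) with input $y$; then $\gamma_s h^{-1}$ is precisely what carries $y$ to the new lifted endpoint $\gamma_s y_0$, and its complexity drops. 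This is the exact analogue of the paper's choice of $\gamma$ with $\gamma x = x'$.

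This misalignment causes two concrete errors. In the case $i(y_0,s)=0$ you assert that a BGT argument forces $s$ to be disjoint from every vertex of $\Delta\cup\{x\}$, so that $\gamma_s$ fixes $\Delta\cup\{x\}$ pointwise. There is no mechanism for this: in this branch there is no large subsurface projection, so the argument of Lemma~\ref{trianglelift} (which requires $d_s>\Theta$ somewhere) is unavailable, and indeed $s$ may cross $x$ or curves of $\Delta$. The correct move is to translate the \emph{entire} data $(\Delta,[x,y],\sigma)$ by $\gamma_s$, which preserves the hypothesis $d_t(x,y)<\Theta$ by equivariance of annular projections and reduces complexity. In the case $d_s(y_0,y)>\Theta$, you try to reach a contradiction, but none exists: if $\sigma$ meets the star of $s$ --- which is exactly what can happen --- then $d_s(x,y_0)$ is not bounded by $B$, and your ``absorbed into the contradiction'' phrase does not correspond to an actual argument. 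What this case gives you is not a contradiction but a \emph{reduction}: $\gamma_s$ fixes $\Delta$ pointwise (here the trianglelift-style BGI argument does apply, since $d_s$ is large); then either $x\in\operatorname{star}(s)$ or the hypothesis forces $d_s(x,y_0)$ to be large so $\sigma$ meets $\operatorname{star}(s)$ by BGI; and in either case one cuts $\sigma$ at a vertex $s'\in\operatorname{star}(s)$ and twists the portion beyond $s'$ by $\gamma_s$, obtaining a new geodesic lift of $\ov\sigma$ in $\link(\Delta)$ with endpoint $\gamma_s y_0$, at which point the (correctly set-up) induction applies.

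Finally, a minor point: you write ``so it suffices to show $h=1$,'' which is strictly stronger than what is needed or achievable, and is not in fact what your induction proves; the inductive statement should be that $d_{\link(\Delta)}(x,y_0)=d_{\link(\Delta)}(x,y)$.
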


\begin{proof}
The proof is very similar to that of \cite[Lemma 4.4]{dfdt}. Suppose by contradiction that there is a shorter path from $\overline{x}$ to $\overline{y}$ inside $\link(\overline\Delta)$. Lift this shorter path as a geodesic segment $[y, x']\subseteq \link(\Delta)$, which can be done by Lemma \ref{mgonlift}. There exists $\gamma \in \N$ such that $\gamma x = x'$. We proceed by induction on the complexity of $\gamma$, to prove that $d_{\link(\Delta)}(y, x') = d_{\link(\Delta)}(y, x)$ (thus contradicting that $d_{\link(\overline\Delta)}(\overline x,\overline y) < d_{\link(\Delta)}(x, y)$). If $\gamma= 1$ we are done, otherwise let $(s, \gamma_s)$ be as in \ref{cor3.6}. If $d_{\C}(x,s)\le1$ then $\gamma_s x = x$, and we can apply $\gamma_s$ to both geodesics and to $\Delta$ and conclude by the induction hypothesis. Otherwise $d_s(x, \gamma x) > \Theta$, and arguing as in Lemma \ref{edgelift} we see that $\gamma_s$ must fix $\Delta$ pointwise. Moreover, either $d_\C(y,s)\le1$ or $d_s(y,x')\ge d_s(x,x')-d_s(x,y)$ must be large, since $d_s(x, y)$ is assumed to be small. In both cases there must be some $s' \in [y, x']$ in the star of $s$. Thus, one can change the lift of $[y, x]$ as $[y, \gamma_s x']$, while keeping it an isometric lift inside the link of $\Delta$. One concludes by induction hypothesis, which applies to $\gamma_s\gamma$.
\end{proof}

The previous lemma is particularly useful in the following form:
\begin{corollary}\label{projV}
    For every finite set of vertices $V\subseteq \C$ there exists $\Theta$ such that, whenever $\N$ is deep enough with respect to $\Theta$, the projection is an isometry on $V$.
\end{corollary}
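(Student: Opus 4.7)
The plan is to apply Lemma \ref{projinlink} with the empty simplex $\ov\Delta = \emptyset$, whose link is all of $\C/\N$ (with empty lift $\Delta$, whose link is all of $\C$). With this choice, an isometric embedding $\pi|_{[x,y]} \to \link(\ov\Delta) = \C/\N$ gives exactly $d_{\C/\N}(\pi(x), \pi(y)) = d_\C(x,y)$, which is precisely what ``isometry on $V$'' requires. The inductive proof of Lemma \ref{projinlink} goes through unchanged in this case, since the only role of $\Delta$ in that argument (forcing $\gamma_s$ to fix $\Delta$ pointwise) is vacuous for $\Delta = \emptyset$.

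To set up the application, I need a finite constant $\Theta_V$ with $d_s(x,y) < \Theta_V$ for every pair $x,y \in V$ and every curve $s$ where the annular projection is defined. For fixed curves $x, y$, the supremum $\sup_s d_s(x, y)$ is finite: by a standard consequence of the Masur--Minsky machinery (in particular, finiteness of large subsurface projections, closely related to Behrstock's inequality), only finitely many subsurfaces can have projection distance above any large fixed threshold, and each such value is itself finite. Since $V$ is finite, taking the maximum of these finite suprema over the finite collection of pairs in $V$ produces the desired $\Theta_V$.

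Once $\Theta > \Theta_V$ is chosen and $\N$ is deep enough with respect to $\Theta$, the hypotheses of Lemma \ref{projinlink} hold for every pair $x,y \in V$, yielding $d_{\C/\N}(\pi(x), \pi(y)) = d_\C(x,y)$ and hence the desired isometry on $V$. Injectivity of $\pi|_V$ is automatic from distance preservation on distinct pairs. The main (and essentially only) content beyond a direct appeal to the preceding results is the finiteness of $\sup_s d_s(x,y)$ for fixed curves; this is standard in the theory of curve graphs but is the one step that draws on substantive input from the literature rather than from the earlier material in the paper.
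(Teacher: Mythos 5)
Your approach is essentially the same as the paper's: both apply Lemma~\ref{projinlink} with the empty simplex, reducing the claim to the finiteness of $\sup_{s\in\C} d_s(x,y)$ for each fixed pair $x,y\in V$. Where you cite this finiteness as standard, the paper actually proves it: complete $x,y$ to clean markings $\mu,\nu$, apply the Masur--Minsky distance formula to conclude that only finitely many $s$ have $d_s(\mu,\nu)$ above the threshold and that each such term is finite, then use the triangle inequality and the $2$-Lipschitz property of annular projections to get $d_s(x,y)\le d_s(\mu,\nu)+4$. One small correction: Behrstock's inequality is not quite the right reference here --- it compares projections of a single marking to two \emph{overlapping} subsurfaces, whereas what you need is the bound on the number of large projections of a fixed pair, which is more directly a consequence of the distance formula as the paper uses it.
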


\begin{proof}
    Since $V$ is finite, we only need to show that for every $x,z\in \C$ there is some constant $M(x,z)$ such that $\sup_{s\in \C} d_s(x,z)<M(x,z)$, because then we can choose $\Theta> M=\max_{x,z\in V} M(x,z)$ to ensure that the hypothesis of Lemma \ref{projinlink} applies. One way to see this is to complete $x,z$ to complete clean markings $\mu,\nu$, in the sense of \cite[Section 2.5]{MM}. By the Distance formula \cite[Theorem 6.12]{MM} there exists a constant $M'(S)$ such that the sum $\sum_{s\in \C,\,d_s(\mu,\nu) > M'} d_s(\mu,\nu)$ is bounded above in terms of the distance between $\mu$ and $\nu$ in the marking graph. In particular the sum is finite, and since every term is greater than a constant there must be a finite number of terms.
    Moreover, for every $s\in \C$ for which the quantity $d_s(x,y)$ is defined (in particular, not a base curve of $\mu$ or $\nu$), let $x'\in \mu$ and $y'\in \nu$ be curves which realise $d_s(\mu,\nu)$. Then by triangle inequality
    $$d_s(x,y)\le d_s(x,x')+d_s(x',y')+d_s(y',y)=d_s(x,x')+d_s(\mu,\nu)+d_s(y',y)$$
    and since annular projections are $2$-Lipschitz (see \cite[Lemma 2.3]{MM}) we have that
    $$d_s(x,y)\le d_s(\mu,\nu)+4$$
    Thus it suffices to choose $M(x,z)> \max_{s\in \C,\,d_s(\mu,\nu) > M'} d_s(\mu,\nu)+4$.
\end{proof}

Now we want to apply Corollary \ref{projV} to the rigid set.
\begin{definition}\label{Y}
Given a copy of $X_b\subset \C$ for $b\ge5$, set 
$$Y_b=X_b\bigcup_{\beta\in X_b\,minimal} H_\beta^{\pm1}(X_b).$$
\end{definition}

\begin{lemma}\label{Mbound}
For every $b\ge5$ there exists $M\in\mathbb{R}^+$ with the following property. Given a copy $X_b\subset \C$ set $Y_b$ as in Definition \ref{Y}. Then for every $x,z\in Y_b$ and every $s\in \C$ we have that $ d_s(x,z)\le M$.
\end{lemma}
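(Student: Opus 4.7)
\textbf{Proof plan for Lemma \ref{Mbound}.} The strategy is to reduce the statement to a single reference copy of $X_b$ by exploiting the $MCG^\pm(S_b)$-equivariance of annular projections, and then to bound each pairwise supremum by means of the argument already used in the proof of Corollary \ref{projV}.

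First, I would fix a reference copy $X_b^{(0)}\subseteq \C$ and the associated finite set of curves $Y_b^{(0)}$, as in Definition \ref{Y}. For each pair $x,z\in Y_b^{(0)}$, the argument given inside the proof of Corollary \ref{projV} (complete $x$ and $z$ to clean markings $\mu,\nu$, apply the distance formula of \cite{MM} to bound $\sum_{s\colon d_s(\mu,\nu)>M'}d_s(\mu,\nu)$, and then use $d_s(x,z)\le d_s(\mu,\nu)+4$ coming from the $2$-Lipschitzness of annular projections) produces a finite bound $M(x,z):=\sup_{s\in\C} d_s(x,z)$. Since $Y_b^{(0)}$ is a finite set of vertices, I would then set
\[
M:=\max_{x,z\in Y_b^{(0)}} M(x,z),
\]
which is finite.

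Next, given any other copy $X_b\subseteq\C$, I would invoke the change of coordinates principle to find $f\in MCG^\pm(S_b)$ with $f(X_b^{(0)})=X_b$. The key point here is to verify that $f(Y_b^{(0)})=Y_b$: since minimality is the combinatorial property of Definition \ref{minimalcurve}, it is preserved by the simplicial map $f$, so $f$ sends minimal vertices of $X_b^{(0)}$ to minimal vertices of $X_b$; moreover the usual conjugation formula $fH_\beta^{\pm 1}f^{-1}=H_{f(\beta)}^{\pm 1}$ (with the sign depending on whether $f$ preserves or reverses orientation) gives $f(H_\beta^{\pm 1}(X_b^{(0)}))=H_{f(\beta)}^{\pm 1}(X_b)$, and taking the union over minimal $\beta$ yields the claim. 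Then for every $x,z\in Y_b$ and every $s\in \C$, writing $x=f(x_0)$, $z=f(z_0)$ with $x_0,z_0\in Y_b^{(0)}$ and using that annular projections are $MCG^\pm$-equivariant, one has
\[
d_s(x,z)=d_{f^{-1}(s)}(x_0,z_0)\le M(x_0,z_0)\le M,
\]
as required.

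The only real obstacle is the bookkeeping verification that the half-twist construction in Definition \ref{Y} is $MCG^\pm$-equivariant in the sense needed, i.e.\ that $f(Y_b^{(0)})=Y_b$; but this is standard once one recalls how Dehn/half twists transform under conjugation by a (possibly orientation-reversing) mapping class. Everything else is a direct appeal to the already-established argument of Corollary \ref{projV} applied to each of the finitely many pairs coming from the fixed reference set $Y_b^{(0)}$.
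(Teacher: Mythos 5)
Your proposal is correct and follows essentially the same route as the paper: bound the finitely many pairwise suprema on a reference $Y_b^{(0)}$ via the marking-graph/distance-formula argument from Corollary \ref{projV}, then transport the bound to an arbitrary copy by $MCG^\pm$-equivariance of annular projections after checking $f(Y_b^{(0)})=Y_b$. The one small inaccuracy is the appeal to the ``change of coordinates principle'' for the existence of $f$ with $f(X_b^{(0)})=X_b$: what you actually need here is the finite rigidity Theorem \ref{finiterig} (Aramayona--Leininger), since a copy of $X_b$ is by definition the image of an isometric embedding and it is precisely that theorem which promotes such an embedding to a mapping class.
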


\begin{proof}
Fix a copy $X_b$ and let $M=\max_{x,z\in Y_b} M(x,z)$, where $M(x,z)$ is defined as in Corollary \ref{projV}. We are left to prove that, if $X_b'$ is another copy of the rigid set and $Y_b'$ is the corresponding union of half twists, then $M$ works also for $Y_b'$. Take an extended mapping class $f$ that maps $X_b$ to $X_b'$, which exists by Theorem \ref{finiterig}. In particular $f$ maps minimal curves to minimal curves. Therefore 
$$Y_b'=X_b'\bigcup_{\beta'\in X_b'\,minimal} H_{\beta'}^{\pm1}(X_b')=f(X_b)\bigcup_{\beta\in X_b\,minimal} H_{f(\beta)}^{\pm1}\circ f(X_b)=$$
$$=f(X_b)\bigcup_{\beta\in X_b\,minimal} f\circ H_{\beta}^{\pm1}(X_b)=f(Y_b)$$
Now the conclusion follows, since for every $x,z\in Y_b'$ and $s\in \C$ we have that $$d_s(x,z)=d_{f^{-1}(s)}(f^{-1}(x),f^{-1}(z))\le M.$$
\end{proof}

This corollary is crucial for us, so we state it as a theorem:
\begin{theorem}\label{projisometry}
For every $b\ge5$ the following holds whenever $\N$ is deep enough. Let $X_b\subset \C$ be a copy of the rigid set and let $Y_b$ be as in Definition \ref{Y}. Then the restriction $\pi|_{Y_b}$ is an isometry. In particular there exists an isometrically embedded copy of $X_b$ inside $\C/\N$.
\end{theorem}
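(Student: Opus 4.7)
The ingredients are already in place: Lemma \ref{Mbound} gives uniform bounds on annular projections between vertices of $Y_b$, and Lemma \ref{projinlink} converts such bounds into an isometric embedding property for the projection. So the plan is simply to combine the two, taking $\Delta=\emptyset$ in Lemma \ref{projinlink} so that $\link(\Delta)=\C$.

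More precisely, I would proceed as follows. Since the projection $\pi\colon\C\to\C/\N$ is always $1$-Lipschitz, it suffices to prove that for every pair $x,y\in Y_b$ one has $d_\C(x,y)\le d_{\C/\N}(\pi(x),\pi(y))$. Fix $b\ge 5$ and let $M=M(b)$ be the constant produced by Lemma \ref{Mbound}, which depends only on $b$ and not on the chosen copy of $X_b$. Choose $\Theta>M$ (taking it larger than any other threshold that the application of Lemma \ref{projinlink} requires, such as the bounded geodesic image constant), and take $\N$ deep enough with respect to this $\Theta$ in the sense of Definition \ref{Ndeep}.

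Now fix any copy $X_b\subset\C$ of the rigid set and the associated finite set $Y_b$ of Definition \ref{Y}. Given $x,y\in Y_b$, Lemma \ref{Mbound} gives $d_s(x,y)\le M<\Theta$ for every $s\in\C$ where the annular projection is defined. Applying Lemma \ref{projinlink} with $\Delta=\emptyset$, any geodesic $[x,y]\subseteq\C$ projects to a geodesic in $\C/\N$, and in particular
\[
d_{\C/\N}(\pi(x),\pi(y))=d_\C(x,y).
\]
Hence $\pi|_{Y_b}$ is an isometry, as required. For the final assertion, since $X_b\subseteq Y_b$ and $X_b$ is by definition a full subgraph of $\C$, its image $\pi(X_b)$ is an isometrically embedded copy of $X_b$ inside $\C/\N$.

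There is no serious obstacle here; the one point to keep straight is the order of the quantifiers. The bound $M$ from Lemma \ref{Mbound} is uniform in the choice of copy $X_b$, which is exactly what lets us fix $\Theta$ and $\N$ first and then conclude the statement for every copy simultaneously, matching the "for every $b$, for $\N$ deep enough, for every copy $X_b$" phrasing of the theorem.
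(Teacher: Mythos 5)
Your proof is correct and is essentially the same as the paper's, which simply cites Lemma \ref{Mbound} together with Corollary \ref{projV}; you unroll Corollary \ref{projV} by applying Lemma \ref{projinlink} directly with $\Delta=\emptyset$, which is exactly what that corollary does internally. You also correctly note that the uniformity of $M$ over all copies of $X_b$ is the point that lets one fix $\Theta$ and $\N$ once and for all.
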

\begin{proof}
Just combine Lemma \ref{Mbound} and Corollary \ref{projV}.
\end{proof}

There are other two consequences of Lemma \ref{projinlink} that will be quite useful later. The first one deals with the cardinality of the quotient:
\begin{corollary}\label{infinite}
Whenever $\N$ is deep enough, the quotient $\C/\N$ is infinite.
\end{corollary}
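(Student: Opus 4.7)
The plan is to exhibit, for arbitrarily large $n$, a pair of curves in $\C$ whose projections to $\C/\N$ remain at distance at least $n$. The main tool is Lemma \ref{projinlink} applied with the empty simplex $\Delta = \overline\Delta = \emptyset$, in which case $\link(\Delta) = \C$ and $\link(\overline\Delta) = \C/\N$: whenever $x, y \in \C$ satisfy $d_s(x, y) \le \Theta$ for every curve $s$ where the quantity is defined, the projection $\pi$ restricts to an isometric embedding of any geodesic $[x, y] \subset \C$ into $\C/\N$, so $d_{\C/\N}(\pi(x), \pi(y)) = d_\C(x, y)$.

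To produce such pairs at unbounded $\C$-distance, I would fix a pseudo-Anosov $f \in MCG(S_b)$ and a curve $\alpha \in \C$, and set $x_n := f^n(\alpha)$. Since $f$ acts loxodromically on the hyperbolic space $\C$, one has $d_\C(\alpha, f^n \alpha) \to \infty$. Moreover, because the invariant laminations $\lambda^\pm$ of $f$ fill $S_b$, a standard consequence of Masur--Minsky's subsurface projection machinery is that for every curve $s$ the sequence $\pi_s(f^n \alpha)$ converges (in the Gromov boundary of the annular curve graph of $s$) to $\pi_s(\lambda^+)$ as $n\to+\infty$ and to $\pi_s(\lambda^-)$ as $n\to-\infty$. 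In particular there exists a constant $B = B(f, \alpha)$ with $d_s(\alpha, f^n \alpha) \le B$ uniformly in both $n$ and $s$.

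Setting $\Theta := B + 1$ and taking $\N$ deep enough with respect to this $\Theta$, Lemma \ref{projinlink} then yields
$$d_{\C/\N}(\pi(\alpha), \pi(f^n \alpha)) = d_\C(\alpha, f^n \alpha) \longrightarrow \infty$$
as $n\to\infty$, so $\C/\N$ has infinite diameter and is therefore infinite.

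The only ingredient outside the immediate scope of this section is the uniform bound on annular projections along a pseudo-Anosov orbit; this is classical but deserves an explicit reference. If one prefers a more self-contained construction, one can instead pick disjoint curves $\beta_1, \ldots, \beta_k$ forming a chain and consider curves of the form $T_{\beta_1}^{n_1}\cdots T_{\beta_k}^{n_k}(\alpha)$: by the bounded geodesic image theorem the $\C$-distance to $\alpha$ can be made arbitrarily large by choosing the exponents, while the annular projections to any curve disjoint from the support remain zero, and the projections to the $\beta_i$-annuli can be controlled one at a time. This is the only genuinely delicate step; everything else is a direct citation of Lemma \ref{projinlink}.
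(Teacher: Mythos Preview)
Your argument is correct and matches the paper's proof essentially line for line: fix a pseudo-Anosov and a basepoint, use the uniform bound on annular projections along the orbit to invoke Lemma~\ref{projinlink} with $\Delta=\emptyset$, and conclude that the orbit projects isometrically. The only minor difference is in how the uniform bound $\sup_{s,n} d_s(x,g^n x)<\infty$ is justified: the paper derives it from the Bounded Geodesic Image Theorem together with the finiteness of $\sup_s d_s(x,z)$ for fixed $x,z$ (Corollary~\ref{projV}), citing the argument in \cite[Theorem~2.1]{dfdt}, rather than from convergence of projections to the invariant laminations.
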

\begin{proof}
For every curve $x\in \C$ and every pseudo-Anosov mapping class $g\in MCG(S)$ there exists $\Theta>0$ such that $\sup_{s\in\C,n\in\mathbb{Z}}d_s(x, g^n(x))<\Theta$. This follows from an argument in the proof of \cite[Theorem 2.1]{dfdt} which just uses the Bounded geodesic image Theorem \ref{bgit} and the fact that for every $x,z\in\C$ then $\sup_{s\in\C}d_s(x,z)<\infty$, which we proved in Corollary \ref{projV}. Hence, if $\N$ is deep enough with respect to $\Theta$, by Lemma \ref{projinlink} the projection is an isometry on the orbit $\{g^n(x)\}_{n\in\mathbb{Z}}$, which is infinite since $g$ is pseudo-Anosov.
\end{proof}
The second consequence deals with filling curves:
\begin{corollary}\label{projfilling}
Fix $\Theta>0$. Let $\N$ be deep enough and let $\overline\Delta\subseteq \C/\N$ be a simplex and $\Delta\subseteq X$ be one of its lift. Suppose that $\Delta$ cuts out a single subsurface $\Sigma$ of complexity at least $1$, and let $x,y\subset\Sigma$ be a pair of filling curves with the property that $d_s(x, y) <\Theta$ whenever the quantity is defined. If $x,y$ project to $\ov x, \ov y\in \link (\ov\Delta)$ then any two lifts $x',y'\in \link(\Delta)$ again fill $\Sigma$.
\end{corollary}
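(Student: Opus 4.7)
The plan is to establish the distance inequality
\[
d_{\link(\Delta)}(x',y') \;\geq\; d_{\link(\Delta)}(x,y)
\]
by combining Lemma \ref{projinlink} with the fact that $\pi$ is $1$-Lipschitz, and then transfer the filling condition from $(x,y)$ to $(x',y')$ using the standard characterization of filling in terms of distance in the curve graph of $\Sigma$.

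First, the hypothesis $d_s(x,y)<\Theta$ for all defined $s$ is precisely what is required to invoke Lemma \ref{projinlink}, which yields that any geodesic $[x,y]\subseteq\link(\Delta)$ embeds isometrically into $\link(\ov\Delta)$, so $d_{\link(\Delta)}(x,y)=d_{\link(\ov\Delta)}(\ov x,\ov y)$. On the other hand, the projection $\pi$ is a simplicial map and is therefore $1$-Lipschitz, and it sends $\link(\Delta)$ to $\link(\ov\Delta)$; applying this to a geodesic from $x'$ to $y'$ in $\link(\Delta)$ gives $d_{\link(\ov\Delta)}(\ov x,\ov y)\le d_{\link(\Delta)}(x',y')$. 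Combining the two bounds yields the displayed inequality.

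Next, I would invoke the standard fact that filling is detected by a lower bound on distance in the curve graph of the subsurface. For $\Sigma$ of complexity at least $2$, $\link(\Delta)\subset\C$ is naturally identified with $\C(\Sigma)$, and $(u,v)$ fills $\Sigma$ iff $d_{\C(\Sigma)}(u,v)\ge 3$ (equivalently, no essential curve in $\Sigma$ is disjoint from both). The inequality above immediately promotes this property from $(x,y)$ to $(x',y')$. For $\Sigma$ of complexity $1$ we adopt the Farey complex convention for $\link(\Delta)$ used elsewhere in the paper (e.g.\ in Theorem \ref{mgonlift} for $b=4$); in this setting, filling is equivalent to the two curves being distinct, i.e., at distance at least $1$, which is again preserved by the inequality.

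The only point that requires care is the uniform treatment of the complexity-$1$ case, where the subgraph $\link(\Delta)\subset\C$ taken literally has no edges. This is handled either by the Farey complex reinterpretation above, or by a direct dispatch: if $(x',y')$ failed to fill then $x'=y'$, hence $\ov x=\ov y$, and the isometric embedding provided by Lemma \ref{projinlink} would force $x=y$, contradicting the filling hypothesis on $(x,y)$.
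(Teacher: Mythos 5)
Your proof is correct and follows the same route as the paper's: identify filling with a lower bound on distance in $\link(\Delta)$, promote that bound to $\link(\ov\Delta)$ via Lemma \ref{projinlink}, and pull it back to any pair of lifts using that $\pi$ is $1$-Lipschitz. Where you go further is in cleanly handling the complexity-$1$ case. The paper's proof is one line: ``$x$ and $y$ fill $\Sigma$ if and only if $d_{\link(\Delta)}(x,y)\ge 3$'' --- but for $\Sigma\cong S_{0,4}$ (whether one takes $\link(\Delta)$ literally with no edges, or uses the Farey convention implicit elsewhere in the paper), filling is simply equivalent to being distinct, and the threshold $3$ is not the right characterization. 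Your observation that under the Farey convention filling corresponds to $d\ge 1$, together with your ``direct dispatch'' (if $x'=y'$ then $\ov x=\ov y$, and the isometric embedding from Lemma \ref{projinlink} forces $x=y$), supplies a correct argument in that regime. In short: same method, but you noticed and filled a small gap that the paper's proof glosses over.
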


\begin{proof}
Recall that $x$ and $y$ fill $\Sigma$ if and only if $d_{\link(\Delta)}(x,y)\ge3$. If $x$ and $y$ satisfy the hypothesis of Lemma \ref{projinlink} then the distance between $\ov x$ and $\ov y$ in $\link(\ov\Delta)$ remains at least three. But then every pair of lifts must be at distance at least three, since the projection map is $1$-Lipschitz.
\end{proof}

\subsection{Puncture separations}
Similarly to the previous subsection, we aim to establish some results of isometric projection, but this time they will arise from some observations on how curves separate punctures on a sphere. Choose an enumeration of the punctures of $S_b$. A curve $\alpha$ on $S_b$ is separating, therefore it splits the punctures into two sets $\mathcal B^+(\alpha)$ and $\mathcal B^-(\alpha)$.

\begin{definition}
The \emph{puncture separation} induced by $\alpha$ is the unordered couple $\{\mathcal B^+(\alpha),\,\mathcal B^-(\alpha)\}$.
\end{definition}

\begin{remark}\label{puntsepdiverse}
Notice that a Dehn twist fixes each puncture, therefore preserves puncture separations. In fact if two punctures are on the same side of $\alpha$ they may be joined by an arc which is disjoint from $\alpha$, and the image of this arc via a Dehn twist will again join the same punctures while being disjoint from the image of $\alpha$. In other words, any two curves that induce different puncture separations cannot be identified in a quotient of the curve graph by a subgroup generated by powers of Dehn twists, such as all deep enough subgroups $\N$ in the light of Remark \ref{gendatwist}.
\end{remark}

\begin{definition}
Two curves $\alpha$ and $\alpha'$ induce \emph{nested} puncture separations if $\mathcal B^+(\alpha)\subseteq\mathcal B^\pm(\alpha')$. In other words, the splitting induced by $\alpha\cup\alpha'$ refines the splitting induced by $\alpha$.
\end{definition}

Notice that two disjoint curves induce nested puncture separations, but the converse is not true (for example, choose some $\beta$ intersecting $\alpha$ and set $\alpha'=T_\beta(\alpha)$, which induces the same puncture separation by Remark \ref{puntsepdiverse}). The following lemma shows a peculiar behaviour of the projection $\C\to \Cdt$ which holds for all $K\in\mathbb{N}$:
\begin{lemma}\label{projpuntsep}
For every subgroup $H\le MCG(S)$ generated by powers of Dehn Twists, the following hold for the projection $\C\to\C/H$:
\begin{enumerate}
    \item For $b=4$, if $\alpha$ and $\alpha'$ are adjacent in the Farey complex then their projections remain at distance $1$.
    \item For $b\ge5$, if $\alpha$ and $\alpha'$ are disjoint curves then their projections remain at distance $1$.
    \item For $b\ge5$, if $\alpha$ and $\alpha'$ intersect and their puncture separations are not nested then their projections remain at distance at least $2$.
\end{enumerate}
\end{lemma}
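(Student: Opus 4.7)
The plan rests on two principles already established: the projection $\pi$ is $1$-Lipschitz, and $H$ preserves the punctures separation of each curve (by Remark \ref{puntsepdiverse}), so any two curves in the same $H$-orbit induce the same separation. All three items then reduce to combinatorial facts about how essential curves separate punctures on $S_b$.

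For (1) and (2) the strategy is identical: the $1$-Lipschitz bound immediately gives $d\le 1$, and one only needs to rule out $\pi(\alpha)=\pi(\alpha')$. For this it suffices to show that $\alpha$ and $\alpha'$ induce distinct punctures separations, as this prevents them from lying in a common $H$-orbit. For (1), I would identify $\C(S_4)$ with the Farey graph through the pillowcase model $S_4=T^2/\{\pm 1\}$, so that essential curves correspond to primitive slopes $p/q\in\hat{\mathbb{Q}}$ and the three punctures separations are in bijection with the three non-trivial parity classes of $(p,q)\pmod 2$. A direct check shows that if $(p,q)$ and $(p',q')$ lie in the same class, then $pq'-p'q$ is even, hence cannot equal $\pm 1$; so Farey-adjacent slopes belong to distinct parity classes and thus induce distinct separations. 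For (2), assume by contradiction that two distinct disjoint curves $\alpha,\alpha'$ on $S_b$ shared a separation $\{A,B\}$; then $\alpha'$ would sit inside one of the two punctured disks cut off by $\alpha$, say the one carrying the punctures $A$, and would split it into sub-disks containing $A_1\subseteq A$ and $A\setminus A_1$. The separation induced by $\alpha'$ would then be $\{A_1,(A\setminus A_1)\sqcup B\}$, and equality with $\{A,B\}$ forces $A\setminus A_1=\emptyset$, making $\alpha'$ isotopic to $\alpha$: a contradiction.

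For (3) I plan a proof by contradiction. If $d_{\C/H}(\pi(\alpha),\pi(\alpha'))\le 1$, then there exist $h,h'\in H$ such that $h(\alpha)$ is either equal to, or disjoint from, $h'(\alpha')$. Setting $g=h^{-1}h'\in H$ and applying $h^{-1}$, this rewrites as: $\alpha$ is either equal to, or disjoint from, $g(\alpha')$. In both cases the punctures separations of $\alpha$ and $g(\alpha')$ are nested: trivially so when they coincide, and by definition when they are disjoint. Since $g(\alpha')$ and $\alpha'$ share their separation, this forces the separations of $\alpha$ and $\alpha'$ to be nested, contradicting the hypothesis.

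The main delicate point is the parity/separation dictionary used in (1), which one should verify carefully: the four punctures of $S_4$ are the images of the $2$-torsion points of $T^2$, and a $(p,q)$-curve on $T^2$ descends to a curve on $S_4$ whose separation pattern is dictated by the parities of $p$ and $q$. Once this dictionary is in place, the remainder is a direct application of Remark \ref{puntsepdiverse} combined with the elementary observation that disjoint curves always induce nested separations.
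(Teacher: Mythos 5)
Your proof is correct and follows essentially the same route as the paper's. Items (2) and (3) are exactly the paper's arguments (disjoint curves inducing the same separation must cobound an unpunctured annulus, hence be isotopic; lifting an edge of $\C/H$ and invoking Remark \ref{puntsepdiverse}); the one place you do more work is item (1), where the paper simply asserts that Farey-adjacent curves induce distinct punctures separations while you supply an explicit proof via the pillowcase/slope-parity dictionary. That argument is valid, though a purely topological one is also available and arguably cleaner: Farey-adjacent curves on $S_{0,4}$ meet in two points, so their union cuts the sphere into four once-punctured disks, and the two curves group those four faces (hence punctures) into $2+2$ in two different ways, so the separations differ.
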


\begin{proof} In all cases we will use that the projection is $1$-Lipschitz, since the action of $H$ over $\C$ is simplicial.
\begin{enumerate}
    \item \label{i1} Two adjacent curves in the Farey complex induce different puncture separations, therefore their projections must be at distance $1$ since they cannot coincide.
    \item \label{i2} If $\alpha$ and $\alpha'$ are disjoint then they must induce different puncture separations, otherwise they would bound an unpunctured annulus and therefore they would be isotopic. Then their projections can only be at distance $1$, for the same reasons as before.
    \item \label{i3} Suppose that $\alpha$ and $\alpha'$ intersect and induce non-nested puncture separations. Firstly, $\alpha$ and $\alpha'$ cannot have the same projection, since they induce different puncture separations. Furthermore, if by contradiction $\pi(\alpha)$ and $\pi(\alpha')$ are the endpoints of an edge then we may lift it, since the action is simplicial. In other words there exists some $g\in H$ such that $\alpha$ is disjoint from $g(\alpha')$. But then $\alpha$ and $g(\alpha')$ must induce nested puncture separations, and so must do $\alpha$ and $\alpha'$ since Dehn twists preserve puncture separations by Remark \ref{puntsepdiverse}.
\end{enumerate}
\end{proof}
As an immediate consequence of Items (\ref{i1}) and (\ref{i2}) we get:
\begin{corollary}\label{projsimplex}
    For every $b\ge4$, the projection map is an isometry on every simplex $\Delta\subset\C$. 
\end{corollary}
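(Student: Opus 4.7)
The plan is essentially an immediate application of the preceding Lemma \ref{projpuntsep}. A simplex $\Delta \subset \C$ is, by definition of the curve graph, a collection of pairwise adjacent vertices: for $b = 4$ this means a set of curves which are pairwise adjacent in the Farey complex, and for $b \ge 5$ this means a set of pairwise disjoint curves.

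I would proceed as follows. First, observe that since the action of $H$ on $\C$ is simplicial, the projection $\pi \colon \C \to \C/H$ is $1$-Lipschitz; in particular, for any two vertices $\alpha, \alpha' \in \Delta$, one has $d_{\C/H}(\pi(\alpha), \pi(\alpha')) \le 1$. Thus it only remains to show that $\pi|_\Delta$ is injective, as then all pairs of distinct projected vertices will be at distance exactly $1$, making $\pi|_\Delta$ an isometry onto a simplex of the same dimension. For the injectivity: in the case $b = 4$, any two distinct vertices $\alpha, \alpha' \in \Delta$ are adjacent in the Farey complex, so by item (1) of Lemma \ref{projpuntsep} their projections remain at distance $1$ and are in particular distinct. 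In the case $b \ge 5$, distinct vertices $\alpha, \alpha' \in \Delta$ are disjoint curves, so item (2) of Lemma \ref{projpuntsep} applies and again their projections are distinct.

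There is no real obstacle here; the whole statement is essentially a repackaging of Lemma \ref{projpuntsep}, with the only verification being that adjacency in a simplex is covered by one of its first two items. The more substantive isometric projection statements of this section concern the finite rigid set $X_b$ and its enlargement $Y_b$, which require the quantitative control coming from Lemma \ref{projinlink} and the bound of Lemma \ref{Mbound}; by contrast, simplices of $\C$ are preserved for purely combinatorial reasons, because distinct curves forming a simplex always induce distinct puncture separations and Dehn twists preserve puncture separations.
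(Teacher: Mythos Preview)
Your proof is correct and takes essentially the same approach as the paper, which simply states the corollary as an immediate consequence of items (1) and (2) of Lemma~\ref{projpuntsep}. Your write-up just makes explicit the reduction to injectivity via the $1$-Lipschitz property, which is implicit in the paper's one-line justification.
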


\begin{remark} Item (\ref{i1}) directly implies Theorem \ref{projisometry} for $b=4$. Moreover, Items (\ref{i2}) and (\ref{i3}) prove Theorem \ref{projisometry} for $b\ge5$, since $Y_b$ has diameter $2$ and by construction each pair of intersecting curves induce non-nested puncture separations. This alternative proof works for every choice of $K$, though only if the surface is a punctured sphere.\end{remark}

\section{Lifting the rigid set}\label{sec:lifting}
The aim of this section is to prove the following lifting result:
\begin{theorem}\label{lifting}
For every $b\ge 5$ and every $\N$ deep enough, every copy of $X_b$ in $\C/\N$ has a lift inside $\C$, and any two lifts are conjugated by an element of $\mathcal{N}$.
\end{theorem}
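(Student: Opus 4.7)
The plan is to prove the theorem by induction on $b\geq 5$, establishing existence of the lift; uniqueness up to $\N$ follows in a similar style, combining the uniqueness clauses of Theorem \ref{mgonlift}, the inductive uniqueness for $X_{b-1}$, and Lemma \ref{specintunica} together with the fact that a sufficiently high power of $T_\beta=H_\beta^2$ lies in $\N$.

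For the base case $b=5$, any copy $\ov X_5\subset\C/\N$ is combinatorially a $5$-cycle, hence a generalised pentagon. Theorem \ref{mgonlift} with $m_0\geq 5$ produces a lift to a simplicial $5$-cycle in $\C$, unique up to the $\N$-action. Since that lift gives a locally injective simplicial embedding $X_5\to\C$, Theorem \ref{finiterig} identifies it as a copy of $X_5$.

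For the inductive step $b\geq 6$, start from $\ov X_b\subset\C/\N$ and pick a minimal vertex $\ov\beta\in\ov X_b$, so that $\ov Y:=\link_{\ov X_b}(\ov\beta)\cong X_{b-1}$. Lift $\ov\beta$ to $\beta\in\C$ using Theorem \ref{mgonlift}. By Corollary \ref{CUdt} applied to the component $S_{b-1}$ cut out by $\beta$, the subgraph $\ov Y\subset\link(\ov\beta)$ identifies with a copy of $X_{b-1}$ inside $\C(S_{b-1})/\N(S_{b-1})$, and (since a deep enough $\N$ restricts to a deep enough subgroup on $S_{b-1}$) the inductive hypothesis furnishes a lift $Y\subset\link(\beta)$ which is a copy of $X_{b-1}$. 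Now pick $\ov\alpha\in\ov X_b$ intersecting $\ov\beta$. This intersection is special, and because every intersection in $\ov X_b$ is detected by vertices of $\ov X_b$, the witnessing facet $\ov P$ and the auxiliary curves $\ov\gamma,\ov\delta,\ov\varepsilon$ already lie in $\ov Y$ and have therefore been lifted inside $Y$. Applying Theorem \ref{mgonlift} to the resulting special pentagon, and exploiting the $\N$-orbit uniqueness of sub-simplex lifts to realign the lift with the already-chosen $\beta$, $P$, $\gamma$, $\delta$, $\varepsilon$, I obtain $\alpha\in\C$ with special intersection with $\beta$ with respect to $P$. Lemma \ref{completion} then extends $\beta\star Y\cup\{\alpha\}$ to a copy $X_b\subset\C$ of the rigid set.

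To verify $\pi(X_b)=\ov X_b$: by Theorem \ref{projisometry} the restriction $\pi|_{X_b}$ is an isometric embedding, so $\pi(X_b)$ is a copy of $X_b$ in $\C/\N$ sharing $\ov\beta$, $\ov Y$ and $\ov\alpha$ with $\ov X_b$. The remaining vertices of either copy are those intersecting $\ov\beta$, and Lemma \ref{alfaiunica} pins each of them down in $\C$ as the unique vertex of $\link(P_z)\cap\link(\alpha)$ for some facet $P_z\subset Y$. A quotient analogue then forces $\pi(X_b)=\ov X_b$: any candidate $\ov z'\in\link(\ov P_z)\cap\link(\ov\alpha)$ in $\C/\N$ can be promoted, via Theorem \ref{mgonlift} applied to the simplex $\ov P_z\cup\{\ov\alpha,\ov z'\}$ with lifts chosen to extend the existing ones, to a lift $z''\in\link(P_z)\cap\link(\alpha)$, which Lemma \ref{alfaiunica} identifies with the corresponding vertex of $X_b$. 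I expect the main obstacle to be the alignment step producing $\alpha$: on the existence side it requires careful use of the $\N$-orbit uniqueness clause of Theorem \ref{mgonlift}, and on the uniqueness side it requires reconciling the purely combinatorial freedom given by Lemma \ref{specintunica} (two candidate lifts of $\ov\alpha$ differ by some $H_\beta^k$) with the algebraic requirement that the discrepancy live in $\N$.
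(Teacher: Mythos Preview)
Your overall skeleton (induct on $b$, lift the minimal-vertex link using Corollary \ref{CUdt} and the inductive hypothesis, then invoke Lemma \ref{completion}) matches the paper's, but the two places you flag as ``the main obstacle'' are exactly where your argument has genuine gaps.

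First, the verification that $\pi(X_b)=\ov X_b$ via a quotient analogue of Lemma \ref{alfaiunica} does not go through as written. The configuration $\ov P_z\cup\{\ov\alpha,\ov z'\}$ is \emph{not} a simplex: in the proof of Lemma \ref{alfaiunica} the facet $P=R\star x$ is chosen so that $x$ intersects $\alpha$, hence $\alpha\notin\link(P)$. Even reinterpreting the object as a generalised polygon, the crucial step is aligning the lift with the already-fixed $\alpha$ (not just with $P_z$), and orbit-uniqueness of simplex lifts alone cannot do this since $P_z$ and $\alpha$ do not span a simplex. The paper does not attempt a quotient analogue of Lemma \ref{alfaiunica}; instead it lifts, for each $i$, a special pentagon containing both $\ov\alpha_1$ and $\ov\alpha_i$ together with vertices of $\ov X_{b-1}$, aligns the pentagon with the existing lift of $\beta\star X_{b-1}$, and uses Lemma \ref{completion} and Lemma \ref{alfaiunica} to produce a copy $X_b^i$. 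The different $X_b^i$ are then shown to be $\N$-translates of each other.

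Second, your uniqueness sketch does not work. Knowing that two candidate lifts of $\ov\alpha$ differ by $H_\beta^k$ for \emph{some} $k$ (Lemma \ref{specintunica}) gives no control over $k$, so the fact that $T_\beta^K\in\N$ is irrelevant: there is no reason $H_\beta^k$ should lie in $\N$. The paper's replacement for this is Lemma \ref{uniqueliftxspec}, proved by the complexity induction of Proposition \ref{cor3.6} together with the uniform bound on annular projections within $Y_b$ (Lemma \ref{Mbound}). That lemma says: if $x,x'$ both have special intersection with $y$ with respect to the same facet $P$, both lie in $Y_b$-type configurations, and project to the same vertex, then some $g\in\N$ maps $x$ to $x'$ while \emph{pointwise fixing} $y$ and any prescribed subset $S\subset\link(y)$. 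This is the technical core that handles both the alignment in the existence proof and the reconciliation in the uniqueness proof, and it is precisely the ingredient missing from your proposal.
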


\begin{remark}
\label{rmk:lifts_exist?}
At the moment, it is not known to the authors whether there exists a finite graph $\ov G$ such that, whenever $\ov G$ is immersed in some $\C/DT_K$, then $\ov G$ does not admit a lift in $\C$. However there are easy examples where a graph $\ov G$ admits two lifts $G,G'$ which belong to different $DT_K$-orbits; for applications it is often crucial for us that there is only one orbit of lifts of specific graphs. For example, for every $K>1$ let $\ov l=\{\ov\alpha,\ov\beta,\ov\gamma\}$ be a geodesic segment of length $2$ in $\C_4/DT_K$, and let $l=\{\alpha,\beta,\gamma\}$ be one of its lifts (whose existence is granted by Lemma \ref{mgonlift}).  Now let $\gamma'=T^{K}_\beta(\gamma)$, so that $l'=\{\alpha,\beta,\gamma'\}$ is another isometric lift of $\ov l$. If by contradiction $g\in DT_K\setminus\{1\}$ maps $l$ to $l'$ then it must fix $\alpha$ and $\beta$. Notice moreover that, if an orientation preserving mapping class fixes an edge of the Farey complex then it fixes the whole complex. Thus $g$ is a hyperelliptic involution, and since it must permute the punctures it cannot belong to any group generated by powers of Dehn twists, such as $DT_K$.
\end{remark}

\subsection{Existence of a lift}
\begin{proof}[Proof of existence] We proceed by induction, using the case $b=5$ as the base case since $X_5$ is a pentagon, which lifts by Theorem \ref{mgonlift}. Assume that $b\ge6$ and that every copy of $X_{b-1}$ has a lift inside $\mathcal{C}_{b-1}$, and let $\ov{X}_b$ be a copy of $X_b$ inside $\C_b/\N$. Let $\overline\beta\in \ov{X}_b$ be a minimal vertex, and let $\ov{X}_{b-1}:=\link_{\ov{X}_b}(\overline\beta)$.
In order to apply the inductive hypothesis we need to show that $\link_{\C_b}(\beta)$ is isomorphic to $\C_{b-1}$ and that its projection is isomorphic to $\C_{b-1}/\N(S_{b-1})$. We start with the first assertion.
\begin{lemma}
Any lift $\beta$ of a minimal vertex $\overline\beta\in\ov{X}_{b}$ bounds a twice-punctured disk. In other words $\link_{\C_b}(\beta)\cong\mathcal{C}_{b-1}$.
\end{lemma}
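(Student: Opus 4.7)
The plan is to argue by contradiction. Suppose $\beta$ does not bound a twice-punctured disk. Since every curve on $S_b$ is separating, $\beta$ must bound a $k$-punctured disk on one side with $3\le k\le b-k$, so $k, b-k\ge3$. Writing $U,V$ for the two components of $S_b\setminus\beta$, we have $\link_{\C_b}(\beta)=\C(U)\ast\C(V)$ as a non-trivial join, and every curve disjoint from $\beta$ lies entirely in $U$ or in $V$.

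My first step would be to show that for each $\overline\gamma\in\ov X_{b-1}:=\link_{\ov X_b}(\overline\beta)$ there is a single component $S(\overline\gamma)\in\{U,V\}$ containing every lift of $\overline\gamma$ that is disjoint from $\beta$. By Remark \ref{gendatwist} the subgroup $\N$ is generated by powers of Dehn twists, hence is contained in the pure mapping class group and fixes every puncture of $S_b$; consequently all lifts of $\overline\gamma$ induce the same unordered partition $\{L, B\setminus L\}$ of the $b$ punctures. A short case analysis comparing this partition with the puncture sets of $U$ and $V$ rules out two lifts of the same $\overline\gamma$ lying on opposite sides of $\beta$: the part $L$ would have to sit simultaneously among the $U$-punctures and among the $V$-punctures, forcing either $|L|=0$ or $L$ to contain all $U$-punctures (which would make $\gamma$ parallel to $\beta$).

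Next I would use the propagation rule: if $\overline\gamma_1,\overline\gamma_2\in\ov X_{b-1}$ are non-adjacent then every pair of lifts intersects in $\C_b$, whereas lifts on distinct sides of $\beta$ would be disjoint via the join structure, so $S(\overline\gamma_1)=S(\overline\gamma_2)$. Combined with the combinatorial claim that the complement graph of $X_{b-1}$ is connected for $b-1\ge5$ - one uses the short-arc closed chain $(s_1,s_3),(s_2,s_4),\ldots,(s_{b-1},s_2)$ of the reference $(b-1)$-gon as a spanning cycle in the complement, and observes that every longer arc $(s_i,s_j)$ with $|j-i|\ge 3\pmod{b-1}$ crosses the short arc $(s_{j-1},s_{j+1})$, hence is non-adjacent to a chain vertex - this forces $S$ to be constant on $\ov X_{b-1}$, say $S\equiv U$.

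Then, via Corollary \ref{CUdt} and Lemma \ref{Cu0}, the assignment $\overline\gamma\mapsto[\gamma]_{\N(U)}$ realises $\ov X_{b-1}$ as a subgraph of $\C(U)/\N(U)$: any edge $\overline\gamma_i\sim\overline\gamma_j$ extends with $\overline\beta$ to a simplex in $\C_b/\N$ which, by the first bullet of Theorem \ref{mgonlift}, lifts to a simplex in $\C_b$ that (up to an $\N$-translate) contains $\beta$, placing the lifts of $\overline\gamma_i,\overline\gamma_j$ inside $\C(U)$ disjointly. Since $X_{b-1}$ carries a pants-decomposition simplex of $S_{b-1}$ of size $b-4$, its image contains a clique of that size, and the same bullet of Theorem \ref{mgonlift} applied to $\N(U)$ in $\C(U)$ (deep enough, inherited from $\N$) lifts this clique to a simplex of size $b-4$ in $\C(S_{k+1})$. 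But the maximal simplices in $\C(S_{k+1})$ have size $k-2$, forcing $k\ge b-2$, which contradicts $k\le b/2$ for $b\ge6$.

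The main obstacle is the side-determination step: the argument that $S(\overline\gamma)$ is single-valued relies essentially on $\N$ sitting inside the pure mapping class group, a feature that sharply distinguishes the present setting from arbitrary normal subgroups and that one has to exploit with careful bookkeeping of which punctures lie on which side of the various curves. Once the side assignment is in place, the rest of the argument - the complement-connectivity statement and the simplex-lifting manipulations - reduces to combinatorial checks and direct appeals to Theorem \ref{mgonlift}, Corollary \ref{CUdt} and Lemma \ref{Cu0}.
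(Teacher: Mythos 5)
Your proof is correct in substance, but it takes a genuinely different route from the paper's. The paper lifts a single facet $\ov P\subseteq\ov X_{b-1}$ to $P\subseteq\link_{\C_b}(\beta)$, notes $P\cup\{\beta\}$ is a pants decomposition so every curve in $\link_{\C_b}(\beta)$ either coincides with or crosses some curve of $P$, and then shows that any two $\gamma,\delta\in P$ admit a common intersecting curve in $\link_{\C_b}(\beta)$: pick $\ov\eta\in\ov X_{b-1}$ at distance $2$ from both $\ov\gamma$ and $\ov\delta$ (using the isometric embedding from Theorem \ref{projisometry}), lift it next to $\beta$, and observe that the $1$-Lipschitz projection forces the lift $\eta$ to intersect both $\gamma$ and $\delta$. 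This immediately makes $\link_{\C_b}(\beta)$ chain-connected, so all of its curves live in one component of $S_b\setminus\beta$ and the other is a twice-punctured disk. That argument needs nothing about $\N$ beyond being deep enough. By contrast, your side-determination step leans crucially on Remark \ref{gendatwist} (that $\N$ preserves puncture separations), so it is tied to the Dehn-twist setting; the propagation via complement-connectivity of $X_{b-1}$ is a fine combinatorial substitute, but it is more work than the paper's ``find a curve intersecting both''. Also, in your final step the appeal to ``$\N(U)$ deep enough, inherited from $\N$'' is both unjustified (the paper never establishes this inheritance) and unnecessary: once the side-assignment is constant, just lift $\ov\beta\star\ov P$ in $\C_b/\N$ via the first bullet of Theorem \ref{mgonlift} to $\beta\star P$; by constancy of $S(\cdot)$ all of $P$ lies in $\C(U)$, and $|P|=b-4>k-2$ already gives the contradiction without any lifting inside $\C(U)/\N(U)$.
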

\begin{proof}
Let $\overline P$ be a facet inside $\ov{X}_{b-1}$. Since $\ov \beta\star\ov P$ is a simplex we can find a lift $\beta'\star P$ by Lemma \ref{mgonlift}, and up to applying some elements of $\N$ we can assume that $\beta'=\beta$ and therefore $P\in \link_{\C_b}(\beta)$. We want to show that all curves in $\link_{\C_b}(\beta)$ lie in the same subsurface of $S\setminus \beta$, or in other words that, given any two curves $\gamma,\delta\in \link_{\C_b}(\beta)$, there is a chain $\gamma=\eta_0, \eta_1, \ldots,\eta_k=\delta\in \link_{\C_b}(\beta)$. Since every curve in $\link_{C_b}(\beta)$ either coincides with or intersects a curve in $P$, which has maximal dimension in $\link_{\C_b}(\beta)$, it is enough to show that for every $\gamma,\,\delta\in P$ there is a curve $\eta$ that intersects both. Now, since $\ov{X}_{b-1}$ is isometrically embedded inside $\C_b/\N$ we may pick some $\overline\eta \in\ov{X}_{b-1}$ at distance $2$ from both $\ov \gamma$ and $\ov \delta$, and let $\eta$ be one of its lifts in $\link_{\C_b}(\beta)$. Since the projection map is $1$-Lipschitz, $\eta$ must be at distance at least $2$ from (hence intersect) both $\gamma$ and $\delta$, and we are done.
\end{proof}
Now, Corollary \ref{CUdt} shows that the projection of $\link_{\C_b}(\beta)$ is isomorphic to $\C_{b-1}/\N(S_{b-1})$. Therefore we can use the inductive hypothesis to lift $\ov{X}_{b-1}$ to some $X_{b-1}$ inside $\link_{\C_b}(\beta)$. Now the key point of the following construction will be this uniqueness lemma:
\begin{lemma}[Unique lift for "very" special intersections]\label{uniqueliftxspec}
For $b\ge5$ the following holds whenever $\N$ is deep enough. Let $x,x'\in \C$ be two curves with special intersection with the same $y$, with respect to the same facet $P$, and let $S$ be any graph in the link of $y$. Suppose that $x,\,y,\,S$ can be completed to a copy of $X_b$. Then if $x$ and $x'$ project to the same element in the quotient there is some $g\in \mathcal{N}$ which maps $x$ to $x'$ and fixes $y$ and $S$.
\end{lemma}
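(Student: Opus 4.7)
The plan is to argue by induction on the complexity $\alpha(g)$, where $g\in\N$ is any element witnessing $\overline{x}=\overline{x}'$, i.e.\ with $gx=x'$ (such a $g$ exists since $x,x'$ project to the same vertex). Before starting the induction I would collect two preliminary facts. By Lemma \ref{specintunica}, $x'=H_y^k(x)$ for some $k\in\mathbb{Z}$; since $H_y$ fixes $y$ and acts as the identity on $\link_\C(y)\supseteq S$, the set $X_b^{(2)}:=H_y^k(X_b^{(1)})$ (where $X_b^{(1)}$ is a copy of $X_b$ completing $x,y,S$) is a second copy of $X_b$ containing $x',y,S$. Hence the simplex $Q:=\{y\}\cup S$ lies in both $X_b^{(1)}$ and $X_b^{(2)}$. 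In particular, for every $q\in Q$, both $\{x,q\}$ and $\{x',q\}$ lie in a copy of $Y_b$, so Lemma \ref{Mbound} produces a uniform constant $M$ with $d_s(x,q),\,d_s(x',q)\le M$ whenever defined. I then fix $\Theta>2M$ and require $\N$ to be deep enough with respect to this $\Theta$, so that Proposition \ref{cor3.6} may be invoked below.

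The base case $g=1$ is trivial. For the inductive step, Proposition \ref{cor3.6} provides $(s,\gamma_s)$ with $\alpha(\gamma_s g)<\alpha(g)$ such that either (A) $d_s(x,x')>\Theta$, or (B) $i(x,s)=0$. The key observation driving the split is that in case (A) the triangle inequality will force $\gamma_s$ to fix $Q$ pointwise, while in case (B) $\gamma_s$ automatically fixes $x$.

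In case (A), for any $q\in Q$ with $d(s,q)\ge2$, the triangle inequality on annular projections gives $d_s(x,x')\le d_s(x,q)+d_s(q,x')\le 2M<\Theta$, a contradiction; so every $q\in Q$ satisfies $d(s,q)\le1$ and $\gamma_s$ fixes $q$. Since $\gamma_s$ then fixes $y$ and $P\subseteq S$, the pair $(\gamma_sx',y)$ still has special intersection with respect to $P$ (this is purely combinatorial and so preserved by $\gamma_s$ acting as a graph automorphism of $\C$), and the hypothesis of the lemma holds for the data $(x,\gamma_sx',y,S,P)$. Applying the inductive hypothesis to $\gamma_sg$ produces $h\in\N$ with $hx=\gamma_sx'$ and $h$ fixing $Q$. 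Setting $g':=\gamma_s^{-1}h\in\N$ then gives $g'x=x'$ and $g'q=q$ for all $q\in Q$ (as both $\gamma_s$ and $h$ fix $Q$).

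In case (B), $\gamma_s$ fixes $x$ but may move $Q$. Here I would instead transport the configuration by $\gamma_s$: the copy $\gamma_s(X_b^{(1)})$ still contains $x=\gamma_sx$, and by the same combinatorial preservation $\gamma_sx'$ has special intersection with $\gamma_sy$ w.r.t.\ $\gamma_sP$. Applying the inductive hypothesis to $\gamma_sg$ with the transformed data $(x,\gamma_sx',\gamma_sy,\gamma_sS,\gamma_sP)$ yields $h\in\N$ with $hx=\gamma_sx'$ and $h$ fixing $\gamma_sQ$. Then the conjugated element $g':=\gamma_s^{-1}h\gamma_s\in\N$ works: using $\gamma_sx=x$ we get $g'x=\gamma_s^{-1}hx=x'$, and for every $q\in Q$, since $\gamma_sq\in\gamma_sQ$ is fixed by $h$, we have $g'q=\gamma_s^{-1}h(\gamma_sq)=\gamma_s^{-1}(\gamma_sq)=q$. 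The main point to watch for is that the two alternatives of Proposition \ref{cor3.6} genuinely require the two different formulas ($\gamma_s^{-1}h$ in case (A) versus the conjugation $\gamma_s^{-1}h\gamma_s$ in case (B)); this is what lets case (B) go through even though $\gamma_s$ may fail to fix $Q$.
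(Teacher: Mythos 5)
Your proof is correct and follows essentially the same route as the paper's: induction on $\alpha(g)$ via Proposition \ref{cor3.6}, using Lemma \ref{specintunica} to obtain $x'=H_y^k(x)$ and then Lemma \ref{Mbound} with the triangle inequality for annular projections to force $\gamma_s$ to fix $Q$ pointwise when $d_s(x,x')>\Theta$. The only (cosmetic) differences are that you bound $d_s(x',q)\le M$ by applying Lemma \ref{Mbound} to the second copy $H_y^k(X_b)$ whereas the paper uses the identity $d_s(x',z)=d_{H_y^{-k}(s)}(x,z)$ to reduce to the original copy, and that your explicit conjugation formula $g'=\gamma_s^{-1}h\gamma_s$ in the disjointness case makes precise what the paper phrases as ``applying $\gamma_s$ to the whole data.''
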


\begin{proof}
Since both $x$ and $x'$ have special intersection with $y$, with respect to the same facet $P$, by Lemma \ref{specintunica} there exists $k\in \mathbb{Z}$ such that, in the four-holed sphere $S_4$ that $P$ cuts out, $x'=H_D(x)$, where $D$ is one of the disks bounded by $y$. Now we claim that $k=2m$ must be even, and therefore $H^{2m}_D=T_y^m$ extends to the whole surface $S$. To see this first notice that since $x$ and $x'$ are in the same $\N$-orbit they must induce the same puncture separation by Remark \ref{puntsepdiverse}. Moreover $x$ and $y$ intersect twice, thus they separate the punctures of $S$ into four sets $\mathcal{A}^\pm,\mathcal{B}^\pm$, each of which corresponds to one of the punctures of $S_4$. Suppose that $x$ induces the separation $\{\mathcal{A}^+\cup\mathcal{B}^+,\mathcal{A}^-\cup\mathcal{B}^-\}$ while $y$ induces the separation $\{\mathcal{A}^+\cup\mathcal{A}^-,\mathcal{B}^+\cup\mathcal{B}^-\}$, and that $D$ contains the punctures corresponding to $\mathcal{A}^\pm$. Then $H_D$ swaps the two punctures corresponding to $\mathcal{A}^\pm$, which shows that $H_D^k(x)$ induces the same puncture separation as $x$ if and only if $k$ is even. 

Now let $\gamma\in \mathcal{N}$ such that $x'=\gamma(x)$. If $\gamma=1$ then we are done. Otherwise let $(s,\gamma_s)$ be as in Proposition \ref{cor3.6}. If $d_{\C}(x,s)\le1$ then we may apply $\gamma_s$ to the whole data and proceed by induction on the complexity of $\gamma$. Otherwise $d_s(x,x')\ge \Theta$. Now we claim that any other $z\in S\cup\{y\}$ is in the star of $s$, so that we can replace $x'$ with $\gamma_s(x')$ and proceed by induction. If this is not the case then $\pi_s(z)$ is well-defined, and we have that
$$d_s(x,x')\le d_s(x,z)+d_s(x',z)=d_s(x,z)+d_s(T^m_y(x),T^m_y(z))=d_s(x,z)+d_{T_y^{-m}(s)}(x,z)$$
where we used that $z$ belongs to the star of $y$ and is therefore fixed by $T_y$. But now 
$$\Theta\le d_s(x,x')\le d_s(x,z)+d_{T_y^{-m}(s)}(x,z)\le 2\max_{p,q\in X_{b},\, s\in\C} d_s(p,q)\le 2M $$
where $M$ is the constant from Lemma \ref{Mbound}, which does not depend on $\Theta$. This is a contradiction if $\N$ is deep enough.
\end{proof}

Now, in order to complete our lift of $\ov X_b$ we still need to lift the vertices $\ov{\alpha}_1,\ldots,\overline\alpha_{b-3}\in \ov X_b$ outside the link of $\overline{\beta}$. For every $i=1,\ldots, b-3$ we can find vertices $\ov x, \ov y\in \ov X_{b-1}$ and a codimension two simplex $\ov R\subset \ov X_{b-1}$ such that $\ov{\beta},\, \ov{\alpha}_1,\, \ov{\alpha}_i,\, \ov x,\, \ov y$ and $\ov R$ span a special pentagon $\ov{\mathfrak{P}}$. For example, one could look at Figure \ref{fig:stella}, replace $z$ with $\alpha_i$ and then consider the corresponding vertices inside $\ov X_{b}$. 

Lift $\ov{\mathfrak{P}}$ to a special pentagon $\mathfrak{P}'$, let $Q'=\beta'\star y'\star R'$ be the lift of the simplex $\overline Q=\overline \beta\star \overline y\star \overline R$ inside $\mathfrak{P}'$, and let $Q=\beta\star y\star R$ be the corresponding simplex inside $\beta\star X_{b-1}$. Since there is a unique $\mathcal{N}$-orbit of lifts of $\ov Q$ there exists $g\in \mathcal{N}$ mapping $Q'$ to $Q$. Thus we can apply $g$ to $\mathfrak{P}'$ and assume that $Q=Q'$. Moreover, let $x'\in \mathfrak{P}'$ be the lift of $\ov x$, and $x$ the corresponding lift inside $X_{b-1}$. If we apply Lemma \ref{uniqueliftxspec} to $x,x',y$ and $S=\beta\star R$ we find some element $g\in \mathcal{N}$ that maps $x'$ to $x$ and fixes $Q$. Thus we can apply $g$ to $\mathfrak{P}'$ and assume that also $x=x'$.\\
Now, let $G_i=\mathfrak{P}'\cup X_{b-1}$. Let $\alpha_1^i\in \mathfrak{P}'$ be the lift of $\ov{\alpha}_1$ inside the special pentagon. Lemma \ref{completion} provides a copy $X_b^i$ which extends $\beta\star X_{b-1}$ and contains $\alpha_1^i$. By Lemma \ref{alfaiunica}, $X_b^i$ also contains $\alpha_i$, therefore $G_i\subset X_b^i$. Now, consider the copies $X_b^2$ and $X_b^j$, for $3\le j\le b-3$. These copies coincide on $\beta\star X_{b-1}$, and by construction $\alpha_1^2$ and $\alpha_1^j$ project to the same element $\ov{\alpha}_1$. Therefore, if we apply Lemma \ref{uniqueliftxspec} with $x=\alpha_1^2$, $x'=\alpha_1^j$, $y=\beta$ and $S=X_{b-1}$, we find an element $g\in \mathcal{N}$ that maps $\alpha_1^2$ to $\alpha_1^j$ and fixes $\beta\star X_{b-1}$. Then we conclude that $g(X_{b}^2)=X_b^j$ by Lemma \ref{alfaiunica}, which implies that the projection of $X_b^2$ coincides with $\ov{X}_b$ also on $\overline{\alpha_j}$. Since we may do this for every $3\le j\le b-3$ we see that $X_b^2$ is the lift we were looking for.
\end{proof}

\subsection{Uniqueness of the lift}\label{section:uniquelift}
\begin{proof}[Proof of uniqueness]
Again we proceed by induction. First we discuss the base case $b=5$.
\begin{lemma}
If $\N$ is deep enough, any two lifts of $\ov X_5$ inside $\C_5$ differ by an element of $\mathcal{N}$.
\end{lemma}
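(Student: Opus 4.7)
The plan is to match four of the five curves of the two lifts by successive $\N$-moves and then invoke Lemma \ref{5alfaiunica}, which says that two copies of $X_5$ sharing four curves must coincide. Index $\ov X_5$ cyclically as $\ov v_1,\ldots,\ov v_5$ (consecutive indices giving the intersecting pairs of the chain) and label the two lifts $X_5=\{v_1,\ldots,v_5\}$ and $X_5'=\{v_1',\ldots,v_5'\}$ so that $\pi(v_i)=\pi(v_i')=\ov v_i$. This indexing is consistent in both lifts because $\pi$ restricts to an isomorphism on each copy of $X_5$ by Theorem \ref{projisometry}, so their chain structures correspond.

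First I would apply some $g_0\in\N$ with $g_0(v_1)=v_1'$ to arrange $v_1=v_1'$. Since $v_1,v_3$ are non-consecutive in the chain and hence disjoint, both $\{v_1,v_3\}$ and $\{v_1,v_3'\}$ are ordered simplices of $\C_5$ lifting the same ordered simplex of $\C_5/\N$, so the uniqueness of $\N$-orbits of simplex lifts (Theorem \ref{mgonlift}) produces $g_1\in\N$ fixing $v_1$ and sending $v_3$ to $v_3'$. To match $v_4$ without disturbing $v_1,v_3$, I would observe that $v_4$ has special intersection with $v_3$ with respect to the facet $P=\{v_1\}$, witnessed by the auxiliary curves $v_5,v_2$ of $X_5$ (each meeting only $v_1$ in $P$), and the same configuration exists for $v_4'$; then Lemma \ref{uniqueliftxspec} with $y=v_3$ and $S=P=\{v_1\}$ gives $g_2\in\N$ fixing $v_1,v_3$ and sending $v_4$ to $v_4'$. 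A symmetric inspection shows $v_2$ has special intersection with $v_1$ with respect to $P=\{v_4\}$, witnessed by $v_3,v_5$; taking $S=\{v_3,v_4\}\subseteq\link(v_1)$, Lemma \ref{uniqueliftxspec} yields $g_3\in\N$ fixing $v_1,v_3,v_4$ and mapping $v_2$ to $v_2'$. The two lifts now share the four curves $v_1,v_2,v_3,v_4$, so Lemma \ref{5alfaiunica} forces $v_5=v_5'$, and $g_3g_2g_1g_0$ is the desired element of $\N$.

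The hard part is the third and fourth matchings: a naive simplex-lift would generally move one of the already-matched curves, and $X_5$ contains no $3$-simplex of $\C_5$ (any three vertices of a $5$-chain must include an intersecting pair), so one cannot simply enlarge the simplex being lifted. Lemma \ref{uniqueliftxspec} is tailored precisely to moving a single curve while keeping a prescribed set of others fixed, provided the required special-intersection configuration can be witnessed inside a copy of $X_5$; the chain combinatorics of $X_5$ supplies this witness entirely from within the rigid set itself, which is what allows the lemma to apply in both adjustment steps.
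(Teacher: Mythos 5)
Your proof is correct and follows essentially the same strategy as the paper's: match an initial disjoint pair by uniqueness of the $\N$-orbit of edge lifts, then apply Lemma \ref{uniqueliftxspec} twice (with the special-intersection configurations witnessed internally by the pentagon's chain structure) to match two more vertices while freezing the already-matched ones, and finish with Lemma \ref{5alfaiunica}. The only differences from the paper's argument are cosmetic — chain indexing rather than pentagon indexing, and slightly different but equally valid choices of $y$, $P$, and $S$ in the two applications of Lemma \ref{uniqueliftxspec}.
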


\begin{proof}
The proof is just a sequence of iterations of Lemma \ref{uniqueliftxspec}. More precisely, let $\{\overline{\gamma}_i\}_{i=1,\ldots,5}$, be the vertices of $\ov X_5$, and let $\gamma_i$ and $\gamma_i'$ be lifts that form two pentagons $X_5$ and $X_5'$. Up to applying some element of $\mathcal{N}$ to $X_5'$ we may assume that $\gamma_1=\gamma_1'$ and $\gamma_2=\gamma_2'$, because the edge $\ov\gamma_1,\ov\gamma_2$, which is a simplex of dimension $1$, admits a unique orbit of lifts by Lemma \ref{mgonlift}. Now, if we apply Lemma \ref{uniqueliftxspec} with $x=\gamma_3$, $x'=\gamma_3'$, $y=\gamma_1$ and $S=P=\gamma_2$ we find some element $g\in\N$ mapping $\gamma_3$ to $\gamma_3'$ and fixing $\gamma_1,\gamma_2$, so we can apply $g$ to $X_5$ and assume that $\gamma_3=\gamma_3'$. Moreover, if we repeat the argument with $x=\gamma_4$, $x'=\gamma_4'$, $y=\gamma_2$, $P=\gamma_3$ and $S=\{\gamma_1,\,\gamma_3\}$ we may also assume that $\gamma_4=\gamma_4'$. Now $X_5$ and $X_5'$ coincide on four curves, and therefore they must coincide because of Lemma \ref{5alfaiunica}.
\end{proof}

Now assume that $b\ge6$ and every copy of $X_{b-1}$ inside $\C_{b-1}/\N(S_{b-1})$ has a unique lift inside $\mathcal{C}_{b-1}$. The next step is the following:
\begin{lemma} If $\overline\beta\in\ov{X}_b$ is a minimal vertex, then $\overline G=\overline\beta\star\ov{X}_{b-1}$ has a unique lift inside $\C_b$, up to elements in $\mathcal{N}$.
\end{lemma}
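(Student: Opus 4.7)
The plan is to use a two-stage reduction: first align the two lifts of the minimal vertex $\ov\beta$ using Theorem \ref{mgonlift}, and then invoke the inductive hypothesis inside the link of $\beta$, which by the previous lemma and Corollary \ref{CUdt} is identified with $\C_{b-1}$ projecting onto $\C_{b-1}/\N(S_{b-1})$. Let $G=\beta\star X_{b-1}$ and $G'=\beta'\star X_{b-1}'$ be two lifts of $\ov G$ inside $\C_b$.

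For the first stage I would apply the first bullet of Theorem \ref{mgonlift} to the single-vertex simplex $\ov\beta$ to obtain $g_1\in\N$ with $g_1(\beta')=\beta$. Replacing $G'$ by $g_1(G')$, which is again a lift of $\ov G$ since $g_1$ projects to the identity in the quotient, I may assume $\beta=\beta'$. Then $X_{b-1}$ and $g_1(X_{b-1}')$ are both copies of $X_{b-1}$ inside $\link_{\C_b}(\beta)$, both projecting onto the same $\ov X_{b-1}\subseteq\link_{\C_b/\N}(\ov\beta)$.

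For the second stage I would use the identifications $\link_{\C_b}(\beta)\cong\C_{b-1}$ (from the preceding lemma, valid since $\ov\beta$ is minimal) and $\link_{\C_b}(\beta)/\N(S_{b-1})\cong\link_{\C_b/\N}(\ov\beta)$ (from Corollary \ref{CUdt}). Under these identifications, $X_{b-1}$ and $g_1(X_{b-1}')$ become lifts of the same copy of $X_{b-1}$ inside $\C_{b-1}/\N(S_{b-1})$, so the inductive hypothesis provides $g_2\in\N(S_{b-1})$ with $g_2(g_1(X_{b-1}'))=X_{b-1}$. Since $\N(S_{b-1})$ is generated by mapping classes supported in the complement of $\beta$ (and, by Remark \ref{gendatwist}, these generators are powers of Dehn twists around such curves), the element $g_2$ fixes $\beta$ pointwise. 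Therefore $g_2 g_1\in\N$ maps $G'$ to $G$, as required.

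The main technical point that needs care is the verification that $\N(S_{b-1})$ inherits the deep-enough condition from $\N$, so that the inductive hypothesis is truly applicable in $\C_{b-1}/\N(S_{b-1})$. This should follow from the local nature of the complexity reduction in Proposition \ref{cor3.6}, since the Dehn twists that the conclusion of that proposition produces for a curve $x$ lying on the subsurface cut out by $\beta$ may themselves be chosen to be supported on that subsurface; but it is the one step beyond purely formal bookkeeping, and is where I expect the technical work to concentrate.
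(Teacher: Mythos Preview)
Your proposal is correct and matches the paper's proof essentially line for line: align $\beta$ with $\beta'$ via the uniqueness of lifts of simplices, then apply the inductive hypothesis inside $\link_{\C_b}(\beta)\cong\C_{b-1}$ using Corollary \ref{CUdt}, and finally extend the resulting element of $\N(S_{b-1})$ by the identity on the twice-punctured disk to obtain an element of $\N$. Regarding your flagged technical point, the paper does not isolate the inheritance of the deep-enough condition as a separate lemma either; it is absorbed into the inductive framework already set up in the existence half of the proof of Theorem \ref{lifting}, where the same identification and the same appeal to the inductive hypothesis on $\C_{b-1}/\N(S_{b-1})$ are used.
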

\begin{proof}
Let $G=\beta\star X_{b-1}$ and $G'=\beta'\star X_{b-1}'$ be two lifts of $\overline G$. Up to an element of $\mathcal{N}$ we may assume that $\beta=\beta'$. Now $X_{b-1}$ and $X_{b-1}'$ are in $\link_{C_b}(\beta)\cong\mathcal{C}_{b-1}$, where again we see $S_{b-1}$ as the surface obtained by shrinking $\beta$ to a puncture. Moreover, as already noticed the projection of $\link_{\C_b}(\beta)$ is isomorphic to $\C_{b-1}/\N(S_{b-1})$, thus by induction there is an element $h\in \mathcal{N}(S_{b-1})$ that maps $X_{b-1}$ to $X_{b-1}'$. If we extend $h$ to be the identity on the twice-punctured disk bounded by $\beta$ we get a mapping class $H\in \mathcal{N}(S_b)$ that maps $G$ to $G'$, as required.
\end{proof}

Now we are finally able to prove the uniqueness part of Theorem \ref{lifting}. Choose two lifts $X_b$ and $X_b'$ of $\ov{X}_b$. By the previous lemma we may assume that they coincide on $\beta\star X_{b-1}$. Now apply Lemma \ref{uniqueliftxspec} with $x=\alpha_1$, $x'=\alpha_1'$, $y=\beta$ and $S=X_{b-1}$, so that we may assume that $X_b$ and $X_b'$ coincide also on $\alpha_1$. Finally, by Lemma \ref{alfaiunica} we see that the two lifts now coincide.
\end{proof}

\section{Combinatorial rigidity}\label{sec:comb_rig}
The aim of this section is to prove Theorem \ref{mainthm} (combinatorial rigidity).

\subsection{The four-punctured case}
Firstly, we show an analogue of Theorem \ref{mainthm} for the special case $b=4$. The proof will highlight the core of the general case, though the latter will require some more machinery. The subgraph playing the role of the rigid set will be any triangle $T$ inside the Farey complex, since the following well-known result holds (for example, it follows from the discussions in \cite[Section 3]{AL} and in \cite[Section 3.4]{FarbMargalit}):
\begin{theorem}\label{Trig}
    Given two triangles $T,T'\subset\C_4$ there exists an extended mapping class $h\in MCG^\pm(S_{0,4})$ mapping $T$ to $T$. Any two such $h$ differ by an element of the Klein four-group $\mathcal{K}$, generated by the classes of the two hyperelliptic involutions in Figure \ref{fig:Klein}.
\end{theorem}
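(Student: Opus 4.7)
The plan is to reduce the statement to the classical description of $MCG^\pm(S_{0,4})$ and its action on the Farey graph. The key identification, obtained via the hyperelliptic double cover $T^2 \to S_{0,4}$ branched at the four punctures, is that essential simple closed curves on $S_{0,4}$ correspond bijectively to slopes in $\mathbb{Q} \cup \{\infty\}$, and intersection number $2$ in $S_{0,4}$ corresponds to the determinant $\pm 1$ condition on slopes. This makes $\C_4$ canonically isomorphic to the $1$-skeleton of the Farey tessellation of $\mathbb{H}^2$.

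Next I would invoke the well-known structure theorem $MCG^\pm(S_{0,4}) \cong PGL_2(\mathbb{Z}) \ltimes \mathcal{K}$, under which the quotient $PGL_2(\mathbb{Z})$ realises the action on the Farey graph. The containment of $\mathcal{K}$ in the kernel is easy, since each hyperelliptic involution is a permutation of the punctures that preserves each of the three possible puncture separations and hence fixes every isotopy class of curve; the converse faithfulness of the induced $PGL_2(\mathbb{Z})$-action on the Farey tessellation is classical.

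For existence I would verify that $PGL_2(\mathbb{Z})$ acts transitively on \emph{ordered} Farey triangles: given two such ordered triples $(p_i/q_i)$ and $(p_i'/q_i')$, the matrix in $GL_2(\mathbb{Z})$ sending the primitive column vectors $(p_i, q_i)^\top$ to $(p_i', q_i')^\top$ for $i=1,2$ exists by the Farey determinant conditions, and it automatically carries the third vertex correctly via the mediant relation. Pulling back along the surjection $MCG^\pm(S_{0,4}) \twoheadrightarrow PGL_2(\mathbb{Z})$ then produces the desired mapping class $h$. For uniqueness, if $h, h'$ both send $T$ to $T'$ then $h^{-1}h'$ pointwise-stabilises $T$, so its image in $PGL_2(\mathbb{Z})$ is a M\"obius transformation fixing three distinct points of $\mathbb{P}^1(\mathbb{Q})$; this forces it to be trivial, whence $h^{-1}h' \in \mathcal{K}$.

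The only substantive obstacle is the structure theorem $MCG^\pm(S_{0,4}) \cong PGL_2(\mathbb{Z}) \ltimes \mathcal{K}$ together with the faithfulness of the induced action on $\C_4$; both facts are classical and can be extracted from \cite[Section 3.4]{FarbMargalit} or from the discussion in \cite[Section 3]{AL} cited just before the theorem.
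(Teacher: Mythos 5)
Your route---the Farey-graph identification via the torus double cover and the structure theorem $MCG^\pm(S_{0,4}) \cong PGL_2(\mathbb{Z}) \ltimes \mathcal{K}$---is the standard one and is in line with the references the paper invokes. However, the argument that $\mathcal{K}$ lies in the kernel of the action on $\C_4$ does not work as stated: preserving the three puncture separations does not imply fixing every isotopy class of curve. Every pure mapping class, in particular every Dehn twist, also preserves all three puncture separations while acting non-trivially on curves; there are infinitely many curves inducing each separation, which is precisely the content of Remark~\ref{puntsepdiverse}. The correct classical justification is that each hyperelliptic involution can be realised as an order-two rotation carrying every simple closed curve to an isotopic one, or equivalently that it lifts to a translation of $T^2$ by a half-lattice vector, which acts trivially on $H_1(T^2)$ and hence on slopes.

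Separately, the uniqueness step (``if $h,h'$ both send $T$ to $T'$ then $h^{-1}h'$ pointwise-stabilises $T$'') silently upgrades ``sends $T$ to $T'$'' to an ordered statement; for the mere set-wise condition, $h^{-1}h'$ only lands in the preimage of the $S_3$ stabiliser of a Farey triangle in $PGL_2(\mathbb{Z})$, a group of order $24$, not $4$. The ordered interpretation is the one the paper actually uses in Propositions~\ref{surjn4} and~\ref{injn4}, so you are proving the right thing, but the existence step should then match: of the two choices (up to $\pm I$) of $M \in GL_2(\mathbb{Z})$ sending $(v_1,v_2)$ to $(\pm w_1,\pm w_2)$, only one sends the mediant $\pm v_1 \pm v_2$ to the intended third vertex $w_3$ while the other sends it to the opposite mediant, so ``automatically'' overstates the case and the sign must be chosen.
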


\begin{figure}[htp]
    \centering
    \includegraphics[width=0.75\textwidth]{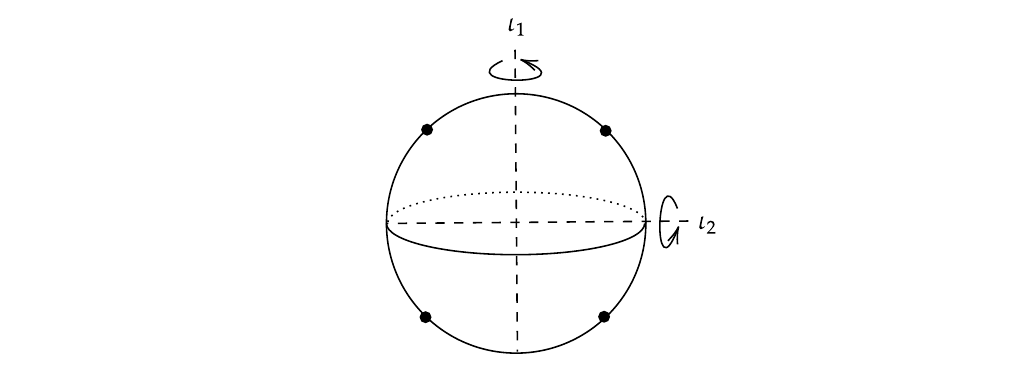}
    \caption{The hyperelliptic involutions $\iota_1,\iota_2$ generate a Klein four-group $\mathcal{K}$ which is the kernel of the action of $MCG^\pm$ over the curve graph.}
    \label{fig:Klein}
\end{figure}

Now we want to prove the following:
\begin{theorem}\label{mainthmb4}
There exists $K_0\in\mathbb{N}_{>0}$ such that if $K$ is a non-trivial multiple of $K_0$ the action $MCG^\pm(S_{0,4})/DT_K\to\text{Aut}(\C(S_{0,4})/DT_K)$ is an epimorphism, with kernel  the projection of the Klein four-group $\mathcal{K}$, generated by the classes of the two hyperelliptic involutions in Figure \ref{fig:Klein}.
\end{theorem}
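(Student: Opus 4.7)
The strategy mirrors the outline sketched just before the theorem, with triangles of the Farey complex $\C_4$ playing the role of the finite rigid set $X_b$. The two key inputs are Lemma \ref{trianglelift}, giving a unique $DT_K$-orbit of labeled lifts of triangles, and Theorem \ref{Trig}, which rigidifies triangles in $\C_4$.

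For surjectivity, I would fix $\phi\in\text{Aut}(\C_4/DT_K)$ together with a triangle $\overline T=\{\overline\gamma_1,\overline\gamma_2,\overline\gamma_3\}$ of $\C_4/DT_K$; its image $\phi(\overline T)$ is again a triangle. Using Lemma \ref{trianglelift} twice, lift $\overline T$ to a labeled triangle $T=\{\gamma_1,\gamma_2,\gamma_3\}$ and $\phi(\overline T)$ to a labeled triangle $T'=\{\gamma_1',\gamma_2',\gamma_3'\}$ with $\gamma_i'$ projecting to $\phi(\overline\gamma_i)$. Theorem \ref{Trig} then supplies $h\in MCG^\pm(S_{0,4})$ with $h(\gamma_i)=\gamma_i'$ for every $i$, so $\overline h$ and $\phi$ coincide on $\overline T$.

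To propagate the equality $\overline h=\phi$ to all of $\C_4/DT_K$, I would induct along the \emph{triangle dual graph}, in which two triangles are joined whenever they share an edge. In $\C_4$ this is the trivalent tree dual to the Farey tessellation, hence connected, and its image in the quotient is connected as well. Suppose $\overline T''$ shares an edge $\overline e$ with a triangle $\overline T_0$ on which $\overline h=\phi$ has already been established, let $T_0$ be the labeled lift from the inductive step, and let $e\subset T_0$ be its lift of $\overline e$. Using Lemma \ref{mgonlift} together with the uniqueness up to $DT_K$ of labeled edge lifts, choose a labeled lift $T''$ of $\overline T''$ containing $e$. Then $h(T'')$ is a triangle of $\C_4$ containing $h(e)$ and distinct from $h(T_0)$, and any labeled lift $T'''$ of $\phi(\overline T'')$ containing $h(e)$ is a triangle with the same two properties. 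Since each edge of the Farey complex lies in exactly two triangles, $h(T'')=T'''$ as unlabeled triangles; the labels agree on $h(e)$ by the inductive hypothesis and are then forced to agree on the remaining vertex. Hence $\overline h$ and $\phi$ agree on $\overline T''$, and since every vertex of $\C_4/DT_K$ lies in some triangle, $\overline h=\phi$ holds everywhere.

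For the kernel, the inclusion $\overline{\mathcal K}\subseteq\ker$ is immediate from Theorem \ref{Trig}, since $\mathcal K$ pointwise fixes $\C_4$. Conversely, if $\overline h$ acts trivially then for any labeled lift $T$ of a triangle $\overline T$ the image $h(T)$ is again a labeled lift of $\overline T$, so Lemma \ref{trianglelift} provides $g\in DT_K$ with $g(\gamma_i)=h(\gamma_i)$ for every $i$; then $g^{-1}h$ pointwise stabilizes $T$ and Theorem \ref{Trig} places it in $\mathcal K$, so $\overline h\in\overline{\mathcal K}$. That $\overline{\mathcal K}$ remains a Klein four-group after projection is the content of Remark \ref{rmk:lifts_exist?}: the hyperelliptic involutions permute punctures while $DT_K$, being generated by powers of Dehn twists (Remark \ref{gendatwist}), fixes each puncture. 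The main subtlety is the propagation step for surjectivity, where one must systematically track \emph{labeled} lifts in order to compare $h(T'')$ and $T'''$ as labeled triangles rather than merely up to a $DT_K$-translate that could a priori permute the three vertices.
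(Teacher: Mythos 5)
Your proof is correct and follows essentially the same route as the paper's: lift a triangle and its $\phi$-image via Lemma \ref{trianglelift}, apply Theorem \ref{Trig} to get a candidate $h$, propagate the agreement $\overline h=\phi$ along triangles sharing an edge using the two-triangles-per-edge fact, and for the kernel use uniqueness of the $DT_K$-orbit of lifts plus the "moreover" clause of Theorem \ref{Trig}. The only cosmetic difference is where the two-triangle comparison happens: you lift $\phi(\overline T'')$ upstairs and compare triangles inside the Farey complex, whereas the paper compares directly in the quotient via Lemma \ref{unicotriang}(2); the labeling bookkeeping you emphasize is implicitly present in the paper as well (Theorem \ref{Trig} and Lemma \ref{trianglelift} are used in their ordered form).
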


We split the proof in two propositions below.

\begin{prop}\label{surjn4}
For any normal subgroup $\N$ deep enough, any automorphism $\overline\phi\in\text{Aut}(\mathcal{C}_4/\mathcal{N})$ is induced by an element of $MCG^\pm(S_4)$.
\end{prop}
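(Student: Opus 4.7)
The plan is to build $h \in MCG^\pm(S_{0,4})$ from the data of $\overline\phi$ applied to a single triangle, and then verify that $h$ induces $\overline\phi$ globally by propagating through the Farey complex via triangle flips.

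First, I would fix a triangle $T_0 = \{a_0, b_0, c_0\} \subset \C_4$. Since $\overline\phi$ is a graph automorphism, $\overline\phi(\pi(T_0))$ is again a triangle in $\C_4/\N$, which lifts to a triangle $T_0' \subset \C_4$ by Lemma \ref{trianglelift}. Theorem \ref{Trig} then furnishes an element $h \in MCG^\pm(S_{0,4})$ with $h(T_0) = T_0'$. It remains to show $\pi \circ h = \overline\phi \circ \pi$ on every vertex of $\C_4$; this holds on $T_0$ by construction, and since the dual graph of the Farey triangulation of $\mathbb{H}^2$ is connected (indeed, a tree), every triangle of $\C_4$ can be reached from $T_0$ through a sequence of edge-adjacent triangles, so it suffices to establish the equality one flip at a time.

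For the inductive step, I would assume the equality holds on a triangle $T = \{a, b, c\}$ and let $T' = \{a, b, c'\}$ be the unique other triangle in $\C_4$ sharing the edge $\{a, b\}$ with $T$. The image $\pi h(T')$ is a set of three mutually adjacent vertices in $\C_4/\N$ containing the edge $\{\overline\phi\pi(a), \overline\phi\pi(b)\}$, as is the triangle $\overline\phi(\pi(T'))$; I must show they agree. If $\pi h(c')$ is distinct from $\overline\phi\pi(a)$ and $\overline\phi\pi(b)$, then $\pi h(T')$ is a genuine triangle, and lifting $\overline\phi(\pi(T'))$ via Lemma \ref{trianglelift} and adjusting by $\N$ so that its shared edge with $h(T)$ becomes $\{h(a), h(b)\}$ — possible thanks to uniqueness of $\N$-orbits of lifts of edges — forces the third vertex of the lift to be one of the two Farey-neighbors $h(c), h(c')$ of $\{h(a), h(b)\}$. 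If it is $h(c')$, then $\overline\phi(\pi(T')) = \pi h(T')$ and we are done; if it is $h(c)$, then $\overline\phi(\pi(T')) = \overline\phi(\pi(T))$, which by injectivity of $\overline\phi$ forces $\overline{c'} = \overline c$, and by normality of $\N$ in $MCG^\pm$ (which holds for $\N = DT_K$, since $K$-th powers of Dehn twists are invariant up to sign under orientation-reversing maps) the vertices $h(c), h(c')$ lie in the same $\N$-orbit, yielding $\pi h(c') = \pi h(c) = \overline\phi(\overline c) = \overline\phi(\overline{c'})$. The remaining degenerate sub-case, where $\pi h(c')$ equals $\overline\phi\pi(a)$ or $\overline\phi\pi(b)$, reduces by the same normality argument to $\overline{c'} = \overline a$ (or $\overline b$), from which $\pi h(c') = \overline\phi(\overline{c'})$ again follows immediately.

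The main obstacle I anticipate is precisely this case analysis and ensuring that no apparent "swap" of the two Farey-neighbors by $\overline\phi$ can occur without already being reflected in the quotient. Normality of $\N$ in $MCG^\pm$ is the crucial technical ingredient: it converts identifications of $h$-images under $\N$ into identifications of the original vertices under $\N$, which collapses every seemingly bad configuration into a vacuous one. Once the equality $\pi h = \overline\phi \pi$ is known on all vertices, $h$ (viewed in $MCG^\pm(S_4)/\N$) maps onto $\overline\phi$, completing surjectivity.
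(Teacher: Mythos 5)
Your strategy matches the paper's: build $h$ from one triangle via Lemma \ref{trianglelift} and Theorem \ref{Trig}, then propagate $\pi h = \overline\phi\pi$ across the Farey complex one flip at a time. The divergence is in the flip step, where your case analysis is more elaborate than necessary and hides a real gap.

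Both sub-cases you raise are in fact vacuous. The ``degenerate'' one (where $\pi h(c')$ collides with $\overline\phi\pi(a)$ or $\overline\phi\pi(b)$) cannot occur: $h(c')$ is a Farey-neighbour of $h(a)$ and $h(b)$, and by Lemma \ref{projpuntsep}(1) adjacency is preserved under $\pi$, so $\pi h(c')$ stays at distance $1$ from (hence distinct from) both. The ``swap'' one (third vertex of the lift equal to $h(c)$) would force $\pi(T')=\pi(T)$, which is impossible: by Lemma \ref{trianglelift} some $g\in\N$ takes $T$ to $T'$, and $g$ preserves punctures separations by Remark \ref{puntsepdiverse}; since $c$ and $c'$ are precisely the two curves of $\{a,b,c,c'\}$ sharing a punctures separation, $g$ must fix $a$ and $b$. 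But an orientation-preserving mapping class of $S_{0,4}$ that fixes an edge of the Farey complex pointwise fixes the whole complex (the observation of Remark \ref{rmk:lifts_exist?}), so $g(c)=c$, contradicting $g(c)=c'\ne c$. Once $\pi(T)\ne\pi(T')$ is established, the flip step follows immediately from Lemma \ref{unicotriang} together with injectivity of $\overline\phi$, which is exactly the paper's argument.

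The genuine gap in your write-up is the appeal to $\N\unlhd MCG^\pm$ in order to conclude $hgh^{-1}\in\N$. Definition \ref{Ndeep} only grants $\N\unlhd MCG$, and your parenthetical covers $DT_K$ but not an arbitrary deep-enough $\N$; the conjugating element $h$ may well be orientation-reversing. Because this step sits inside the vacuous ``swap'' sub-case the final statement is unaffected, but as written your proof rests on a hypothesis the proposition does not supply. Proving $\pi(T)\ne\pi(T')$ as above both collapses your case analysis and sidesteps the normality question entirely.
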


The proof relies on the following:
\begin{lemma}\label{unicotriang} The following facts hold:
\begin{enumerate}
    \item \label{item1} Any two adjacent curves $\alpha,\,\beta\in \mathcal{C}_4$ belong to exactly two triangles.
    \item \label{item2} Any two adjacent vertices $\overline\alpha,\,\overline\beta\in \mathcal{C}_4/\mathcal{N}$ belong to at most two triangles (namely, the projections of the two triangles that complete some of their lifts).
\end{enumerate}
\end{lemma}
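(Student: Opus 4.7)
Part (\ref{item1}) is a classical fact about the curve graph of the four-punctured sphere, which is isomorphic to the Farey complex (each vertex corresponding to a slope in $\mathbb{Q} \cup \{\infty\}$, edges joining Farey-adjacent slopes): every edge of the Farey tessellation of $\mathbb{H}^2$ is shared by exactly two triangles. By the change of coordinates principle, it suffices to verify this for one specific pair of adjacent curves $\alpha,\beta$, where one can directly list the two isotopy classes of curves disjoint from both.

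For part (\ref{item2}) my approach is to reduce to (\ref{item1}) via the lifting machinery already developed. Suppose for contradiction that we have three distinct triangles $\overline{T}_j = \{\overline\alpha, \overline\beta, \overline\gamma_j\}$, $j=1,2,3$, in $\C_4/\N$ all sharing the edge $\overline{e} = \{\overline\alpha, \overline\beta\}$. Fix once and for all a lift $e = \{\alpha, \beta\}$ of $\overline{e}$ in $\C_4$; by Theorem \ref{mgonlift} any other lift of $\overline{e}$ is an $\N$-translate of $e$. For each $j$, pick an arbitrary lift of $\overline{T}_j$ inside $\C_4$; the sub-edge lifting $\overline{e}$ is in the $\N$-orbit of $e$, so by applying a suitable element of $\N$ we get a lift $T_j = \{\alpha, \beta, \gamma_j\}$ that extends our fixed edge $e$. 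Now by part (\ref{item1}), the set $\{\gamma_1, \gamma_2, \gamma_3\}$ contains at most two distinct vertices of $\C_4$, so at least two of the $T_j$ coincide as subsets of $\C_4$, hence their projections coincide in $\C_4/\N$, contradicting the assumed distinctness of the $\overline{T}_j$.

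The only delicate point is that the adjustment performed on each $\overline{T}_j$ must be an element of $\N$, so as not to alter what happens downstairs after projection. This is precisely guaranteed by the uniqueness of the $\N$-orbit of lifts of a single edge (Theorem \ref{mgonlift}, first bullet, in the $b=4$ case), together with Lemma \ref{trianglelift} which ensures that the resulting $T_j$ is a genuine lift of $\overline{T}_j$. I do not foresee any substantial obstacle: the lemma is essentially a direct combination of the unique-orbit-of-lifts results for edges and triangles with the elementary local combinatorics of the Farey complex.
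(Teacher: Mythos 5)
Your proof is correct and follows essentially the same route as the paper: lift an arbitrary triangle containing $\overline\alpha,\overline\beta$, use the unique $\N$-orbit of lifts of the edge (Theorem \ref{mgonlift}) to move it onto a fixed lift $\{\alpha,\beta\}$, and then invoke part (\ref{item1}). The only cosmetic difference is that you phrase it as a contradiction with three triangles while the paper argues directly that every $\overline T$ must project from one of the two upstairs triangles; also, your appeal to Lemma \ref{trianglelift} is superfluous (the $\N$-translate of a lift is automatically a lift), but this is harmless.
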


\begin{proof}
Item (\ref{item1}) is true by construction of the Farey complex. Regarding Item (\ref{item2}) fix a pair $\alpha,\,\beta$ of adjacent lifts of $\overline\alpha,\,\overline\beta$. Any triangle $\overline{T}=\{\overline{\alpha},\,\overline\beta,\,\overline\gamma\}$ has a lift $\{\alpha',\,\beta',\,\gamma'\}$, and by uniqueness of the orbit of lifts of an edge (which is a particular case of Theorem \ref{mgonlift}) we may assume that $\alpha=\alpha'$ and $\beta=\beta'$. Therefore $\overline T$ is the projection of one of the two triangles containing $\alpha,\,\beta$.
\end{proof}

\begin{proof}[Proof of Proposition \ref{surjn4}]
Let $T\subseteq\mathcal{C}_4$ be a triangle, and let $\overline{T}$ be its projection, which is still an isometrically embedded triangle by Item (\ref{i1}) of Lemma \ref{projpuntsep}. Let $\widetilde{T}$ be a lift of $\overline{\phi}(\overline T)$, which exists by Theorem \ref{mgonlift}. By Theorem \ref{Trig} there is a mapping class $h\in MCG^\pm(S_4)$ mapping $T$ to $\widetilde{T}$, which means that $\overline{\phi}$ coincides with the induced map $\overline{h}$ on $\overline{T}$. In other words, we showed that the following diagram commutes on $T\subset\mathcal{C}_4$:
\begin{equation}\begin{tikzcd}\label{comm4}
    \mathcal{C}_4\ar{d}{\pi}\ar{r}{h}&\mathcal{C}_4\ar{d}{\pi}\\
    \mathcal{C}_4/\mathcal{N}\ar{r}{\overline{\phi}}&\mathcal{C}_4/\mathcal{N}
\end{tikzcd}\end{equation}
Our next goal is to show that, if the diagram commutes on some triangle $T$, then it commutes on any triangle $T'$ that shares an edge with $T$. If we prove the claim we are done, since any triangle in the Farey complex maps to a fixed triangle via a finite sequence of reflections along sides. Let $\alpha,\,\beta\in T$ be the vertices of the common edge. Notice that $h$, which is a graph automorphism, must map $T'$ to the triangle $\widetilde{T}$ that contains $h(\alpha)$ and $h(\beta)$ and is not $\widetilde{T}$, and similarly $\overline\phi$ must map $\pi(T')$ to $\pi(\widetilde{T}')$ since there are no other triangles containing $\pi(\alpha)$ and $\pi(\beta)$. Therefore Diagram (\ref{comm4}) commutes also on $T'$, as required.
\end{proof}

\begin{prop}\label{injn4}
For $b=4$ and any $\N$ deep enough, if $g\in MCG^\pm$ induces the identity on $\C_4/\N$, then $g\in \mathcal{N}\mathcal{K}$.
\end{prop}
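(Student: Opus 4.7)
The plan is to pick a reference triangle $T$ in the Farey complex, use the uniqueness of the $\N$-orbit of lifts of \emph{ordered} simplices to cancel $g$ against an element of $\N$ so that what remains fixes $T$ pointwise, and then invoke Theorem \ref{Trig} to identify this residual mapping class with an element of $\mathcal{K}$.

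In detail: fix any ordered triangle $T=(\alpha,\beta,\gamma)\subset \C_4$. Since $g$ acts as the identity on $\C_4/\N$, we have $\pi(g(\alpha))=\pi(\alpha)$, $\pi(g(\beta))=\pi(\beta)$, and $\pi(g(\gamma))=\pi(\gamma)$, so the ordered triangles $T$ and $g(T)$ are two lifts of the same ordered simplex $\pi(T)\subset \C_4/\N$. By the first bullet of Theorem \ref{mgonlift} (equivalently, Lemma \ref{trianglelift}), such an ordered triangle has a unique $\N$-orbit of lifts, so there exists $n\in\N$ with $n(\alpha)=g(\alpha)$, $n(\beta)=g(\beta)$, and $n(\gamma)=g(\gamma)$. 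Then $n^{-1}g$ fixes each of $\alpha,\beta,\gamma$, and by the \emph{moreover} clause of Theorem \ref{Trig} any mapping class fixing a triangle pointwise belongs to $\mathcal{K}$. Hence $n^{-1}g\in \mathcal{K}$ and $g\in \N\mathcal{K}$.

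The main subtlety is that one must use the uniqueness of lifts of \emph{ordered} simplices, not merely unordered ones; otherwise the best we could extract is an $n\in \N$ with $n(T)=g(T)$ as unordered sets, in which case $n^{-1}g$ would only fix $T$ setwise, and such a mapping class could permute the three curves of $T$ nontrivially, going well beyond $\mathcal{K}$. Correspondingly, the appearance of $\mathcal{K}$ in the conclusion is unavoidable: every element of $\mathcal{K}$ acts trivially on $\C_4$, hence on $\C_4/\N$, so the induced action of $MCG^\pm(S_{0,4})$ on $\C_4/\N$ cannot distinguish $g$ from $gk$ for $k\in\mathcal{K}$. Modulo this bookkeeping about orderings, the argument is essentially a direct combination of Lemma \ref{trianglelift} and Theorem \ref{Trig}.
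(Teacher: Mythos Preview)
Your proof is correct and follows essentially the same argument as the paper: use uniqueness of the $\N$-orbit of lifts of a triangle (Lemma~\ref{trianglelift}) to find $n\in\N$ with $n^{-1}g$ fixing the triangle pointwise, then invoke the ``moreover'' part of Theorem~\ref{Trig}. One minor correction: a triangle in $\C_4$ is a $3$-cycle in the Farey graph, not a simplex (maximal simplices there are single vertices), so the first bullet of Theorem~\ref{mgonlift} does not literally apply---the relevant statement is Lemma~\ref{trianglelift}, which you also cite.
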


\begin{proof}
Choose a copy $\ov T\subset \C/\N$ of the rigid set, and let $T$ be one of its lifts. Then $g(T)$ is also a lift of $\ov T$, and by Lemma \ref{trianglelift} there is an element $h\in \mathcal{N}$ mapping $T$ to $g(T)$. Hence $h^{-1}\circ g$ is the identity on $T$, and therefore belongs to $\mathcal{K}$ by the "moreover" part of Theorem \ref{Trig}.
\end{proof}

While the injectivity part of the case $b\ge5$ will be very similar to the proof of Proposition \ref{injn4}, for the surjectivity part we will need a more general version of Lemma \ref{unicotriang}. The following subsections are devoted to establish such a result, the main tool being half twists around minimal vertices.

\subsection{Characterisation of half twists}
From now on let $b\ge5$. Let $\beta\in X_b\subseteq \C$ be a minimal curve, and recall that we denote by $H_\beta$ the half twist around $\beta$. Our first goal is to show that for any $\alpha\in X_b$ its image $H_\beta(\alpha)$ admits a graph-theoretic characterisation, which therefore is preserved by any automorphism of the curve complex. More precisely we want to prove the following:

\begin{lemma}\label{chartwist}
For any $\alpha\in X_b$:
\begin{itemize}
    \item $H_\beta(\alpha)=\alpha$ iff $d_{\C}(\alpha,\beta)\le 1$;
    \item otherwise there exists a facet $P\subseteq X_b$ such that $H_\beta(\alpha)$ is one of the two curves with special intersection with both $\alpha$ and $\beta$, with respect to $P$.
\end{itemize}
\end{lemma}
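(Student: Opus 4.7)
The first bullet is elementary. Using $H_\beta^2 = T_\beta$, if $H_\beta(\alpha) = \alpha$ then the Dehn twist $T_\beta$ fixes $\alpha$, which forces $i(\alpha,\beta) = 0$, so $d(\alpha,\beta) \le 1$. Conversely, if $\alpha = \beta$ then trivially $H_\beta(\alpha) = \alpha$, and if $\alpha$ is disjoint from $\beta$ then $\alpha$ may be isotoped to lie outside the small neighborhood of the twice-punctured disk bounded by $\beta$ which supports $H_\beta$, giving $H_\beta(\alpha) = \alpha$.

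For the second bullet, suppose $\alpha \in X_b$ intersects $\beta$. Since every intersection in $X_b$ is special and is detected by vertices of $X_b$ (cf.\ the discussion around Figure \ref{fig:stella}), we can find a facet $P \subseteq X_b$ together with auxiliary vertices $\gamma, \delta, \varepsilon \in X_b$ (with $\varepsilon \in P$ and $R := P \setminus \{\varepsilon\}$) assembling into a special pentagon $\mathfrak{P}$ that detects the special intersection of $\alpha$ and $\beta$ with respect to $P$. Every vertex of $P$ lies in $\link(\beta)$, hence is disjoint from $\beta$, and so may be isotoped away from the support of $H_\beta$; thus $H_\beta$ fixes $P$ pointwise and also fixes $\varepsilon$. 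In particular $H_\beta(\alpha)$ is disjoint from $P$ and completes it to a maximal simplex.

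To verify the special intersection property, transport $\mathfrak{P}$ by the homeomorphism $H_\beta$. Since being a special pentagon is a topological condition preserved by any mapping class, the configuration with vertices $H_\beta(\alpha), \varepsilon, \beta, H_\beta(\gamma), H_\beta(\delta)$ and codimension-two simplex $R$ is again a special pentagon. By the symmetry pointed out in Remark \ref{crossspecint}, any two of its non-adjacent vertices have special intersection; in particular $H_\beta(\alpha)$ and $\beta$ have special intersection with respect to $P = \varepsilon \star R$. For the special intersection of $\alpha$ with $H_\beta(\alpha)$, one checks directly that $\gamma, \alpha, H_\beta(\alpha), H_\beta(\gamma)$ is a chain (each consecutive pair intersects because $H_\beta$ preserves intersection numbers and $\gamma, \alpha, \beta$ were consecutive in the original chain), and both $\gamma$ and $H_\beta(\gamma)$ intersect only $\varepsilon$ among vertices of $P$, again because $H_\beta$ fixes $P$ pointwise.

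Finally, any curve $\zeta$ with special intersection with both $\alpha$ and $\beta$ with respect to $P$ must be disjoint from $P$ and have intersection number $2$ with each of $\alpha, \beta$ (by the remark following Definition \ref{specint}). Such $\zeta$ therefore lies in the four-holed sphere $\Sigma_4$ cut out by $P$, and is adjacent to both $\alpha$ and $\beta$ in the Farey tessellation of $\Sigma_4$. Since an edge in the Farey complex is shared by exactly two triangles, there are exactly two such curves; the two curves $H_\beta(\alpha)$ and $H_\beta^{-1}(\alpha)$ exhaust them. The main subtlety is this last count: it requires the standard but slightly delicate fact that in a four-holed sphere the two Farey-neighbors of an edge $\{\alpha,\beta\}$ are precisely the $H_\beta$-orbit of $\alpha$ under the half-twist, which follows from the classification of triangles in the Farey complex already exploited in Lemma \ref{specintunica}.
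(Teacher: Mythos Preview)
Your overall structure matches the paper's: treat the first bullet as elementary, then for the second bullet choose a facet $P\subseteq X_b$ that both $\alpha$ and $\beta$ complete, verify that $H_\beta(\alpha)$ has special intersection with each of $\alpha$ and $\beta$ with respect to $P$, and finally count that only two curves in the $S_4$ cut out by $P$ can have intersection number $2$ with both $\alpha$ and $\beta$ using the Farey picture. Your treatment of the first bullet, of the pair $(H_\beta(\alpha),\beta)$ via transporting the pentagon by $H_\beta$, and of the final count are all fine and in line with the paper.

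There is a genuine gap in your verification that $\alpha$ and $H_\beta(\alpha)$ have special intersection. In the original special pentagon the closed chain is $\gamma,\alpha,\beta,\delta,\varepsilon$, so $\gamma$ and $\beta$ are \emph{non}-consecutive and hence disjoint. Since $H_\beta$ is supported on a neighbourhood of the twice-punctured disk bounded by $\beta$, this forces $H_\beta(\gamma)=\gamma$. Your proposed chain $\gamma,\alpha,H_\beta(\alpha),H_\beta(\gamma)$ therefore degenerates to $\gamma,\alpha,H_\beta(\alpha),\gamma$, which is not a chain of four distinct vertices. Worse, in the $H_\beta$-image of the pentagon the pair $(H_\beta(\gamma),H_\beta(\alpha))=(\gamma,H_\beta(\alpha))$ is consecutive, so $\gamma$ actually \emph{intersects} $H_\beta(\alpha)$; thus even the non-consecutive condition fails. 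The auxiliary curves detecting the special intersection of $\alpha$ with $H_\beta(\alpha)$ cannot be taken from the original pentagon or its $H_\beta$-image alone. The paper is admittedly brief here (``the curves $\gamma$ and $\delta$ are easy to find in both cases''), but as its later discussion around Figure~\ref{fig:auxcurve} makes explicit, one needs genuinely new auxiliary curves, obtained for instance as half-twist images around another minimal curve $\zeta\in X_b$; these lie in $Y_b$ rather than in $X_b$ or $H_\beta(X_b)$.
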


Notice that we did not uniquely characterise $H_\beta(\alpha)$, as it is impossible to distinguish it from $H_\beta^{-1}(\alpha)$ just from the curve graph (indeed, the orientation-reversing reflection $r$ fixes $X_b$ pointwise but swaps the half-twists). 

\begin{proof}
The first bullet is clearly true, so we focus on the second. Let $\alpha$ be a curve in $X_b$ that intersects $\beta$, and let $P$ be a facet that both $\alpha$ and $\beta$ complete, which cuts out a four-holed sphere $S_4$. Now, clearly $H_\beta(\alpha)$ has special intersection with both $\alpha$ and $\beta$ with respect to $P$ (the curves $\gamma$ and $\delta$ are easy to find in both cases); therefore it suffices to notice that there are only two curves on $S_4$ with intersection $2$ with both $\alpha$ and $\beta$, since in the curve graph of $S_{4}$ the edge with endpoints $\alpha$ and $\beta$ belongs to only two triangles.
\end{proof}

\subsection{Definition of half twists in the quotient}\label{section:htquot}
For the rest of the section, let $\N$ be a deep enough subgroup.
\begin{definition}
Let $\ov{X}_b$ be a copy of $X_b$ inside $\C/\N$, and let $\overline{\beta}\in\ov{X}_b$ be a minimal vertex. Let $s:\,\ov{X}_b\to \C$ be a lift. Then we define the half twist around $\overline\beta$ as the map $H_{\overline\beta}\colon \ov{X}_b\to \C/\N$, defined as the composition $\pi\circ H_{s(\overline\beta)}\circ s$.
\end{definition}
In other words, we define the half twist by lifting $\ov{X}_b$, applying a half twist and projecting, so that the half twist commutes with the quotient projection. The definition is well-posed, since any two lifts differ by some $g\in \mathcal{N}$. Notice moreover that $H_{\overline\beta}(\ov{X}_b)$ is still a copy of $X_b$ by Theorem \ref{projisometry}, since it is the projection of $H_{s(\overline\beta)}\left(s(\ov{X}_b)\right)$.\\
Our next goal is to show that the combinatorial characterisation of the half twist described in Theorem \ref{chartwist} still works in the quotient. In order to do so we need to show that:
\begin{enumerate}
    \item $H_{\overline\beta}(\overline\alpha)$ still satisfies the properties described in Theorem \ref{chartwist};
    \item Any $\overline\eta$ that satisfies the properties with $\overline\alpha$, $\overline\beta$ and $\overline P$ lifts to a curve that satisfies the properties with some lifts $\alpha$, $\beta$, $P$. Therefore $\overline\eta$ must be the projection of $H_\beta^{\pm 1} (\alpha)$.
\end{enumerate}
The following subsections will be dedicated to the proofs of these items.

\subsubsection{The projection of a twist looks like a twist}
Let $\ov{X}_b,\,\overline\alpha,\,\overline\beta$ as above. For short we set $\alpha=s(\overline\alpha)$ and similarly for any other vertex and subset of $s(\ov{X}_b)$. We subdivide the proof into two lemmas:
\begin{lemma}
    $H_{\overline\beta}(\overline\alpha)=\overline\alpha$ iff $d_{\Cdt}(\overline\alpha,\overline\beta)\le1$.
\end{lemma}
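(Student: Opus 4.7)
The plan is to deduce both implications directly from Theorem \ref{projisometry}, which guarantees that the projection $\pi$ is an isometry (in particular injective) on the enlarged set $Y_b$ of Definition \ref{Y}. The key observation is that $Y_b$ was designed precisely so that both $\alpha$ and $H_\beta(\alpha)$ lie inside it, which converts an equality in the quotient into an equality upstairs.

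For the ($\Leftarrow$) direction, I would fix a lift $s\colon\overline{X}_b\to\mathcal{C}$ and write $\alpha=s(\overline\alpha)$, $\beta=s(\overline\beta)$. Since $\pi|_{X_b}$ is an isometry by Theorem \ref{projisometry}, the hypothesis $d(\overline\alpha,\overline\beta)\le1$ gives $d_{X_b}(\alpha,\beta)\le1$, and hence $\alpha$ is either equal to $\beta$ or disjoint from it in $\mathcal{C}$. Since $H_\beta$ is supported on a neighborhood of the twice-punctured disk bounded by $\beta$, it fixes every isotopy class disjoint from $\beta$; thus $H_\beta(\alpha)=\alpha$, and by definition of the half twist in the quotient,
\[
H_{\overline\beta}(\overline\alpha)=\pi\bigl(H_\beta(\alpha)\bigr)=\pi(\alpha)=\overline\alpha.
\]

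For the ($\Rightarrow$) direction, suppose $H_{\overline\beta}(\overline\alpha)=\overline\alpha$. Then $\pi(H_\beta(\alpha))=\pi(\alpha)$. By construction both $\alpha\in X_b\subseteq Y_b$ and $H_\beta(\alpha)\in H_\beta(X_b)\subseteq Y_b$ lie in $Y_b$, and $\pi|_{Y_b}$ is injective by Theorem \ref{projisometry}. Hence $H_\beta(\alpha)=\alpha$ as curves in $\mathcal{C}$, which forces $\alpha$ to be either equal to or disjoint from $\beta$, i.e.\ $d_\mathcal{C}(\alpha,\beta)\le1$. Using once more that $\pi|_{X_b}$ is an isometry, we conclude $d(\overline\alpha,\overline\beta)\le1$.

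I do not expect a genuine obstacle: the content of the lemma is essentially packaged inside Theorem \ref{projisometry}, and the only auxiliary fact needed is the elementary statement that the half twist around a curve bounding a twice-punctured disk acts trivially on isotopy classes of curves disjoint from it. The definition of $Y_b$ is tailored to make this argument work in exactly one step.
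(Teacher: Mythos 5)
Your proposal is correct and follows essentially the same route as the paper: both directions reduce to the fact (Theorem \ref{projisometry}) that $\pi$ restricts to an isometry on $Y_b$, which contains both $\alpha$ and $H_\beta(\alpha)$. The paper phrases the $(\Rightarrow)$ direction by contraposition, but the content is identical.
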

\begin{proof}
    Both $\ov\alpha$ and $\ov\beta$ belong to $\ov{X}_b$, which has diameter $2$. If $d_{\Cdt}(\overline\alpha,\overline\beta)\le1$ then $H_{\overline\beta}(\overline\alpha)=\ov\alpha$, as by construction $H_{\overline\beta}$ is the identity on the link of $\overline{\beta}$. If otherwise $\overline\alpha$ is at distance $2$ from $\overline\beta$, then $\alpha$ and $H_{\beta}(\alpha)$ are different curves in $Y_b$, and by Theorem \ref{projisometry} they project to different points.
\end{proof}
\begin{lemma}
    If $d_{\Cdt}(\overline\alpha,\overline\beta)=2$ then there exists a facet $\overline P$ such that $H_{\overline\beta}(\overline\alpha)$ has special intersection with both $\overline\alpha$ and $\overline\beta$, with respect to $\overline P$.
\end{lemma}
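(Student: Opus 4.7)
The strategy is to push the upstairs pentagon characterisation from Lemma \ref{chartwist} down to the quotient using Theorem \ref{projisometry} applied on a subsurface. First I would pick the lift $s$ and set $\alpha = s(\overline{\alpha})$, $\beta = s(\overline{\beta})$. Since $\pi|_{Y_b}$ is an isometry by Theorem \ref{projisometry}, we have $d(\alpha, \beta) = d(\overline{\alpha}, \overline{\beta}) = 2$, and by the definition of the quotient half twist, $H_{\overline{\beta}}(\overline{\alpha}) = \pi(H_\beta(\alpha))$. Lemma \ref{chartwist} applied upstairs to $X_b$ then produces a facet $P \subset X_b$ completing both $\alpha$ and $\beta$ to maximal simplices, such that $H_\beta(\alpha)$ has special intersection with both $\alpha$ and $\beta$ with respect to $P$. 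Setting $\overline{P} := \pi(P)$, the isometry of $\pi|_{Y_b}$ guarantees that $\overline{P}$ is a facet of $\C/\N$ completed to maximal simplices by $\overline{\alpha}$, $\overline{\beta}$, and $H_{\overline{\beta}}(\overline{\alpha})$.

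The crux is then to supply the auxiliary vertices detecting the two special intersections in the quotient. By Remark \ref{crossspecint}, the upstairs special intersection of $\alpha$ and $H_\beta(\alpha)$ with respect to $P$ is witnessed by a special pentagon
$$\mathfrak{P}_\alpha = \{\alpha,\, \epsilon_\alpha,\, H_\beta(\alpha),\, \gamma_1,\, \delta_1\}$$
living inside $\link_\C(R_\alpha)$, where $R_\alpha = P \setminus \{\epsilon_\alpha\}$. Since consecutive vertices of a special pentagon have intersection number $2$ (by the remark after Definition \ref{specint}), $\mathfrak{P}_\alpha$ is isomorphic as a subgraph of $\C$ to the rigid set $X_5$ of the $5$-punctured sphere $S_5^{R_\alpha}$ cut out by $R_\alpha$; the isomorphism is realised by a mapping class via Theorem \ref{finiterig}, so $\mathfrak{P}_\alpha$ is a copy of $X_5$. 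The same discussion yields an analogous pentagon $\mathfrak{P}_\beta$ detecting the special intersection of $\beta$ and $H_\beta(\alpha)$.

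To descend, I would use Corollary \ref{CUdt} to identify $\pi(\link_\C(R_\alpha))$ with $\C(S_5^{R_\alpha})/\N(S_5^{R_\alpha})$, and then apply Theorem \ref{projisometry} inside the subsurface $S_5^{R_\alpha}$: the projection is an isometry on the set $Y_5$ associated to $\mathfrak{P}_\alpha$, and in particular on $\mathfrak{P}_\alpha$ itself. Since adjacency in the link $\link_{\C/\N}(\overline{R}_\alpha)$ coincides with adjacency in $\C/\N$ among vertices of the link (because $\pi$ is $1$-Lipschitz and the edge $\overline{u}\overline{v} \cup \overline{R}_\alpha$ lifts by Theorem \ref{mgonlift}), the image $\pi(\mathfrak{P}_\alpha)$ is an embedded special pentagon in $\C/\N$, which witnesses that $H_{\overline{\beta}}(\overline{\alpha})$ has special intersection with $\overline{\alpha}$ with respect to $\overline{P}$. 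The analogous argument applied to $\mathfrak{P}_\beta$ gives special intersection with $\overline{\beta}$.

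The main technical obstacle is ensuring that the "deep enough" hypothesis for $\N$ as a subgroup of $MCG(S_b)$ transfers to the subsurface subgroup $\N(S_5^{R_\alpha})$ so that Theorem \ref{projisometry} can legitimately be invoked at the subsurface level. I would handle this by enlarging the threshold $\Theta$ at the beginning of the argument to dominate, uniformly over all $5$-punctured subsurfaces $S_5^R$ cut out by codimension-$2$ simplices $R \subset X_b$ and all copies of $X_5$ inside them, the maximal annular projection bound produced by the analogue of Lemma \ref{Mbound}; finiteness of this supremum follows from the standard change-of-coordinates argument, just as in the proof of Lemma \ref{Mbound} itself.
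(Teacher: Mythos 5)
Your proof is correct in outline, but it takes a genuinely different route from the paper's, and in the process takes on extra technical debt that the paper's argument sidesteps.

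The paper's key observation is that the \emph{entire} configuration $G$ witnessing both special intersections is a subgraph of $Y_b$: the auxiliary curves detecting that $H_\beta(\alpha)$ has special intersection with $\beta$ are simply $H_\beta(\gamma), H_\beta(\delta)$ for $\gamma,\delta\in X_b$ (hence in $Y_b$ by definition), and the auxiliary curves $\gamma',\delta'$ detecting special intersection of $H_\beta(\alpha)$ with $\alpha$ can be chosen in $Y_b$ as well (this is Figure \ref{fig:auxcurve}, using half twists around the minimal curves $\beta$ and $\zeta$). With this observation one single application of Theorem \ref{projisometry} to $Y_b$ projects the whole double-pentagon $G$ isometrically and the lemma follows immediately.

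Your route instead projects each pentagon separately via the subsurface isomorphism of Corollary \ref{CUdt} together with a relativised Theorem \ref{projisometry} on the five-punctured subsurface $S_5^{R}$. This can be made to work, and your reduction from the subsurface link to $\C/\N$ (using that $\link_{\C/\N}(\overline R_\alpha)$ is a full subgraph and that the pentagon has diameter $2$) is fine. However, the ``main technical obstacle'' you flag is not fully resolved by the fix you propose. The hypothesis needed to invoke Theorem \ref{projisometry} inside $S_5^R$ is that $\N(S_5^R)$ be deep enough \emph{as a subgroup of} $MCG(S_5^R)$, i.e.\ that it satisfy the conclusion of Proposition \ref{cor3.6} for the five-punctured sphere. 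This is a structural property of the subgroup, not merely a bound on annular projections, so enlarging $\Theta$ at the level of $MCG(S_b)$ and invoking an analogue of Lemma \ref{Mbound} does not by itself supply it. One would need a separate argument (in the spirit of Lemma \ref{Cu0}) that the complexity-reducing pairs $(s,\gamma_s)$ from Proposition \ref{cor3.6} can be chosen with $s$ in the subsurface, together with the observation that since there are only finitely many topological types of subsurfaces one can choose $K$ uniformly. This transfer is believable and is implicitly used elsewhere in the paper, but it is an extra lemma that the $Y_b$-based argument never needs, which is precisely why the paper's version is cleaner.
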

\begin{proof}
    Choose a facet $P\subseteq X_b$ that both $\alpha$ and $\beta$ complete, and let $\gamma,\delta\in X_b$ be some auxiliary curves detecting their special intersection, as in Figure \ref{fig:stella}. The images of these curves under $H_\beta$ are auxiliary curves for $\beta$ and $H_\beta(\alpha)$. Moreover we can find $\gamma',\delta'\in Y_b$ that detect the special intersection between $\alpha$ and $H_\beta(\alpha)$ (see Figure \ref{fig:auxcurve} for an example). The graph $G$ spanned by these curves is the union of two special pentagons patched along the simplex $H_\beta(\alpha)\star P$, as in Figure \ref{fig:grafotwist}. Since $G$ it is a subgraph of the graph $Y_b$, defined in Definition \ref{Y}, by Theorem \ref{projisometry} its projection is an isometric embedding of two special pentagons, detecting that $H_{\overline\beta}(\overline\alpha)=\pi(H_{\beta}(\alpha))$ has special intersection with both $\overline\alpha$ and $\overline\beta$.
\end{proof}

\begin{figure}[htp]
    \centering
    \includegraphics[width=0.75\textwidth]{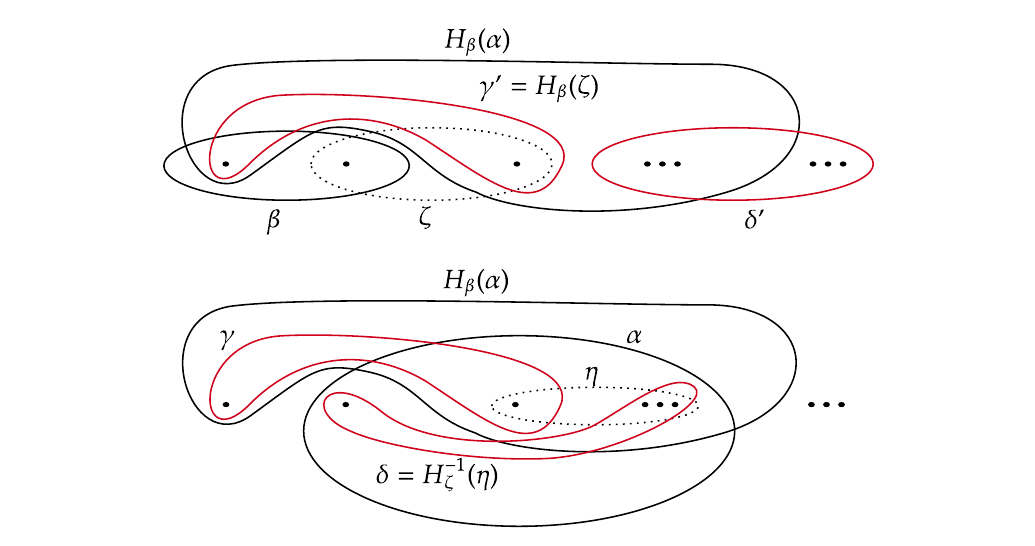}
    \caption{An example of how the auxiliary curves (here in red) may be chosen only using half twists around $\beta$ and $\zeta$, which are both in $X_b$. In our example $\gamma=\gamma'$, but this is not necessary. The case where $\alpha$ bounds a twice-punctured disk can be dealt with similarly.}
    \label{fig:auxcurve}
\end{figure}
    
\begin{figure}[htp]
    \centering
    \includegraphics[width=\textwidth]{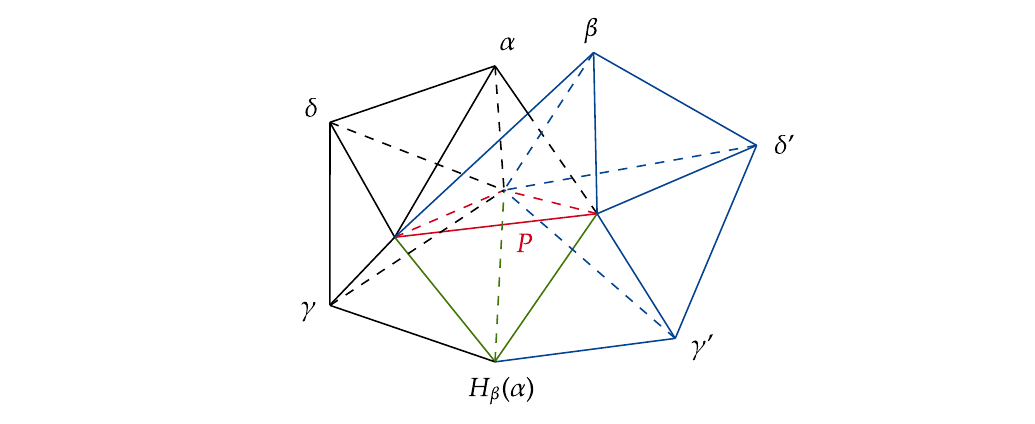}
    \caption{The configuration $G$ that detects the special intersection of $H_\beta(\alpha)$ with both $\alpha$ and $\beta$, that consists in two special pentagons (here, in black and blue) which overlap over a simplex (here, in green and red). Possibly $\gamma=\gamma'$ and $\delta=\delta'$, but each of the pentagons is isometrically embedded.}
    \label{fig:grafotwist}
\end{figure}

\subsubsection{If it looks like a twist it is the projection of a twist}
\begin{lemma}
Let $\beta\in X_b\subset\C$ be a minimal curve, and let $\alpha\in X_b$ have special intersection with $\beta$ with respect to the facet $P$. Let $\overline\alpha,\,\overline\beta,\,\overline P$ be their projections, and let $\overline\eta\in\C/\N$ be a vertex with special intersection with both $\overline\alpha$ and $\overline\beta$ with respect to $\overline P$. Then $\overline\eta$ lifts to a curve $\eta$ with special intersection with both $\alpha$ and $\beta$ with respect to $P$.
\end{lemma}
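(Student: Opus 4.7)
The strategy is to lift the two special pentagons $\ov{\mathfrak{P}}_1$ and $\ov{\mathfrak{P}}_2$ that witness the special intersections of $\ov\eta$ with $\ov\alpha$ and $\ov\beta$ respectively (both with respect to $\ov P$), and then align them so that a single lift $\eta$ of $\ov\eta$ satisfies both special intersections relative to our prescribed $\alpha, \beta, P$ in $\C$.

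First, I would use Lemma \ref{liftspec} to lift each $\ov{\mathfrak{P}}_i$ to a special pentagon $\mathfrak{P}_i \subset \C$. By the uniqueness of the $\N$-orbit of lifts of the ordered simplex $\ov\alpha \star \ov P$ (Theorem \ref{mgonlift}), I apply an element of $\N$ to $\mathfrak{P}_1$ so that its sub-lift of $\ov\alpha\star\ov P$ coincides with our given $\alpha\star P$. Let $\eta\in\link(P)$ denote the resulting lift of $\ov\eta$; the pentagon structure of $\mathfrak{P}_1$ witnesses that $\eta$ has special intersection with $\alpha$ with respect to $P$ in $\C$. Next, exploiting the uniqueness of lifts of the ordered simplex $\ov\eta\star\ov P$, I align $\mathfrak{P}_2$ so that its lift of $\ov\eta\star\ov P$ is exactly $\eta\star P$, and let $\beta''$ be the resulting lift of $\ov\beta$ sitting inside this aligned pentagon. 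Then $\eta$ has special intersection with $\beta''$ with respect to $P$ in $\C$.

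It remains to show $\beta'' = \beta$, as this immediately yields that $\eta$ also has special intersection with $\beta$ with respect to $P$, completing the proof. Since both $\beta$ and $\beta''$ are lifts of $\ov\beta$ completing $P$ to a maximal simplex, Theorem \ref{mgonlift} applied to the simplex $\ov\beta\star\ov P$ supplies some $g\in\N$ fixing $P$ pointwise and mapping $\beta$ to $\beta''$; the goal therefore reduces to showing that one may take $g$ trivial. This is the main technical obstacle, and I would attack it by induction on the complexity of $g$ (in the sense of Proposition \ref{cor3.6}), following the blueprint of Lemma \ref{uniqueliftxspec}: if $g\neq 1$, apply Proposition \ref{cor3.6} to $x=\beta$ to obtain a pair $(s,\gamma_s)$ with $\gamma_s g$ of strictly lower complexity. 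Either $d_\C(\beta, s)\le 1$ and $\gamma_s$ fixes $\beta$, in which case one applies $\gamma_s$ globally and concludes by induction; or $d_s(\beta,\beta'') > \Theta$, in which case the bounded geodesic image theorem (Theorem \ref{bgit}) combined with the uniform annular projection bound supplied by Corollary \ref{projV} for the finite configuration $\mathfrak{P}_1\cup\mathfrak{P}_2\cup\{\alpha,\beta, P\}$ forces $\gamma_s$ to fix $\eta$, $\alpha$, the vertices of $P$, and all the auxiliary curves of both pentagons, so that one may apply $\gamma_s$ to $\beta''$ alone, reducing complexity while preserving the alignment of $\mathfrak{P}_1$, $\mathfrak{P}_2$, and the prescribed data $\alpha,\beta,P$.
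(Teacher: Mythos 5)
Your overall strategy --- lift the two witnessing pentagons and align them using uniqueness of lifts of simplices --- parallels the paper's, which forms the glued graph $G=\mathfrak{P}_1\cup\mathfrak{P}_2$ along $\ov\eta\star\ov P$ and then aligns $G$ with $X_b$ by invoking, as a corollary of Lemma~\ref{uniqueliftxspec}, uniqueness of the $\N$-orbit of lifts of $\ov\alpha\star\ov P\cup\ov\beta\star\ov P$. Your final step replaces that invocation with a from-scratch complexity induction, and this is where the argument breaks.

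In the case $d_s(\beta,\beta'')>\Theta$ you appeal to Corollary~\ref{projV} for the ``finite configuration $\mathfrak{P}_1\cup\mathfrak{P}_2\cup\{\alpha,\beta,P\}$'' to conclude that $\gamma_s$ fixes $\eta$, $\alpha$, $P$, and the auxiliary vertices. But $\Theta$ (hence $\N$) must be fixed \emph{before} any lifts are produced, and there is no uniform bound on $\sup_s d_s(\beta,z)$ as $z$ ranges over $\eta$ and the auxiliary vertices of lifted special pentagons: these depend on the choice of $\ov\eta$ and of the pentagon lifts, and are not confined to any fixed finite set (unlike $Y_b$, for which Lemma~\ref{Mbound} gives a mapping-class-invariant bound). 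So Corollary~\ref{projV} gives you nothing for those curves. This is exactly the obstruction that Lemma~\ref{uniqueliftxspec} is engineered to bypass: there the competing curves are related by a half twist $x'=H_y^k(x)$, which converts $d_s(x',z)$ into $d_{H_y^{-k}(s)}(x,z)$ with both arguments inside $X_b$, where Lemma~\ref{Mbound} applies; no pentagon auxiliaries enter. To import that device you would first need $\beta''$ to be a half-twist image of $\beta$ around $\alpha$ (equivalently, that $\beta''$ has special intersection with $\alpha$ with respect to $P$), which your alignment of $\mathfrak{P}_2$ along $\eta\star P$ does not supply. Also, the goal $\beta''=\beta$ is stronger than needed and need not hold: it suffices to produce $g\in\N$ fixing $\alpha\star P$ pointwise with $g(\beta'')=\beta$, since then $g(\eta)$ is the required lift (as $g$ is simplicial and preserves special intersection) --- this is what the paper's corollary of Lemma~\ref{uniqueliftxspec} delivers.
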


\begin{proof}
Consider the subgraph $\overline G\subseteq \C/\N$ made of two special pentagons that detect the special intersections, as in Figure \ref{fig:grafotwist}. This subgraph has a lift $G\subseteq \C$ (more precisely, each one of the special pentagons lifts, and we may arrange that the lifts coincide on $\overline\eta\star\overline P$ since there is a unique $\N$-orbit of lifts of simplices). Now, as a corollary of Lemma \ref{uniqueliftxspec} the union of the two simplices $\overline\alpha\star\overline P\cup\overline\beta\star\overline P$ admits a unique lift, up to elements in $\mathcal{N}$, and therefore we may assume that $G$ is glued to $X_b$ in such a way that the corresponding copies of $\alpha,\,\beta,\,P$ coincide. But now $G$ detects that $\overline\eta$ lifts to a curve with special intersection with the copies of $\alpha$ and $\beta$ inside $X_b$, as required.
\end{proof}

\subsection{The general case}
We are finally ready to prove the following combinatorial rigidity statement, which implies Theorem \ref{mainthm} when $\N=DT_K$ for suitable $K$.

\begin{theorem}\label{CombRig}
For every $b\ge5$ and for every deep enough subgroup $\N$, the natural map $MCG^\pm(S_{b})/\N\to\text{Aut}(\C(S_{b})/\N)$ is an isomorphism.
\end{theorem}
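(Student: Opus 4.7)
The plan is to adapt the strategy from the four-punctured case (Propositions \ref{surjn4} and \ref{injn4}) to the setting $b\ge 5$, using the rigid set $X_b$ and its lifting properties (Theorem \ref{lifting}) in place of triangles in the Farey complex. We handle surjectivity and injectivity of the natural map $\Phi:MCG^\pm(S_b)/\N\to\text{Aut}(\C/\N)$ separately.

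For surjectivity, fix $\bar\phi\in\text{Aut}(\C/\N)$ and a copy $X_b\subset\C$, whose projection $\bar X_b=\pi(X_b)$ is isometrically embedded by Theorem \ref{projisometry}. Since $\bar\phi$ is a graph automorphism, the image $\bar\phi(\bar X_b)$ is another isometrically embedded copy of $X_b$, and by Theorem \ref{lifting} it admits a lift $\widehat X_b\subset\C$. Composing $\pi:X_b\to\bar X_b$ with $\bar\phi$ and with the inverse of the projection $\widehat X_b\to\bar\phi(\bar X_b)$ yields a simplicial isomorphism $X_b\to\widehat X_b$ inside $\C$, which by Theorem \ref{finiterig} is induced by some $h\in MCG^\pm$. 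After replacing $\bar\phi$ by $\bar h^{-1}\circ\bar\phi$, we may therefore assume that $\bar\phi$ is the identity on $\bar X_b$, and our remaining task is to show that $\bar\phi$ must then equal either the identity or the automorphism $\bar r$ induced by the reflection $r$ of Theorem \ref{finiterig} (both of which lie in the image of $\Phi$).

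The propagation of $\bar\phi$ off of $\bar X_b$ will use the combinatorial characterisation of half twists developed in Lemma \ref{chartwist} and Section \ref{section:htquot}: since $\bar\phi$ preserves special pentagons and fixes $\bar X_b$ pointwise, for every minimal vertex $\bar\beta\in\bar X_b$ and every $\bar\alpha\in\bar X_b$ at distance two from $\bar\beta$, the image $\bar\phi(H_{\bar\beta}(\bar\alpha))$ must coincide with one of the two curves $H_{\bar\beta}^{\pm1}(\bar\alpha)$. Using Lemma \ref{alfaiunica} to reconstruct the neighbouring copy $H_{\bar\beta}(\bar X_b)$ of the rigid set from $\bar\beta$, its link, and a single auxiliary curve, one argues that $\bar\phi$ cannot make the sign choices independently across $\bar X_b$: overlaps between adjacent half-twist copies force a globally coherent choice, so on $H_{\bar\beta}(\bar X_b)$ the map $\bar\phi$ agrees either with the identity or with $\bar r$. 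Iterating over sequences of copies of $X_b$ related by half twists around minimal vertices, and using the change of coordinates principle applied to lifts to reach every vertex of $\C/\N$, we conclude $\bar\phi\in\{\text{id},\bar r\}$ on all of $\C/\N$.

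For injectivity, suppose $g\in MCG^\pm$ induces the identity on $\C/\N$. Then for a fixed copy $X_b\subset\C$, the graph $g(X_b)$ is another lift of $\pi(X_b)$, so by Theorem \ref{lifting} there exists $h\in\N$ with $h(X_b)=g(X_b)$; hence $h^{-1}g$ fixes $X_b$ pointwise and by Theorem \ref{finiterig} equals either the identity or $r$. But $r$ cannot induce the identity on $\C/\N$: for a minimal $\beta\in X_b$ and any $\alpha\in X_b$ intersecting $\beta$ we have $r(H_\beta(\alpha))=H_\beta^{-1}(\alpha)$, and both curves lie in $Y_b$ but are distinct in $\C$, so by Theorem \ref{projisometry} they project to distinct vertices of $\C/\N$. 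Therefore $h^{-1}g=\text{id}$, giving $g=h\in\N$ and triviality of $\ker\Phi$. The principal obstacle is the coherence step in surjectivity: ruling out a patchwork of local $\text{id}/\bar r$ choices and showing that the sign is uniform across all copies of $X_b$ reached by sequences of half twists, analogous to the two-triangle propagation argument of Lemma \ref{unicotriang} but now requiring the full strength of Theorems \ref{lifting} and \ref{finiterig}.
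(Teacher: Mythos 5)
Your proposal follows the same high-level strategy as the paper's proof of this theorem (split into Theorems \ref{surjge5} and \ref{injge5}): lift $\ov{\phi}(\ov X_b)$ via Theorem \ref{lifting}, extract $h\in MCG^\pm$ via finite rigidity, then propagate through copies of $X_b$ related by half twists around minimal vertices, using the combinatorial characterisation of half twists. Your injectivity argument is correct and in fact cleaner than the paper's: to rule out $\ov r = \mathrm{id}$ you observe that $r$ maps $H_\beta(\alpha)$ to $H_\beta^{-1}(\alpha)$, two distinct vertices of $Y_b$, which therefore project to distinct vertices of $\C/\N$ by Theorem \ref{projisometry}. The paper instead argues that $\ov r=\mathrm{id}$ would force $T_\beta\in\N$, hence by normality the whole pure mapping class group, contradicting Corollary \ref{infinite} --- a longer route to the same end.

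On surjectivity, the step you leave implicit (``overlaps between adjacent half-twist copies force a globally coherent choice'') is exactly where the real work lies, and the gesture is not enough to carry the argument. The paper's mechanism is the identity $H_\zeta^{-1}(\beta)=H_\beta(\zeta)$ for intersecting minimal curves $\beta,\zeta\in X_b$: once the sign ambiguity for $H_\beta$ is resolved by replacing $h$ with $h\circ r$ if needed, commutativity of the diagram on $H_\beta(X_b)$ already determines it on $H_\beta(\zeta)=H_\zeta^{-1}(\beta)$, which via the characterisation of half twists in the quotient forces the sign for $H_\zeta$, whereupon Lemma \ref{alfaiunica} extends commutativity to all of $H_\zeta(X_b)$. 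Without identifying this specific overlap, there is no reason a priori that the $\{\mathrm{id},\ov r\}$ choice cannot vary from one minimal vertex to another. The coverage step you attribute to ``change of coordinates'' is, concretely, the fact that half twists around the minimal curves of $X_b$ generate $MCG$, so any copy of $X_b$ is reachable from the initial one by a chain of half twists around minimal curves; the paper proves this by an inductive rewriting of a word in these generators.
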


As before, we split the proof into two theorems.
\begin{theorem}\label{surjge5}
For $b\ge5$ and any $\N$ deep enough, any automorphism $\overline\phi\in\text{Aut}(\C/\N)$ is induced by an element of $MCG^\pm(S_b)$.
\end{theorem}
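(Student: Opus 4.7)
The plan is to follow the strategy of Proposition~\ref{surjn4}, with the rigid set $X_b$ replacing the triangle and the combinatorial characterization of half twists (Lemma~\ref{chartwist}) replacing Lemma~\ref{unicotriang}. First, I would fix any copy $\ov X_b\subset\C/\N$ (which exists by Theorem~\ref{projisometry}) and lift it to $X_b\subset\C$ via Theorem~\ref{lifting}. The image $\ov\phi(\ov X_b)$ is again an embedded copy of $X_b$, since its defining combinatorial features (intersection patterns, facets, special intersections, minimal vertices) are preserved by graph automorphisms; lift it to $X_b'\subset\C$. By finite rigidity (Theorem~\ref{finiterig}) there is $h\in MCG^\pm(S_b)$ with $h(X_b)=X_b'$, so $\ov h$ agrees with $\ov\phi$ on $\ov X_b$. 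Replacing $\ov\phi$ by $\ov h^{-1}\circ\ov\phi$, the task reduces to showing that if $\ov\phi$ fixes $\ov X_b$ pointwise then $\ov\phi$ is induced by an element of $\text{Pstab}(X_b)=\langle r\rangle$, i.e.\ $\ov\phi\in\{\mathrm{id},\ov r\}$.

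Next, for each minimal $\ov\beta\in\ov X_b$, the combinatorial characterization of half twists from Subsection~\ref{section:htquot} identifies $H_{\ov\beta}(\ov\alpha)$, for any $\ov\alpha\in\ov X_b$ with $d(\ov\alpha,\ov\beta)=2$, as one of the (at most two) vertices with special intersection with both $\ov\alpha$ and $\ov\beta$ relative to some facet $\ov P\subset\ov X_b$. Since $\ov\phi$ fixes $\ov\alpha,\ov\beta,\ov P$ and preserves special intersection, it must send $H_{\ov\beta}(\ov\alpha)$ to $H_{\ov\beta}^{\pm 1}(\ov\alpha)$. These two possibilities correspond respectively to the actions of $\mathrm{id}$ and of $\ov r$ (using $rH_\beta r=H_\beta^{-1}$), so exactly one of $\{\mathrm{id},\ov r\}$ agrees with $\ov\phi$ at $H_{\ov\beta}(\ov\alpha)$. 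By Lemma~\ref{alfaiunica} this single point then determines the image of the whole copy $H_{\ov\beta}(\ov X_b)$.

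To conclude, propagate the dichotomy: once $\ov\phi$ is known to agree with either $\mathrm{id}$ or $\ov r$ on some copy $\ov X_b^{(n)}$, one repeats the previous paragraph starting from $\ov X_b^{(n)}$ to determine $\ov\phi$ on any neighbouring copy $\ov X_b^{(n+1)}$ obtained by a half twist around a minimal vertex. Since every curve of $\C/\N$ lies in some copy of $X_b$ (by transitivity of $MCG^\pm$ on such copies together with Theorem~\ref{projisometry}), and any two copies are connected by a sequence of half twists around minimal vertices ($MCG^\pm$ being generated, modulo $\text{Pstab}(X_b)$, by such half twists), an inductive argument exhausts $\C/\N$ and shows that $\ov\phi$ coincides globally with $\mathrm{id}$ or with $\ov r$. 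Either way, $\ov\phi$ is induced by an element of $MCG^\pm(S_b)$.

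The main obstacle is the global consistency of the local dichotomies: one must rule out the possibility that $\ov\phi$ behaves like $\mathrm{id}$ on some copies and like $\ov r$ on others. The key leverage is that two overlapping copies of $X_b$ share more than a single vertex, so a change of choice between them would force two combinatorially incompatible behaviours on their intersection, violating Lemma~\ref{alfaiunica} together with the uniqueness clause of Theorem~\ref{lifting}; hence the dichotomy resolves into a single global choice. The remaining subtlety is verifying that the graph on copies of $X_b$ whose edges are half twists around minimal vertices is connected and reaches every curve, which follows from a classical generation statement for $MCG^\pm(S_b)$ combined with Theorem~\ref{projisometry}.
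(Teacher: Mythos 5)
Your proposal follows the paper's approach closely (lift $\ov X_b$ and $\ov\phi(\ov X_b)$ via Theorem~\ref{lifting}, apply finite rigidity, use the combinatorial characterisation of half twists to pin down $\ov\phi$ on $H_{\ov\beta}(\ov X_b)$ up to the reflection, propagate), and the reduction to "if $\ov\phi$ fixes $\ov X_b$ pointwise then $\ov\phi\in\{\mathrm{id},\ov r\}$" is a legitimate reformulation of the paper's diagram-commutativity bookkeeping. However, there is a genuine gap precisely at the step you flag as "the main obstacle," and the resolution you propose does not work.

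You assert that the global consistency of the local dichotomies follows because overlapping copies of $X_b$ share more than a single vertex, so conflicting choices would be combinatorially incompatible on the intersection. But look at what the intersection actually is: two copies related by a half twist around a minimal curve $\ov\zeta$ overlap exactly in $\ov\zeta\star(\ov X_{b-1}\cap\ov X_b)$, i.e.\ in vertices that are fixed by $H_{\ov\zeta}^{\pm 1}$ — and hence fixed by both $\mathrm{id}$ and by the local reflection on the new copy. The overlap therefore imposes no constraint at all on which branch of the dichotomy $\ov\phi$ takes on the new copy; both branches agree with $\ov\phi$ on the shared vertices. Similarly, two sibling copies $H_{\ov\beta}(\ov X_b)$ and $H_{\ov\zeta}(\ov X_b)$ only intersect in vertices of $\ov X_b$ itself (the parts fixed by both half twists), which again carry no information. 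So "the dichotomy resolves into a single global choice" does not follow from overlap; as stated, your argument would allow $\ov\phi$ to fix $\ov X_b$ and $H_{\ov\beta}(\ov X_b)$ pointwise while acting as the reflection on $H_{\ov\zeta}(\ov X_b)$.

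The missing ingredient is the braid-type identity $H_\zeta^{-1}(\beta)=H_\beta(\zeta)$ for intersecting minimal curves $\beta,\zeta\in X_b$ (Figure~\ref{fig:Hincrociati}). This is what the paper uses: $H_{\ov\beta}(\ov\zeta)\in H_{\ov\beta}(\ov X_b)$ is already known to be fixed once $\ov\phi$ is fixed on $H_{\ov\beta}(\ov X_b)$, and since $H_{\ov\beta}(\ov\zeta)=H_{\ov\zeta}^{-1}(\ov\beta)$ is one of the two curves with special intersection with $\ov\beta$ and $\ov\zeta$, and $\ov\phi$ permutes that pair, $\ov\phi$ must also fix $H_{\ov\zeta}(\ov\beta)$. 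Only then does Lemma~\ref{alfaiunica} (or Corollary~\ref{FinRigquot}) pin down $\ov\phi$ on all of $H_{\ov\zeta}(\ov X_b)$. Without this relation, the sign on each new copy is a genuinely free choice and the induction does not close. The rest of your outline (the generation claim, the reduction, the use of the half-twist characterisation) is consistent with the paper; you should replace the "overlapping copies" justification by the identity $H_\zeta^{-1}(\beta)=H_\beta(\zeta)$ and the bijectivity of $\ov\phi$ on the pair $\{H_{\ov\zeta}^{\pm 1}(\ov\beta)\}$.
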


\begin{proof}
Consider $X_b$ inside $\C$ and let $\ov{X}_b$ be its projection, which is still a copy of $X_b$ by Theorem \ref{projisometry}. Moreover, let $\widetilde{X_b}$ be a lift of $\overline{\phi}(\ov{X}_b)$. By Theorem \ref{finiterig} there is an extended mapping class $h\in MCG^\pm(S_b)$ mapping $X_b$ to $\widetilde{X_b}$, which means that the following diagram commutes on $X_b\in\C$:
\begin{equation}\begin{tikzcd}\label{commge5}
    \C\ar{d}{\pi}\ar{r}{h}&\C\ar{d}{\pi}\\
    \C/\N\ar{r}{\overline{\phi}}&\C/\N
\end{tikzcd}\end{equation}
Now, choose a minimal vertex $\beta\in X_b$, and let $r$ be the reflection fixing $X_b$. We know that, if we fix a curve $\alpha_1\in X_b$ that intersects $\beta$, $h$ must map $H_{\beta}(\alpha_1)$ to either $H_{h(\beta)}(h(\alpha_1))$ or $H_{h(\beta)}^{-1}(h(\alpha_1))$, since these are the only two curves that satisfy the graph-theoretic characterisation of the half twist, Theorem \ref{chartwist}. Subsection \ref{section:htquot} shows that the same argument works in the quotient, hence there are only two possibilities for $\overline\phi\circ H_{\overline\beta}(\ov{\alpha}_1)$. Up to replacing $h$ with $h\circ r$ we may therefore assume that Diagram \ref{commge5} commutes also on $H_{\beta}(\alpha_1)$, and hence on the whole $H_{\beta}(X_b)$, which by Lemma \ref{alfaiunica} is uniquely determined by $H_{\beta}(\alpha_1)$ (and the copy of $X_{b-1}$ which is the link of $\beta$ in $X_b$; this is fixed by $H_\beta$).

We now claim that, if the diagram commutes on $X_b\cup H_\beta(X_b)$ and $\zeta$ is a minimal curve in $X_b$ that intersects $\beta$ then the diagram commutes also on $H_{\zeta}(X_b)$. In fact $H_\zeta^{-1}(\beta)=H_\beta(\zeta)$, as shown in Figure \ref{fig:Hincrociati}, therefore we already know that the diagram commutes on $H_\zeta^{-1}(\beta)$ (and therefore on $H_\zeta(\beta)$ which is the only other curve that satisfies the same characterisation as $H_\zeta^{-1}(\beta)$). Therefore the diagram commutes also on the whole $H_{\zeta}(X_b)$, which is uniquely determined by $H_{\zeta}(\beta)$.

\begin{figure}[htp]
    \centering
    \includegraphics[width=0.75\textwidth]{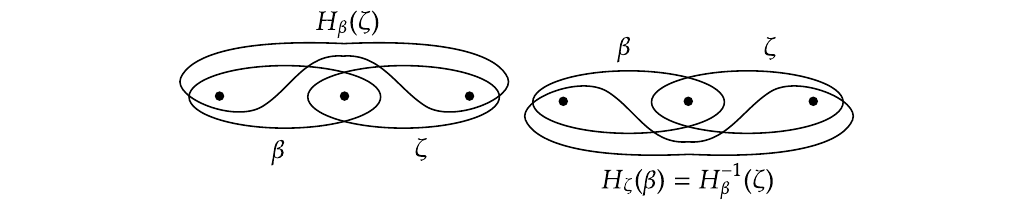}
    \caption{The two minimal curves from the proof of theorem \ref{surjge5} and their respective half twists.}
    \label{fig:Hincrociati}
\end{figure}

We may repeat the previous argument to show that Diagram \ref{commge5} commutes on $X_b$ and on any half twist around one of its minimal curves. Now it suffices to prove that the orbit of $X_b$ by successive half twists covers $\C$. That is, we are left to prove:

\par\medskip

{\bf Claim:} Given two copies $X_b$ and $X_b'$, there is a sequence $X_b=X^0, X^1, \ldots,  X^k=X_b'$ of copies of $X_b$ such that, for every $i=0,\ldots,k-1$, $X^{i+1}=H_\theta^{\pm}(X^i)$ for some minimal curve $\theta\in X^i$.

\begin{proof}[Proof of Claim]
To show this let $h$ be a (orientation-preserving) mapping class that maps $X_b$ to $X_b'$. Now, if $\theta_1,\ldots,\theta_b$ are the minimal curves in $X_b$ we know that $H_{\theta_1},\ldots,H_{\theta_b}$ generate the mapping class group. Therefore let $h=H_{\theta_{i_1}}^{\pm}\ldots H_{\theta_{i_k}}^{\pm}$, for some possibly repeated indices $i_1,\ldots,i_k\in\{1,\ldots,b\}$. Now we proceed by induction on $k$. If $k=1$ we are done. Otherwise notice that, by the properties of half Dehn twists,
$$h=H^{\pm}_{\theta_{i_1}}\ldots H^{\pm}_{\theta_{i_k}}=H^{\pm}_{H^{\pm}_{\theta_{i_1}}(\theta_{i_2})}\ldots H^{\pm}_{H^{\pm}_{\theta_{i_1}}(\theta_{i_k})}H^{\pm}_{\theta_{i_1}}$$
Now we have expressed $h$ in terms of $H_{\theta_{i_1}}^{\pm}$ and $k-1$ half twists around some minimal curves of $X^1:=H_{\theta_{i_1}}^{\pm}(X_b)$. Therefore we conclude by the inductive hypothesis.
\end{proof}

This concludes the proof of Theorem \ref{surjge5}.
\end{proof}

Now we turn to the injectivity part:
\begin{theorem}\label{injge5}
For any $b\ge5$ and any $\N$ deep enough, if $g\in MCG^\pm$ induces the identity on $\C/\N$, then $g\in \mathcal{N}$.
\end{theorem}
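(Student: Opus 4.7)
The plan is to adapt the strategy of the $b=4$ case, Proposition \ref{injn4}, with the rigid set $X_b$ playing the role of a triangle and Theorem \ref{lifting} that of Lemma \ref{trianglelift}. First I would fix a copy $X_b \subset \C$ of the rigid set; by Theorem \ref{projisometry} its image $\ov X_b := \pi(X_b) \subset \C/\N$ is again a copy of $X_b$, and the section $\ov X_b \to X_b$ is a lift. Because $g$ descends to the identity on $\C/\N$, the composition $g|_{X_b}\colon \ov X_b \to \C$ is a second lift of $\ov X_b$, so the uniqueness part of Theorem \ref{lifting} produces $h \in \N$ with $h(\gamma) = g(\gamma)$ for every $\gamma \in X_b$; equivalently, $h^{-1}g$ fixes $X_b$ pointwise.

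The second step invokes finite rigidity, Theorem \ref{finiterig}: the pointwise stabiliser of $X_b$ in $MCG^\pm$ is $\text{Pstab}(X_b) = \langle r \rangle$, where $r$ is the orientation-reversing reflection swapping the two copies of the $b$-gon. Thus $h^{-1}g \in \{1,r\}$; if $h^{-1}g = 1$ we conclude immediately that $g = h \in \N$, so the remaining content of the proof is ruling out the case $h^{-1}g = r$.

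The main obstacle is therefore to show that the reflection $r$ does not act as the identity on $\C/\N$, for if $h^{-1}g = r$ then, since both $g$ and $h^{-1}$ act trivially on $\C/\N$, so must $r$. To exhibit a curve moved by $r$ in the quotient, I would pick a minimal vertex $\beta \in X_b$ together with a vertex $\alpha \in X_b$ at distance $2$ from $\beta$ in $X_b$. Since $r$ fixes $\alpha$ and $\beta$ pointwise but is orientation-reversing, conjugation yields $r H_\beta r^{-1} = H_\beta^{-1}$, hence $r(H_\beta(\alpha)) = H_\beta^{-1}(\alpha)$. The two curves $H_\beta(\alpha)$ and $H_\beta^{-1}(\alpha)$ are distinct in $\C$ and both lie in the set $Y_b$ of Definition \ref{Y}; by Theorem \ref{projisometry} the projection $\pi|_{Y_b}$ is an isometry, in particular injective, so these curves remain distinct in $\C/\N$. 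This yields the required contradiction and forces $h^{-1}g = 1$, so $g = h \in \N$.
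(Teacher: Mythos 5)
Your proof is correct, and the first half is exactly the paper's argument: lift $\ov X_b$ twice (once as $X_b$, once as $g(X_b)$), use uniqueness of lifts to find $h\in\N$ with $h^{-1}g$ fixing $X_b$ pointwise, and then invoke the "moreover" part of Theorem \ref{finiterig} to conclude $h^{-1}g\in\langle r\rangle$.

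Where you genuinely diverge is in ruling out $h^{-1}g = r$, i.e. in showing $r$ does not act trivially on $\C/\N$, and your route is shorter and cleaner. You observe that $r(H_\beta(\alpha)) = H_\beta^{-1}(\alpha) \neq H_\beta(\alpha)$, that both curves lie in $Y_b$, and that $\pi|_{Y_b}$ is injective by Theorem \ref{projisometry}; if $r$ acted trivially these two would have to coincide in the quotient, contradiction. The paper instead argues more indirectly: if $r$ acted trivially then $H_\beta(X_b)$ and $H_\beta^{-1}(X_b)$ would have the same projection, uniqueness of lifts would force $T_\beta\in\N$, normality would then force all Dehn twists about twice-punctured-disk curves into $\N$, hence $\N\supseteq PMCG(S_b)$, contradicting Corollary \ref{infinite}. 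Both arguments are valid; yours extracts the contradiction directly from the isometry of the projection on $Y_b$ applied to a single pair of curves, while the paper's version requires an additional round of lift-uniqueness and the fact that genus-zero twists generate $PMCG$. In fact, your derivation is essentially a one-step instance of what Theorem \ref{projisometry} was designed for, so arguably it exposes the mechanism more transparently.
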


\begin{proof}
Choose a copy of $\ov{X}_b\subseteq \C/\N$, and let $X_b$ be one of its lifts. Then $g(X_b)$ is also a lift of $\ov{X}_b$, and by the uniqueness part of Theorem \ref{lifting} there is an element $h\in \mathcal{N}$ mapping $X_b$ to $g(X_b)$. Therefore $h^{-1}\circ g$ is the identity on $X_b$, and by the "moreover" part of Theorem \ref{finiterig} we must have that $h^{-1}\circ g\in\left\langle r\right\rangle$. Now, $h^{-1} g$ induces the identity map on $\C/\N$, as $g$ does so by hypothesis and $h$ is an element of $\N$. Therefore, in order to prove that $h^{-1}g$ must be the identity, we must prove that $r$ does not induce the identity on $\C/\N$. If this is not the case then, for any minimal curve $\beta\in X_b$, we have that $H_\beta(X_b)$ and $r\circ H_\beta(X_b)=r\circ H_\beta\circ r(X_b)=H_\beta^{-1}(X_b)$ coincide in the quotient (here we used that $r(X_b)=X_b$ and the properties of a half Dehn twist under conjugation). Therefore, by uniqueness of the orbit of lifts, there is an element $k\in \mathcal{N}$ which, for every curve $\gamma\in X_b$, maps $H_\beta^{-1}(\gamma)=r\circ H_\beta(\gamma)$ to $H_\beta(\gamma)$. By the "moreover" part of Theorem \ref{finiterig} there is only one orientation-preserving mapping class with this property, and therefore we must have that $T_\beta=k\in \mathcal{N}$ since the Dehn twist $T_\beta$ clearly maps $H_\beta^{-1}(\gamma)$ to $H_\beta(\gamma)$. But then since $\mathcal{N}$ is a normal subgroup it must contain all conjugates of $T_\beta$, that is, all Dehn twists around curves that bound twice-punctured disks. These mapping classes generate the \emph{pure} mapping class group $PMCG(S_b)$, which is the finite index subgroup of $MCG(S_b)$ consisting of elements that fix each puncture individually. But this subgroup is not deep enough if we choose a large enough constant $\Theta$ (for example, since the quotient $\C/PMCG$ is finite, which contradicts Corollary \ref{infinite}).
\end{proof}

\subsection{Finite rigidity of the quotient}
As a by-product of the proofs of Theorems \ref{CombRig} and \ref{mainthmb4} we get an analogue of Theorem \ref{finiterig} for the quotient. 
\begin{corollary}[Finite rigidity of $\C/\N$]\label{FinRigquot}
For $b\ge4$ and $\N$ deep enough, for every fixed copy $\ov{X}_b\subset\C/\N$ (where by $\ov X_4$ we mean a triangle), any isometric embedding $\phi:\ov{X}_b\to \C/\N$ is induced by a mapping class $\overline{h}\in MCG/\mathcal{N}$. Moreover, two such $\ov h$ differ by an element of the pointwise stabiliser $\text{PStab}(\ov X_b)$, which is the projection of $\text{Pstab}(X_b)$.
\end{corollary}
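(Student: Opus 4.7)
The plan is to reduce to finite rigidity in $\C$ (Theorem \ref{finiterig} when $b\ge 5$, or Theorem \ref{Trig} when $b=4$) by lifting both the domain and the codomain simultaneously, and then invoking the uniqueness of $\N$-orbits of lifts. First I would apply Theorem \ref{lifting} (or Lemma \ref{trianglelift} in the case $b=4$) to produce lifts $X_b^{(0)}, X_b^{(1)}\subset \C$ of the fixed copy $\ov X_b$ and of its image $\phi(\ov X_b)$ respectively. Because $\pi|_{X_b^{(0)}}$ and $\pi|_{X_b^{(1)}}$ are simplicial isomorphisms onto their images (by Theorem \ref{projisometry}, or Corollary \ref{projsimplex} in the $b=4$ case), one gets a well-defined simplicial isomorphism
$$\tilde \phi := (\pi|_{X_b^{(1)}})^{-1}\circ\phi\circ\pi|_{X_b^{(0)}} : X_b^{(0)}\to X_b^{(1)}\subset\C,$$
which is in particular locally injective. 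Theorem \ref{finiterig} (respectively Theorem \ref{Trig}) then supplies a mapping class $h\in MCG^\pm$ with $h|_{X_b^{(0)}}=\tilde\phi$, and by construction the projection $\ov h\in MCG^\pm/\N$ induces $\phi$ on $\ov X_b$.

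For the moreover statement, I would start with two classes $\ov h_1, \ov h_2\in MCG^\pm/\N$ both inducing $\phi$, set $g:= h_2^{-1}h_1$, and observe that $\ov g$ is the identity on $\ov X_b$, so that $g(X_b^{(0)})$ is another lift of $\ov X_b$. The uniqueness part of Theorem \ref{lifting} (or Lemma \ref{trianglelift}) then yields $n\in\N$ with $ng(X_b^{(0)})=X_b^{(0)}$ setwise; combined with $\pi\circ ng=\pi$ on $X_b^{(0)}$ and the injectivity of $\pi|_{X_b^{(0)}}$, this forces $ng$ to fix $X_b^{(0)}$ pointwise, i.e.\ $ng\in\text{Pstab}(X_b^{(0)})$. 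Consequently $\ov h_2^{-1}\ov h_1 = \overline{ng}$ lies in the projection of $\text{Pstab}(X_b^{(0)})$, which is by definition $\text{PStab}(\ov X_b)$.

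The only subtle point, and hence the mildest ``obstacle'' in the argument, is passing from the setwise uniqueness of lifts provided by Theorem \ref{lifting} to the pointwise fixing needed to land in $\text{Pstab}(X_b^{(0)})$. This upgrade is essentially free, since the restriction of $\pi$ to any single lift is an isometry and hence injective; all the genuinely hard work has already been carried out in establishing Theorem \ref{lifting} and the finite rigidity results in $\C$, so this corollary really does fall out as a bookkeeping consequence of the proofs of Theorems \ref{mainthmb4} and \ref{CombRig}.
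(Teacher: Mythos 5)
Your proof is correct and follows essentially the same approach as the paper's: lift $\ov X_b$ and $\phi(\ov X_b)$, invoke finite rigidity in $\C$ (Theorem~\ref{finiterig} or~\ref{Trig}), and for uniqueness combine uniqueness of the $\N$-orbit of lifts (Theorem~\ref{lifting}, Lemma~\ref{trianglelift}) with the ``moreover'' part of the rigidity theorems. Your version is actually a bit more careful than the paper's: you explicitly construct the locally injective simplicial map $\tilde\phi=(\pi|_{X_b^{(1)}})^{-1}\circ\phi\circ\pi|_{X_b^{(0)}}$ so that $h$ induces $\phi$ vertex by vertex (rather than just mapping set to set), and in the uniqueness part you explicitly upgrade the setwise identity $ng(X_b^{(0)})=X_b^{(0)}$ to a pointwise one via injectivity of $\pi$ on a single lift, whereas the paper elides both points. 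One cosmetic note: the corollary writes $\ov h\in MCG/\N$ while you (consistently with Theorem~\ref{finiterig}) produce $\ov h\in MCG^\pm/\N$; reconciling these requires either noting that a suitable element of $\text{Pstab}(X_b)$ can be used to adjust orientation, or reading the corollary's $MCG$ as $MCG^\pm$, but this ambiguity is present in the paper's own phrasing as well and does not affect the substance of your argument.
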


\begin{proof} For the existence part let $X_b$ and $X_b'$ be lifts of $\ov{X}_b$ and $\phi(\ov{X}_b)$, respectively. Then by finite rigidity there is a mapping class $h\in MCG$ mapping $X_b$ to $X_b'$, and the induced map $\overline{h}\in MCG/\mathcal{N}$ maps $\ov{X}_b$ to $\phi(\ov{X}_b)$.\\
For the uniqueness part, if $h$ and $h'$ both induce maps that map $\ov{X}_b$ to $\phi(\ov{X}_b)$ then $h(X_b)$ and $h'(X_b)$ are both lifts of $\phi(\ov{X}_b)$, hence by uniqueness of the orbit of lifts there exists a $g\in \mathcal{N}$ such that $g\circ h|_{X_b}=h'|_{X_b}$. But then $g\circ h$ and $h'$ must differ by an element of $\text{Pstab}(X_b)$.
\end{proof}

\section{From 1-separating to strongly separating}\label{section:1tostrong}
\begin{definition}
We say that a curve $\gamma\in \C$ is \emph{$1$-separating} if $S-\gamma$ has a connected component of complexity $1$. If $S$ is a punctured sphere, $\gamma$ is $1$-separating if and only if it bounds a disk with three punctures. Let $\Cp$ be the subgraph of $\C$ spanned by $1$-separating curves. 
\end{definition}

Moreover, we recall the following definition from \cite{ssgraph}:
\begin{definition}\label{defCss}
Let $\Css$ the full subgraph of the curve graph spanned by all \emph{strongly separating} curves, i.e. those separating curves that do not bound a twice-punctured disk.
\end{definition}
Notice that $\Cp\subseteq\Css$ whenever $b\ge6$. Moreover, the action of $DT_K$ restricts to both $\Cp$ and $\Css$, therefore the quotient $\Cpdt$ is a subgraph of $\Cssdt$, which is in turn a subgraph of $\Cdt$. 

As we will see in Section \ref{sec:qi}, every self-quasi-isometry of $MCG/DT_K$ induces an automorphism of $\Cpdt$. Therefore we must find a way to relate an automorphism of $\Cpdt$ with an automorphism of $\Cdt$, which we understand in view of the combinatorial rigidity Theorem \ref{CombRig}. For short, in the rest of the paper we will say that a proposition holds \emph{for all large multiples K} if there exists $K_0\in\mathbb{N}_{>0}$ such that the proposition holds whenever $K$ is a non-trivial multiple of $K_0$.
\begin{theorem}\label{doppiaext}
    For all $b\ge7$ and for all large multiples $K$, any automorphism of $\Cpdt$ is the restriction of an automorphism of $\Cdt$.
\end{theorem}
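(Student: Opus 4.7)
The plan is to leverage three ingredients: combinatorial rigidity for $\Cdt$ (Theorem \ref{CombRig}), the classical rigidity of the $1$-separating curve graph from \cite{BowPants,ssgraph} (namely that $\text{Aut}(\Cp) = MCG^\pm(S_b)$ for $b \geq 7$), and the lifting machinery of Sections \ref{sec:setting}--\ref{sec:lifting}. Since $\text{Aut}(\Cdt) \cong MCG^\pm(S_b)/DT_K$ by Theorem \ref{CombRig}, and every extended mapping class preserves the topological property of being a $1$-separating curve, it suffices to produce, for each $\phi \in \text{Aut}(\Cpdt)$, an element $\ov g \in MCG^\pm(S_b)/DT_K$ whose induced action on $\Cpdt$ coincides with $\phi$; restricting the action of $\ov g$ on $\Cdt$ then gives the desired extension.

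To build such a $\ov g$, I would fix a copy of the Aramayona--Leininger rigid set $X_b \subset \C$ containing a rich collection of $1$-separating curves, and let $\ov{X}_b \subset \Cdt$ be its isometric projection, guaranteed by Theorem \ref{projisometry}. The $1$-separating curves of $\ov{X}_b$ lie in $\Cpdt$ and are mapped by $\phi$ to another such subset. Using Theorem \ref{lifting} I would lift both configurations to isomorphic copies inside $\C$; classical finite rigidity (Theorem \ref{finiterig}), together with the result $\text{Aut}(\Cp) = MCG^\pm$ from \cite{ssgraph} to control the extension of the map between the $1$-separating parts, would produce an extended mapping class $g \in MCG^\pm$ realizing the lifted map. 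Its projection $\ov g \in MCG^\pm/DT_K$ is the candidate extension.

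The main obstacle, and likely the heart of the proof, is verifying that $\ov g$ and $\phi$ agree on \emph{all} of $\Cpdt$, not just on $\ov{X}_b \cap \Cpdt$. My strategy would be to replace $\phi$ by $\ov g^{-1} \circ \phi$ and show that any automorphism of $\Cpdt$ fixing a sufficiently rich finite configuration pointwise must be the identity, mirroring the injectivity argument of Theorem \ref{injge5}: given another $1$-separating curve $\ov c$, I would complete it together with enough of the fixed set to a configuration that admits a unique $\N$-orbit of lifts (via Theorem \ref{lifting}), and then use the classical rigidity to conclude $\phi(\ov c) = \ov c$. The technical subtlety I expect to require the most care is that the rigid-set and uniqueness-of-lift results of Section \ref{sec:lifting} were stated for the full set $X_b \subset \C$, so one must carefully propagate them while staying within $1$-separating curves; this is also where the hypothesis $b \geq 7$ enters (rather than the weaker $b \geq 5$ of combinatorial rigidity), since it is needed both for the classical $\text{Aut}(\Cp) = MCG^\pm$ result and to ensure there are enough $1$-separating curves inside each rigid configuration to span the arguments above.
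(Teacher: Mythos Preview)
Your approach differs fundamentally from the paper's and contains a genuine gap.

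The paper proves Theorem~\ref{doppiaext} by a direct combinatorial extension in two stages, following the Bowditch roadmap: first from $\Cpdt$ to $\Cssdt$ (Section~\ref{section:1tostrong}), then from $\Cssdt$ to $\Cdt$ (Section~\ref{section:strongtoall}). In the first stage, each vertex of $(\Cssdt)\setminus(\Cpdt)$ is shown to correspond bijectively to a \emph{division} of $\Cpdt$ (a maximal join with connected dual sides), so any automorphism of $\Cpdt$ permutes divisions and hence extends to $\Cssdt$. In the second stage, minimal curves are recovered as equivalence classes of \emph{surrounding pairs} inside $\Cssdt$, detected via heptagons ($b=7$), the graph $\mathfrak{O}$ ($b=8$), or bizarre simplices reducing to peripheral $S_7$'s ($b\ge 9$). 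All of this is intrinsic to $\Cpdt$; the rigid set $X_b$ and Theorem~\ref{CombRig} play no role. Indeed, in the paper's logic Theorem~\ref{AutoExt} (every automorphism of $\Cpdt$ comes from a mapping class) is a \emph{corollary} of Theorem~\ref{doppiaext} combined with Theorem~\ref{CombRig}, not a tool for proving it.

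Your proposal tries to go through $X_b$, but this is circular. The rigid set $X_b$ contains curves that are \emph{not} $1$-separating: every minimal curve $\beta\in X_b$ bounds a twice-punctured disk, so $\beta\notin\Cp$. Hence your automorphism $\phi\in\text{Aut}(\Cpdt)$ acts only on the proper subset $\ov X_b\cap\Cpdt$, and you have no control over where the remaining vertices of $\ov X_b$ should go. Theorem~\ref{lifting} requires a full copy of $X_b$ in $\Cdt$ to lift, and Theorem~\ref{finiterig} requires the full $X_b$ to produce a mapping class; neither applies to the $1$-separating subgraph alone, nor is there any result in the paper or in \cite{AL,ssgraph} asserting that $X_b\cap\Cp$ is rigid in $\Cp$. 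Deciding where to send the non-$1$-separating vertices of $\ov X_b$ is exactly the problem of extending $\phi$ from $\Cpdt$ to $\Cdt$ --- the statement you are trying to prove. The ``technical subtlety'' you flag in your last paragraph is therefore not a wrinkle but the entire content of the theorem, and invoking $\text{Aut}(\Cp)=MCG^\pm$ does not help since that concerns $\Cp$, not $\Cpdt$, and there is no mechanism here for lifting an automorphism of the quotient graph.
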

Combining this theorem with the combinatorial rigidity Theorem \ref{CombRig} we get the following:
\begin{theorem}\label{AutoExt}
    For all $b\ge7$ and for all large multiples $K$, any automorphism of $\Cpdt$ is the restriction of an element of $MCG^\pm(S_b)/DT_K$.
\end{theorem}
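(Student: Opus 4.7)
The plan is to chain together the two ingredients explicitly named just before the statement. Given an automorphism $\phi$ of $\Cpdt$, first apply Theorem \ref{doppiaext} (for $b\ge 7$ and all large multiples $K$) to extend $\phi$ to an automorphism $\tilde\phi\in\text{Aut}(\Cdt)$. Then, choosing $K$ to additionally be large enough that $DT_K$ is deep enough in the sense required by Theorem \ref{CombRig}, the natural map
\[
MCG^\pm(S_b)/DT_K \;\longrightarrow\; \text{Aut}(\Cdt)
\]
is an isomorphism, so there is a unique class $[g]\in MCG^\pm(S_b)/DT_K$ inducing $\tilde\phi$ on $\Cdt$.

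It remains to observe that the restriction of $[g]$ to $\Cpdt$ agrees with the original $\phi$: by construction $[g]$ induces $\tilde\phi$ on all of $\Cdt$, and by Theorem \ref{doppiaext} the extension $\tilde\phi$ restricts to $\phi$ on the subgraph $\Cpdt\subseteq\Cdt$. Since the action of a mapping class on the curve graph sends $1$-separating curves to $1$-separating curves, and (projecting from $\C$ to $\Cdt$) this property descends to $\Cpdt$, the action of $[g]$ preserves $\Cpdt$ and there restricts to $\phi$.

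The only thing to check is that a single threshold $K_0$ works for both inputs simultaneously, which is immediate by taking the least common multiple (or simply the larger) of the two thresholds provided by Theorem \ref{doppiaext} and Theorem \ref{CombRig}. There is no real obstacle here: the work has been done in the preceding theorems, and the present statement is essentially their composition.
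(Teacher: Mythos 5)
Your proposal is correct and matches the paper's approach exactly: the paper states Theorem \ref{AutoExt} as a direct consequence of Theorems \ref{doppiaext} and \ref{CombRig}, which is precisely the composition you describe. The remark about mapping classes preserving $1$-separating curves is a harmless redundancy, since $\tilde\phi$ restricting to $\phi$ on $\Cpdt$ already forces $[g]$ to preserve $\Cpdt$ and agree with $\phi$ there.
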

Following the roadmap of \cite{ssgraph, BowPants} we split the proof of Theorem \ref{doppiaext} into two intermediate extensions, one from $\Cpdt$ to $\Cssdt$ and one from $\Cssdt$ to $\Cdt$. 

For the rest of the section, we assume $b\ge 7$ and we take a multiple $K$ which is large enough that $\Cdt$ satisfies the Combinatorial Rigidity Theorem \ref{mainthm}. We first recall some definitions:
\begin{definition} Let $G$ be a graph. The \emph{dual graph} $G^*$ of $G$ is the graph with the same vertices of $G$ and with an edge between two vertices if and only if they are not adjacent in $G$.
\end{definition}
Notice that if $G\subseteq\C$ then $G^*$ is connected iff any two curves can be joined by a chain as in Definition \ref{chain}.

The following definition is from \cite[Section 7]{BowPants}: 
\begin{definition} Let $\Gamma$ be either $\Cp$ or $\Cpdt$. A \emph{division} of $\Gamma$ is an unordered pair $\{P^+,P^-\}$ of disjoint infinite subsets such that $P^+\star P^-\subset \Gamma$ is a maximal join and both $(P^+)^*$ and $(P^-)^*$ are connected. Two divisions $P^\pm,Q^\pm$ are nested if either $P^+\subseteq Q^+$ or $P^+\subseteq Q^-$.
\end{definition}
By \cite[Lemma 7.2]{BowPants} and the discussion following \cite[Lemma 7.5]{BowPants}, any division of $\Cp$ is induced by a unique strongly separating curve $\delta\in\Css\setminus\Cp$: namely, $P^\pm(\delta)$ correspond to the $1$-separating curves that fill the two sides of $\delta$. Moreover, nesting is equivalent to the corresponding $\delta$s being disjoint. Our goal is to give a similar description for the classes of $(\Cssdt)\setminus(\Cpdt)$.
\begin{definition}\label{filldiv}
Let $\Gamma$ be either $\Cp$ or $\Cpdt$. A \emph{filled division} of $\Gamma$ is a triple $(P^\pm,\alpha^\pm,\beta^\pm)$ such that:
\begin{enumerate}
    \item $P^\pm$ is a division of $\Gamma$, called the \emph{underlying} division;
    \item $\alpha^\pm,\beta^\pm\in P^\pm$, respectively;
    \item $d_\Gamma(\alpha^+,\beta^+)=d_\Gamma(\alpha^-,\beta^-)=2$
    \item\label{filldiv3} $P^+=\link_\Gamma(\alpha^-)\cap \link_\Gamma(\beta^-)$, and similarly for $P^-$.
\end{enumerate}
Two filled divisions are said to be equivalent if they have the same underlying $P^{\pm}$. From now on, by division we will always mean an equivalence class of filled divisions (i.e., we won't consider divisions which do not allow a filling).
\end{definition}
\begin{remark}
When $\Gamma=\Cp$, any division $P^\pm$ in the sense of Bowditch admits a filling. For example, since the division is induced by some strongly-separating curve $\delta$, we may choose $\alpha^\pm,\beta^\pm$ to be two pairs of curves that respectively fill the two surfaces cut out by $\delta$ (hence the name "filled division"). Notice, however, that this is not necessarily the case: the subsurfaces filled by $\alpha^\pm,\beta^\pm$ could cut out some twice punctured disks, which are irrelevant to our argument since they cannot contain any $1$-separating curve. We will elaborate on this Remark in the proof of Lemma \ref{Ssurj}, and the situation will be depicted in Figure \ref{fig:filledsubsurf}.
\end{remark}

\begin{remark}\label{fillingpairs}
We could state Definition \ref{filldiv} just in terms of four vertices, by saying that a filled division is given by two pairs of vertices $(\alpha^\pm,\beta^\pm)\in \Gamma$ such that:
\begin{enumerate}
    \item \label{i} $\alpha^+,\alpha^-,\beta^+,\beta^-$ is an isometrically embedded square;
    \item \label{ii} $P^\pm:=\link_\Gamma(\alpha^\mp)\cap \link_\Gamma(\beta^\mp)$ are infinite;
    \item \label{iii} if $\gamma\in P^+$ and $\gamma'\in P^-$ then $d_\Gamma(\gamma,\gamma')=1$.
    
\end{enumerate}
This is because by Conditions \ref{ii} and \ref{iii} of Definition \ref{fillingpairs} we have that $P^+$ and $P^-$ are disjoint infinite subsets that form a join $P^+\star P^-$, which is maximal since any subgraph $Q^+\subset \Gamma$ which forms a join with $\alpha^-$ and $\beta^-$ must be contained in $P^+$. Moreover, Condition \ref {i} says that $\alpha^+$ and $\beta^+$ are connected in $(P^+)^*$. Finally, since $P^+$ and $P^-$ are disjoint we get that there is no $\gamma\in\Gamma$ such that $\alpha^\pm,\beta^\pm\in \link_\Gamma(\gamma)$; therefore a vertex in $P^+=\link_\Gamma(\alpha^-)\cap\link_\Gamma(\beta^-)$ which is not $\alpha^+$ nor $\beta^+$ must be at distance at least $2$ from either $\alpha^+$ or $\beta^+$, so $(P^+)^*$ is connected.
\end{remark}

The following definition is needed to take into account also classes in $\Cpdt$:
\begin{definition}
    A \emph{slice} (respectively, \emph{filled slice}) is either a division (resp. filled division) or a vertex $\delta\in \Gamma$. A vertex is nested into a division $P^\pm$ if it belongs to either $P^+$ or $P^-$. Two vertices are nested if they are disjoint.
\end{definition}
Now we want to assign to every $\overline{\delta}\in(\Cssdt)\setminus(\Cpdt)$ some filled division $(\overline{P}^\pm, \overline\alpha^\pm, \overline\beta^\pm)$. First, choose representatives $\delta_1,\ldots,\delta_k\in \Css\setminus\Cp$ for every homeomorphism type of strongly separating, non-$1$-separating curves. For every $i=1\ldots,k$ choose $\alpha_i^\pm,\beta_i^\pm$ as two pairs of curves in $\Cp$ that fill the sides of $\delta_i$. Then for every $\delta\in \Css\setminus\Cp$ choose some mapping class $f$ such that $\delta=f(\delta_i)$ for some $i\in\{1,\ldots, k\}$, and set $(\alpha^\pm(\delta), \beta^\pm(\delta))=f(\alpha_i^\pm, \beta_i^\pm)$, which are again two pairs of filling curves for the sides of $\delta$. Now, choose $\Theta$ big enough that, for every $i=1,\ldots,k$ and for every $s\in\C$, we have that $\max\{d_s(\alpha_i^\pm,\beta_i^\pm),d_s(\alpha_i^\pm,\beta_i^\mp)\}\le \Theta$. Since $(\alpha^\pm(\delta), \beta^\pm(\delta))$ are defined as images via mapping classes of some $(\alpha_i^\pm, \beta_i^\pm)$ we also get that, for every $\delta\in\Css\setminus\Cp$ and for every $s\in\C$,
\begin{equation}\label{shortprojab}
    \max\{d_s(\alpha^\pm(\delta), \beta^\pm(\delta)),d_s(\alpha^\pm(\delta), \beta^\mp(\delta))\}\le \Theta.
\end{equation}

This way, if $K$ is a large enough multiple with respect to this constant $\Theta$ we will be able to invoke Lemma \ref{projinlink} and its corollaries, which will tell us that the projection map is isometric when restricted to these curves. The key point is that, since there are only finitely many $MCG$-orbits of curves $\delta$, we could fix once and for all a $MCG$-equivariant choice of four curves $\alpha^\pm(\delta), \beta^\pm(\delta)$ in every orbit, and therefore we only need to bound the annular projections of finitely many curves. This kind of argument will be recurrent throughout the paper. 

Now let $(\overline\alpha^\pm, \overline\beta^\pm):=\pi(\alpha^\pm(\delta), \beta^\pm(\delta))$.

\begin{lemma}\label{fillprojtofill}
    The two pairs $(\overline\alpha^\pm, \overline\beta^\pm)$ give a filled division.
\end{lemma}

\begin{proof}
We show that the conditions of Remark \ref{fillingpairs} are satisfied. First notice that, since by construction all projections between $\alpha^\pm$ and $\beta^\pm$ are short (see Equation (\ref{shortprojab})), Lemma \ref{projinlink} tells us that the quotient map $\pi$ is an isometry on the square spanned by these curves. This proves Condition \ref{i}. 

For Condition \ref{ii} let $P^\pm\subset \Cp$ be the division induced by $\delta$, and let $\ov P^\pm$ be defined as in Remark \ref{fillingpairs}. Clearly $\pi(P^\pm)\subset \ov P^\pm$, since any $\gamma\in P^-$ is disjoint from both $\alpha^+$ and $\beta^+$ and therefore $\pi(\gamma)\in \link(\overline\alpha^+)\cap \link(\overline\beta^+)$, again by Lemma \ref{projpuntsep}. Thus it is enough to show that $\pi(P^\pm)$ is infinite. Let $\Sigma^\pm$ be the two subsurfaces cut out by $\delta$, so that $P^\pm\subset \C(\Sigma^\pm)$. Since $\delta\not \in \Cp$ both subsurfaces have complexity at least $2$, therefore $\pi(\C(\Sigma^\pm))\cong \C(\Sigma^\pm)/DT_K(\Sigma^\pm)$ by Corollary \ref{CUdt}. Now we claim that we can find a large multiple $K$ such that 
$\C(\Sigma^\pm)/DT_K(\Sigma^\pm)$ is infinite. In order to do so, for every topological type of $\Sigma^+$ we can choose a curve $x\in P^+$ and a pseudo-Anosov element $g\in MCG(\Sigma^+)$ that fixes the boundary $\delta$ pointwise. Notice that for every $n\in \mathbb{Z}$ we have that $g^n(x)$ is again $1$-separating for $S$, thus $g^n(x)\in P^+$. Now we can proceed as in Corollary \ref{infinite} to show that, whenever $K$ is a large  multiple, the projection $\pi$ is an isometry on the axis $\{g^n(x)\}_{n\in\mathbb{Z}}$, and in particular $\pi(P^+)$ is infinite. Since there are only finitely many topological types of $\Sigma^+$ we can choose a large multiple $K$ that works for any $\Sigma^+$. Moreover the whole argument can be repeated to show that $\pi(P^-)$ is infinite.

For Condition \ref{iii}, first consider a simplex $\Delta^+\in\C$ that contains $\delta$, is disjoint from $\alpha^+$ and $\beta^+$ and is maximal with these properties, and let $\overline\Delta^+\in\Cdt$ be its projection, which is a simplex of the same dimension by Corollary \ref{projsimplex}. Let $\overline\gamma\in\Cpdt$ be a vertex in $\link_{\Cpdt}(\overline\alpha^+)\cap \link_{\Cpdt}(\overline\beta^+)$. If we look at these three vertices inside $\Cdt$ we see that $\overline\alpha^+,\overline\beta^+\in \link_{\Cdt}(\overline\Delta^+)$, i.e. we have a generalised square whose "vertices" are $\overline\gamma,\,\overline\alpha^+,\,\overline\Delta^+,\,\overline\beta^+$ as in Figure \ref{fig:gensquare}.

\begin{figure}[htp]
    \centering
    \includegraphics[width=0.75\textwidth]{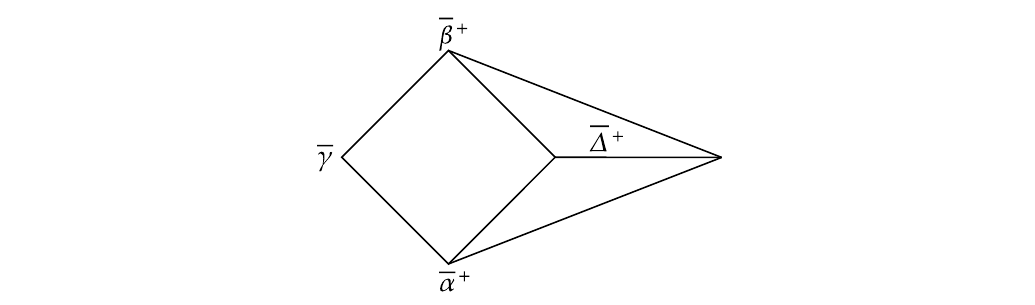}
    \caption{The generalised square from Lemma \ref{fillprojtofill}. The simplex $\overline\Delta^+$ is represented as a segment.}
    \label{fig:gensquare}
\end{figure}

We may lift this square and assume that the lift of $\overline\Delta^+$ is $\Delta^+$, by uniqueness of the orbit of lifts of a simplex, which is Theorem \ref{mgonlift}. In particular, we may assume that the lift of $\overline\delta$ is still $\delta$. Let $\alpha',\,\beta'$ be the lifts of $\overline\alpha^+,\,\overline\beta^+$, which again fill one of the sides of $\delta$ by Corollary \ref{projfilling}. But now the lift $\gamma$ of $\overline\gamma$ is disjoint from $\alpha'$ and $\beta'$ and cannot coincide with $\delta$, since $\delta\not\in \Cp$. Therefore $\gamma$ must lie on the other side of $\delta$ with respect to $\alpha'$ and $\beta'$, i.e. $\gamma\in P^-$. The same argument applies to any vertex $\overline\eta\in \link(\overline\alpha^-)\cap \link(\overline\beta^-)$ and produces a lift $\eta\in P^+$. But then $\gamma$ and $\eta$ lie on different sides of $\delta$, and in particular they must be disjoint. Therefore, by Item 2 of Lemma \ref{projpuntsep}, we get that $d_{\Cpdt}(\ov\gamma,\ov\eta)=1$, as required.
\end{proof}

Now we want to associate to every $\overline\delta$ in $\Cssdt$ a slice of $\Cpdt$. If $\overline\delta\in \Cpdt$ set $S(\overline\delta):=\overline\delta$. Otherwise choose a lift $\delta$, take the corresponding filled division $(\alpha^\pm(\delta), \beta^\pm(\delta))$, constructed as above, and let $S(\overline\delta):=P^\pm(\overline\delta)$ be the underlying division of $(\overline\alpha^\pm, \overline\beta^\pm):=\pi(\alpha^\pm(\delta), \beta^\pm(\delta))$. 

\begin{theorem}\label{thm:1toss}
For all large multiples $K$, the map $S$ described above is a well-defined bijection between vertices in $\Cssdt$ and slices of $\Cpdt$, that translates adjacency into nesting. Hence any automorphism of $\Cpdt$ extends to an automorphism of $\Cssdt$.
\end{theorem}
We need to show that $S$ is a well-defined bijection when restricted to $(\Cssdt)\setminus(\Cpdt)$. If this is the case then clearly it maps adjacent vertices to nested slices and vice versa, and the proof of Theorem \ref{thm:1toss} is complete. We break the proof into a series of lemmas.

\begin{lemma}
    The map $S$ is well-defined. In other words, the corresponding division $\overline P^\pm$ of $(\overline\alpha^\pm, \overline\beta^\pm)$ is independent on the choice of the lift $\delta$.
\end{lemma}

\begin{proof}
When we verified Conditions \ref{ii} and \ref{iii} in the proof of Lemma \ref{fillprojtofill} we actually showed that $\ov P^\pm=\pi(P^\pm)$, and the latter depends only on $\delta$. Now it suffices to notice that, if $g\in DT_K$, then the division induced by $g(\delta)$ is clearly $g(P^\pm)$.
\end{proof}

Now we look for an inverse of $S$.

\begin{lemma}\label{Ssurj}
    Let $\overline P^\pm$ be a division that admits a filling. There exists a vertex $\overline\delta$ such that $S(\overline\delta)=\overline P^\pm$.
\end{lemma}

\begin{proof}
    Let $(\overline\alpha^\pm,\overline\beta^\pm)$ be two pairs of vertices whose corresponding division is $\overline P^\pm$. Lift them to some curves $(\alpha^\pm,\beta^\pm)$ that form a square in $\Cp$. Let $\Sigma^+,\,\Sigma^-$ be the subsurfaces filled by $\alpha^+\cup\beta^+$ and $\alpha^-\cup\beta^-$, respectively. The boundaries of these subsurfaces are separating, as they are curves on a sphere, hence they cut out some peripheral punctured disks and a (possibly punctured) annulus between them. Moreover there is no curve $\gamma\in \Cp$ that does not cross any of the boundaries, because otherwise $\link(\pi(\gamma))$ would contain $(\overline\alpha^\pm,\overline\beta^\pm)$. Therefore we are in the situation of Figure \ref{fig:filledsubsurf}: all peripheral disks contain two punctures each.\\
    If we show that the annulus cannot contain any puncture then the core curve $\delta$ of this annulus is the only strongly separating curve which is disjoint from $\alpha^\pm$ and $\beta^\pm$, which can be also characterised as the only boundary component of the surface filled by $\alpha^+\cup\beta^+$ which lies in $\Css$. If otherwise the annulus contains some puncture we can find two intersecting $1$-separating curves $\gamma^\pm$ such that $\gamma^+\in \link(\alpha^-)\cap \link(\beta^-)$ and $\gamma^-\in \link(\alpha^+)\cap \link(\beta^+)$. This is because the annulus must cut the surface into two subsurfaces, each of which contains the disk with three punctures bounded by $\alpha^+$ and $\alpha^-$, respectively. Hence every side of the annulus contains at least three punctures, and we can choose some curve $\gamma^+$ that bounds the puncture inside the annulus and two other punctures on the side of $\alpha^+$. The same holds for $\gamma^-$, as depicted in Figure \ref{fig:filledsubsurf}. Now, since we can choose $\gamma^\pm$ to induce different puncture separations, their projections remain at distance at least $2$ by Lemma \ref{projpuntsep}, hence they contradict Condition \ref{iii} for $(\overline\alpha^\pm,\overline\beta^\pm)$.

    \begin{figure}[htp]
        \centering
        \includegraphics[width=0.75\textwidth]{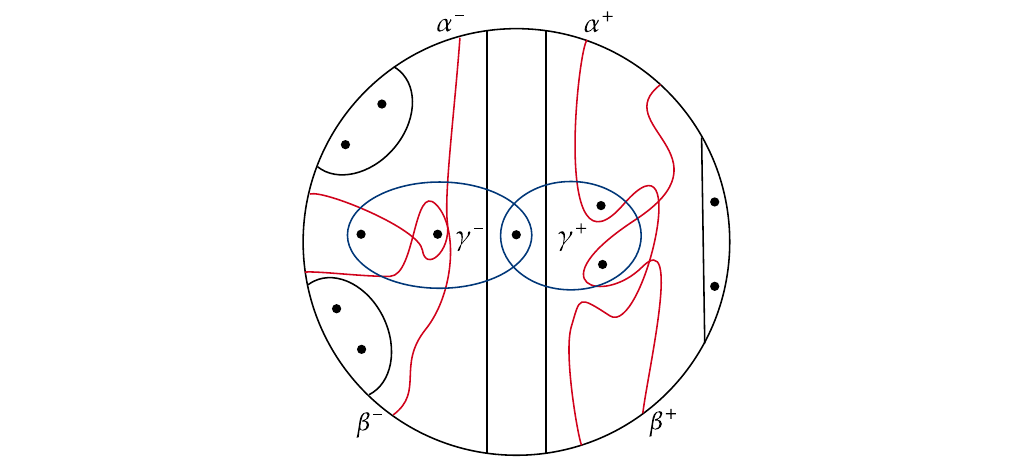}
        \caption{The red curves fill the subsurfaces whose boundaries are the black curves, and may cut out some punctured disks. If the annulus in between contains some punctures then the projections of the blue curves remain at distance at least 2.}
        \label{fig:filledsubsurf}
    \end{figure}

    Now, let $P^\pm$ be the division induced by $\delta$ and filled by $(\alpha^\pm,\beta^\pm)$. Clearly $\pi(P^\pm)\subseteq \overline P^\pm$, since a curve in $\link(\alpha^+)\cap \link(\beta^+)$ projects inside $\link(\overline\alpha^+)\cap \link(\overline\beta^+)$. Moreover by construction $\pi(P^\pm)=S(\pi(\delta))$, and in particular it is a maximal join. Hence $S(\pi(\delta))= \overline P^\pm$ by maximality.
\end{proof}

\begin{lemma}\label{deltaindep}
    For all large multiples $K$, the vertex $\overline\delta$ from Lemma \ref{Ssurj} does not depend on the choice of the filling vertices $(\overline\alpha^\pm,\overline\beta^\pm)$, nor on the choice of their lifts.
\end{lemma}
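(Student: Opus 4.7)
The plan is to handle the two kinds of independence in turn: first independence of lifts (with the filling vertices fixed), then independence of filling vertices, invoking the first as a lemma so that we are free to choose convenient lifts in the second argument. Throughout, recall from the proof of Lemma \ref{Ssurj} that given a lift $(\alpha^\pm, \beta^\pm) \subseteq \Cp$ of a filling the associated $\delta$ is uniquely determined as the only strongly separating curve disjoint from $\alpha^\pm \cup \beta^\pm$, equivalently as the core of the unpunctured annulus in $S \setminus (\Sigma^+ \cup \Sigma^-)$, where $\Sigma^\pm = \mathrm{fill}(\alpha^\pm \cup \beta^\pm)$.

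For independence of lifts, let $(\alpha_i^\pm, \beta_i^\pm)$ for $i=1,2$ be two lifts of the same $(\overline\alpha^\pm, \overline\beta^\pm)$, producing cores $\delta_i$. By the uniqueness of $\mathcal{N}$-orbits of lifts of the edge $(\overline\alpha^+, \overline\beta^+)$ (Theorem \ref{mgonlift}, first bullet), we apply some $g \in \mathcal{N}$ to the second lift to arrange $\alpha_1^+ = \alpha_2^+$ and $\beta_1^+ = \beta_2^+$. The argument in Lemma \ref{Ssurj} shows that every $1$-separating curve disjoint from $\Sigma_1^+$ must lie in $\Sigma_1^-$ (since the other components of $S \setminus \Sigma_1^+$ are an unpunctured annulus and twice-punctured peripheral disks, containing no $1$-separating curves); hence $\alpha_i^-, \beta_i^- \in \Sigma_1^-$ for both $i$. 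Lemma \ref{Cu0} with $U = \Sigma_1^-$ produces an element of $\mathcal{N}(\Sigma_1^-)$, fixing $\Sigma_1^+$ pointwise, taking $\alpha_2^-$ to $\alpha_1^-$. A further application of Lemma \ref{Cu0} to the component of $\Sigma_1^- \setminus \alpha_1^-$ containing $\beta_1^-$ and $\beta_2^-$ (the same component, since by Remark \ref{gendatwist} elements of $\mathcal{N}$ are products of powers of Dehn twists and hence preserve topological configurations) aligns $\beta_2^-$ with $\beta_1^-$. The two squares now coincide, and so do their cores; this forces $\pi(\delta_1) = \pi(\delta_2)$.

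For independence of filling vertices, suppose $(\overline\alpha_i^\pm, \overline\beta_i^\pm)$ for $i=1,2$ both induce the underlying division $\overline{P}^\pm$. Fix a lift $(\alpha_1^\pm, \beta_1^\pm)$ of the first filling with filled subsurfaces $\Sigma_1^\pm$ and core $\delta_1$; by the previous step we may choose the lift of the second filling freely. By Corollary \ref{CUdt}, $\pi(\C(\Sigma_1^+)) \cong \C(\Sigma_1^+)/\mathcal{N}(\Sigma_1^+)$ as induced subgraphs of $\Cdt$, and the proof of Lemma \ref{fillprojtofill} gives $\overline{P}^+ = \pi(P_1^+) \subseteq \pi(\C(\Sigma_1^+))$. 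Since $\overline\alpha_2^+, \overline\beta_2^+ \in \overline{P}^+$ are adjacent in $\Cdt$, they are adjacent in this subgraph and therefore have disjoint lifts $\alpha_2^+, \beta_2^+$ in $P_1^+ \subseteq \Sigma_1^+$; an analogous argument yields $\alpha_2^-, \beta_2^- \in P_1^- \subseteq \Sigma_1^-$. These four curves form a lift of the second filling in $\Cp$, entirely contained in $\Sigma_1^+ \cup \Sigma_1^-$. It remains to show $\Sigma_2^\pm = \mathrm{fill}(\alpha_2^\pm \cup \beta_2^\pm) = \Sigma_1^\pm$, for then the two cores coincide. The complement $\Sigma_1^+ \setminus \Sigma_2^+$ is disjoint from $\Sigma_2^- \subseteq \Sigma_1^-$, so it sits inside $S \setminus (\Sigma_2^+ \cup \Sigma_2^-)$, which by Lemma \ref{Ssurj} applied to the second lift is an unpunctured annulus plus twice-punctured peripheral disks. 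A puncture-and-complexity count, combined with the equality $\pi(\C(\Sigma_1^+) \cap \Cp) = \pi(\C(\Sigma_2^+) \cap \Cp) = \overline{P}^+$, forces $\Sigma_1^+$ and $\Sigma_2^+$ to have the same topological type; containment then promotes this to equality up to isotopy. Symmetrically $\Sigma_2^- = \Sigma_1^-$, so $\delta_2 = \delta_1$.

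The main obstacle is the final topological step: extracting from the equality $\pi(P_1^+) = \pi(P_2^+)$ the matching of complexities of $\Sigma_1^+$ and $\Sigma_2^+$, which in turn rules out the possibility of $\Sigma_2^+$ being a proper subsurface cut off from $\Sigma_1^+$ by twice-punctured disks. Once that is done, everything else reduces to the mechanical use of Corollary \ref{CUdt} and Lemma \ref{Cu0}.
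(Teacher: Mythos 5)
There is a genuine gap. You repeatedly treat $\alpha^+$ and $\beta^+$ as adjacent, when in fact they \emph{intersect}: by Remark~\ref{fillingpairs} the square is $\alpha^+,\alpha^-,\beta^+,\beta^-$ with edges $\alpha^\pm\perp\beta^\mp$ and $\alpha^+\perp\alpha^-$, while the two diagonals $\alpha^+\not\perp\beta^+$ and $\alpha^-\not\perp\beta^-$ are the filling pairs. Consequently $(\overline\alpha^+,\overline\beta^+)$ is not an edge, Theorem~\ref{mgonlift} does not give a unique $\N$-orbit of lifts of this pair, and you cannot arrange $\alpha_1^+=\alpha_2^+$ and $\beta_1^+=\beta_2^+$ with a single element of $\N$. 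This is exactly the obstruction flagged in Remark~\ref{rmk:lifts_exist?}: even a length-$2$ path in $\C/\N$ can have several $\N$-orbits of lifts, so the two lifts of the ``same'' filling can genuinely differ by a large Dehn-twist twist inserted between $\alpha^+$ and $\beta^+$. The same misreading recurs twice more: $\beta_1^-$ and $\beta_2^-$ intersect $\alpha_1^-$, so there is no component of $\Sigma_1^-\setminus\alpha_1^-$ containing either of them, and in Step 2 the phrase ``$\overline\alpha_2^+,\overline\beta_2^+$ are adjacent in $\Cdt$'' is false (these are filling $1$-separating curves, at distance $\ge 2$ by Lemma~\ref{projpuntsep}), so neither the appeal to edge lifts nor the claimed disjointness of the chosen lifts is available.

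These are not cosmetic slips: once you cannot force both $\alpha^+$'s and both $\beta^+$'s to coincide outright, you are back to needing a controlled way to remove the remaining element $g\in DT_K$ carrying $\beta_1^+$ to $\beta_2^+$. That is precisely what the paper's proof does via the complexity-reduction induction of Proposition~\ref{cor3.6}, with the crucial observation that the cut sets $\{\alpha_1^-,\beta_1^-,\delta_1\}$, $\{\alpha^+\}$, $\{\alpha_2^-,\beta_2^-,\delta_2\}$ disconnect $\beta_1^+$ from $\beta_2^+$, so a large annular projection forces $\gamma_s$ to fix one cut set pointwise and can be pushed ``past'' it. You avoid this machinery, but the lifting subtlety it handles resurfaces unaddressed. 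Finally, you yourself flag the unfinished topological step (showing $\Sigma_2^\pm=\Sigma_1^\pm$); the paper sidesteps this entirely by reducing to the case where $(\alpha_1^+,\beta_1^+)=(\alpha_2^+,\beta_2^+)$ exactly, after which $\delta$ is pinned down as the unique boundary component of the common filled subsurface lying in $\Css$.
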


The proof of this lemma will be prototypical of many arguments throughout the paper.
\begin{proof}
    Choose two pairs of filling vertices $(\overline\alpha_1^\pm,\overline\beta_1^\pm)$ and $(\overline\alpha_2^\pm,\overline\beta_2^\pm)$. Without loss of generality we may assume that $(\overline\alpha_1^+,\overline\beta_1^+)=(\overline\alpha_2^+,\overline\beta_2^+)$, since we can replace one pair at a time. Let $(\alpha_1^\pm,\beta_1^\pm)$ and $(\alpha_2^\pm,\beta_2^\pm)$ two lifts forming two squares. The argument of Lemma \ref{Ssurj}, whose construction works for any choice of lifts of $(\overline\alpha_1^\pm,\overline\beta_1^\pm)$, shows that there exists a unique curve $\delta_1\in\Css$ which is disjoint from $\alpha_1^\pm$ and $\beta_1^\pm$, and we can similarly find $\delta_2$. Up to elements in $DT_K$ we may assume that $\alpha_1^+=\alpha_2^+=\alpha^+$, thus we are in the situation depicted in Figure \ref{fig:twosquares}.\\
    Now, there exists $g\in DT_K$ such that $g(\beta_1^+)=\beta_2^+$. We want to prove that, up to changing the lift, we may "glue" $\beta_1^+$ to $\beta_2^+$, that is, we can find a lift of the two squares in Figure \ref{fig:twosquares} such that $\beta_1^+=\beta_2^+$. If $g$ is not the identity let $(s,\gamma_s)$ be as in Proposition \ref{cor3.6}. If $d_{\C}(s,\beta_1^+)\le1$ we may apply $\gamma_s$ to all data and proceed by induction on the complexity of $g$. Otherwise $d_s(\beta_1^+,\beta_2^+)\ge\Theta$. Now, at least one of the cut sets $\{\alpha_1^-,\beta_1^-,\delta_1\},\,\{\alpha^+\},\,\{\alpha_2^-,\beta_2^-,\delta_2\}$ must be fixed pointwise by $\gamma_s$, because if $DT_K$ is deep enough there cannot be a path from $\beta_1^+$ to $\beta_2^+$ of length $4$ with no points in the star of $s$. In fact, for every fixed $n\in \mathbb{N}$ we can choose $\Theta\ge n B$, where $B$ is the constant from the Theorem \ref{bgit} (BGI). Therefore, whenever $p,q\in \C$ have large projections on some $s$, any piecewise geodesic path between $p$ and $q$ which is made of at most $n$ geodesic subpaths must intersect the star of $s$, because otherwise we could find a geodesic subpath whose endpoints have projections at least $B$ on $s$, thus violating BGI. In our case we can choose $n$ to be $4$. Therefore we may apply $\gamma_s$ to the lift "beyond" the points in the star of $s$ (that is, to the connected component that contains $\beta_2^+$ of the complement of the cut set), and again proceed by induction on the complexity of $g$.\\
    At the end of this procedure we have replaced $\delta_1$ and $\delta_2$ by some of their images via elements of $DT_K$. But now $(\alpha_1^+,\beta_1^+)=(\alpha_2^+,\beta_2^+)$, and therefore $\delta_1=\delta_2$ since they are both characterised as the only boundary component of the surface filled by $\alpha^+\cup\beta^+$ which lies in $\Css$.
\end{proof}
 \begin{figure}[htp]
        \centering
        \includegraphics[width=0.75\textwidth]{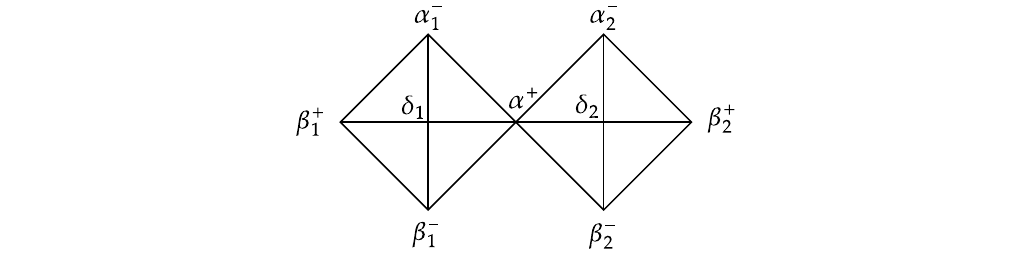} 
        \caption{The two squares inside $\Css$ described in the proof of Lemma \ref{deltaindep}. We may independently choose a point for every column and find a path from $\beta_1^+$ to $\beta_2^+$ passing through those points. Every such path must pass through the star of $s$, i.e. must contain a vertex fixed by $\gamma_s$.}
        \label{fig:twosquares}
    \end{figure}

Now the proof of Theorem \ref{thm:1toss} is complete if we show that:
\begin{corollary}
    The map $S$ is injective.
\end{corollary}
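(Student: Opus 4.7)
The plan is to observe that injectivity of $S$ follows almost immediately from the two preceding lemmas, provided we first separate the cases according to the type of slice. First I would note that a slice in the image is either a vertex of $\Cpdt$ or a genuine division, and these two possibilities are mutually exclusive. By definition, $S(\overline\delta)=\overline\delta$ is a vertex-slice precisely when $\overline\delta\in\Cpdt$, while $S(\overline\delta)$ is a division whenever $\overline\delta\in(\Cssdt)\setminus(\Cpdt)$. Consequently, if $S(\overline\delta_1)=S(\overline\delta_2)$, then either both $\overline\delta_i$ lie in $\Cpdt$ or both lie in its complement.

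If $\overline\delta_1,\overline\delta_2\in\Cpdt$, injectivity is immediate since $S$ restricts to the identity on $\Cpdt$. So assume both $\overline\delta_1,\overline\delta_2\in(\Cssdt)\setminus(\Cpdt)$ and write $\overline P^\pm:=S(\overline\delta_1)=S(\overline\delta_2)$. Applying the construction of Lemma \ref{Ssurj} to $\overline P^\pm$, for any choice of filling pair $(\overline\alpha^\pm,\overline\beta^\pm)$ with $\overline P^\pm=\link(\overline\alpha^\mp)\cap\link(\overline\beta^\mp)$ and any lift $(\alpha^\pm,\beta^\pm)\subset\Cp$, there is a unique strongly separating curve $\delta$ in $\C$ lying in the annular region between the subsurfaces filled by $\alpha^+\cup\beta^+$ and $\alpha^-\cup\beta^-$; its projection is the vertex that Lemma \ref{Ssurj} exhibits as a preimage of $\overline P^\pm$.

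The key point is then Lemma \ref{deltaindep}, which asserts that this vertex $\overline\delta\in\Cssdt$ depends neither on the choice of filling pair $(\overline\alpha^\pm,\overline\beta^\pm)$ nor on the choice of lifts. Hence both $\overline\delta_1$ and $\overline\delta_2$ must coincide with this canonically determined vertex: one may apply the procedure to a pair of filling vertices for $\overline\delta_1$ and another for $\overline\delta_2$ and obtain the same output. This forces $\overline\delta_1=\overline\delta_2$.

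I do not anticipate any serious obstacle here, as the real work has already been done in Lemmas \ref{Ssurj} and \ref{deltaindep}; the corollary is essentially a bookkeeping step, combining the existence of a preimage with its independence from the auxiliary data. The only thing to be careful about is the type distinction between vertex-slices and division-slices, which is handled by the trivial observation above.
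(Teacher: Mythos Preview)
Your proof is correct and follows essentially the same approach as the paper's: both arguments observe that the filling pair used to define $S(\overline\delta_i)$ is a valid input to the procedure of Lemma~\ref{Ssurj} and recovers $\overline\delta_i$, and then invoke Lemma~\ref{deltaindep} to conclude. Your additional handling of the vertex-slice versus division-slice distinction is simply an explicit treatment of what the paper leaves to context (the theorem statement already reduces to the restriction $S|_{(\Cssdt)\setminus(\Cpdt)}$).
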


\begin{proof}
    We want to show that if $S(\overline\delta_1)=S(\overline\delta_2)$ then $\overline\delta_1=\overline\delta_2$. Let $\delta_1$ and $\delta_2$ be two lifts of these vertices, let $(\alpha_1^\pm,\beta_1^\pm)$ and $(\alpha_2^\pm,\beta_2^\pm)$ be some filling curves for the lifts and let $(\overline\alpha_1^\pm,\overline\beta_1^\pm)$ and $(\overline\alpha_2^\pm,\overline\beta_2^\pm)$ be their projections, which induce the same division $\overline P^\pm$. Notice that, by construction, $\delta_1$ is the curve obtained by applying the machinery of Lemma \ref{Ssurj} to $(\overline\alpha_1^\pm,\overline\beta_1^\pm)$ with lifts $(\alpha_1^\pm,\beta_1^\pm)$, and similarly for $\delta_2$. But the previous lemma shows that $\overline\delta_1=\overline\delta_2$, since the class of the curve $\delta$ obtained by Lemma \ref{Ssurj} does not depend on the choice of filled divisions for $\overline P^\pm$.
\end{proof}

\section{From strongly separating to all curves}\label{section:strongtoall}
For this whole section we follow the footsteps of \cite{ssgraph}, except we focus on the case of punctured spheres. We recall that, as in Definition \ref{defCss}, we denote by $\Css$ the full subgraph of $\C$ spanned by strongly separating curves, that is, all separating curves that do not bound twice-punctured disks. We want to prove that:
\begin{theorem}\label{csstoc}
    For $b\ge7$ and for all large multiples $K$, any automorphism of $\Cssdt$ extends to an automorphism of $\Cdt$.
\end{theorem}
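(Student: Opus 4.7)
The plan is to follow the template of Section \ref{section:1tostrong}: attach to each class $\overline\alpha \in (\Cdt) \setminus (\Cssdt)$ a combinatorial invariant in $\Cssdt$, verify that it is preserved by automorphisms of $\Cssdt$, and thereby induce an action on $(\Cdt)\setminus(\Cssdt)$ compatible with adjacency. Since $S_b$ is a punctured sphere, every essential simple closed curve separates, and the curves in $\C \setminus \Css$ are exactly those bounding twice-punctured disks; these are in bijection with unordered pairs of punctures of $S_b$. Since $DT_K$ fixes every puncture (Remark \ref{puntsepdiverse}), this bijection descends to the quotient, so classes in $(\Cdt) \setminus (\Cssdt)$ are in bijection with unordered pairs of punctures of $S_b$.

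I would associate to each $\alpha\in \C\setminus\Css$ a \emph{trap} $T(\alpha)\subset\Css$, namely a pair (or triple) of strongly separating curves $\beta_1,\beta_2\in\Css$ such that $\alpha$ is the unique curve in $\C\setminus\Css$ belonging to $\link(\beta_1)\cap\link(\beta_2)$. Concretely, take $\beta_i$ to bound a three-punctured subdisk containing the pair of punctures $\{p,q\}$ enclosed by $\alpha$ together with one further puncture, choosing the two extra punctures to be distinct. Since $b\ge 7$, at least five punctures are available outside $\{p,q\}$, so such a trap exists; a direct check shows that $\alpha$ is indeed the only curve in $\C\setminus\Css$ disjoint from both $\beta_i$. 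Passing to the quotient, define $\overline T(\overline\alpha):=\pi(T(\alpha))$.

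Following Section \ref{section:1tostrong}, I would then establish four points: (a) $\overline T$ is well-defined, independently of the choice of lift of $\overline\alpha$ and of auxiliary data; (b) the image of $\overline T$ can be characterised purely by combinatorial conditions in $\Cssdt$, using link-size conditions that reflect the topological constraint that $\beta_i$ bounds a three-holed disk, together with join-type conditions encoding that both $\beta_i$'s enclose two common punctures; (c) $\overline T$ is injective; and (d) adjacency in $\Cdt$ between $\overline\alpha$ and a vertex $\overline\gamma\in\Cssdt$ corresponds to a combinatorial relation between $\overline T(\overline\alpha)$ and $\overline\gamma$ detectable in $\Cssdt$. Granted (a)--(d), any automorphism of $\Cssdt$ permutes admissible traps and hence induces an action on $(\Cdt)\setminus(\Cssdt)$ compatible with all adjacencies, yielding the extension to an automorphism of $\Cdt$. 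Combined with Theorem \ref{doppiaext}, this establishes Theorem \ref{AutoExt}.

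The main obstacle is step (c), which parallels Lemma \ref{deltaindep}. Given two lifts $\alpha_1,\alpha_2\in\C\setminus\Css$ whose traps have the same projection in $\Cssdt$, take $g\in DT_K$ sending one trap to the other and reduce the complexity of $g$ via Proposition \ref{cor3.6}: at each inductive step the partner $(s,\gamma_s)$ either has $s$ disjoint from all trap vertices (so $\gamma_s$ can be applied to the whole configuration), or has large projection of $\alpha_1, g(\alpha_1)$ on $s$, in which case the Bounded Geodesic Image Theorem \ref{bgit} forces $\gamma_s$ to fix enough of the trap for the reduction to continue, exactly as in the proof of Lemma \ref{deltaindep}. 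Iterating until $g$ becomes trivial forces the two traps to coincide in $\C$; by the uniqueness clause in the construction of $T(\alpha)$, this forces $\alpha_1=\alpha_2$, completing the argument.
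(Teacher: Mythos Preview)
Your plan is the same as the paper's, but the execution has concrete errors and the key step is missing. First, the bijection between $(\Cdt)\setminus(\Cssdt)$ and puncture pairs is false: Remark~\ref{puntsepdiverse} gives a surjection, not a bijection---there are infinitely many $DT_K$--classes of minimal curves enclosing any fixed pair (cf.\ Corollary~\ref{infinite}). Second, your uniqueness claim fails: if $\beta_1,\beta_2$ bound three-punctured disks $D_1,D_2$ sharing $\{p,q\}$, then $\alpha$ is the unique minimal curve in $D_1\cap D_2$, but $S_b\setminus(D_1\cup D_2)$ contains at least $b-4\ge 3$ punctures and hence many minimal curves also disjoint from both $\beta_i$. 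So a trap never pins down $\alpha$ by link-intersection alone.

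The real gap is step~(b), which is where all the content lies and which you have not supplied. You must recognise traps combinatorially in $\Cssdt$ and, crucially, decide when two traps determine the \emph{same} minimal vertex (otherwise automorphisms permute traps but not their fibres). The paper does this via \emph{surrounding pairs}---your traps with the extra constraint that $D_1\cap D_2$ is a twice-punctured disk---detected by immersed heptagons ($b=7$), the graph $\mathfrak{O}$ ($b=8$), or bizarre simplices reducing to peripheral $S_7$'s ($b\ge 9$); and via \emph{surrounding triples}, which generate the equivalence relation ``surround the same $\omega$''. Your complexity-reduction sketch for~(c) is then used in the paper not to prove injectivity of a map on minimal vertices but to show that the minimal vertex surrounded by a pair or triple in $\Cssdt$ is well-defined independently of lifts (Lemmas~\ref{surrpair}, \ref{liftingtriples}, \ref{triple9}).
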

In \cite{ssgraph}, Bowditch identifies every \emph{minimal curve} $\omega$ (meaning, a curve that bounds a twice-punctured disk) with a certain equivalence class of pairs of curves in $\Css$. We recall the following definitions from that paper:
\begin{definition}
    Two curves $\alpha,\beta\in \Css$ form a \emph{surrounding pair} if they bound three-punctured disks whose intersection is a twice-punctured disk. The boundary of the latter is the minimal curve \emph{surrounded} by $\alpha$ and $\beta$. See Figure \ref{fig:surrpair}.
\end{definition}
\begin{figure}[htp]
    \centering
    \includegraphics[width=0.75\textwidth]{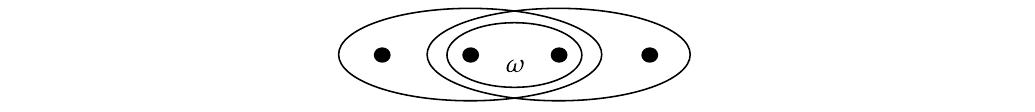}
    \caption{A surrounding pair and the minimal curve $\omega$ it surrounds.}
    \label{fig:surrpair}
\end{figure}

\begin{definition}
    Three curves $\alpha,\beta,\gamma\in \Css$ form a \emph{surrounding triple} if any two of them form a surrounding pair that surrounds the same minimal curve, as in Figure \ref{fig:surrtriple}.
\end{definition}
\begin{figure}[htp]
    \centering
    \includegraphics[width=0.75\textwidth]{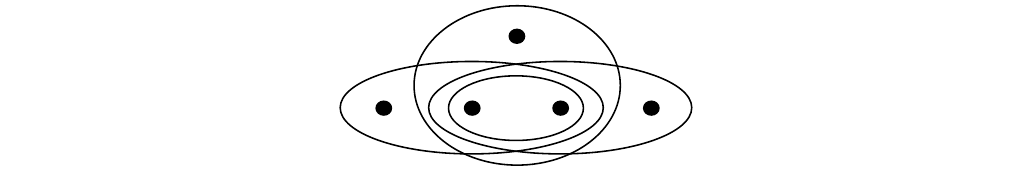}
    \caption{A surrounding triple.}
    \label{fig:surrtriple}
\end{figure}
The following is a restatement of \cite[Lemmas 4.2 and 4.3]{ssgraph}, which are core results of that paper:
\begin{lemma}\label{chainoftriples}
Any two surrounding pairs that surround the same $\omega$ are connected by a finite sequence of surrounding triples (i.e., by successively replacing one of the two curves with a third curve that forms a surrounding triple with the other two). Thus minimal curves correspond to equivalence classes of surrounding pairs, up to surrounding triples.\\
Moreover, two minimal curves $\omega, \omega'$ are disjoint if and only if there are two disjoint curves $\alpha$ surrounding $\omega$ and $\alpha'$ surrounding $\omega'$. A minimal curve $\omega$ is disjoint from a curve $\beta$ if and only if either $\beta$ surrounds $\omega$ or there is some $\alpha$ surrounding $\omega$ and disjoint from $\beta$.
\end{lemma}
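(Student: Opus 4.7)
The plan is to parametrize surrounding curves combinatorially by punctures. Given a minimal curve $\omega$, collapse the 2-punctured disk it bounds to a single marked puncture $\ast_\omega$, turning $S_b$ into a sphere $S'$ with $b-1$ punctures. A curve $\alpha\in\Css$ surrounds $\omega$ if and only if, on $S'$, it bounds a 2-punctured disk containing $\ast_\omega$; such a curve is determined up to isotopy by the identity of the other puncture in its disk. Hence surrounding curves of $\omega$ are in bijection with $\text{punct}(S')\setminus\{\ast_\omega\}$; surrounding pairs are in bijection with unordered pairs of distinct such punctures (the intersection condition on the two 3-disks is exactly that the two "other punctures" are distinct); and surrounding triples with unordered 3-element subsets.

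For the first assertion, the surrounding-triple relation becomes the move on unordered pairs $\{p,q\}\subseteq\text{punct}(S')\setminus\{\ast_\omega\}$: pick a third puncture $r$ and replace $\{p,q\}$ with $\{p,r\}$ or $\{q,r\}$. The transitive closure of this move acts transitively on pairs as soon as the ambient set has at least three elements, i.e. $b-2\ge 3$. In fact one goes from $\{p_1,q_1\}$ to $\{p_2,q_2\}$ in at most two steps: first use the triple $\{p_1,q_1,p_2\}$ to move to $\{p_1,p_2\}$, then use $\{p_1,p_2,q_2\}$ to reach $\{p_2,q_2\}$, handling the degenerate cases where some labels coincide by omitting the corresponding step.

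For the second assertion, if $\omega,\omega'$ are disjoint, choose representatives bounding disjoint 2-disks $D,D'$. The complement $S_b\setminus(D\cup D')$ contains $b-4\ge 3$ punctures, so one can pick two \emph{distinct} punctures $p,q$ there and attach disjoint bands to enlarge $D$ to a 3-disk $D_\alpha\ni p$ and $D'$ to $D_{\alpha'}\ni q$ with $D_\alpha\cap D_{\alpha'}=\emptyset$; the boundaries $\alpha,\alpha'$ lie in $\Css$ because their complements have $b-3\ge 4$ punctures. Conversely, given disjoint surrounding curves $\alpha,\alpha'$, they split $S_b$ into three regions $R_1,R_2,R_3$ separated by $\alpha,\alpha'$ (in linear order). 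Using only that the two 3-disks each contain 3 punctures, a short case check rules out configurations in which $R_2$ has zero punctures (as this forces $\alpha,\alpha'$ to be isotopic) and, in each remaining case, locates disjoint 2-punctured sub-disks of $D_\alpha$ and $D_{\alpha'}$ representing $\omega$ and $\omega'$; the key observation is that the 2-disk of $\omega$ must avoid the unique puncture of $D_\alpha$ lying in the annular strip between $\omega$ and $\alpha$, and symmetrically for $\omega'$, which pushes the two 2-disks to the outer regions $R_1$ and $R_3$.

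The third assertion reduces similarly to a combinatorial check in $S'$. Assume $\omega$ and $\beta$ are disjoint. Then $\beta$ descends to an essential curve in $S'$; if the disk component of $S'\setminus\beta$ containing $\ast_\omega$ has exactly one other puncture, $\beta$ surrounds $\omega$, and otherwise the component containing $\ast_\omega$ contains another puncture $p$ from which one builds a surrounding curve $\alpha$ of $\omega$ supported near $\ast_\omega\cup p$ and disjoint from $\beta$, the existence of a suitable $p$ being guaranteed by $b-2\ge 5$. Conversely both alternatives trivially imply $\omega$ and $\beta$ are disjoint. The most delicate step, and the main obstacle, is the case analysis in the converse of the second assertion: one must carefully enumerate how $D_\alpha,D_{\alpha'}$ sit with respect to the three-region decomposition while using only that $\alpha\ne\alpha'$, so as to avoid inadvertently imposing that $\omega=\omega'$ or that the two surrounding curves are isotopic.
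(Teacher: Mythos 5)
The paper does not prove this lemma; it is a restatement of results in the cited reference \cite{ssgraph} and is used without proof, so there is no internal argument to compare against. Assessed on its own, your argument rests on a false claim: a surrounding curve of $\omega$ is \emph{not} determined up to isotopy by the extra puncture it encloses, so the asserted bijection between surrounding curves of $\omega$ and the punctures of $S'$ other than $\ast_\omega$ fails. Concretely, if $\gamma_0$ bounds a $2$-punctured disk on $S'$ containing $\ast_\omega$ and $p$, and $\delta$ bounds one containing $\ast_\omega$ and some $r\ne p$, then $\gamma_0$ and $\delta$ intersect (their puncture separations are not nested), and the twists $T_\delta^k(\gamma_0)$ for $k\in\mathbb Z$ give infinitely many pairwise non-isotopic curves, all bounding $2$-punctured disks containing $\ast_\omega$ and $p$.

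This collapses the first assertion, which is the heart of the lemma. Your combinatorial move on pairs of puncture labels handles only the label-level transitivity; it says nothing about connecting two surrounding pairs whose curves carry the same pair of labels but are not isotopic, and that is precisely where the real work of the cited lemma lies (via surgery and innermost-disk arguments on the intersecting three-punctured disks, or via the action of the stabiliser of $\omega$). The second and third assertions are less affected --- the forward directions are fine, and the converse of the second is in fact simpler than your sketch suggests: if $\alpha\ne\alpha'$ are disjoint with three-punctured disks $D_\alpha,D_{\alpha'}$, then nesting would force $\alpha$ and $\alpha'$ to be isotopic (both disks contain exactly three punctures), so $D_\alpha\cap D_{\alpha'}=\emptyset$, and hence $D_\omega\subset D_\alpha$ and $D_{\omega'}\subset D_{\alpha'}$ are disjoint. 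But the first assertion carries the weight of the lemma, and it does not follow from what you have written.
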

This implies that, whenever surrounding pairs and surrounding triples can be recognised inside $\Css$, any automorphism of $\Css$ extends to an automorphism of $\C$. Our goal is to repeat the same kind of argument: we want to define surrounding pairs and triples inside $\Cssdt$ just using combinatorial properties and then show that they actually correspond to projections of surrounding pairs and triples. Then we can rely on Lemma \ref{chainoftriples} to show that two surrounding pairs in $\Cssdt$ are connected by a finite sequence of surrounding triples, and we are almost done. As in \cite{ssgraph} we distinguish three cases, whether the number of punctures is $7$, $8$ or at least $9$.

For the rest of the section $K$ will always denote a large enough multiple, depending on the constants that will successively appear.

\subsection{7 punctures}\label{section:7pt}
We will say that an $n$-cycle inside a graph $\Gamma$ is a cyclically ordered
sequence of n vertices, where consecutive vertices are adjacent. The following Lemma, which summarises \cite[Sections 3 and 4]{ssgraph}, shows that surrounding pairs and triples can be recognised inside $\Css(S_7)$ (which actually coincides with $\Cp(S_7)$) by using $7$-cycles:
\begin{lemma}\label{Bow7}
    In a seven-punctured sphere:
    \begin{itemize}
        \item Inside $\Cp(S_7)$ there is only one $7$-cycle, or heptagon, up to the action of the mapping class group, and this heptagon is isometrically embedded;
        \item Two curves $\alpha,\beta\in \Cp$ form a surrounding pair if and only if they are at distance $2$ in some heptagon;
        \item Three curves $\alpha,\beta,\gamma$ form a surrounding triple if any two of them form a surrounding pair and there is no curve in $\Cp$ that is disjoint from each of $\alpha,\beta,\gamma$. 
    \end{itemize}
\end{lemma}

\begin{figure}[htp]
    \centering
    \includegraphics[width=0.75\textwidth]{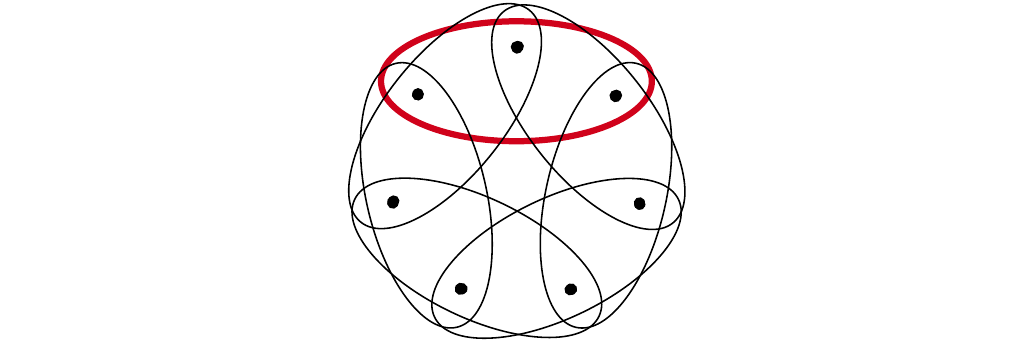}
    \caption{These curves form an isometrically embedded heptagon in $\Cp(S_7)$. The curves are all obtained from the same curve, say the red one at the top, by rotating. Notice that all curves induce different puncture separations.}
    \label{fig:heptagon}
\end{figure}

Thus, a good definition for surrounding pairs inside $\Cssdt$ is the following:
\begin{definition}
    Two vertices $\ov\alpha,\ov\beta\in \Cssdt$ form a surrounding pair if they are at distance $2$ in some heptagon inside $\Cssdt$.
\end{definition}
It is clear that if $\ov\alpha,\ov\beta$ are a surrounding pair then we can find lifts $\alpha,\beta$ that are a surrounding pair in $\Css$, just by lifting the corresponding heptagon. Thus $\alpha,\beta$ surround some minimal curve $\omega$, and we say that $\ov\alpha,\ov\beta$ surround its projection $\ov\omega$.

\begin{lemma}\label{surrpair}
The vertex $\ov\omega$ surrounded by $\ov\alpha,\ov\beta$ is well-defined, meaning it does not depend on the chosen heptagon, nor on its lift.
\end{lemma}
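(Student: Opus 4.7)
The plan is to argue by induction on $DT_K$-complexity, closely following the template of Lemma \ref{deltaindep}. I would unify both assertions (independence on the lift and on the heptagon) into a single statement: given any two heptagons $\ov H_1, \ov H_2 \subset \Cpdt(S_7)$ containing $\ov\alpha, \ov\beta$ at distance $2$, and any choice of lifts to heptagons in $\Cp$, the resulting surrounding pairs $(\alpha_i, \beta_i)$ surround minimal curves $\omega_i$ that project to the same vertex in $\Cdt$. By the uniqueness of the $DT_K$-orbit of a lifted vertex (Theorem \ref{mgonlift}), I would apply an element of $DT_K$ to the second lifted heptagon so as to arrange $\alpha_1 = \alpha_2 =: \alpha$. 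Then there exists $g \in DT_K$ with $g(\beta_1) = \beta_2$, and we wish to show $\pi(\omega_1) = \pi(\omega_2)$.

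The induction is on $\alpha(g)$. If $g = 1$ then $\beta_1 = \beta_2$, and since the minimal curve surrounded by a pair in $\Cp$ is uniquely determined (as the boundary of the intersection of the two three-punctured disks), we get $\omega_1 = \omega_2$. Otherwise, apply Proposition \ref{cor3.6} to $\beta_1$ to obtain $(s, \gamma_s)$ with $\alpha(\gamma_s g) < \alpha(g)$. If $d_\C(s, \beta_1) \le 1$, then $\gamma_s$ fixes $\beta_1$, and I would apply $\gamma_s$ simultaneously to both triples, obtaining the surrounding configurations $(\gamma_s\alpha, \beta_1, \gamma_s\omega_1)$ and $(\gamma_s\alpha, \gamma_s\beta_2, \gamma_s\omega_2)$ related by $\gamma_s g$ of strictly lower complexity; induction and the fact that $\gamma_s \in DT_K$ yields the conclusion.

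Otherwise $d_s(\beta_1, \beta_2) > \Theta$. Here I would use the cut-set argument: choosing $\Theta \ge 2B$ and applying the Bounded Geodesic Image Theorem \ref{bgit} to each of the two edges in the length-$2$ path $\beta_1 \to \alpha \to \beta_2$ (regarded as a piecewise geodesic of two trivial subpaths), the pigeonhole principle on $d_s$ forces the star of $s$ to contain one of $\beta_1, \alpha, \beta_2$. Since we are assuming $d(s, \beta_1) > 1$, we conclude that $\gamma_s$ fixes either $\alpha$ or $\beta_2$. In the first subcase I would apply $\gamma_s$ only to the second triple, replacing it with $(\alpha, \gamma_s\beta_2, \gamma_s\omega_2)$ (still a surrounding configuration since $\gamma_s$ is a homeomorphism), after which $\gamma_s g$ has lower complexity and induction applies. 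In the second subcase, $\gamma_s g$ still maps $\beta_1$ to $\beta_2$, so I would simply replace $g$ by $\gamma_s g$ without modifying any curves and induct.

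The main obstacle will be the cut-set analysis in the hard case. The key simplification compared to Lemma \ref{deltaindep} is that within the subgraph of $\C$ spanned by $\{\alpha, \beta_1, \omega_1, \beta_2, \omega_2\}$, every short path from $\beta_1$ to $\beta_2$ must pass through the shared vertex $\alpha$, so a single length-$2$ path suffices to localize $s$ (one does not need to check multiple piecewise geodesics as in Lemma \ref{deltaindep}). Verifying that all modifications preserve the surrounding-configuration structure is automatic because $\gamma_s$ is a mapping class, so no delicate geometric check is needed beyond the BGI-based cut-set argument.
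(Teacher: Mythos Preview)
Your cut-set analysis has a genuine gap. The curves $\alpha$ and $\beta_i$ are \emph{not} adjacent in $\C$: a surrounding pair consists of intersecting curves, so $d_\C(\alpha,\beta_i)=2$. Hence the path $\beta_1\to\alpha\to\beta_2$ has length $4$, not $2$, and BGI applied to the two length-$2$ geodesics $[\beta_1,\alpha]$ and $[\alpha,\beta_2]$ only tells you that \emph{some vertex on each geodesic} lies in the star of $s$---this could be a midpoint (for instance $\omega_1$ or $\omega_2$), not one of $\beta_1,\alpha,\beta_2$. So your trichotomy ``$\gamma_s$ fixes $\beta_1$, $\alpha$, or $\beta_2$'' is unjustified, and you are missing the cases where the cut set is $\omega_1$ or $\omega_2$.

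This omission is not merely cosmetic. Suppose $\gamma_s$ fixes $\omega_1$ but none of $\alpha,\beta_1,\beta_2$. To reduce complexity you would apply $\gamma_s$ to $\alpha$ and the second triple, obtaining $(\gamma_s\alpha,\beta_1,\omega_1)$. But now you must verify that $(\gamma_s\alpha,\beta_1)$ is still a surrounding pair with $\omega_1$ as the surrounded curve, and this is not automatic: two curves each bounding a three-punctured disk containing $\omega_1$ need not have their disks intersect in a twice-punctured disk. The paper handles exactly this by \emph{retaining the heptagons} throughout the induction: the cut set in the corresponding case is $\{\omega\}$ together with vertices $\gamma,\delta$ on the two heptagon-paths from $\beta$ to $\alpha$, so that after applying $\gamma_s$ beyond the cut one still has an immersed heptagon in which $\gamma_s(\alpha)$ and $\beta$ sit at distance $2$---hence they remain a surrounding pair by Lemma~\ref{Bow7}, and one can then argue via the disk characterisation that $\omega$ is still the surrounded curve. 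By discarding the heptagons after lifting, you lose the mechanism that guarantees the surrounding-pair structure survives the surgery.
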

\begin{proof}
Let $\ov T,\ov T'\subset \Cssdt$ be two heptagons in which $\ov\alpha,\ov\beta$ are at distance $2$, and let $T,T'$ be two lifts of $\ov T,\ov T'$. Up to elements of $DT_K$ we can assume that the lifts of $\ov\alpha$ coincide, and let $\beta,\beta'$ be the lifts of $\ov\beta$. Let $\omega, \omega'$ be the minimal curves surrounded by the two pairs.  Let $g\in DT_K$ be such that $g(\beta)=\beta'$. Then the picture in $\C$ is as in Figure \ref{fig:liftingpairs}.
\begin{figure}[htp]
    \centering
    \includegraphics[width=0.75\textwidth]{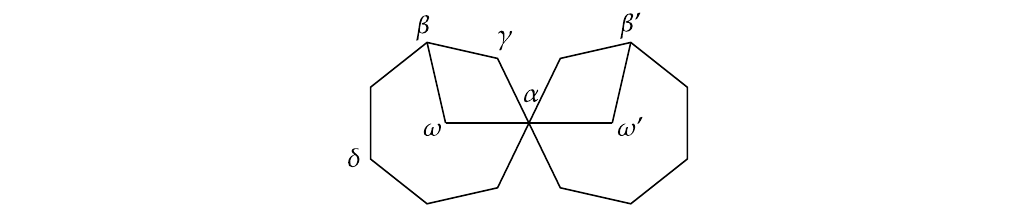}
    \caption{The two heptagons from the proof of Lemma \ref{surrpair}.}
    \label{fig:liftingpairs}
\end{figure}\\
Let $(s,\gamma_s)$ be as in Proposition \ref{cor3.6}. We now show that we can apply $\gamma_s$ to part of our diagram without breaking the heptagons, and ensuring that $\omega$ and $\omega'$ remain surrounded by the corresponding pairs. If this is true then we can proceed by induction on the complexity of $g$ to glue $\beta$ to $\beta'$, and in the end $\omega$ and $\omega'$ will both be the unique curve surrounded by $\alpha$ and $\beta$. We can argue as in the proof of Lemma \ref{deltaindep} to show that $\gamma_s$ must fix one of the following:
\begin{itemize}
    \item $\beta$;
    \item $\omega$ and some curves $\gamma,\delta\in T$ on the two paths in $T$ from $\beta$ to $\alpha$;
    \item $\alpha$;
    \item $\omega'$ and some curves $\gamma',\delta'\in T'$ on the two paths in $T'$ from $\beta'$ to $\alpha'$.
\end{itemize}
In all cases we can apply $\gamma_s$ beyond the cut set and proceed by induction. We just need to be careful that in the second case $\omega$ remains the curve surrounded by $\beta$ and $\gamma_s(\alpha)$. This is true, since the curve surrounded by a pair is characterised as the only minimal curve inside both of the three-punctured disks defined by the surrounding curves. Thus it suffices to notice that $\omega=\gamma_s(\omega)$ is still inside the disk surrounded by $\gamma_s(\alpha)$. The exact same argument shows that, in the fourth case, $\omega'$ remains the curve surrounded by $\alpha'$ and $\gamma_s(\beta')$.
\end{proof}

This shows that every surrounding pair in $\Cssdt$ corresponds to some minimal $\ov\omega$. Conversely, given some $\ov\omega$ it is easy to find some $\ov\alpha,\ov\beta$ that surround it: just pick a lift $\omega$, find a surrounding pair and let $T$ be the corresponding heptagon. Since by Lemma \ref{Bow7} there is a unique heptagon in $\Cp(S_7)$ up to the action of the mapping class group, $T$ looks like in Figure \ref{fig:heptagon}, and therefore all curves of $T$ induce different puncture separations. Thus, by Lemma \ref{puntsepdiverse}, the projection is injective when restricted to $T$, and therefore $\pi(T)$ is a $7$-cycle.

Now we want to define surrounding triples, in order to characterise the class of pairs that surround the same $\ov\omega$. Again, our definition is inspired by Lemma \ref{Bow7}.
\begin{definition}
    Three vertices $\ov\alpha,\ov\beta,\ov\gamma\in\Cssdt$ form a surrounding triple if they are pairwise surrounding pairs and there is no $\ov\delta\in \Cssdt$ which is adjacent to each of them.
\end{definition}

\begin{lemma}\label{projtriples}
    A surrounding triple $\alpha,\beta,\gamma\in\Css$ projects to a surrounding triple $\ov\alpha,\ov\beta,\ov\gamma\in\Cssdt$.
\end{lemma}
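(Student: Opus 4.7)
The statement requires verifying the two clauses of the definition of surrounding triple: each pair among $\ov\alpha,\ov\beta,\ov\gamma$ is a surrounding pair in $\Cssdt$, and no vertex of $\Cssdt$ is adjacent to all three.

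For the first clause, I would start from an immersed heptagon $H \subset \Cp \subseteq \Css$ in which $\alpha,\beta$ lie at distance $2$ (Lemma \ref{Bow7}). By the uniqueness-up-to-MCG assertion of that lemma, $H$ looks like Figure \ref{fig:heptagon}: its seven curves form an orbit of the order-$7$ rotation of $S_7$, so they have pairwise distinct puncture separations. A direct check---parametrising the curves as rotations of a fixed $3$-separating set and testing set-theoretic containment---shows that adjacent vertices of $H$ (which are disjoint as curves) have nested puncture separations, whereas non-adjacent vertices (which intersect) have non-nested puncture separations. By items (2) and (3) of Lemma \ref{projpuntsep} respectively, this implies that $\pi(H)$ remains an immersed heptagon in $\Cdt$, entirely contained in $\Cssdt$ since each of its vertices is the projection of a curve in $\Css$. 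Hence $(\ov\alpha,\ov\beta)$ is a surrounding pair in $\Cssdt$, and the same argument handles the other two pairs.

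For the second clause, suppose for contradiction that $\ov\delta \in \Cssdt$ is adjacent to all of $\ov\alpha,\ov\beta,\ov\gamma$. Lifting each edge (Theorem \ref{mgonlift}, first bullet) produces three lifts $\delta_\alpha,\delta_\beta,\delta_\gamma \in \Css$ of $\ov\delta$ with $\delta_\xi$ disjoint from $\xi$ for each $\xi \in \{\alpha,\beta,\gamma\}$. The plan is to glue these three lifts pairwise into a single $\delta \in \Css$ simultaneously disjoint from $\alpha,\beta,\gamma$. This produces a contradiction because the complementary regions of $\alpha \cup \beta \cup \gamma$ in $S_7$ are the inner $2$-punctured disk bounded by $\omega$, three $1$-punctured petals that contain no essential curves, and an outer $2$-punctured disk; so the only essential simple closed curves in $S_7$ disjoint from all three are $\omega$ and the minimal curve $\omega'$ bounding the two outermost punctures---both minimal, and hence not in $\Css$---whereas $DT_K$, being generated by powers of Dehn twists (Remark \ref{gendatwist}), preserves the topological type of a curve and so the $DT_K$-orbit of a strongly separating curve cannot contain a minimal one.

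The gluing itself is an induction on the complexity $\alpha(g)$ of a $DT_K$-element $g$ with $g\delta_\alpha = \delta_\beta$, following the template of Lemma \ref{deltaindep}. Apply Proposition \ref{cor3.6} to $x = \delta_\alpha$ and $g$ to obtain $(s,\gamma_s)$ with $\alpha(\gamma_s g) < \alpha(g)$, choosing $\Theta$ large relative to $4B$ where $B$ is the constant of Theorem \ref{bgit}. Bounded geodesic image applied to the piecewise-geodesic path $\delta_\alpha - \alpha - \omega - \beta - \delta_\beta$ of length $4$ forces at least one of its vertices to lie in the star of $s$, giving a cut set which $\gamma_s$ fixes pointwise. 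In each case one applies $\gamma_s$ to the portion of the configuration beyond the cut, which reduces the complexity of $g$ while preserving the disjointness constraints from the fixed curves $\alpha$ and $\beta$; after finitely many steps $\delta_\alpha = \delta_\beta$, and a symmetric step glues this common lift to $\delta_\gamma$. The main obstacle is precisely this case analysis: the sub-case where the only vertex of the path in the star of $s$ is $\omega$---while $\gamma_s$ moves both $\alpha$ and $\beta$---requires a delicate choice of $\Theta$ together with an additional iteration of BGI on shorter sub-paths, as in the analogous step of Lemma \ref{deltaindep}.
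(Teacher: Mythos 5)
Your proof of the first clause (surrounding pairs project to surrounding pairs) is correct and essentially matches the discussion preceding Lemma~\ref{projtriples} in the paper.

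Your second clause, however, has a genuine gap, and in fact takes a much harder route than necessary. You lift $\ov\delta$ three separate times as $\delta_\alpha,\delta_\beta,\delta_\gamma$ and then try to glue them into a single $\delta$ disjoint from the \emph{original} $\alpha,\beta,\gamma$ by a cut-set induction along the length-$4$ path $\delta_\alpha-\alpha-\omega-\beta-\delta_\beta$. But this induction requires the fixed part of the configuration (namely $\alpha$ and $\beta$) to survive each step, which fails: when the vertex of the path lying in the star of $s$ is $\omega$ alone, applying $\gamma_s$ ``beyond the cut'' necessarily moves $\beta$ (indeed the same already happens when only $\alpha$ is in the star, since $\gamma_s g$ is what decreases complexity, forcing you to translate the $\delta_\beta$-side together with $\omega$ and $\beta$). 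You flag this sub-case yourself and defer it to ``a delicate choice of $\Theta$ and an additional iteration of BGI,'' but no such fix is given, and I do not see how the template of Lemma~\ref{deltaindep} resolves it here, because unlike that lemma the data you need to preserve ($\alpha$ and $\beta$) sit \emph{inside} the path rather than at a shared endpoint.

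The paper sidesteps the gluing entirely by inverting which object is fixed: it lifts $\ov\delta$ once to some $\delta\in\Css$ and then lifts $\ov\alpha,\ov\beta,\ov\gamma$ to curves $\alpha',\beta',\gamma'$ in $\link(\delta)$ (using uniqueness of the $DT_K$-orbit of lifts of an edge to make all three lifts of $\ov\delta$ the same). Since $\delta$ is strongly separating, the three-punctured disks bounded by $\alpha',\beta',\gamma'$ all lie on the $4$-punctured side of $\delta$, so their union contains at most $4$ punctures. But $DT_K$ preserves puncture separations (Remarks~\ref{gendatwist} and~\ref{puntsepdiverse}), so the disks bounded by $\alpha,\beta,\gamma$ likewise cover at most $4$ punctures --- contradicting the fact that a surrounding triple covers $5$. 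This gives the conclusion with no cut-set argument at all, and is the approach you should adopt.
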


\begin{proof}
We already know that surrounding pairs project to surrounding pairs. Now let $\ov\delta\in \Cssdt$ and let $\delta$ be one of its lifts. We want to show that $\ov\delta$ is not adjacent to each of $\ov\alpha,\ov\beta,\ov\gamma$. Since $\alpha,\beta,\gamma$ is a surrounding triple we have that $\delta$ must intersect at least one of the three curves, say $\alpha$. If $\delta$ and $\alpha$ induce different puncture separations then $d_{\Cssdt}(\ov\alpha, \ov \delta)\ge 2$ by Lemma \ref{projpuntsep}. Otherwise $\delta$ and $\alpha$ surround the same three punctures, and in particular $\delta$ and $\beta$ must intersect with different puncture separations. Hence $d_{\Cssdt}(\ov\beta, \ov \delta)\ge 2$ for the same reason.
\end{proof}

\begin{corollary}
    Any two surrounding pairs for the same $\ov\omega$ are connected by a finite number of surrounding triples.
\end{corollary}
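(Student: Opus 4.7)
The plan is to lift each surrounding pair from $\Cssdt$ to $\Css$, use \cite{ssgraph}'s chain of surrounding triples in $\Css$ (which is Lemma \ref{chainoftriples}), and then project the chain back down via Lemma \ref{projtriples}.

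More precisely, suppose $(\ov\alpha_1, \ov\beta_1)$ and $(\ov\alpha_2, \ov\beta_2)$ are two surrounding pairs in $\Cssdt$ that surround the same $\ov\omega$. By definition of the map "surrounds" (see the paragraph after Lemma \ref{surrpair}), each $(\ov\alpha_i, \ov\beta_i)$ sits at distance $2$ in some immersed heptagon $\ov T_i\subset\Cssdt$, which lifts to a heptagon $T_i\subset\Css$ whose vertices at distance $2$ form a surrounding pair $(\alpha_i, \beta_i)$ in $\Css$ surrounding some minimal curve $\omega_i$ with $\pi(\omega_i)=\ov\omega$. Since $\omega_1$ and $\omega_2$ both project to $\ov\omega$, there is some $g\in DT_K$ with $g(\omega_1)=\omega_2$. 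Replacing $(\alpha_1, \beta_1)$ by $(g(\alpha_1), g(\beta_1))$, which is still a surrounding pair in $\Css$ projecting to $(\ov\alpha_1, \ov\beta_1)$, we may assume the two lifted pairs surround a common minimal curve $\omega:=\omega_1=\omega_2$.

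Now Lemma \ref{chainoftriples} provides a finite sequence of surrounding triples in $\Css$ that interpolates between $(\alpha_1,\beta_1)$ and $(\alpha_2,\beta_2)$: that is, a sequence of surrounding pairs
\[
(\alpha_1,\beta_1)=(p_0,q_0),\,(p_1,q_1),\,\dots,\,(p_n,q_n)=(\alpha_2,\beta_2),
\]
all surrounding $\omega$, such that for each $i$ the triple $\{p_i,q_i,p_{i+1}\}$ or $\{p_i,q_i,q_{i+1}\}$ is a surrounding triple in $\Css$. Applying $\pi$ and using Lemma \ref{projtriples}, each surrounding triple projects to a surrounding triple in $\Cssdt$; consequently each consecutive pair $(\pi(p_i),\pi(q_i))$, $(\pi(p_{i+1}),\pi(q_{i+1}))$ is obtained from the previous one by a surrounding-triple move in $\Cssdt$. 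This gives the required finite chain from $(\ov\alpha_1,\ov\beta_1)$ to $(\ov\alpha_2,\ov\beta_2)$.

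The only step that requires any care is checking that the projection of each intermediate pair is itself a surrounding pair in $\Cssdt$ surrounding $\ov\omega$: but any surrounding pair in $\Css$ is contained in a heptagon whose vertices induce pairwise distinct puncture separations (as in Figure \ref{fig:heptagon}), so by Lemma \ref{projpuntsep} and Remark \ref{puntsepdiverse} the projection is again an immersed heptagon in $\Cssdt$, and the common surrounded minimal curve remains $\ov\omega=\pi(\omega)$ by the uniqueness established in Lemma \ref{surrpair}. This is the only place where the specific structure of heptagons in $\Cp(S_7)$ is used, and it is essentially immediate from the $7$-puncture picture.
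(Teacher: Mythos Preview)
Your proof is correct and follows essentially the same approach as the paper's: lift both surrounding pairs so that they surround a common lift $\omega$, invoke the chain of surrounding triples in $\Css$ from Lemma~\ref{chainoftriples}, and push the chain back down via Lemma~\ref{projtriples}. The paper compresses this into two sentences, while you spell out the $DT_K$-translation aligning the two lifts of $\ov\omega$ and the verification that intermediate projected pairs are again surrounding pairs; the latter check is in fact already contained in Lemma~\ref{projtriples} (a surrounding triple in $\Cssdt$ has surrounding pairs as its subpairs by definition), so your final paragraph, while not wrong, is slightly more than is needed.
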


\begin{proof}
Just lift the two surrounding pairs in such a way that they surround the same lift $\omega$. Then the conclusion follows from Lemmas \ref{Bow7} and \ref{projtriples}.
\end{proof}
Moving on, we need to show that the vertex $\ov\omega$ surrounded by a surrounding triple is well-defined, in order to identify $\ov\omega$ with an equivalence class of surrounding pairs up to surrounding triples.
\begin{lemma}\label{liftingtriples}
    If $\ov\alpha,\ov\beta,\ov\gamma\in\Cssdt$ form a surrounding triple then they pairwise surround the same $\ov\omega$. Therefore, if two pairs are connected by a sequence of surrounding triples, then they surround the same $\ov\omega$.
\end{lemma}

\begin{proof}
Let $\ov T,\ov T',\ov T''$ be three heptagons for the three pairs. Lift them to three heptagons $T,T',T''$ in such a way that $T$ and $T'$ share some lift $\beta$ and $T'$ and $T''$ share some lift $\gamma$. Let $\alpha\in T$ and $\alpha''\in T''$ be the lift of $\ov\alpha$, and let $\omega,\omega'\,\omega''$ be the three curves surrounded by the three pairs, as in Figure \ref{fig:liftingtriples}.
\begin{figure}[htp]
    \centering
    \includegraphics[width=0.75\textwidth]{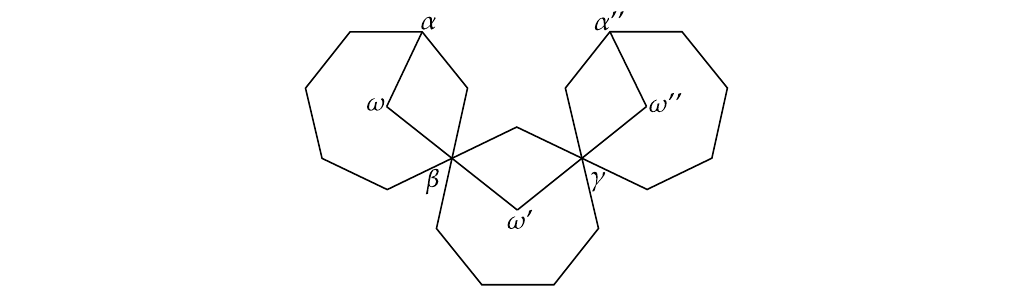}
    \caption{The three heptagons from the proof.}
    \label{fig:liftingtriples}
\end{figure}\\ 
Arguing as in Lemma \ref{surrpair} one can glue $\alpha$ to $\alpha''$ while preserving the fact that $\omega$ is the curve surrounded by $\alpha$ and $\beta$, and similarly for $\omega'$ and $\omega''$. Now $\alpha,\beta,\gamma$ are pairwise surrounding pairs. Moreover they cannot lie in the link of some other curve $\delta\in \Css$, because otherwise their projections would be at distance $1$ from $\ov\delta$. This implies that $\alpha,\beta,\gamma$ form a surrounding triple, and therefore $\omega=\omega'=\omega''$.
\end{proof}

\begin{corollary}
    There is a bijective correspondence between minimal vertices $\ov\omega\in \Cdt$ and equivalence classes of surrounding pairs, up to surrounding triples.
\end{corollary}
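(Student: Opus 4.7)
The plan is to assemble the pieces already proven in this subsection into a single well-defined map and verify it is a bijection. Define a map $\Phi$ from equivalence classes of surrounding pairs (modulo the relation generated by surrounding triples) to the set of minimal vertices $\ov\omega\in\Cdt$, by sending the class of a surrounding pair $(\ov\alpha,\ov\beta)$ to the minimal vertex $\ov\omega$ that it surrounds in the sense of Lemma \ref{surrpair}. The well-definedness of $\Phi$ on pairs is exactly the content of Lemma \ref{surrpair}, and the fact that this descends to equivalence classes is exactly Lemma \ref{liftingtriples}.

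For surjectivity, given any minimal vertex $\ov\omega\in\Cdt$, I would choose a lift $\omega\in\C$, pick a surrounding pair $(\alpha,\beta)\in\Css$ for $\omega$ (which exists in $\C$), and let $T$ be the embedded heptagon in $\Cp(S_7)$ realising $\alpha,\beta$ at distance $2$. As observed in the discussion following Lemma \ref{surrpair}, all curves of the unique (up to $MCG$) heptagon from Lemma \ref{Bow7} induce pairwise distinct punctures separations, so Lemma \ref{projpuntsep} guarantees that $\pi(T)$ is still an embedded heptagon in $\Cssdt$. Then $(\ov\alpha,\ov\beta)$ is a surrounding pair in $\Cssdt$, and by construction it surrounds $\ov\omega$, so $\Phi([\ov\alpha,\ov\beta])=\ov\omega$.

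For injectivity, suppose $(\ov\alpha_1,\ov\beta_1)$ and $(\ov\alpha_2,\ov\beta_2)$ are two surrounding pairs with $\Phi([\ov\alpha_1,\ov\beta_1])=\Phi([\ov\alpha_2,\ov\beta_2])=\ov\omega$. By Lemma \ref{surrpair}, I can lift both pairs so that they surround the same lift $\omega\in\C$ of $\ov\omega$; thus both lifted pairs are surrounding pairs for the \emph{same} minimal curve in $\Css$. By the classical result in the seven-punctured case, Lemma \ref{Bow7}, these two surrounding pairs for $\omega$ in $\Css$ are connected by a finite chain of surrounding triples in $\Css$. Each such surrounding triple projects to a surrounding triple in $\Cssdt$ by Lemma \ref{projtriples}, yielding a chain of surrounding triples in $\Cssdt$ joining $(\ov\alpha_1,\ov\beta_1)$ to $(\ov\alpha_2,\ov\beta_2)$. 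Hence the two pairs represent the same equivalence class, proving injectivity.

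No step looks particularly hard; the content has been bundled into the preceding lemmas, and the only mild subtlety is that one must lift the two pairs so that they surround the \emph{same} $\omega\in\C$ rather than merely two $DT_K$-translates, which is handled by prescribing the lift of $\ov\omega$ first and then lifting each heptagon accordingly (this uses the uniqueness of orbits of lifts for simplices from Theorem \ref{mgonlift}, as in the proof of Lemma \ref{liftingtriples}).
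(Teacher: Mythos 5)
Your proof is correct and assembles the preceding lemmas in exactly the way the paper intends (the paper states the corollary without proof, treating it as immediate from Lemmas \ref{surrpair}, \ref{liftingtriples}, \ref{projtriples} and the surjectivity discussion). One small point of hygiene: in the injectivity step, the ability to lift both pairs so they surround the \emph{same} $\omega$ is not really an application of Lemma \ref{surrpair} (well-definedness) nor of Theorem \ref{mgonlift} (uniqueness of orbits of lifts) — it is simply the observation that the two lifted surrounded curves $\omega_1, \omega_2$ both project to $\ov\omega$, hence differ by some $g\in DT_K$, and applying $g$ to one heptagon lift aligns them; and the chain-of-triples fact you attribute to Lemma \ref{Bow7} is really the content of Lemma \ref{chainoftriples}. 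Neither of these affects the correctness of the argument.
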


The only thing left to do is the following straightforward observation, that shows that automorphisms of $\Cssdt$ preserve disjointness between equivalence classes of surrounding pairs:
\begin{lemma}\label{disjointness}
    Two minimal vertices $\ov\omega, \ov\omega'$ are disjoint if and only if there are two disjoint vertices $\ov\alpha$ surrounding $\ov\omega$ and $\ov\alpha'$ surrounding $\ov\omega'$. A minimal vertex $\ov\omega$ is disjoint from a vertex $\ov\beta$ if and only if either $\ov\beta$ surrounds $\ov\omega$ or there is some $\ov\alpha$ surrounding $\ov\omega$ and disjoint from $\ov\beta$.
\end{lemma}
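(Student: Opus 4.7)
The plan is to reduce both equivalences to Lemma \ref{chainoftriples}, by lifting the disjointness relations in the quotient to honest disjointness in $\C$ via Theorem \ref{mgonlift}, applying Lemma \ref{chainoftriples} in $\C$, and transporting the result back through the projection using item~(2) of Lemma \ref{projpuntsep}. The projection of a heptagon that witnesses a surrounding pair is again a heptagon that witnesses the surrounding of the projected minimal vertex, as already established in the discussion following Lemma \ref{surrpair}; this is the only bridge we need between the two graphs, in addition to uniqueness of lifts of edges.

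For the first equivalence, if $\ov\omega$ and $\ov\omega'$ are disjoint in $\Cdt$, I would lift the edge to disjoint minimal curves $\omega,\omega'\in\C$ and use the second part of Lemma \ref{chainoftriples} to produce disjoint curves $\alpha,\alpha'\in\Css$ surrounding $\omega$ and $\omega'$; projecting, $\ov\alpha,\ov\alpha'$ remain disjoint by Lemma \ref{projpuntsep} and surround $\ov\omega,\ov\omega'$ by the observation above. Conversely, given disjoint $\ov\alpha,\ov\alpha'$ surrounding $\ov\omega,\ov\omega'$, lift the edge to disjoint $\alpha,\alpha'\in\Css$. Since $\ov\alpha$ surrounds $\ov\omega$ there is a witnessing heptagon $\ov T\subset\Cssdt$; Theorem \ref{mgonlift} supplies a lift of $\ov T$, and, because single vertices lift uniquely up to $DT_K$, we may translate the lift by an element of $DT_K$ so that it contains the already-fixed curve $\alpha$ as a vertex. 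Reading off the resulting surrounding pair $(\alpha,\beta)$ gives a lift $\omega$ of $\ov\omega$ surrounded by $\alpha$; do the same on the primed side. The curves $\alpha,\alpha'$ are disjoint in $\Css$ and surround $\omega,\omega'$, so Lemma \ref{chainoftriples} forces $\omega,\omega'$ to be disjoint in $\C$, and projecting yields disjointness of $\ov\omega$ and $\ov\omega'$.

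The second equivalence follows the same template, with $\ov\beta$ playing the role of $\ov\alpha'$ and with the extra case that $\ov\beta$ itself is the surrounding vertex. In the forward direction, lift $\{\ov\omega,\ov\beta\}$ to disjoint $\omega,\beta\in\C$ and apply the second sentence of Lemma \ref{chainoftriples}: either $\beta$ surrounds $\omega$, whence $\ov\beta\in\Cssdt$ surrounds $\ov\omega$, or some $\alpha\in\Css$ surrounds $\omega$ and is disjoint from $\beta$, whence $\ov\alpha$ surrounds $\ov\omega$ and is disjoint from $\ov\beta$ by the projection argument. In the converse direction, either lift a heptagon witnessing $\ov\beta$ surrounding $\ov\omega$ and read off a disjoint lift of the pair $(\ov\omega,\ov\beta)$, or lift $\{\ov\alpha,\ov\beta\}$ to disjoint $\alpha,\beta\in\C$, attach a surrounding heptagon above $\alpha$ as in the previous paragraph to obtain a lift $\omega$ of $\ov\omega$ surrounded by $\alpha$, and invoke Lemma \ref{chainoftriples} to conclude that $\omega,\beta$ are disjoint in $\C$, hence $\ov\omega,\ov\beta$ are disjoint in $\Cdt$.

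The main obstacle I anticipate is the compatibility step in the two converse directions: after lifting the prescribed disjoint edge we are no longer free to choose the lift of $\ov\alpha$, so we must produce a lift of $\ov\omega$ that the \emph{given} lift $\alpha$ genuinely surrounds. This is precisely the content of Lemma \ref{surrpair}, which guarantees that any heptagon-lift containing $\alpha$ surrounds some lift of $\ov\omega$, independently of the choices made. Once that compatibility is in hand, the remaining steps are clean applications of Lemma \ref{chainoftriples} in $\C$ and Lemma \ref{projpuntsep} for the push-forward.
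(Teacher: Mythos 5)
Your proof is correct, and since the paper does not actually give an argument for Lemma~\ref{disjointness} (it is introduced as a ``straightforward observation''), yours is exactly the filling-in of details that the authors have in mind: lift edges via Theorem~\ref{mgonlift}, position a lifted heptagon at the prescribed lift of $\ov\alpha$ using uniqueness of vertex lifts, invoke Lemma~\ref{surrpair} to ensure the surrounded curve projects to $\ov\omega$ regardless of these choices, apply Lemma~\ref{chainoftriples} in $\C$, and push disjointness forward via Lemma~\ref{projpuntsep}(2). You correctly identified the one genuine pressure point (making the given lift $\alpha$ appear in a heptagon that surrounds a lift of $\ov\omega$) and correctly resolved it with Lemma~\ref{surrpair}.
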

We now know how to intrinsically recover $\Cdt$ from $\Cssdt$. Thus we get the following, which is the $b=7$ case of Theorem \ref{csstoc}:
\begin{corollary}
    If $b=7$, for all large multiples $K$ every automorphism of $\Cssdt$ extends to an automorphism of $\Cdt$.
\end{corollary}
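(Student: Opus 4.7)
My plan is to show that an automorphism $\phi$ of $\Cssdt$ extends uniquely to an automorphism of $\Cdt$ by dictating its action on the ``new'' vertices, which are exactly the $DT_K$-orbits of minimal curves (those bounding twice-punctured disks). Recall that on a punctured sphere every simple closed curve is separating, so the vertex set of $\Cdt$ partitions into vertices of $\Cssdt$ and projections of minimal curves.

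First I would invoke the bijective correspondence established in the corollary immediately preceding the statement: each minimal vertex $\ov\omega\in\Cdt$ corresponds uniquely to an equivalence class of surrounding pairs in $\Cssdt$ modulo the relation generated by surrounding triples. Since surrounding pairs and surrounding triples were defined purely combinatorially in $\Cssdt$ (via immersed heptagons, pairwise surrounding, and non-existence of a common neighbour), $\phi$ preserves both notions and hence acts on the set of equivalence classes. This lets me define $\widetilde{\phi}(\ov\omega)$ to be the minimal vertex whose associated equivalence class is the $\phi$-image of the class of $\ov\omega$; Lemmas \ref{surrpair} and \ref{liftingtriples} guarantee this is well-defined, and Lemma \ref{projtriples} together with the heptagon construction of Section \ref{section:7pt} gives existence of representatives.

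I would then extend by setting $\widetilde{\phi}|_{\Cssdt}=\phi$ and verify that $\widetilde{\phi}$ is a graph automorphism of $\Cdt$. The adjacencies to check split into three cases: between two $\Css$-vertices (preserved because $\phi$ is already a $\Cssdt$-automorphism and $\Cssdt$ is a full subgraph of $\Cdt$); between a minimal vertex and a $\Css$-vertex; and between two minimal vertices. The latter two cases are characterised purely in terms of surrounding data in $\Cssdt$ by Lemma \ref{disjointness}, and hence preserved by $\phi$. Applying the same recipe to $\phi^{-1}$ produces an inverse for $\widetilde{\phi}$, so it is an automorphism.

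The conceptual heavy lifting has already been done in Lemmas \ref{surrpair}--\ref{disjointness}: the identification of minimal vertices of $\Cdt$ with purely combinatorial data in $\Cssdt$, together with the combinatorial characterisation of disjointness. Given those results, the extension argument is a routine assembly. The one point I expect to warrant care is confirming that the equivalence-class construction is equivariant under \emph{arbitrary} $\Cssdt$-automorphisms, not just those arising from $MCG^{\pm}(S_7)/DT_K$; but this is automatic because every ingredient (heptagon, surrounding pair, surrounding triple, disjointness) is a purely intrinsic combinatorial property of $\Cssdt$.
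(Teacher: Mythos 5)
Your proposal is correct and follows essentially the same route as the paper: the preceding lemmas of Section \ref{section:7pt} establish the bijection between minimal vertices and equivalence classes of surrounding pairs up to surrounding triples, and Lemma \ref{disjointness} characterises the remaining adjacencies combinatorially, after which the extension is a routine assembly (the paper simply states ``We now know how to intrinsically recover $\Cdt$ from $\Cssdt$'' and records the corollary). Your write-up just spells out the bookkeeping that the paper leaves implicit, including the partition of vertices (using that on a punctured sphere every curve is separating) and the observation that combinatorial invariance under arbitrary graph automorphisms is automatic.
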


\subsection{At least 9 punctures}
We move on to the general case in which our sphere has at least $9$ punctures, temporarily ignoring the case $S_8$ which, as will become clear, is different in nature. First we need to identify the vertices corresponding to classes of $1$-separating curves inside $\Cssdt$. 

Recall that we denote by $G^*$ the dual of a graph $G$.
\begin{definition}
    Let $\Gamma$ be (a subgraph of) either $\C$ or $\Cdt$. A vertex $x$ \emph{splits} $\Gamma$ if $\left(\link_\Gamma(x)\right)^*$ is connected.
\end{definition}
Notice that we can recognise $\Cp$ inside $\Css$. More precisely, if $\delta\in\Css$ then $\delta\in\Cp$ if and only if $\delta$ does not split $\Css$. 
Now we want to establish the same result for $\Cssdt$.
\begin{lemma}\label{cpdtcomesplit}
A vertex $\overline\delta\in\Cssdt$ belongs to $\Cpdt$ if and only if it does not split $\Cssdt$.
\end{lemma}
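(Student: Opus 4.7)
The plan is to transfer the known characterisation of $\Cp$ inside $\Css$ (the statement just before Definition \ref{defCss}, from \cite{ssgraph}) to the quotient, using the lifting machinery developed so far. First I would fix a lift $\delta\in\Css$ of $\overline\delta$ and observe that the topological distinction between $\Cp$ and $\Css\setminus\Cp$ is a $DT_K$-invariant: by Remark \ref{gendatwist} the group $DT_K$ is generated by powers of Dehn twists, which by Remark \ref{puntsepdiverse} preserve the puncture separation induced by any curve. Consequently $\overline\delta\in\Cpdt$ if and only if $\delta\in\Cp$, and it suffices to show that ``$\delta$ splits $\Css$'' is equivalent to ``$\overline\delta$ splits $\Cssdt$''.

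Next I would set up the relationship between the links. Given any $\overline\alpha\in\link_{\Cssdt}(\overline\delta)$, Theorem \ref{mgonlift} lifts the edge $\{\overline\delta,\overline\alpha\}$ to an edge in $\C$, and by the uniqueness of the $DT_K$-orbit of lifts of $\overline\delta$ I may adjust the lift so that it equals the chosen $\delta$; the corresponding endpoint is then a vertex $\alpha\in\link_\C(\delta)\cap\Css$ (since $DT_K$ preserves topological types). This produces a surjective map $\pi:\link_\Css(\delta)\to\link_{\Cssdt}(\overline\delta)$. Moreover, non-adjacency in $\Cssdt$ lifts trivially: if $\overline\gamma_i,\overline\gamma_{i+1}$ are non-adjacent in $\Cssdt$ then no $DT_K$-translate makes two lifts disjoint, so any choice of lifts is non-adjacent in $\Css$.

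For the implication $\overline\delta\in\Cpdt\Rightarrow\overline\delta$ doesn't split $\Cssdt$, I proceed by contrapositive: a chain $\overline\gamma_0,\dots,\overline\gamma_n$ in $\link^*_{\Cssdt}(\overline\delta)$ joining the projections of two different components of $\link^*_\Css(\delta)$ can be lifted vertex-by-vertex to vertices $\gamma_i\in\link_\Css(\delta)$ as above, and by the previous paragraph the lifted sequence is a chain in $\link^*_\Css(\delta)$, contradicting the assumption that $\delta\in\Cp$ doesn't split $\Css$. The other direction, $\overline\delta\notin\Cpdt\Rightarrow\overline\delta$ splits $\Cssdt$, requires projecting a chain from $\link^*_\Css(\delta)$ to $\link^*_{\Cssdt}(\overline\delta)$ while keeping consecutive intersections genuine after projection. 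I would handle this by choosing the chain with uniformly bounded annular projections (exploiting the fact that the MCG of each side of $\delta$ acts transitively on finitely many topological configurations of chains, as in Lemma \ref{Mbound}), and then invoking Lemma \ref{projinlink}/Corollary \ref{projV} to conclude that for $K$ large enough, $\pi$ restricts to an isometry on the chain, so its image is a chain in $\link^*_{\Cssdt}(\overline\delta)$.

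The main obstacle is the converse direction, where a priori the projection could merge two distinct components of $\link^*_\Css(\delta)$ by creating new adjacencies. The resolution I propose is exactly the isometric-projection-on-finite-sets package from Section \ref{sec:proj}, applied after reducing to finitely many topological configurations using the action of the MCG of the relevant side of $\delta$. This is precisely the philosophy of Theorem \ref{projisometry} and should go through provided $K$ is a large enough multiple of $K_0$ for both sides of every possible $\delta\in\Css\setminus\Cp$ to satisfy the isometry bound; since there are only finitely many topological types of $\delta$, a uniform such $K$ exists.
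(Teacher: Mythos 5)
Your easy direction (lifting a chain from $\link^*_{\Cssdt}(\overline\delta)$ back to $\link^*_\Css(\delta)$ using $1$-Lipschitzness of the projection) matches the paper, which proves the lemma via an intermediate statement (Lemma \ref{soprassesotto}) that handles exactly this direction the same way. The issue is your hard direction. You propose to project a chain of $\Css$ through $\pi$ and preserve adjacency by choosing the chain to have ``uniformly bounded annular projections ... exploiting the fact that the MCG of each side of $\delta$ acts transitively on finitely many topological configurations of chains, as in Lemma \ref{Mbound}''. This finiteness claim does not hold here: unlike $X_b$ or $Y_b$ in Lemma \ref{Mbound}, which are free-floating finite graphs on which $MCG^\pm$ acts transitively, your chain must have the two prescribed curves $\alpha,\beta$ as its endpoints, and the pair $(\alpha,\beta)$ ranges over infinitely many topological configurations with unbounded subsurface projections $d_s(\alpha,\cdot)$. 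Consequently there is no uniform $\Theta$ making Corollary \ref{projV} applicable, and you cannot choose a uniform large multiple $K$. Patching this (e.g.\ by picking the first interior vertex $\gamma_1$ with bounded intersection number with $\alpha$, similarly at the $\beta$ end, and controlling the middle of the chain) might be possible, but it is not what your sketch says, it is laborious, and it would still only give the statement for $K$ large.

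The paper's proof of the hard direction is both simpler and strictly stronger, and it is worth internalising. Instead of projecting an arbitrary chain and worrying about isometry, the paper produces a single intermediary curve $\gamma\in\link_\Css(\delta)$ that \emph{intersects} both $\alpha$ and $\beta$ and, crucially, induces a puncture separation different from each of them (this is a short topological argument: a side of $\delta$ containing both $\alpha$ and $\beta$ must have extra punctures outside the disks $D_\alpha, D_\beta$, so one can build $D_\gamma$ appropriately). Then Lemma \ref{projpuntsep} --- which relies only on the fact that Dehn twists preserve puncture separations (Remark \ref{puntsepdiverse}) and hence holds for \emph{every} $K$ and every subgroup generated by powers of twists --- forces $\overline\gamma$ to remain at distance at least $2$ from both $\overline\alpha$ and $\overline\beta$. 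This gives the needed length-$2$ chain in $\link^*_{\Cssdt}(\overline\delta)$ without invoking any ``deep enough'' hypothesis, which is why the lemma requires no largeness assumption on $K$. Your proposal trades this robust combinatorial/topological invariant for an analytic estimate that is not available uniformly; I would redirect you to the puncture-separation trick.
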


Lemma \ref{cpdtcomesplit} is implied by the following:

\begin{lemma}\label{soprassesotto}
    Let $\delta\in \Css$ be a curve and let $\alpha,\beta\in \link(\delta)$. Let $\ov\alpha,\ov\beta,\ov \delta$ be their projections. Then $\delta$ separates $\alpha$ and $\beta$ if and only if $\ov\delta$ separates $\ov\alpha$ and $\ov\beta$.
\end{lemma}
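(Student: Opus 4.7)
The plan is to interpret ``$\delta$ separates $\alpha,\beta$'' as $\alpha,\beta$ lying on different topological sides of $\delta$ inside $S$---equivalently, in distinct connected components of $\link^*_\Css(\delta)$, since for strongly separating $\delta$ the link $\link_\Css(\delta)$ is a join of the strongly separating curves on the two sides---and to interpret ``$\ov\delta$ separates $\ov\alpha,\ov\beta$'' as the analogous statement about components of $\link^*_\Cssdt(\ov\delta)$. I would then prove both implications by contraposition, using Theorem \ref{mgonlift} to lift edges, Remark \ref{puntsepdiverse} to transfer puncture separations to the quotient, and the geometric fact that any two intersecting simple closed curves on a punctured sphere induce non-nested puncture separations (each side of one is crossed by the other and therefore meets both sides of the other).

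For the contrapositive of $(\Rightarrow)$, suppose there is a path $\ov\alpha=\ov\gamma_0,\dots,\ov\gamma_n=\ov\beta$ inside $\link^*_\Cssdt(\ov\delta)$. For each $i$ I would lift $\ov\gamma_i$ to a curve $\gamma_i$ disjoint from $\delta$, using uniqueness of the $DT_K$-orbit of lifts of the edge $\ov\delta\,\ov\gamma_i$ (first bullet of Theorem \ref{mgonlift}), and arrange that $\gamma_0=\alpha$. Consecutive $\ov\gamma_i,\ov\gamma_{i+1}$ are non-adjacent in $\Cssdt$; since $\Cssdt$ inherits its adjacencies from $\Cdt$, they are also non-adjacent in $\Cdt$, which forces the lifts $\gamma_i,\gamma_{i+1}$ to intersect in $\C$ (otherwise the disjointness would descend to the quotient). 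Two curves disjoint from $\delta$ that intersect must lie on the same topological side of $\delta$, so every $\gamma_i$ lies on the side containing $\alpha$. In particular $\gamma_n$ and $\beta$ both lift $\ov\beta$ and differ by an element of $DT_K$; by Remark \ref{puntsepdiverse} they share the same puncture separation, forcing $\beta$ to the side of $\gamma_n$, namely the side of $\alpha$. Hence $\delta$ does not separate $\alpha,\beta$.

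For the contrapositive of $(\Leftarrow)$, suppose $\alpha,\beta$ both lie on a topological side $\Sigma^+$ of $\delta$. I would exhibit a path $\alpha=\gamma_0,\dots,\gamma_n=\beta$ inside $\Css\cap\C(\Sigma^+)\cap\link_\C(\delta)$ whose consecutive pairs intersect on $S$; such a path exists because $\Sigma^+$ has at least four punctures whenever it hosts any vertex of $\Css$ (consequence of $b\geq 7$ and $\delta\in\Css$), and on such a surface the strongly separating curves of $S$ supported in $\Sigma^+$ admit plenty of pairwise intersections (curves enclosing overlapping but non-nested sets of three or more punctures) to interpolate between $\alpha$ and $\beta$. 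Each consecutive intersecting pair has non-nested puncture separations, so item 3 of Lemma \ref{projpuntsep} yields $d_\Cdt(\pi(\gamma_i),\pi(\gamma_{i+1}))\geq 2$; since the endpoints are in $\Cssdt$, the inequality persists there. The projected path then witnesses $\ov\alpha,\ov\beta$ being in the same component of $\link^*_\Cssdt(\ov\delta)$, i.e.\ $\ov\delta$ does not separate them.

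The main obstacle I anticipate is the construction of the path of intersecting strongly separating curves in the $(\Leftarrow)$ direction, which amounts to a connectivity statement for the ``intersection graph'' on $\Css\cap\C(\Sigma^+)$. I expect this to reduce to a short case analysis on the number of punctures of $\Sigma^+$ (at least four in all relevant cases) together with standard filling arguments in the curve graph of $\Sigma^+$.
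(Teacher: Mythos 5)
Your argument for the direction ``not separated in $\Cssdt$ implies not separated in $\Css$'' matches the paper's: lift each vertex of the quotient chain into $\link(\delta)$ (using uniqueness of orbits of lifts of edges), use $1$-Lipschitzness of $\pi$ to see that consecutive lifts intersect, and deduce they all lie on one side. You take a small detour by only arranging $\gamma_0=\alpha$ and then matching $\gamma_n$ to $\beta$ via puncture separations; the paper simply arranges both $\gamma_0=\alpha$ and $\gamma_k=\beta$ directly (each $\ov\gamma_i$ can be lifted into $\link(\delta)$ independently), but your version is also correct.

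There is, however, a genuine error in your preamble that bleeds into the other direction: the claimed ``geometric fact'' that any two intersecting simple closed curves on a punctured sphere induce non-nested puncture separations is false. The paper points this out immediately after introducing nested puncture separations: if $\beta$ intersects $\alpha$, then $T_\beta(\alpha)$ intersects $\alpha$ yet has the \emph{same} (hence nested) puncture separation. Your parenthetical justification --- ``each side of one is crossed by the other and therefore meets both sides of the other'' --- is a statement about subsurfaces, not about puncture sets: a side of $\alpha$ can meet both sides of $\gamma$ while all of its punctures sit on one side of $\gamma$. In your $(\Leftarrow)$ argument you write ``Each consecutive intersecting pair has non-nested puncture separations,'' and if this is being deduced from the preamble's fact rather than built into the construction, the step fails and Lemma~\ref{projpuntsep}(3) does not apply. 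To repair it you must construct the interpolating path so that consecutive curves enclose \emph{crossing} sets of punctures (your hint about ``overlapping but non-nested sets of three or more punctures'' is the right idea), at which point the connectivity claim you flag as ``the main obstacle'' is the whole substance of the argument. The paper sidesteps this: it produces a \emph{single} intermediate curve $\gamma\subset\Sigma$ in $\link(\delta)$, chosen so its disc $D_\gamma$ has at least three punctures and contains a puncture outside $D_\alpha$ and one outside $D_\beta$, giving a chain $\ov\alpha,\ov\gamma,\ov\beta$ of length two in $\link(\ov\delta)$, with no need for a general connectivity statement about the intersection graph on $\Sigma$.
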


\begin{proof}
Suppose that $\ov\alpha,\ov\beta$ are not separated by $\ov\delta$. Pick a  chain $\ov\gamma_0=\ov\alpha,\ov\gamma_1,\ldots,\ov\gamma_k=\ov\beta\in \link(\ov\delta)$ and choose some lifts $\gamma_0=\alpha,\gamma_1,\ldots,\gamma_{k}=\beta\in \link(\delta)$. Notice that, since the projection map is $1$-Lipschitz, these curves form a chain themselves, which means that $\delta$ does not separate $\alpha$ and $\beta$.\\
Conversely, suppose that $\alpha,\beta$ lie on the same subsurface $\Sigma$ cut out by $\delta$. This subsurface must contain one of the disks that $\alpha$ cuts out, call it $D_\alpha$, but cannot coincide with it. Therefore there must be some puncture that belongs to $\Sigma$ but not to $D_\alpha$. The same argument works for $\beta$ and some disk $D_\beta$. Hence it is always possible to find a strongly separating curve $\gamma\in \link(\delta)$ which intersects both $\alpha$ and $\beta$ and induces a different puncture separation. More precisely, one can always find some disk $D_\gamma\subset \Sigma$ with at least three punctures and which contains (at least) one of the punctures not in $D_\alpha$ and (at least) one of the punctures not in $D_\beta$, as in Figure \ref{fig:cuttingcurve}. Then $\ov\gamma$ must be at distance at least $2$ from both $\ov\alpha$ and $\ov\beta$ by Lemma \ref{puntsepdiverse}, and therefore $\ov\delta$ does not separate $\ov\alpha$ and $\ov\beta$. 
\end{proof}

\begin{figure}[htp]
    \centering
    \includegraphics[width=\textwidth]{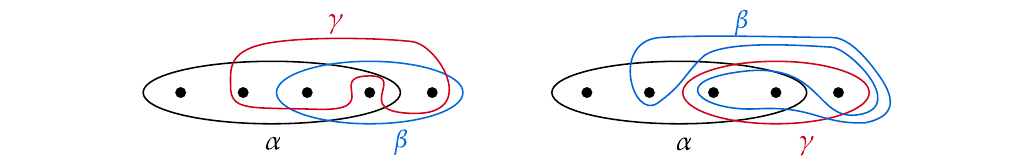}
    \caption{The shape of a possible curve $\gamma$ depends on whether one of the disks contains the punctures of the other.}
    \label{fig:cuttingcurve}
\end{figure}

Now we can exploit the previous results to intrinsically determine if two curves belong to a \emph{peripheral} $S_7$, that is, an $S_7$ cut out by a single curve. Here is where we need the number of punctures to be at least $9$, since in $S_8$ there is no strongly separating curve that cuts out some $S_7$.
\begin{lemma}[Bizarre simplices]\label{deltabizarreadventure}
    Let $b\ge9$. If $\Delta=(\delta_3,\ldots\delta_{b-6})\subseteq\Css$ is an ordered simplex of dimension $b-9$ such that:
    \begin{enumerate}
        \item $\delta_3\in \Cp$;
        \item every $\delta_i$ separates $\delta_{i-1}$ from $\delta_{i+1}$;
        \item there is no strongly separating curve that separates two consecutive curves $\delta_i$ and $\delta_{i+1}$;
    \end{enumerate}
    then $\link(\Delta)$ fills a peripheral $S_7$ cut out by $\delta_{b-6}$.
\end{lemma}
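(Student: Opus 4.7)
The strategy is to establish by induction on $i$ from $3$ to $b-6$ that $\delta_i$ bounds a punctured disk $D_i$ containing exactly $i$ punctures, with $D_3\subset D_4\subset\cdots\subset D_{b-6}$. Once this nested picture is in hand, $\delta_{b-6}$ automatically cuts off a $6$-punctured disk on its outer side, whose closure is the peripheral $S_7$ of the conclusion; the remaining statement about $\link(\Delta)$ is then a topological verification inside this $S_7$.

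The base case $i=3$ is the definition of $\Cp$, which cuts off an $S_{0,4}$. For the inductive step $i\to i+1$, my first task is to show $\delta_{i+1}$ lies outside $D_i$. If $i\ge 4$, this follows from Condition (2) combined with $\delta_{i-1}\subset D_i$ given by the inductive hypothesis; if $i=3$, I use that every essential curve of $D_3\cong S_{0,4}$ bounds a $2$-punctured sub-disk and therefore fails to lie in $\Css$, forcing $\delta_4\not\subset D_3$. Writing $B$ for the two-holed strip cobounded by $\delta_i$ and $\delta_{i+1}$, containing $k$ punctures, and $A$ for the further sub-disk cut off by $\delta_{i+1}$, containing $m=b-i-k$ punctures, the heart of the step is a puncture-count. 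Since $\delta_{i+1}$ is itself strongly separating and the complementary side has $i+k=b-m\ge 4$ automatically, we must have $m\ge 3$. Any essential $\gamma\subset B$ separating $\delta_i$ from $\delta_{i+1}$ bounds in $S_b$ two disks with $i+j$ and $m+k-j$ punctures for some $1\le j\le k-1$; because $i+j\ge 4>2$ automatically, Condition (3) forces $m+k-j=2$ for \emph{every} admissible $j$, pinning $j$ to the single value $m+k-2$. This is incompatible with $k\ge 3$ (as then more than one admissible $j$ exists) and with $k=2$ (which would demand $m=1<3$), so $k\le 1$; but $k=0$ would make $\delta_i$ and $\delta_{i+1}$ cobound an annulus, forcing $\delta_i=\delta_{i+1}$, so $k=1$, and $D_{i+1}:=D_i\cup\overline B$ is a disk with $i+1$ punctures, closing the induction.

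Specialising to $i=b-6$ gives that $\delta_{b-6}$ bounds a $(b-6)$-punctured disk, whose complement together with $\delta_{b-6}$ is the peripheral $S_7$, call it $\Sigma$. I then intend to identify $\link(\Delta)\cap\Css$ as the set of $\Css$-curves lying in $\Sigma$ and check that these fill $\Sigma$. For the first, any $\gamma\in\link(\Delta)\cap\Css$ sits in a single component of $S_b\setminus\bigcup_i\delta_i$; the components other than $\Sigma$ are $D_3\cong S_{0,4}$ (whose essentials all bound $2$-punctured disks, hence leave $\Css$) and the $1$-punctured strips $D_{i+1}\setminus\overline{D_i}$ (each a $2$-holed sphere with one puncture, admitting no essential simple closed curves), so $\gamma\subset\Sigma$, and the converse inclusion is automatic since any $\Css$-curve inside $\Sigma$ is disjoint from every $\delta_i$. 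For the filling statement, every essential $\eta\subset\Sigma$ bounds in $\Sigma$ a sub-disk with $j_\eta\in\{2,3,4,5\}$ punctures; for each such $\eta$, I can exhibit a $3$-puncture sub-disk $\gamma\subset\Sigma$ (which is in $\Css$ since $b\ge 9$ makes the complementary side have $b-3\ge 6$ punctures) whose puncture set is linked with $\eta$'s --- neither nested inside nor disjoint from it --- producing $i(\gamma,\eta)>0$.

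The main obstacle I expect is the puncture-count bookkeeping in the inductive step: one has to track carefully which pairs $(k,j)$ Condition (3) actually rules out, and notice that $i\le b-7$ (implicit in $\dim\Delta=b-9$) together with the lower bound $m\ge 3$ coming from $\delta_{i+1}\in\Css$ is exactly what eliminates the borderline case $k=2$, $m=1$. Everything downstream of the nested-disk picture is straightforward topology of punctured disks.
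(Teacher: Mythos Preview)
Your proposal is correct and follows essentially the same approach as the paper: establish that the $\delta_i$ are nested with single punctures between consecutive ones, then count punctures to identify the outer $S_7$. The paper's proof is a three-line sketch; you supply the inductive bookkeeping and the verification that $\link_{\Css}(\Delta)$ actually fills $\Sigma$, which the paper leaves implicit. One minor simplification: in your puncture-count step you can dispense with the case split on $k$, since for any admissible $j\in\{1,\dots,k-1\}$ one has $m+k-j\ge m+1\ge 4>2$ directly, contradicting Condition~(3) and forcing $k\le 1$ immediately.
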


\begin{proof}
The second property tells us that $\Delta$ cuts the surface into two disks and some punctured annuli between consecutive curves, as in Figure \ref{fig:bizarre}. Moreover each of these annuli should contain just one puncture, because otherwise we could find some curve that contradicts the third property. Then, since the $S_{0,4}$ cut out by $\delta_3$ cannot contain any strongly separating curve, $\link(\Delta)$ must fill the other peripheral disk, call it $\Sigma$. Now it is enough to count the punctures in $\Sigma$, that must be six ($\delta_3$ cuts out three punctures, and each of the $b-9$ annuli contains a single puncture).
\end{proof}

\begin{figure}[htp]
    \centering
    \includegraphics[width=0.75\textwidth]{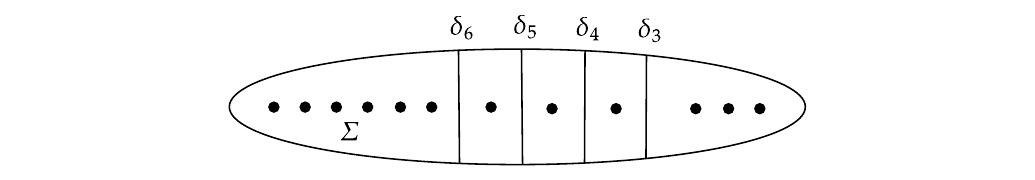}
    \caption{Starting from $\delta_3$, $\Delta$ is constructed by consecutively cutting out once-punctured annuli.}
    \label{fig:bizarre}
\end{figure}

Notice that these bizarre properties are stated only in terms of some vertices separating some others, and are therefore preserved when passing to the quotient and taking lifts by Lemma \ref{soprassesotto}. Hence we get the following:

\begin{corollary}
    Let $b\ge9$. If $\ov\Delta=(\ov\delta_3,\ldots\ov\delta_{b-6})\subseteq \Cssdt$ is an ordered simplex of dimension $b-9$ such that:
    \begin{enumerate}
        \item $\ov\delta_3\in \Cpdt$;
        \item every $\ov\delta_i$ separates $\ov\delta_{i-1}$ from $\ov\delta_{i+1}$;
        \item there is no $\ov\gamma\in\Cssdt$ that separates two consecutive $\ov\delta_i$ and $\ov\delta_{i+1}$;
    \end{enumerate}
    then every vertex $\ov\alpha\in \link(\ov\Delta)$ lifts inside a peripheral $S_7$ cut out by some lift $\delta_{b-6}$ of $\ov\delta_{b-6}$.
\end{corollary}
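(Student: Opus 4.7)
The plan is to reduce the corollary to Lemma \ref{deltabizarreadventure} in $\Css$ by lifting the whole configuration using the uniqueness of simplex lifts. First I would invoke Theorem \ref{mgonlift} to pick a lift $\Delta=(\delta_3,\ldots,\delta_{b-6})$ of $\ov\Delta$ inside $\C$. Since being strongly separating is a topological property preserved by all mapping classes, every lift of a vertex of $\Cssdt$ lies in $\Css$, so in fact $\Delta\subseteq\Css$. Analogously, every lift of $\ov\delta_3$ lies in $\Cp$ because $1$-separation is $DT_K$-invariant, giving condition~(1) of Lemma \ref{deltabizarreadventure} for $\Delta$.

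Next I would verify conditions (2) and (3) by applying Lemma \ref{soprassesotto}. For (2), the assumption that $\ov\delta_i$ separates $\ov\delta_{i-1}$ from $\ov\delta_{i+1}$ in $\Cssdt$ (i.e.\ in $\link^*(\ov\delta_i)$) combined with the fact that $\delta_{i-1},\delta_{i+1}\in\link(\delta_i)$ (since $\Delta$ is a simplex) immediately gives, via Lemma \ref{soprassesotto}, that $\delta_i$ separates $\delta_{i-1}$ from $\delta_{i+1}$ in $\Css$. For (3), suppose by contradiction that some $\gamma\in\Css$ separates $\delta_i$ from $\delta_{i+1}$; then $\delta_i,\delta_{i+1}\in\link(\gamma)$, and Lemma \ref{soprassesotto} applied to $\gamma$ implies that $\ov\gamma\in\Cssdt$ separates $\ov\delta_i$ from $\ov\delta_{i+1}$, contradicting the hypothesis. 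Hence $\Delta$ satisfies all three bizarre properties, and Lemma \ref{deltabizarreadventure} tells us that $\link(\Delta)$ fills a peripheral $S_7$ cut out by $\delta_{b-6}$.

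To finish, let $\ov\alpha\in\link(\ov\Delta)$. Then $\ov\Delta\cup\{\ov\alpha\}$ is a simplex in the quotient, which by Theorem \ref{mgonlift} admits a lift as a simplex in $\C$, unique up to the action of $DT_K$. By uniqueness, some $DT_K$-translate of this lift extends our chosen $\Delta$; call $\alpha$ the corresponding lift of $\ov\alpha$. Then $\alpha\in\link(\Delta)$, and by the previous paragraph $\alpha$ lies inside the peripheral $S_7$ cut out by $\delta_{b-6}$, which is exactly the conclusion we wanted.

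The arguments are essentially bookkeeping on top of the two key results already proven: Theorem \ref{mgonlift} to pass between $\C$ and the quotient compatibly on simplices, and Lemma \ref{soprassesotto} to translate separation properties. The only subtle point, and the one I would be most careful about, is the direction of Lemma \ref{soprassesotto} required for condition (3): we need it applied with $\gamma$ (not a member of $\Delta$) playing the role of the separating vertex, which is legitimate precisely because the hypothesis $\gamma\in\Css$ disjoint from $\delta_i,\delta_{i+1}$ places $\delta_i,\delta_{i+1}$ in $\link(\gamma)$, matching the hypotheses of the lemma.
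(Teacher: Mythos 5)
Your proof is correct and follows essentially the same route the paper intends: the paper dispenses with this corollary in a single remark ("these bizarre properties are stated only in terms of some vertices separating some others, and are therefore preserved when passing to the quotient and taking lifts by Lemma \ref{soprassesotto}"), and your argument simply fleshes out that remark, using Theorem \ref{mgonlift} for existence and uniqueness of simplex lifts and Lemma \ref{soprassesotto} to transfer the three separation conditions, before invoking Lemma \ref{deltabizarreadventure} on the lifted simplex. The final paragraph, where you use uniqueness of simplex lifts to arrange that the lift of $\ov\Delta\cup\{\ov\alpha\}$ restricts to the previously chosen $\Delta$, correctly handles the point that all the $\ov\alpha$ can be lifted into a single peripheral $S_7$ bounded by one fixed lift $\delta_{b-6}$.
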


Now, given some peripheral $S_7$ and a bizarre simplex $\Delta$ for it, we are able to recognise $\mathcal{C}^{ss}(S_7)$, which is the subgraph of $\Css$ spanned by those curves $\gamma\in \link(\Delta)$ which do not cut out a pair of pants with the boundary of $S_7$, or equivalently for which there exists some other curve $\gamma'\in \link(\Delta)$ that separates $\gamma$ from $\delta_{b-6}$. Again, this property just involves some vertices separating some others, thus it holds in the quotient if and only if it holds in $\Css$. Therefore, if $\pi:\Css\to\Cssdt$ is the quotient projection, we can recognise $\pi(\mathcal{C}^{ss}(S_7))$ inside $\Cssdt$ in the same way. Notice that, as discussed in Corollary \ref{CUdt},  $\pi(\mathcal{C}^{ss}(S_7))$ is isomorphic to $\Css(S_7)/DT_K(S_7)$, therefore we are exactly in the framework of the previous subsection. Then we can define surrounding pairs and triples as in Figures \ref{fig:surrpair} and \ref{fig:surrtriple}. The following is a summary of \cite[Lemmas 6.1 and 6.3]{ssgraph}:
\begin{lemma}
    Let $b\ge9$. Two curves $\alpha,\beta\in \Css$ form a surrounding pair if and only if:
    \begin{itemize}
        \item They belong to $\Cp$;
        \item They belong to some peripheral $S_7$;
        \item They are a surrounding pair intrinsically inside $S_7$.
    \end{itemize}
    Three curves $\alpha,\beta,\gamma\in \Css$ form a surrounding triple if they are pairwise surrounding pairs and they all belong to some peripheral $S_6$ (which is defined exactly as a peripheral $S_7$, but this time the bizarre simplex has one more vertex).
\end{lemma}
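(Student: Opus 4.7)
The plan is to reduce both characterizations to disk-counting in $S_b$, exploiting the fact that 3-punctured and 2-punctured disks transfer unchanged between $S_b$ and any peripheral subsurface (cut out by a single curve) containing them.

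For the surrounding pair characterization I would treat both directions. The backward direction is a transfer argument. Suppose $\alpha,\beta\in\Cp$ both lie inside a peripheral $S_7\subset S_b$ and form a surrounding pair inside $S_7$. The 3-punctured disk bounded by $\alpha$ inside $S_7$ uses 3 of the 6 non-boundary punctures of the $S_7$, so it does not touch the boundary curve of the $S_7$ and it is therefore also a 3-punctured disk in $S_b$ (similarly for $\beta$). Their intersection, a topological 2-punctured disk inside $S_7$, is a 2-punctured disk in $S_b$ as well, so $\alpha,\beta$ is a surrounding pair in $\Css(S_b)$. For the forward direction, given $\alpha,\beta$ a surrounding pair in $\Css(S_b)$, each bounds a 3-punctured disk by definition, so $\alpha,\beta\in\Cp$ is automatic. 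The union $D_\alpha\cup D_\beta$ is a 4-punctured disk, since $|D_\alpha|+|D_\beta|-|D_\alpha\cap D_\beta|=3+3-2=4$ and since the two 3-punctured disks meet in a single 2-punctured disk. Because $b\ge 9$ there are at least $b-4\ge 5$ punctures in the complement of $D_\alpha\cup D_\beta$; pick any two of them and enclose $D_\alpha\cup D_\beta$ together with these two punctures by a simple closed curve $\epsilon$. Then $\epsilon$ bounds a 6-punctured disk, i.e.\ cuts out a peripheral $S_7$ containing both $\alpha$ and $\beta$, and the disks $D_\alpha,D_\beta$ and $D_\alpha\cap D_\beta$ witness the surrounding pair inside this $S_7$.

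For the surrounding triple, the forward direction is analogous. If $\alpha,\beta,\gamma$ all surround the same minimal curve $\omega$ with $D_\omega=\{p_1,p_2\}$, then each $D_i$ ($i=\alpha,\beta,\gamma$) contains $\{p_1,p_2\}$ plus exactly one extra puncture, so $D_\alpha\cup D_\beta\cup D_\gamma$ is a 5-punctured disk. Since $b\ge 9$ its bounding curve cuts off a peripheral $S_6$ (a 5-punctured disk side) containing all three curves, and the three curves are trivially pairwise surrounding pairs. For the backward direction we must show that if $\alpha,\beta,\gamma$ lie in a peripheral $S_6$ and are pairwise surrounding pairs, then all three pairs surround the same minimal curve. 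In $S_6$ (which has $5$ non-boundary punctures) each $D_i$ uses 3 of these 5 punctures, and any two of them must share exactly 2 punctures. Combinatorially the only two ways to assign three 3-subsets of a 5-set with pairwise intersection of size 2 are, up to symmetry: (i) the \emph{concentric} case $D_\alpha\cap D_\beta = D_\beta\cap D_\gamma = D_\alpha\cap D_\gamma$, in which case all three $\omega$-curves coincide and the triple is a surrounding triple; and (ii) a \emph{cyclic} case, e.g.\ $D_\alpha=\{x,y,z\}$, $D_\beta=\{x,y,c\}$, $D_\gamma=\{y,z,c\}$, in which the three pairwise intersections are distinct 2-subsets all containing a common puncture $y$.

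The main obstacle is ruling out the cyclic case (ii) topologically. To do this I would examine the arrangement of the three simple closed curves $\alpha,\beta,\gamma$ on $S_6$. In case (ii) the three pairwise intersection disks share exactly the single puncture $y$ and are topological 2-punctured disks containing, respectively, $\{x,y\}$, $\{y,c\}$, $\{y,z\}$. The triple intersection $D_\alpha\cap D_\beta\cap D_\gamma$ would contain the puncture $y$. Tracing the combinatorics of how the arcs of $\alpha,\beta,\gamma$ cut the sphere around the five punctures, one finds that at least one of the three pairwise intersections must be disconnected — i.e.\ fails to be a single disk — contradicting the surrounding pair hypothesis. Concretely, I would argue by counting: three simple closed curves on a sphere with pairwise geometric intersection number $2$ produce a planar graph whose faces one can enumerate; the cyclic arrangement forces at least one pair of curves to bound two distinct bigons containing one puncture each, rather than a single bigon containing two punctures, which contradicts the requirement that the pair be a surrounding pair. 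This rules out case (ii) and completes the proof.
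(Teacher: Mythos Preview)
The paper does not prove this lemma; it merely cites it as a summary of \cite[Lemmas~6.1, 6.3]{ssgraph}. Your disk-transfer argument for the surrounding-pair equivalence is correct and is the natural approach.

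For the surrounding-triple clause, however, your backward direction has a genuine gap: the ``cyclic'' case~(ii) \emph{cannot} be ruled out, because it is topologically realizable. Take three great circles $\alpha,\beta,\gamma$ on the sphere and label the eight open octants by sign triples. Place puncture~$2$ in $(+,+,+)$, punctures $1,3,4$ in the adjacent octants $(+,+,-),(+,-,+),(-,+,+)$, and puncture~$5$ together with the remaining $b-5$ punctures of $S_b$ in $(-,-,-)$; a curve $\epsilon$ inside that octant separating $\{1,\dots,5\}$ from $\{6,\dots,b\}$ bounds a peripheral $S_6$ containing $\alpha,\beta,\gamma$. Each great circle bounds a $3$-punctured hemisphere, each pair meets in exactly two points, each intersection $D_\alpha\cap D_\beta$, $D_\alpha\cap D_\gamma$, $D_\beta\cap D_\gamma$ is a single lune containing exactly two punctures, and every one of the four lunes in each pairwise arrangement contains at least one puncture, so the curves are pairwise in minimal position. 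Thus all three pairs are genuine surrounding pairs in $S_b$, all lying in a common peripheral $S_6$, yet they surround three \emph{distinct} minimal curves (around $\{1,2\}$, $\{2,3\}$, $\{2,4\}$ respectively). Your assertion that ``at least one pair of curves bound two distinct bigons containing one puncture each'' is therefore false. In fact this example shows that the surrounding-triple clause, read literally as stated here, does not hold; Bowditch's actual Lemma~6.3 carries an additional hypothesis (compare the $b=7$ condition that no curve of the relevant type is disjoint from all three), so you should check the source for the precise statement rather than attempt to prove the summary verbatim.
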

Now we can almost repeat what we did for the case $b=7$, being careful to remember in which link we are. All proofs will have the same flavour as the corresponding ones, though they will involve more cut set arguments. 
\begin{itemize}
    \item First notice that, if $\ov T$ is a heptagon which lies in $\link_{\Cssdt} (\ov \Delta)$ for some simplex $\ov \Delta$, then $\ov T\star\ov\Delta$ is a generalised heptagon, thus we can find lifts $\Delta$ and $T\subset\link_{\Css}(\Delta)$ by Theorem \ref{mgonlift}.
    \item To prove the equivalent of Lemma \ref{surrpair} we must show that the vertex $\ov\omega$ surrounded by a pair is also independent of the chosen simplex $\ov\Delta$. Thus we take simplices $\ov\Delta,\ov\Delta'$ and heptagons $\ov T,\ov T'$ inside the respective links, and lift them to $T\in \link(\Delta)$ and $T'\in \link(\Delta')$. Then the proof follows the same steps, being careful to add $\Delta$ to the cut set containing $\omega$ (and similarly for $\Delta'$).
    \item It is clear that surrounding pairs (triples) project to surrounding pairs (triples) since the bizarre properties are preserved in the quotient. Then Lemma \ref{projtriples} follows.
    \item Lemma \ref{disjointness} deals with disjointness, and the same statement works in our case. 
\end{itemize}
We are left to prove an analogous to Lemma \ref{liftingtriples}, which needs a bit more care. 
\begin{lemma}\label{triple9}
    Let $b\ge9$. If $\ov\alpha,\ov\beta,\ov\gamma\in\Cssdt$ form a surrounding triple then they pairwise surround the same $\ov\omega$.
\end{lemma}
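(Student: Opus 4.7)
The plan is to mimic the proof of Lemma \ref{liftingtriples} (the $b=7$ analogue), with the added bookkeeping that the bizarre simplices witnessing the peripheral $S_7$s and the common peripheral $S_6$ must be carried through a cut-set induction.

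First I would exploit that, by the definition of surrounding triple in the $b\ge 9$ setting, the three vertices $\ov\alpha,\ov\beta,\ov\gamma$ all lie in the link of some bizarre simplex $\ov\Delta_6=(\ov\delta_3,\dots,\ov\delta_{b-5})$ cutting out a common peripheral $S_6$, and each pair among them is a surrounding pair in some peripheral $S_7$ containing this $S_6$ (obtained by dropping $\ov\delta_{b-5}$ from the sequence). Lift $\ov\Delta_6$ to a simplex $\Delta_6\subset \Css$ via Theorem \ref{mgonlift}; by Lemma \ref{soprassesotto} its image remains a bizarre simplex in $\Css$. For each of the three pairs, the witnessing heptagon together with $\ov\Delta_6$ forms a generalised heptagon in $\Cssdt$, which can be lifted inside $\link_{\Css}(\Delta_6)$ by Theorem \ref{mgonlift}. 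Using uniqueness of $DT_K$-orbits of lifts of edges, I would arrange the three lifted heptagons $T_{\alpha\beta},T_{\beta\gamma},T_{\gamma\alpha}$ so that $T_{\alpha\beta}$ and $T_{\beta\gamma}$ share a common lift $\beta$ of $\ov\beta$, and $T_{\beta\gamma}$ and $T_{\gamma\alpha}$ share a common lift $\gamma$ of $\ov\gamma$, leaving $\alpha\in T_{\alpha\beta}$ and $\alpha''\in T_{\gamma\alpha}$ as the two lifts of $\ov\alpha$ that still need to be identified. Let $\omega,\omega',\omega''$ denote the minimal curves surrounded by $(\alpha,\beta),(\beta,\gamma),(\gamma,\alpha'')$ respectively.

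Next I would glue $\alpha$ to $\alpha''$ by induction on the complexity $\alpha(g)$ of an element $g\in DT_K$ with $g(\alpha)=\alpha''$, following the template of Lemmas \ref{deltaindep} and \ref{surrpair}. At each step Proposition \ref{cor3.6} provides $(s,\gamma_s)$; either $\gamma_s$ fixes $\alpha$ (in which case apply it to the entire picture and recurse), or $d_s(\alpha,\alpha'')$ exceeds $\Theta$. In the latter case, choosing $\Theta$ large with respect to the constant $B$ of the Bounded Geodesic Image Theorem \ref{bgit} (and proportionally to the combinatorial length of the finitely many piecewise geodesic paths between $\alpha$ and $\alpha''$ in our diagram) forces $\gamma_s$ to fix pointwise one of the cut sets $\Delta_6\cup\{\beta\}$, $\Delta_6\cup\{\omega\}\cup C$, $\Delta_6\cup\{\gamma\}$, or $\Delta_6\cup\{\omega''\}\cup C''$, where $C,C''$ are pairs of vertices lying on the two paths between $\alpha$ and $\beta$ in $T_{\alpha\beta}$ and between $\gamma$ and $\alpha''$ in $T_{\gamma\alpha}$ respectively. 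I would then apply $\gamma_s$ beyond the fixed cut set; as in Lemma \ref{surrpair}, this preserves the heptagons on one side, the bizarre simplex $\Delta_6$, and the characterisation of $\omega,\omega',\omega''$ as the unique minimal curves lying inside both three-punctured disks bounded by the respective surrounding pairs, so the inductive hypothesis applies to $\gamma_s g$.

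At the end of the induction $\alpha=\alpha''$, so $\alpha,\beta,\gamma$ are pairwise surrounding pairs all contained in the peripheral $S_6$ cut out by $\Delta_6$; by the definition recalled before the lemma, they form a surrounding triple in $\Css$, and by Lemma \ref{chainoftriples} they pairwise surround the same minimal curve $\omega$. Hence $\omega=\omega'=\omega''$ and its projection $\ov\omega$ is surrounded by each of the three pairs in $\Cssdt$. The main obstacle is the cut-set bookkeeping in the third step: one must ensure that each application of $\gamma_s$ simultaneously preserves $\Delta_6$, the portion of the three heptagons on one side of the cut, and the three surrounded minimal curves, which is why $\Theta$ must be calibrated against the bounded combinatorial length of all possible paths that the inductive step needs to cross.
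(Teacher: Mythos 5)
Your proposal diverges from the paper's argument at the very first step, and the divergence introduces a genuine gap.

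You claim that, by the definition of surrounding triple, the three heptagons witnessing the pairwise surrounding-pair conditions all lie inside $\link(\ov\Delta_7)$ for a single bizarre simplex $\ov\Delta_7$ obtained by dropping the last vertex of the common $S_6$-simplex $\ov\Delta_6$. This is not what the definitions give you. A surrounding pair in $\Cssdt$ comes with a heptagon inside $\link(\ov\Delta^\ast)$ for \emph{some} bizarre simplex $\ov\Delta^\ast$; a surrounding triple additionally produces a bizarre simplex $\ov\sigma$ (the $S_6$-simplex) whose link contains all three vertices. Nothing forces the three simplices $\ov\Delta,\ov\Delta',\ov\Delta''$ witnessing the three pairs to coincide with one another, nor to be compatible with $\ov\sigma$ in the way you want. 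In $\Css$ itself this coherence is automatic because the surrounding-pair condition is geometric, but in the quotient the condition is purely combinatorial, and proving that a pair lying in $\link(\ov\Delta_7)$ actually admits a heptagon inside $\link(\ov\Delta_7)$ is an additional claim that your proposal uses but does not prove. This is precisely what allows you to collapse the paper's three gluing inductions into one: the paper lifts $\Delta,\Delta',\Delta'',\sigma$ separately, adjusts each heptagon so its endpoints land in $\link(\sigma)$, and then performs three cut-set inductions (glue the two lifts of $\ov\beta$, then of $\ov\gamma$, then of $\ov\alpha$). Your single-induction shortcut is only available because of the unproved coherence assumption.

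There is also a concrete error in the lifting step. You write that ``the witnessing heptagon together with $\ov\Delta_6$ forms a generalised heptagon'' and lift it inside $\link_{\Css}(\Delta_6)$. That is false: the heptagon lives in $\Cp(S_7)$, and some of its curves bound disks containing the extra puncture of $S_7\setminus S_6$ and therefore cross $\ov\delta_{b-5}$, the last vertex of $\ov\Delta_6$. The heptagon sits in $\link(\ov\Delta_7)$, not in $\link(\ov\Delta_6)$. Once this is fixed, the remaining vertex $\delta_{b-5}$ plays the role of the paper's $\sigma$, and you would still need to adjust each lifted heptagon so that its endpoint lifts of $\ov\alpha,\ov\beta,\ov\gamma$ land in $\link(\delta_{b-5})$ — that step is missing from your plan, and it is essential for concluding at the end that the lifted triple lies in a common $S_6$ and is therefore a genuine surrounding triple. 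Finally, your cut-set list in the last induction is incomplete: you omit the layer through $\omega'$ and the middle heptagon $T_{\beta\gamma}$, which is one of the intermediate columns separating $\alpha$ from $\alpha''$ in the lifted picture.
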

\begin{proof}
    As in the proof of Lemma \ref{liftingtriples} we actually want to show that a surrounding triple in $\Cssdt$ lifts to a surrounding triple in $\Css$. Since $\ov\alpha,\ov\beta$ are a surrounding pair they belong to some heptagon $\ov T$ inside the link of some simplex $\ov\Delta$, which corresponds to some peripheral $S_7$ (meaning that it lifts to a pants decomposition for the complement of a peripheral $S_7$). We can similarly find $\ov T',\ov \Delta'$ for $\ov\beta,\ov\gamma$ and $\ov T'',\ov \Delta''$ for $\ov\alpha,\ov\gamma$. Moreover, let $\ov \sigma$ be a simplex which corresponds to some peripheral $S_6$ and whose link contains $\ov\alpha,\ov\beta,\ov\gamma$.
    Now, choose lifts $\Delta,\Delta',\Delta'',\sigma$ for the various simplices. Lift $\ov T$ to a heptagon $T$ inside $\link(\Delta)$, and let $\alpha\in T$ be the lift of $\ov\alpha$. If $\alpha\not\in \link(\sigma)$ let $g\in DT_K$ be such that $g(\alpha)\in \link(\sigma)$, and replace $T$ and $\Delta$ with $g(T)$ and $g(\Delta)$. Apply the same procedure to $T',\Delta'$ and $T'',\Delta''$. Finally, let $\omega$ be the curve surrounded by $\alpha,\beta$, and define similarly $\omega'$ and $\omega''$. The situation is depicted in Figure \ref{fig:triplestagliati}.
    \begin{figure}[htp]
        \centering
        \includegraphics[width=0.75\textwidth]{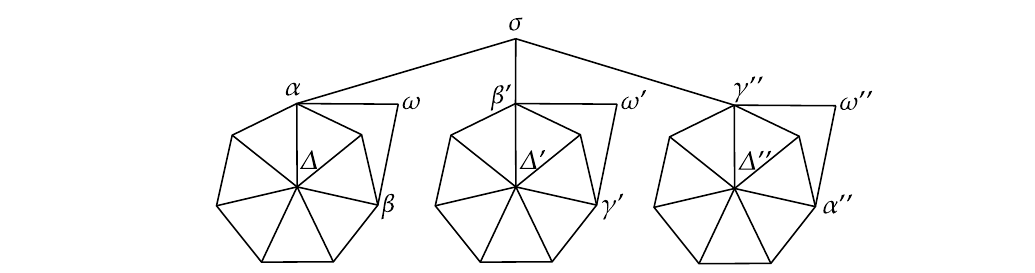}
        \caption{The various lifts from the proof of Lemma \ref{triple9}.}
        
        \label{fig:triplestagliati}
    \end{figure}

    Now let $\beta\in T$ and $\beta'\in T'$ be the corresponding lifts of $\ov\beta$, which we want to glue to each other. Let $g\in DT_K$ be an element mapping $\beta$ to $\beta'$. If $g$ is not the identity let $(s,\gamma_s)$ be as in Proposition \ref{cor3.6}. Arguing as in the proof of Lemma \ref{deltaindep} we see that $\gamma_s$ must fix pointwise one of the following:
    \begin{itemize}
        \item $\beta$;
        \item $\omega$, two curves $\delta,\varepsilon$ in $T$ and the whole $\Delta$;
        \item $\alpha$;
        \item $\sigma$;
    \end{itemize}
    Then we can apply $\gamma_s$ to everything beyond the cut set, while preserving that all $\omega$s are still the minimal curves surrounded by the corresponding pairs as in Lemma \ref{surrpair}, and proceed by induction.

    Thus we can glue $\beta$ to $\beta'$ and with the exact same argument we can glue $\gamma'$ to $\gamma''$. Therefore, our data up to this point span a graph which schematically looks like in Figure \ref{fig:triplesglued}.
    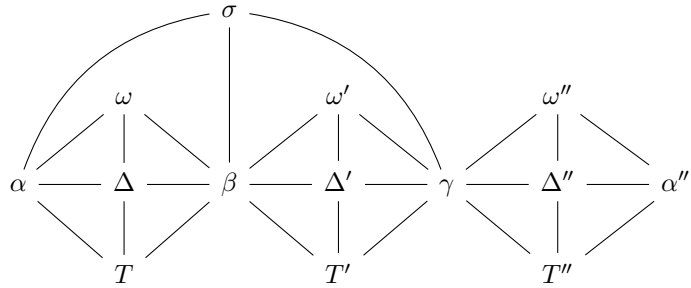
\begin{figure}[htp]
        \centering
       \begin{tikzcd}
            &&\sigma\ar[ddrr, no head, bend left=30] \ar[ddll, no head, bend right=30]\ar[dd,no head]&&&&\\
            &\omega\ar[dr,no head]\ar[d,no head]&&\omega'\ar[dr,no head]\ar[d,no head]&&\omega''\ar[dr,no head]\ar[d,no head]&\\
            \alpha\ar[r,no head]\ar[ur, no head]\ar[dr, no head]&\Delta\ar[r,no head]&\beta\ar[r,no head]\ar[ur, no head]\ar[dr, no head]&\Delta'\ar[r,no head]&\gamma\ar[r,no head]\ar[ur, no head]\ar[dr, no head]&\Delta''\ar[r,no head]&\alpha''\\
            &T\ar[ur,no head]\ar[u,no head]&&T'\ar[ur,no head]\ar[u,no head]&&T''\ar[ur,no head]\ar[u,no head]&
       \end{tikzcd}
        \caption{This graph represents all possible "paths" from $\alpha$ to $\alpha''$, and elements on the same column correspond to the same cut set. Notice that every heptagon actually represents two paths.}
        \label{fig:triplesglued}
    \end{figure}\\
    We are left to glue $\alpha$ to $\alpha''$, since then $\alpha,\beta,\gamma$ will be a surrounding triple and therefore surround the same minimal curve, which in turn will mean that $\omega=\omega'=\omega''$. Let $g\in DT_K$ be an element mapping $\alpha$ to $\alpha''$. If $g$ is not the identity let $(s,\gamma_s)$ be as in Proposition \ref{cor3.6}. Then $\gamma_s$ must fix pointwise one of the following cut sets:
    \begin{itemize}
        \item $\alpha$;
        \item $\sigma$, $\omega$, two curves $\delta,\varepsilon$ in $T$ and the whole $\Delta$;
        \item $\sigma$ and $\beta$;
        \item $\sigma$, $\omega'$, two curves $\delta',\varepsilon'$ in $T'$ and the whole $\Delta'$;
      
        \item $\gamma$;
        \item $\omega''$, two curves $\delta'',\varepsilon''$ in $T''$ and the whole $\Delta''$. 
    \end{itemize}
    Thus, as argued in Lemma \ref{surrpair}, we can apply $\gamma_s$ beyond the cut set, while preserving that every minimal curve is the one surrounded by the corresponding pair, and conclude by induction.
\end{proof}
To sum up we get the following, which is the case $b\ge9$ of Theorem \ref{csstoc}:
\begin{corollary}
    For every $b\ge9$ and all large multiples $K$, every automorphism of $\Cssdt$ extends to an automorphism of $\Cdt$.
\end{corollary}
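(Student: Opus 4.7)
The plan is to extend $\phi \in \mathrm{Aut}(\Cssdt)$ to $\hat\phi \in \mathrm{Aut}(\Cdt)$ by defining its action on minimal vertices (those classes in $\Cdt \setminus \Cssdt$) through the correspondence with equivalence classes of surrounding pairs up to surrounding triples. Concretely, for each minimal vertex $\ov\omega$ I would pick any surrounding pair $(\ov\alpha, \ov\beta)$ in $\Cssdt$ for $\ov\omega$---existence follows by lifting $\ov\omega$ to some $\omega \in \C$, taking a surrounding pair for $\omega$ in $\Css$, embedding it inside a peripheral $S_7$ witnessed by a bizarre simplex as in Lemma~\ref{deltabizarreadventure}, and projecting. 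Since the notions of surrounding pair and surrounding triple in $\Cssdt$ have been characterized in purely combinatorial terms (involving only bizarre simplices, membership in $\Cpdt$, and separation relations, all preserved by graph automorphisms), the pair $(\phi(\ov\alpha), \phi(\ov\beta))$ is again a surrounding pair, and the $b \ge 9$ analog of Lemma~\ref{surrpair} produces a unique minimal vertex $\ov\omega'$ that it surrounds. I would set $\hat\phi(\ov\omega) := \ov\omega'$.

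The first substantive task is to verify well-definedness: different surrounding pairs for the same $\ov\omega$ must yield the same image. My plan is to show that any two such pairs $(\ov\alpha, \ov\beta)$ and $(\ov\alpha', \ov\beta')$ are connected by a finite chain of surrounding triples in $\Cssdt$; since $\phi$ preserves triples and the analog of Lemma~\ref{triple9} forces the surrounded minimal vertex to be constant along a chain, $\hat\phi(\ov\omega)$ is independent of the chosen pair. To produce the chain I would lift both pairs to $\Css$ so that they surround a common lift $\omega$ of $\ov\omega$, extend to a common peripheral $S_7$ containing both, invoke Lemma~\ref{chainoftriples} inside $\C^{ss}(S_7)$, and project, noting that the projected chain remains a chain of surrounding triples because the defining conditions are purely combinatorial and preserved by $\pi$.

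Checking that $\hat\phi$ is a graph automorphism is then routine: adjacency in $\Cdt$ involving a minimal vertex is characterized combinatorially by the $b \ge 9$ analog of Lemma~\ref{disjointness}, and adjacency inside $\Cssdt$ is preserved tautologically by $\phi$, so $\hat\phi$ preserves all adjacencies of $\Cdt$. Applying the same construction to $\phi^{-1}$ provides the inverse of $\hat\phi$, so $\hat\phi \in \mathrm{Aut}(\Cdt)$ as desired.

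I expect the main obstacle to be the well-definedness step, specifically arranging two arbitrary surrounding pairs for $\ov\omega$ in $\Cssdt$ to be lifted simultaneously into a common peripheral $S_7$. A priori the naive lifts could surround distinct lifts of $\ov\omega$, sit inside unrelated peripheral $S_7$'s, and be related only by a non-trivial element $g \in DT_K$. The approach is the now-familiar cut-set argument from Lemmas~\ref{deltaindep} and~\ref{triple9}: apply Proposition~\ref{cor3.6} to $g$ and verify that each Dehn twist $\gamma_s$ either fixes a suitable cut set (containing the relevant surrounded curve and a spanning sub-simplex of the bizarre simplex) or acts on the far side of the configuration in a way compatible with both the surrounding pair structure and the ambient bizarre simplex, inductively reducing the complexity of $g$ until the two lifts coincide. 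Once this alignment is achieved, Lemma~\ref{chainoftriples} applies inside the common $\C^{ss}(S_7)$ and the resulting chain descends to $\Cssdt$, closing the argument.
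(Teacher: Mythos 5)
Your strategy coincides with the paper's: extend $\phi$ to minimal vertices of $\Cdt$ through the correspondence between minimal vertices and equivalence classes of surrounding pairs modulo surrounding triples, check well-definedness via chains of surrounding triples (the analog of Lemma~\ref{triple9} keeps the surrounded vertex constant along such a chain), and verify adjacency preservation via the analog of Lemma~\ref{disjointness}; invertibility follows by applying the construction to $\phi^{-1}$. Two small remarks on your write-up: the detour through a common peripheral $S_7$ is unnecessary, since once both surrounding pairs are lifted to surround a common $\omega$ (which is a single $DT_K$-translation once one knows the surrounded vertex in $\Cssdt$ is well-defined — the $b\ge 9$ analog of Lemma~\ref{surrpair}), Lemma~\ref{chainoftriples} applies directly in $\Css(S_b)$ and the analog of Lemma~\ref{projtriples} sends each triple of the chain to a triple in $\Cssdt$; your route also works because two surrounding pairs for a common $\omega$ involve at most six punctures and so do fit in a peripheral $S_7$ when $b\ge 9$, but that is an extra check to make. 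Likewise, the new cut-set argument you envisage in the final paragraph is already subsumed by the $b\ge 9$ analogs of Lemma~\ref{surrpair} and Lemma~\ref{triple9}, so no additional reduction is needed once those lemmas are in place.
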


\subsection{8 punctures}
We are left to deal with the case $b=8$. The idea will be to replace heptagons with a suitable subgraph which we will use to recognise surrounding pairs. Define $\mathfrak{O}$ as the graph obtained by adding the four longest diagonal to an octagon, as in Figure \ref{fig:oct}. A possible realisation of this graph inside $\Cp(S_8)$ is given by the eight curves in the same Figure, and \cite[Lemma 7.2]{ssgraph} shows that there is only one copy of $\mathfrak{O}$ inside $\Cp(S_8)$ up to the action of the mapping class group (that is, every $\mathfrak{O}$ corresponds to some curves arranged as in the Figure).
\begin{figure}[htp]
    \centering
    \includegraphics[width=0.75\textwidth]{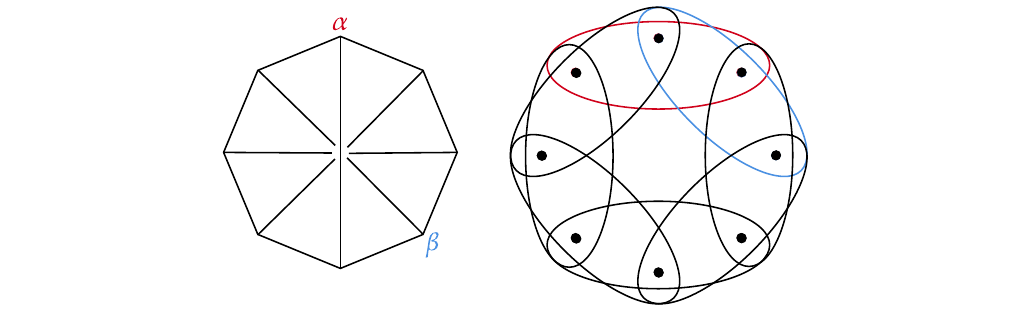}
    \caption{The graph $\mathfrak{O}$ and its realisation with $1$-separating curves, each of which surrounds three consecutive punctures. Notice that two vertices correspond to a surrounding pair if and only if they are connected by exactly two paths of length $2$ inside $\mathfrak{O}$.}
    \label{fig:oct}
\end{figure}\\
In \cite{ssgraph} it was proved that $\alpha,\beta\in \Css$ form a surrounding pair if and only if:
\begin{itemize}
    \item They belong to $\Cp$;
    \item They belong to some isometrically embedded copy of $\mathfrak{O}$ inside $\Cp$;
    \item They are connected by exactly two geodesic paths of length $2$ inside $\mathfrak{O}$.
\end{itemize}
Moreover, three curves $\alpha,\beta,\gamma\in \Css$ form a surrounding triple if and only if they are pairwise surrounding pairs and there is no $\delta\in \Css\setminus\Cp$ which is disjoint from all of them.\\
Now, in order to define surrounding pairs and triples inside $\Cssdt$ we need the following lemmas:
\begin{lemma}\label{projoct}
    For all large multiples $K$ every isometrically embedded copy of $\mathfrak{O}$ inside $\C$ projects isometrically into $\Cpdt$. In particular there exists an isometrically embedded copy of $\mathfrak{O}$ inside $\Cpdt$.
\end{lemma}

\begin{proof}
    Every two copies of $\mathfrak{O}$ differ by a mapping class, thus it suffices to argue as in Lemma \ref{projisometry} with $X_b$ replaced by $\mathfrak{O}$.
\end{proof}

\begin{lemma}
Every isometrically embedded copy $\ov{\mathfrak{O}}\subset\Cpdt$ admits an isometrically embedded lift $\mathfrak{O}\subset \Cp$. Therefore surrounding pairs lift to surrounding pairs.
\end{lemma}

\begin{proof}
It suffices to find a lift $\mathfrak{O}$, which will automatically be isometrically embedded since the projection map is $1$-Lipschitz (as argued in Lemma \ref{liftspec}). Now, it is useful to see $\mathfrak{O}$ as the $1$-skeleton of a Möbius band made of four squares, as in Figure \ref{fig:mobius}.
\begin{figure}[htp]
    \centering
    \includegraphics[width=0.75\textwidth]{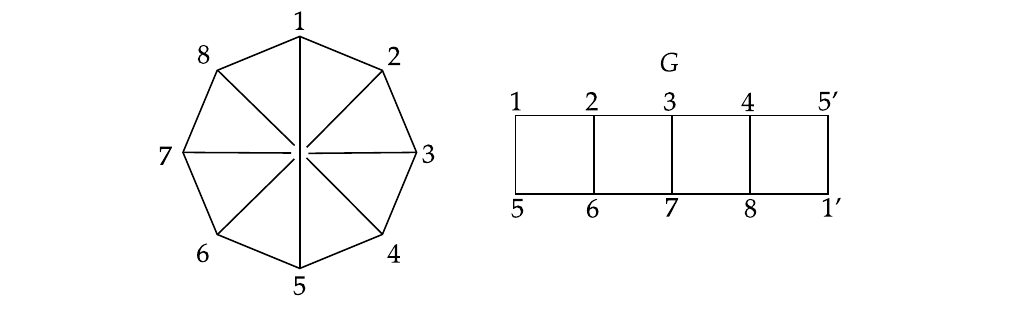}
    \caption{$\mathfrak{O}$ is obtained from the strip $G$ by gluing $1$ to $1'$ and $5$ to $5'$.}
    \label{fig:mobius}
\end{figure}\\
Clearly the graph $G$ in the Figure admits a lift, since we can lift each square and glue them together along common sides (recall that every $1$-simplex admits a unique $DT_K$-orbit of lifts). Now we are left to glue $1$ to $1'$ and $5$ to $5'$. Let $g\in DT_K$ be an element that maps the edge $1,5$ to the edge $1',5'$. If $g$ is not the identity let $(s,\gamma_s)$ be as in Lemma \ref{cor3.6}, applied to $x=1$. If $d_{\C}(s,1)\le 1$ we can apply $\gamma_s$ to the whole data and proceed by induction. Otherwise every path from $1$ to $1'$ must intersect the star of $s$. This means that there must be a square $Q$ where it is not possible to move from the left side to the right side without crossing the star of $s$. Thus there must be at least two vertices $p,q\in Q$ which lie in the star of $s$ and that cut $G$ in two connected components (more precisely, $p,q$ must be the vertices of either a diagonal or a vertical side). Thus we can apply $\gamma_s$ beyond $p$ and $q$. Notice that either $5'$ is one of $p$ and $q$ or $5'$ and $1'$ are on the same connected component cut out by $p$ and $q$; either way $\gamma_s$ is applied to the whole edge $1',5'$, and we can proceed by induction.
\end{proof}

Now we can proceed exactly as for the case $b=7$, with a few adjustments to cut sets arguments.
Given a surrounding pair $\ov\alpha,\ov\beta$, define the vertex $\ov\omega$ surrounded by the pair by lifting the pair and projecting the curve surrounded by the lift, as before.
\begin{lemma}\label{liftingpairs8}
    The vertex $\ov\omega$ surrounded by a pair $\ov\alpha,\ov\beta$ is well-defined.
\end{lemma}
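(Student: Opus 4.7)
The plan is to mirror the argument of Lemma \ref{surrpair} (the $b=7$ case), replacing heptagons with copies of the graph $\mathfrak{O}$ and using the analogous lifting result from Lemma \ref{projoct}. Given two embedded copies $\ov{\mathfrak{O}}_1,\ov{\mathfrak{O}}_2\subset\Cpdt$ in each of which $\ov\alpha,\ov\beta$ are connected by exactly two geodesic paths of length $2$, I would first lift each $\ov{\mathfrak{O}}_i$ to an embedded $\mathfrak{O}_i\subset\Cp$. Since simplices admit a unique $DT_K$-orbit of lifts, up to translating by an element of $DT_K$ I may assume the two lifts of $\ov\alpha$ coincide in a single vertex $\alpha$; call $\beta_i\in\mathfrak{O}_i$ the two lifts of $\ov\beta$ and $\omega_i$ the minimal curve surrounded by $(\alpha,\beta_i)$. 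The goal is to show $\pi(\omega_1)=\pi(\omega_2)$, which would follow from gluing $\beta_1$ to $\beta_2$ while preserving each surrounding relationship.

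To do so, let $g\in DT_K$ satisfy $g(\beta_1)=\beta_2$ and argue by induction on the complexity $\alpha(g)$ from Proposition \ref{cor3.6}. If $g=1$ there is nothing to prove. Otherwise, apply Proposition \ref{cor3.6} to $\beta_1$ to obtain a pair $(s,\gamma_s)$. In the easy case $d_\C(s,\beta_1)\le 1$, $\gamma_s$ fixes $\beta_1$, so one may apply $\gamma_s$ to the entire diagram and invoke the inductive hypothesis. In the harder case $d_s(\beta_1,\beta_2)>\Theta$, the key step is to identify a cut set that $\gamma_s$ must fix pointwise. Choosing $\Theta\ge 4B$ with $B$ the bounded geodesic image constant, any piecewise geodesic path of at most four edges from $\beta_1$ to $\beta_2$ must intersect the star of $s$. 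Since $\alpha$ and $\beta_i$ are connected inside $\mathfrak{O}_i$ by exactly two paths of length $2$ through intermediate vertices $p_i,q_i$, this forces $\gamma_s$ to fix pointwise one of the cut sets $\{\beta_1\}$, $\{\omega_1,p_1,q_1\}$, $\{\alpha\}$, $\{\omega_2,p_2,q_2\}$, or $\{\beta_2\}$ (the cases in which $\omega_i$ is present correspond to paths routed through the surrounded curves).

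Once a cut set is fixed, the induction step is carried out by applying $\gamma_s$ to the connected component of the complement of the cut set containing $\beta_2$, which lowers $\alpha(\gamma_sg)$ below $\alpha(g)$. The only subtle point is that after applying $\gamma_s$ one must still recognise $\omega_i$ as the minimal curve surrounded by the updated pair; this is automatic because $\gamma_s$ is a homeomorphism and $\omega_i=\gamma_s(\omega_i)$ still lies inside the three-holed disc bounded by $\gamma_s(\alpha)$ (or $\gamma_s(\beta_i)$). At the end of the induction, $\beta_1=\beta_2$, so $\omega_1=\omega_2$ by uniqueness of the curve surrounded by a given surrounding pair in $\Css$. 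The main obstacle is the enumeration of cut sets in the hard case: the diagonals of $\mathfrak{O}$ provide extra short paths compared to the heptagon used in $b=7$, so one must verify that the finitely many length-$\le 4$ routes through $\mathfrak{O}_1\cup\{\alpha\}\cup\mathfrak{O}_2$ and through the additional connections to $\omega_1,\omega_2$ are all blocked by some cut set above.
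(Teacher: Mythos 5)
Your overall strategy is right and matches the paper's: lift the two octagons sharing the lift of $\ov\alpha$, take $g\in DT_K$ gluing the two lifts of $\ov\beta$, apply Proposition \ref{cor3.6}, and push $\gamma_s$ across a cut set while preserving the surrounded curves. The problem is precisely the cut sets you list, and you flag it yourself without resolving it. The set $\{\omega_1,p_1,q_1\}$ (the two common neighbours of $\alpha$ and $\beta_1$ in $\mathfrak{O}_1$ together with $\omega_1$) does \emph{not} separate $\beta_1$ from $\alpha$ inside the graph $\mathfrak{O}_1\cup\{\omega_1\}$: there is a length-$3$ path $\beta_1\,$--$\,\delta_1\,$--$\,\delta_2\,$--$\,\alpha$, where $\delta_1,\delta_2$ are the third $\mathfrak{O}_1$-neighbours of $\beta_1$ and $\alpha$ respectively (and $\delta_1,\delta_2$ are adjacent). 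So the claim that ``$\gamma_s$ must fix pointwise one of the cut sets $\{\beta_1\}$, $\{\omega_1,p_1,q_1\}$, $\{\alpha\}$, $\{\omega_2,p_2,q_2\}$, $\{\beta_2\}$'' is false as stated, and the inductive step cannot be carried out with these sets.

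The paper's proof closes this gap by treating the cases $d_\C(\beta,s)\le1$ and $d_\C(\alpha,s)\le1$ separately first, and in the remaining case using the triangle inequality to reduce to $d_s(\alpha,\beta)$ being large (so one only enumerates short paths from $\beta$ to $\alpha$, not from $\beta$ to $\beta'$). It then deduces that $\gamma_s$ fixes $\omega,\gamma_1,\gamma_2$ \emph{and one of} $\delta_1,\delta_2$, because the extra length-$3$ path through the diagonals also forces an intersection with the star of $s$. This yields the correct cut sets $\link_{\mathfrak{O}}(\beta)=\{\gamma_1,\gamma_2,\delta_1\}$ or $\link_{\mathfrak{O}}(\alpha)=\{\gamma_1,\gamma_2,\delta_2\}$, which are the full three-element links of $\beta$ and $\alpha$ in $\mathfrak{O}$ and do separate. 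To repair your proposal you would need to add the $\delta_i$'s to the fixed set and then route the cut through one of these full links (augmented with $\omega$), rather than through $\{\omega,p,q\}$ alone.
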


\begin{proof}
Let $\ov{\mathfrak{O}}$ and $\ov{\mathfrak{O}}'$ be two copies of $\mathfrak{O}$ inside $\Cssdt$ that contain $\ov\alpha,\ov\beta$, and lift them to $\mathfrak{O}$ and $\mathfrak{O}'$. We can assume that the lifts of $\ov\alpha$ coincide, and let $\beta,\beta'$ be the lift of $\ov\beta$. Let $\gamma_1,\gamma_2,\delta_1,\delta_2$ be the curves in Figure \ref{fig:gluedoct}.
\begin{figure}[htp]
    \centering
    \includegraphics[width=0.75\textwidth]{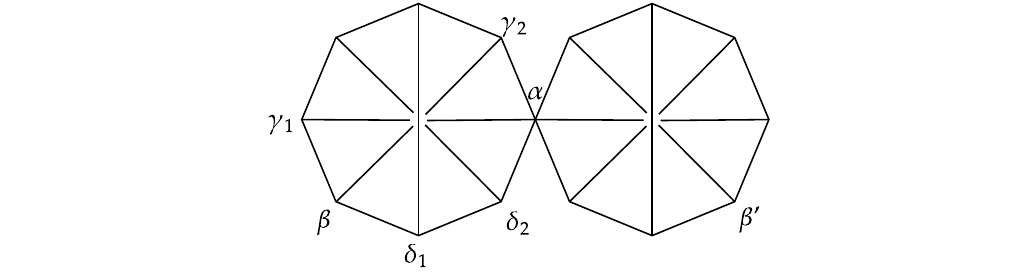}
    \caption{The two octagons from the proof of Lemma \ref{liftingpairs8}.}
    \label{fig:gluedoct}
\end{figure}\\
Let $\omega,\omega'$ be the curves surrounded by the two pairs, and let $g\in DT_K$ be an element that maps $\beta$ to $\beta'$. If $g$ is not the identity let $(s,\gamma_s)$ be as in Proposition \ref{cor3.6}. If $d_\C(\beta,s)\le 1$ we can apply $\gamma_s$ to the whole data and proceed by induction on the complexity. If $d_\C(\alpha,s)\le 1$ we can apply $\gamma_s$ just to the second octagon and again proceed by induction. If none of the previous hold we can assume without loss of generality that $d_s(\alpha,\beta)$ is large, and therefore every path from $\beta$ to $\alpha$ must pass through the star of $s$. In order to understand the following cut set argument we represent $\omega$ and part of the octagon as in Figure \ref{fig:data}.

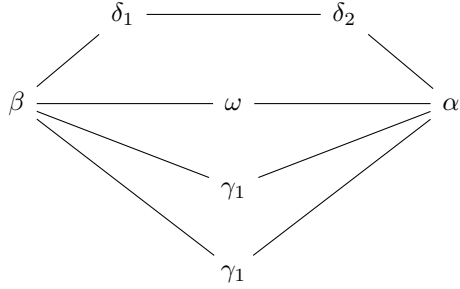
\begin{figure}[htp]
    \centering
    \begin{tikzcd}
    &\delta_1\ar[rr,no head]& &\delta_2& \\
    \beta \ar[ur,no head]\ar[drr,no head]\ar[ddrr,no head]\ar[rr,no head]&& \omega\ar[rr,no head] && \alpha\ar[ul,no head]\ar[dll,no head]\ar[ddll,no head] \\
    && \gamma_1 && \\
    && \gamma_1 && \\
    \end{tikzcd}
    \caption{A schematic representation of $\{\omega\}\cup\link_{\mathfrak{O}}(\alpha)\cup\link_{\mathfrak{O}}(\beta)$ from Lemma \ref{liftingpairs8}.}
    \label{fig:data}
\end{figure}

We already see that $\gamma_s$ must fix $\omega,\gamma_1,\gamma_2$ and one between $\delta_1$ and $\delta_2$. In the first case $\gamma_s$ fixes $\link_{\mathfrak{O}}(\beta)=\{\gamma_1,\gamma_2,\delta_1\}$, which is a cut set for $\mathfrak{O}$ because it is the link of a vertex. Thus we can apply $\gamma_s$ to all curves but $\beta$. In the second case $\gamma_s$ fixes $\link_{\mathfrak{O}}(\alpha)=\{\gamma_1,\gamma_2,\delta_2\}$, and we can apply $\gamma_s$ to $\mathfrak{O}'\cup \link(\alpha)$. In both cases $\omega$ remains the curve surrounded by $\beta$ and $\gamma_s(\alpha)$, as argued in Lemma \ref{surrpair}; therefore we can proceed by induction.
\end{proof}
The rest of the argument is as for the case $b=7$.
\begin{itemize}
    \item Arguing exactly as in Lemma \ref{projtriples} we see that surrounding triples project to surrounding triples.
    \item Adapting the proof of Lemma \ref{liftingtriples} with the cut set arguments from Lemma \ref{liftingpairs8} we get that the vertex $\ov\omega$ surrounded by a surrounding triple is well-defined.
    \item Again, the conclusion of Lemma \ref{disjointness} holds, so we can recognise disjointness.
\end{itemize}
Thus we get the final piece of the proof of Theorem \ref{csstoc}:
\begin{corollary}
    If $b=8$, for all large multiples $K$ every automorphism of $\Cssdt$ extends to an automorphism of $\Cdt$.
\end{corollary}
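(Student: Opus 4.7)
The plan is to replicate the template established for $b=7$ in Section \ref{section:7pt}, with heptagons replaced by embedded copies of the graph $\mathfrak{O}$ and with Lemma \ref{liftingpairs8} as the main technical input. The key intrinsic objects that must be recognised from the combinatorics of $\Cssdt$ alone are: the subgraph $\Cpdt$ (via Lemma \ref{cpdtcomesplit}), embedded copies of $\mathfrak{O}$ (which lift by the second lemma of this subsection, using that $\mathfrak{O}$ deformation retracts to the Moebius strip $G$), surrounding pairs of vertices (as pairs joined by exactly two length-$2$ geodesic paths in some embedded $\ov{\mathfrak{O}}\subset\Cpdt$), surrounding triples (as pairwise surrounding pairs admitting no common neighbour in $\Cssdt\setminus\Cpdt$), and the disjointness criterion of Lemma \ref{disjointness}. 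All of these are purely graph-theoretic and hence preserved by any automorphism of $\Cssdt$.

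The three pieces flagged in the text as remaining to be checked are executed as follows. The fact that surrounding triples project to surrounding triples is verbatim Lemma \ref{projtriples}: the property of having no common neighbour in $\Css\setminus\Cp$ is preserved because any lift of a common neighbour in the quotient would be a common neighbour in $\Css$, forcing the three lifted curves into a single peripheral four-punctured disk, contradicting the assumption. For well-definedness of $\ov\omega$ attached to a surrounding triple $\ov\alpha,\ov\beta,\ov\gamma$, one chooses three copies $\ov{\mathfrak{O}}_1,\ov{\mathfrak{O}}_2,\ov{\mathfrak{O}}_3$ of $\mathfrak{O}$ witnessing the three pairs, lifts them to $\mathfrak{O}_1,\mathfrak{O}_2,\mathfrak{O}_3\subset\Css$, and successively glues the common lifted vertices by induction on the complexity $\alpha(g)$ of the element $g\in DT_K$ identifying them, exactly as in Lemma \ref{triple9} but without the peripheral simplex $\sigma$. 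At each step the cut set argument of Lemma \ref{liftingpairs8}, enlarged by the neighbouring surrounded minimal curve and the two vertices $\gamma_1,\gamma_2$ of the corresponding octagon link, controls $\gamma_s$ and preserves the surrounded minimal curves. Finally, the analog of Lemma \ref{disjointness} transfers word by word: disjointness of two minimal classes is detected by the existence of disjoint surrounding vertices, and disjointness of a minimal class from a strongly separating class is detected by either containment as surrounding vertex or by a disjoint surrounding vertex.

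With these pieces in place the corollary is now an assembly. Given an automorphism $\phi\in\text{Aut}(\Cssdt)$, the fact that $\Cpdt$, embedded copies of $\mathfrak{O}$, surrounding pairs and surrounding triples are intrinsic combinatorial features shows that $\phi$ permutes the equivalence classes of surrounding pairs modulo surrounding triples. By the bijection between these classes and the minimal vertices of $\Cdt$ established above, $\phi$ induces a bijection $\widehat\phi$ on minimal vertices; together with the action of $\phi$ on the strongly separating vertices this yields a bijection on the vertex set of $\Cdt$. The disjointness criterion ensures that $\widehat\phi$ preserves each of the three types of edges (minimal--minimal, minimal--strongly separating, strongly separating--strongly separating), and hence $\widehat\phi$ is an automorphism of $\Cdt$ extending $\phi$.

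The main obstacle I expect is the cut set bookkeeping in the triple version of Lemma \ref{liftingpairs8}. The graph $\mathfrak{O}$ is less "thin" than a heptagon: the four long diagonals mean that a single vertex of the star of $s$ does not suffice to separate two halves of an octagon, so one must consistently locate a pair of vertices $\{p,q\}$ (the endpoints of a diagonal or of a vertical side, in the language of Figure \ref{fig:mobius}) lying in the star of $s$ and cutting $\mathfrak{O}$ into two components. One then chains three such patches together, checking at each inductive step that $\gamma_s$ fixes one of the six candidate cut sets listed in Lemma \ref{triple9} (with $\sigma$ removed and $\mathfrak{O}$s in place of heptagons). Once this inductive gluing is set up carefully, the remainder of the argument is a mechanical transcription of the $b=7$ case.
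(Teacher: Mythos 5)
Your proposal follows essentially the same route as the paper: you identify the same intrinsic combinatorial objects (the subgraph $\Cpdt$, embedded copies of $\mathfrak{O}$, surrounding pairs via two length-$2$ paths in some $\ov{\mathfrak{O}}$, surrounding triples via the no-common-neighbour-in-$\Cssdt\setminus\Cpdt$ condition, and the disjointness criterion), and you execute the same three checks (projection of triples, well-definedness of $\ov\omega$ via cut set arguments, and transfer of the disjointness lemma) before assembling the extension. The paper states all of this very tersely, essentially instructing the reader to adapt the $b=7$ argument with Lemma \ref{liftingpairs8}'s cut sets; your expansion of the cut set bookkeeping in the triple step is the correct reading of that instruction. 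One small imprecision: you frame the gluing step as "Lemma \ref{triple9} without the peripheral simplex $\sigma$", whereas the paper points to Lemma \ref{liftingtriples} (the $b=7$ triple lemma) with the $b=8$ cut sets; these are equivalent in substance, but the latter is the cleaner reference since no peripheral simplex enters when $b=8$. Also, the $\{p,q\}$ pair-of-vertices cut set you invoke is the one used in lifting $\ov{\mathfrak{O}}$ itself, while the well-definedness arguments in Lemma \ref{liftingpairs8} use the three-element cut sets $\link_{\mathfrak{O}}(\alpha)$ and $\link_{\mathfrak{O}}(\beta)$; keeping these separate would make the bookkeeping you flag as the main obstacle more transparent.
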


\section{Extracting combinatorial data from a quasi-isometry}\label{sec:extract}
The main goal of this section is, roughly speaking, to show that quasi-isometries of $MCG(S_b)/DT_K$ induce automorphisms of a certain graph, which we will later on relate to $\C/DT_K$. This strategy is inspired by \cite[Section 5]{quasiflats}, where an approach to the quasi-isometric rigidity of mapping class groups is presented as a template to prove quasi-isometric rigidity of other \emph{hierarchically hyperbolic spaces} (HHS), $MCG(S_b)/DT_K$ being a HHS (for suitable $K$) \cite{BHMS}. More precisely, \cite[Theorem 5.7]{quasiflats} states that quasi-isometries of a HHS satisfying three additional assumptions induce automorphisms of a suitable graph. The assumptions are not satisfied by $MCG(S_b)/DT_K$ but nevertheless we will show that a very similar statement applies to that case, see Theorem \ref{hingesauto}, which is the main result of this section. This theorem in fact applies to any HHS satisfying certain assumptions (different from those of \cite{quasiflats}), other examples of such HHSs being pants graphs.

We will begin by recalling all facts about HHSs that we will need.

\subsection{HHS background}

A hierarchically hyperbolic space is a metric space $X$ that comes with certain additional data, most importantly a family of uniformly hyperbolic spaces $\{\C(Y)\}_{Y\in \mathfrak S}$, called \emph{coordinate spaces}, and uniformly coarsely Lipschitz maps $\pi_Y:X\to \C(Y)$. For mapping class groups, these are curve graphs of subsurfaces and maps coming from subsurface projections. Moreover, the index set $\mathfrak S$ of the set of hyperbolic spaces has two relations, \emph{orthogonality} and \emph{nesting}, denoted $\orth$ and $\nest$ respectively. For mapping class groups, these are containment and disjointness of subsurfaces (up to isotopy). When two elements $U,V$ of $\mathfrak S$ are not $\nest$- nor $\orth$-compatible, one says that they are \emph{transverse}. It is customary to denote a HHS simply by $(X,\mathfrak S)$.

A crucial fact about HHSs is that they satisfy a \emph{distance formula}. This means that for all sufficiently large threshold $s$ there exists $D$ such that for all $x,y\in X$ we have
$$d_X(x,y)\asymp_{D,D} \sum \{\{d_{Y}(x,y)\}\}_s,$$
where
\begin{itemize}
\item $\asymp_{D,D}$ denotes equality up to multiplicative and additive error at most $D$,
\item $\{\{A\}\}_s=A$ if $A\geq s$ and $\{\{A\}\}_s=0$ otherwise,
\item $d_{Y}(x,y)$ is shorthand for $d_{\C(Y)}(\pi_Y(x),\pi_Y(y))$.
\end{itemize}

The idea of orthogonality is that it corresponds to products, in the following sense. Given any $U\in \mathfrak S$, there is a corresponding space $F_U$ associated to it, which is quasi-isometrically embedded in $X$ ($F_U$ is a HHS itself with index set $\mathfrak S_U=\{Y\in \mathfrak S: Y\nest U\}$, and in mapping class groups these essentially correspond to mapping class groups of subsurfaces). Given a maximal set $\{U_i\}$ of pairwise orthogonal elements of $\mathfrak S$, there is a corresponding \emph{standard product region} $P_{\{U_i\}}$ which is quasi-isometric to the product of the $F_{U_i}$ (think of a Dehn twist flat as, coarsely, a product of annular curve graphs). The product region $P_{\{U_i\}}$ has the property that for all $U$ with $U_i\propnest U$ or $U\transverse U_i$ for some $i$, we have that $\pi_Y(P_{\{U_i\}})$ is a uniformly bounded set, that we denote $\rho^{\{U_i\}}_V$ (usually, this notation is used for a different bounded set at finite Hausdorff distance from the one defined here; this will not matter for us). Product regions can also be associated to non-maximal sets of pairwise orthogonal elements of $\mathfrak S$, but in this case there is an additional factor than the $F_{U_i}$; this will not be important for us.

The following is \cite[Corollary 1.28]{quasiflats} (which is stated for product regions corresponding to one element of the index set, but the proof does not use this):

\begin{lemma}\label{dxP}
For all sufficiently large $s$ there exists $D$ such that, if $\{U_i\}$ is a maximal collection of pairwise orthogonal indices and $x\in X$, then
$$d_X(x,P_{\{U_i\}})\asymp_{D,D}\sum_{W} \left\{\left\{d_W\left(x,\rho^{\{U_i\}}_W\right)\right\}\right\}_s$$
where the sum is made on all $W$ such that $U_i\propnest W$ or $U_i\transverse W$ for some $U_i$.
\end{lemma}

\begin{corollary}[Intersection of different product regions]\label{prodint}
If $\{U_i\}$ and $\{V_j\}$ are maximal collections of pairwise orthogonal indices, such that $\{U_i\}\cap\{V_j\}=\emptyset$ and every $V_j$ is $\nest$-minimal, then $P_{\{U_i\}}\Tilde\cap P_{\{V_j\}}$ is coarsely a point.
\end{corollary}

\begin{proof}
It is enough to apply Lemma \ref{dxP} with $x\in P_{\{V_j\}}$. In fact, every $W\not\in\{V_j\}$ contributes to the sum with a term that does not depend on $x$, but only on $\{U_i,V_j\}$ and $W$. Moreover, there is a finite number of terms that contribute to the sum, so if we choose a big enough threshold $s$ we get that 
$$d_X(x,P_{\{U_i\}})\asymp_{D,D} \sum_{j}\left\{\left\{d_{V_j}\left(x,\rho^{\{U_i\}}_{V_j}\right)\right\}\right\}_s $$
Therefore all points $x\in  P_{\{V_j\}}$ that lie within a given constant of $P_{\{U_i\}}$ have nearby projections to all $\C V_j$, and hence they are close to each other, as required. 
\end{proof}

Finally, we will be interested in quasiflats in HHSs and, related to quasiflats, orthants. In $X$ and the $F_U$s there are special (uniform) quasi-geodesics, called \emph{hierarchy paths}, which are those that project monotonically (with uniform constants) to all $\C(Y)$. Similarly, there are hierarchy rays and hierarchy lines (which are quasigeodesic rays and lines, respectively).  Given a product region $P_{\{U_i\}}$ and hierarchy lines (resp., rays) in some of the $F_{U_i}$, the product region contains a product of those (where for the indices $i$ for which a line/ray has not been assigned one chooses a point in $F_{U_i}$ instead). This is what we will refer to as \emph{standard $k$-flats (resp., orthants)}, where $k$ is the number of lines/rays. The \emph{support} of a standard $k$-flat (resp., orthant) is the set of $U_i$ for which a line/ray has been assigned. Related to this, a \emph{complete support set} is a subset $\{U_i\}_{i=1}^\nu\subseteq \mathfrak S$ of pairwise orthogonal indices with all $\C(U_i)$ unbounded, and with maximal possible cardinality $\nu$ among sets with these properties. This definition is relevant here as $\nu$ is then the maximal dimension of standard orthants (if the $\C(U_i)$ are not only unbounded, but also have non-empty Gromov boundary, as will be the case for us). Moreover, in this case points in the Gromov boundary $\partial C(U_i)$ each determine a hierarchy ray, and similarly pairs of points determine hierarchy lines. In particular, for every complete support set $\{U_i\}_i=1^\nu$ and for every choice of distinct points $p_i\in\partial\C U_i$ there exists an associated standard flat $\mathfrak F_{\{(U_i,p_i^\pm)\}}$, which is the product of the hierarchy lines in $F_{U_i}$ whose projections to $\C U_i$ have endpoints $p_i^\pm$.

\subsection{The hinge graph}

In this section we discuss the graph that quasi-isometries will induce automorphisms of. We will call it the hinge graph, and it is best thought of, for the purposes of this section, as encoding standard orthants, as well as their coarse intersections. Here, recall that given subsets $A,B$ of a metric space $X$, the coarse intersection $A\Tilde{\cap} B$, if well-defined, is a subspace of $X$ within bounded Hausdorff distance of all $N_R(A)\cap N_R(B)$ for $R$ sufficiently large.

\begin{remark}\label{stdint}
The coarse intersection of any two standard flats $\mathfrak F_1,\mathfrak F_2$ is well-defined by \cite[Lemma 4.12]{quasiflats}. In fact, the lemma gives a description of the coarse intersection as a subspace $F$ of the HHS whose projection to any hyperbolic space $U$ is the coarse intersection of $\pi_U(\mathfrak F_1)$ and $\pi_U(\mathfrak F_2)$. For example, the coarse intersection is a standard $1$-flat provided that all coarse intersections in the various hyperbolic spaces are bounded, except for one hyperbolic space where $\pi_U(\mathfrak F_1)$ and $\pi_U(\mathfrak F_2)$ are both quasilines with the same endpoints in the Gromov boundary.
\end{remark}

\begin{definition}[Hinge graph] As in \cite[Definition 5.2]{quasiflats}, let $\bf{Hinge}(\mathfrak{S})$ be the set of \emph{hinges}, that is, pairs $(U,p)$ where $U$ is contained in a complete support set and $p\in\partial C U$. We say that two hinges $(U,p)$ and $(V,q)$ are compatible if they are orthogonal and there exists some complete support set that contains both. Then we give $\bf{Hinge}(\mathfrak{S})$ a metric graph structure by declaring two hinges to be adjacent if and only if they are compatible.
\end{definition}

As in \cite[Definition 5.3]{quasiflats}, one can associate to a hinge $\sigma=(U,p)$ a standard $1$-orthant, denoted $h_\sigma$. This is a hierarchy ray whose projection to $\C(U)$ is a quasigeodesic ray asymptotic to $p$, and whose projections to all other coordinate spaces are uniformly bounded. Another property of $h_\sigma$, stated in \cite[Remark 5.4]{quasiflats}, is that if $\sigma\neq\sigma'$ are two different hinges then $d_{Haus}(h_\sigma,h_{\sigma'})=\infty$.

\begin{lemma}\label{preciseint}
Let $(X,\mathfrak{S})$ be a HHS. If for every $U\in\mathfrak{S}$ either the space $\C U$ is bounded or $|\partial \C U|\ge 4$ then:
\begin{enumerate}
\item For every hinge $\sigma=(U,p)$ there exist two standard flats $\mathfrak F_1,\,\mathfrak F_2$ whose coarse intersection is supported in $U$, is coarsely a standard $1$-flat, and contains $h_\sigma$.
\item For any two compatible hinges $\sigma=(U,p)$ and $\sigma'=(V,q)$ there exist two standard flats $\mathfrak F_1,\,\mathfrak F_2$ whose coarse intersection is a standard $2$-flat supported in $\{U,V\}$ and containing $h_\sigma$ and $h_{\sigma'}$.
\end{enumerate}
\end{lemma}

This lemma will replace the three assumptions from \cite[Section 5]{quasiflats}.

\begin{proof}
\begin{enumerate}
\item Choose another ideal point $q\in \partial \C U\setminus\{p\}$, and let $\gamma$ be a hierarchy path in $F_U$ whose projection to $\C U$ has limit points $p$ and $q$. Then pick a complete support set $\{U_i\}$ completing $U=U_1$, and for each $i\ge2$ choose four distinct ideal points $p_i^\pm, q_i^\pm\in \C U_i$. Let $\gamma_i, \delta_i$ be two bi-infinite hierarchy paths in each $F_{U_i}$ whose projection to $\C U_i$ have limit points, respectively, $p_i^\pm$ and $q_i^\pm$. Then $\mathfrak F_1= \gamma\times \prod_i\gamma_i$ and $\mathfrak F_1= \gamma\times \prod_i\delta_i$ have the required coarse intersection in view of the discussion in Remark \ref{stdint}.
\item The proof is very similar, in this case fixing two hierarchy lines rather than one.
\end{enumerate}
\end{proof}

\begin{remark}[Hidden hypothesis: asymphoricity]\label{rem:asymporicity}
    In the following theorem the HHSs will be required to be \emph{asymphoric}, as in \cite[Definition 1.14]{quasiflats}. For our purposes, it suffices to know that asymphoric HHS include all hierarchically hyperbolic groups, such as our quotients of interest $MCG/DT_K$ (see Remark \ref{rem:prop_hhs_mcgdtn}).
%
\end{remark}

\begin{theorem}\label{hingesauto}
Let $(X,\mathfrak{S})$, $(Y,\mathfrak{T})$ be asymphoric HHSs such that for every $U\in\mathfrak{S}$ either the space $\C U$ is bounded or $|\partial \C U|\ge 4$, and similarly for every $V\in\mathfrak{T}$. Then every quasi-isometry $f:\,X\to Y$ induces an isomorphism $f_{hin}$ between the hinge graphs, such that for all $\sigma\in\bf{Hinge}(\mathfrak{S})$ we have $d_{Haus}(h_{f_{hin}(\sigma)}, f(h_\sigma))<\infty$.
\end{theorem}

\begin{proof}
This is a very similar statement to \cite[Theorem 5.7]{quasiflats}, which applies to HHSs satisfying three additional assumptions (not satisfied in our case). An inspection of the proof shows that each use of the three assumptions can be replaced by our Lemma \ref{preciseint}.  
\end{proof}

The following is the analogue of \cite[Lemma 5.9]{quasiflats}, which is a corollary of \cite[Theorem 5.7]{quasiflats} and whose proof does not use the additional assumptions on the HHSs further. Given a quasi-isometry between HHSs, the lemma gives a condition for the image of a standard flat to lie within bounded Hausdorff distance of a standard flat in terms of the induced map $f_{hin}$ on hinge graphs. Crucially for applications, the bound is only in terms of the HHSs and the quasi-isometry constants.

\begin{lemma}[Flats go to flats]\label{flatstoflats}
Let $(X,\mathfrak{S})$, $(Y,\mathfrak{T})$ be asymphoric HHSs, such that for every $U\in\mathfrak{S}$ either the space $\C U$ is bounded or $|\partial \C U|\ge 4$, and similarly for every $V\in\mathfrak{T}$.  Let $f:\,X\to Y$ be a quasi-isometry. There exists a constant $C_0$, depending only on the quasi-isometry constants of $f$, with the following property. Let $\{U_i\}_{i=1}^\nu\subseteq\mathfrak{S}$ be a complete support set, and let $p_i^\pm$ be distinct points in $\partial \C U_i$. Suppose that there exist a complete support set $\{V_i\}_{i=1}^\nu\subseteq\mathfrak{T}$ and distinct points $q_i^\pm\in\partial \C V_i$ such that for every $i=1,\ldots, \nu$ we have $f_{hin}(U_i,p_i^\pm)=(V_i,q_i^\pm)$. Then, $d_{Haus}(f(\mathfrak F_{\{(U_i,p_i^\pm)\}}),  \mathfrak F_{\{(V_i,q_i^\pm)\}})\le C_0$.
\end{lemma}

\section{Quasi-isometric rigidity}\label{sec:qi} This Section is devoted to the proof of Theorem \ref{qimcgdtn}. For the rest of the Section, let $S=S_b$ be a sphere with $b\ge 7$. Let us start from the HHS structure of $MCG^\pm/DT_K$.
\begin{definition}
For $\Delta$, $\Delta'$ simplices of some graph $G$, we write $\Delta\sim\Delta'$
to mean $\link(\Delta)=\link(\Delta')$. We denote by $[\Delta]$ the $\sim$–equivalence class of $\Delta$, and by $\mathfrak{S}$ the set of equivalence classes of non-maximal simplices. Finally, we define the saturation of $\Delta$ as the set of vertices $v\in G$ for which there exists a simplex $\Delta'$ such that $v \in \Delta'$ and $\Delta'\sim\Delta$, i.e.
$$\text{Sat}(\Delta)=\left(\bigcup_{\Delta'\in[\Delta]}\Delta'\right)^{(0)}.$$
\end{definition}

\begin{remark}\label{rem:prop_hhs_mcgdtn}
By \cite[Theorem 7.1]{BHMS}, $MCG/DT_K$ is a hierarchically hyperbolic \emph{group}, that is, it has a "nice" proper and cocompact action on some hierarchically hyperbolic space from which it inherits a HHS structure (see \cite[Definition 1.21]{HHSII}). Moreover, combining \cite[Theorem 7.1 and Proposition 8.13]{BHMS}, we see that the HHS structure has the following properties:
\begin{enumerate}[label=(\roman*)]
    \item The index set is the set $\mathfrak{S}$ of $\sim$–equivalence classes of simplices inside $\Cdt$;
    \item There is a bijection $j:\,\mathfrak{S}\to \mathfrak{S}^{\ge 1}/DT_K$, where  $\mathfrak{S}^{\ge 1}$ is the set of essential, non-annular, possibly disconnected subsurfaces $U\subseteq S$.
    \item Two elements $[\overline\Delta],[\overline\Sigma]\in\mathfrak{S}$ are orthogonal (resp. nested) if $j\left([\overline\Delta]\right)$ and $j\left([\overline\Delta]\right)$ admit representatives, which we also call \emph{lifts}, that are disjoint (resp. nested);
    \item If $\ov\Delta$ is not a facet, then the coordinate space $\C(\ov\Delta):=\mathcal{C}([\ov\Delta])$ associated to $[\ov\Delta]$ is quasi-isometric to $\link(\ov \Delta)$ (see \cite[Claim 6.12]{BHMS}). Otherwise, the pointwise stabiliser of the saturation $P(\ov\Delta):=\text{Pstab}(\text{Sat}(\ov\Delta))$ is a hyperbolic group acting properly and cocompactly on $\C(\ov\Delta)$.
\end{enumerate}
\end{remark}

We will use \emph{convex-cocompact subgroups} to show that the $\C(\ov\Delta)$ have at least 4 points at infinity (or they are bounded). This notion was introduced in \cite{FarbMosher}, and it admits several characterisations. The properties that we will need are that there exist free convex-compact subgroups inside every finite index subgroup (see e.g. \cite[Theorem 1.4]{FarbMosher}), and the following fact from \cite{convccpt}. If a subgroup $Q<MCG(S)$ is convex-cocompact then there exists a constant $D$ such that, for every element $h\in Q$ and for every two curves $x,s\subset S$ we have $d_s(x,h(x))\le D$ whenever the quantity is defined.

In our context, convex-cocompact subgroups survive in deep enough quotients:
\begin{lemma}\label{cvccpt}
    Let $S$ be a connected surface of finite type and whose complexity is at least $2$. Given a convex-cocompact subgroup $Q<MCG(S)$, for all large multiples $K$ the projection $\pi|_Q$ is injective and the orbit maps of $Q$ to $\mathcal{C}(S)/DT_K$ are quasi-isometric embeddings.
\end{lemma}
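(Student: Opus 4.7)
The plan is to combine the characterisation of convex-cocompactness by uniformly bounded subsurface projections with Lemma \ref{projinlink}, which upgrades this uniform bound into an isometric embedding of geodesics to the quotient.

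First I would fix a base curve $x_0\in\C(S)$ and extract from the convex-cocompactness hypothesis recalled just before the lemma a constant $D=D(Q)$, uniform over all base curves and all admissible subsurface projections, with $d_s(x,hx)\le D$ for every $h\in Q$, every $x\in\C(S)$, and every curve $s$ where the quantity is defined. Since subsurface projections are $MCG$-equivariant,
\[
d_s(h_1 x_0,h_2 x_0)=d_{h_1^{-1}s}(x_0,h_1^{-1}h_2\, x_0)\le D
\]
for all $h_1,h_2\in Q$, because $h_1^{-1}h_2\in Q$.

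Next I would choose $K$ to be a multiple of $K_0$ large enough that $DT_K$ is deep enough in the sense of Definition \ref{Ndeep} with respect to some $\Theta>D$. Then Lemma \ref{projinlink}, applied with the empty simplex $\ov\Delta=\emptyset$ (so that the link is the whole graph), gives that $\pi$ isometrically embeds every $\C(S)$-geodesic between $h_1 x_0$ and $h_2 x_0$ into $\C(S)/DT_K$; in particular
\[
d_{\C(S)/DT_K}\bigl(\pi(h_1 x_0),\pi(h_2 x_0)\bigr)=d_{\C(S)}(h_1 x_0,h_2 x_0)\qquad\text{for all }h_1,h_2\in Q.
\]
Combined with the fact that the orbit map $Q\to\C(S)$ is itself a quasi-isometric embedding (one of the equivalent definitions of convex-cocompactness), this immediately yields that the orbit map $Q\to\C(S)/DT_K$ is a quasi-isometric embedding.

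Finally, I would derive injectivity of $\pi|_Q$ as a corollary. If $h\in Q\cap DT_K$ then $h$ acts trivially on $\C(S)/DT_K$, so $\pi(hy)=\pi(y)$ for every curve $y$; the displayed identity (with base curve $y$ and $h_1=1,\,h_2=h$) then forces $hy=y$ in $\C(S)$ for every $y$. Since $b\ge 7$ and the $MCG(S_b)$-action on $\C(S_b)$ is faithful for $b\ge 5$, this yields $h=1$. The main point requiring care is verifying that the constant $D$ can be chosen uniformly in $x$, rather than only along a single $Q$-orbit; this is exactly the content of the property of convex-cocompact subgroups recalled from \cite{convccpt}, and is what makes the argument go through without having to iterate Proposition \ref{cor3.6} by hand.
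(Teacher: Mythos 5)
Your argument takes a genuinely different route from the paper, which simply cites \cite[Theorem~7.1.iii]{BHMS}. Instead you give a direct proof by combining the uniformly-bounded-projections property of convex-cocompact subgroups with Lemma~\ref{projinlink}, and this is a nice self-contained alternative that only uses tools already developed in the paper.

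The quasi-isometric-embedding half of your argument is correct: by $MCG$-equivariance of subsurface projections one reduces to the fixed base curve $x_0$, Lemma~\ref{projinlink} (with $\Delta=\emptyset$) then makes $\pi$ isometric on the orbit $Q\cdot x_0$, and composing with the orbit map $Q\to\C(S)$ gives the claim. Note that for this half you only ever use the recalled bound in the form $\sup_{s}\,d_s(x_0,g\,x_0)\le D$ for $g\in Q$ with $x_0$ fixed; you do not actually need the stronger uniformity over all base curves $x$ that you announce at the outset, and that stronger form is in fact dubious (e.g.\ taking $x=T_s^N(x_0)$ for a subsurface $s$ with $hs\ne s$ produces unbounded $d_s(x,hx)$ as $N\to\infty$).

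This matters for your injectivity argument, which is where a genuine gap appears. You deduce $hy=y$ for \emph{every} curve $y$ by applying the displayed identity ``with base curve $y$''; but the identity was established only along the orbit of $x_0$, and extending it to arbitrary $y$ would require precisely the uniform-in-$y$ bound that does not hold. Moreover you then invoke faithfulness of the action of $MCG(S_b)$ on $\C(S_b)$ for $b\ge 7$, but the lemma has no such hypothesis: $S$ is an arbitrary connected surface of complexity $\ge 2$, and for surfaces with genus (such as $S_{1,2}$ or $S_2$) the hyperelliptic involution acts trivially on $\C$, so that appeal is wrong in general. Both issues disappear with a small fix: from $\pi(h x_0)=\pi(x_0)$ and the isometric embedding you already have $d_\C(x_0,hx_0)=0$, hence $h x_0=x_0$; since a convex-cocompact subgroup is purely pseudo-Anosov, a non-trivial $h\in Q$ fixes no curve, so $h=1$. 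This uses only the base curve $x_0$ and requires no faithfulness of the $MCG$-action.
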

\begin{proof}
    This is just \cite[Theorem 7.1.iii]{BHMS}.
\end{proof}

Now, in order for our machinery from Section \ref{sec:extract} to work, we must ensure that we are in the assumptions of Theorem \ref{hingesauto}:
\begin{lemma}\label{CovDelta}
For all large multiples $K$ the following holds. For any $[\ov\Delta]\in \mathfrak{S}$, either the space $\C(\ov\Delta)$ is bounded or it has at least four points at infinity.
\end{lemma}

\begin{proof}
First, assume that $\ov\Delta$ has codimension $1$. Then $\C(\ov\Delta)$ is quasi-isometric to the hyperbolic group $P(\ov\Delta)$, and we claim that there exists a copy of the free group on two generators $F_2$ inside $P(\ov\Delta)$. If this is the case then $P(\ov\Delta)$ is an infinite, non virtually cyclic hyperbolic group, hence it has at least four boundary points.\\
Let $\Delta\subset\C$ be a lift of $\ov\Delta$. By \cite[Proposition 8.13.vii]{BHMS} we have that $\pi(\text{Sat}(\Delta))=\text{Sat}(\ov\Delta)$, therefore $P(\ov\Delta)$ contains the quotient projection of $P(\Delta):=\text{Pstab}(\text{Sat}(\Delta))$. Thus it is enough to show that there is a copy of $F_2$ inside $P(\Delta)$ whose projection to $MCG^\pm/DT_K$ is injective. Now, $\Delta$ is a facet in $\C$, thus it cuts out a four-holed sphere $U$. Notice that a curve $\gamma\in \C$ is in the link of $\Delta$ if and only if $\gamma$ lies in $U$ and is not one of its boundary curves. Hence, a simplex $\Delta'$ which has the same link of $\Delta$ must be a pants decomposition for $S\setminus U$, including its boundary curves. This in turn means that, if we see $U$ just as a four-punctured sphere $S_4$ (that is, we forget about the difference between punctures and boundary curves), then $ P(\Delta)$ contains the pure mapping class group $PMCG(S_4)$. \\
Inside $PMCG(S_4)$, which is finite-index inside $MCG(S_4)$, we may find a convex-cocompact copy of $F_2$, call it $H$. Suppose by contradiction that $\pi|_H$ is not injective, which means that there is some $h\in H\cap DT_K\setminus \{1\}$. Now $h$ can not fix every curve, since the only elements of $MCG(S_4)$ with this property are the hyperelliptic involutions, which permute the punctures. Thus let $x\subseteq U$ be a curve such that $h(x)\neq x$ and let $(s,\gamma_s)$ be as in Proposition \ref{cor3.6}. Notice that $h(x)$ still lies on $U$, therefore both $x$ and $h(x)$ complete $\Delta$ to maximal simplices.\\
Now, suppose by contradiction that $d_{\C}(x,s)>1$, and therefore $d_s(x,h(x))>\Theta$. If we argue as in Lemma \ref{edgelift} we get that $\gamma_s$ must fix $\Delta$ pointwise. Hence $s$ is disjoint from all curves in $\Delta$, but $s\not\in \Delta$ since otherwise $\gamma_s$, which is a power of $T_s$, would fix $x$. Therefore $s$ lies in $U$, and by convex-cocompactness of $H$ there exists a constant $D$ (which we can choose independently of $U$) such that $d_s(x,h(x))\le D$. This is a contradiction if $\Theta>D$, which happens for all large multiples $K$.
\\
Then we must have that $d_{\C}(x,s)\le 1$, i.e. $x$ must be fixed by $\gamma_s$, and we can apply $\gamma_s$ to the whole data and proceed by induction on the complexity of $h$. In the end we must have that $h=\prod_{i=1}^r \gamma_{s_i}$ and every $\gamma_{s_i}$ fixes $x$. But then $h(x)=x$, which contradicts our hypothesis. Thus we proved the Lemma for $\C(\ov\Delta)$ whenever $\ov\Delta$ has codimension $1$.\\
Now suppose that $\ov\Delta$ is not a facet. Then $\C(\ov\Delta)$ is quasi-isometric to $\link(\ov\Delta)$, which is the projection of $\link(\Delta)$ for some lift $\Delta$. Notice that $\link(\Delta)=\mathcal{C}(U)$ is the curve graph of the subsurface $U$ cut out by $\Delta$. If $U$ has at least two connected components which are not pairs of pants, then $\mathcal{C}(U)$ is bounded, and so is its projection. Otherwise $U$ is some connected subsurface of complexity at least $2$, and $\C(\ov\Delta)$ is the image of the curve graph $\mathcal{C}(U)$ under the quotient map, which is isomorphic to $\mathcal{C}(U)/DT_K(U)$ by Corollary \ref{CUdt}.\\
Now our goal has become to show that, for every surface $U$ of complexity at least $2$, the quotient of the curve graph $\mathcal{C}(U)/DT_K(U)$ has at least four boundary points. But now we can just apply Lemma \ref{cvccpt}: if we choose a convex-cocompact copy of $F_2$ inside $MCG(U)$ then its projection is still a copy of $F_2$ whose orbit maps to $\mathcal{C}(U)/DT_K(U)$ are quasi-isometric embeddings, and we are done.
\end{proof}

Thus we get:
\begin{corollary}[of Theorem \ref{hingesauto}]\label{cor:hinge_map_mcgdtn}
    Every quasi-isometry $f:\,MCG/DT_K\to MCG/DT_K$ induces an automorphism $f_{hin}$ of the hinge graph, such that for all $\sigma\in\bf{Hinge}(\mathfrak{S})$ we have $d_{Haus}(h_{f_{hin}(\sigma)}, f(h_\sigma))<\infty$.
\end{corollary}

\subsection{Complete support sets for the quotient}

Moving forward, our next goal is to understand the structure of complete support sets. For the following theorems we think of $\mathfrak{S}$ as the set of classes of subsurfaces (thus omitting the bijection $j$ whenever possible).
\begin{lemma}\label{cssXmcgdtn}
    The following holds for all large multiples $K$. Let $\left\{\ov{U}_i\right\}_{i=1}^r$ be a collection of pairwise orthogonal indices. Then there exist pairwise disjoint representatives $\left\{U_i\right\}_{i=1}^r$.
\end{lemma}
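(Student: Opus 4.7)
The plan is to proceed by induction on $r$; the cases $r \le 2$ are exactly the definition of orthogonality. For the inductive step I would fix pairwise disjoint representatives $U_1, \ldots, U_{r-1}$ (available from the inductive hypothesis) and aim to find a lift $U_r$ of $\ov U_r$ simultaneously disjoint from all of them. Equivalently, regarding $\Delta = \partial(U_1 \sqcup \cdots \sqcup U_{r-1})$ as a simplex in $\C$, the goal is to produce a lift of $\ov U_r$ contained in a single connected component of $S \setminus \Delta$.

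I would build $U_r$ by accumulating disjointness constraints one index at a time: assume a lift $W$ of $\ov U_r$ has been found disjoint from $U_1, \ldots, U_k$, and seek a lift also disjoint from $U_{k+1}$. Orthogonality of $\ov U_r$ and $\ov U_{k+1}$ provides a second lift $W'$ of $\ov U_r$ disjoint from $U_{k+1}$, and some $g \in DT_K$ with $g(W) = W'$. I then run the now-standard complexity reduction on $\alpha(g)$ via Proposition \ref{cor3.6}, applied to a curve $x$ lying in the interior of $W$. In the case $d_{\C}(x,s) \le 1$, following the template of Lemma \ref{Cu0} one cycles the choice of $x$ through a collection of curves pinning down the isotopy class of $W$, so that $\gamma_s$ either fixes $W$ setwise or is supported in a component of $S \setminus W$ disjoint from $U_{k+1}$; in both subcases the problem is reduced to a lower-complexity one by conjugation. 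In the case $d_s(x, g(x)) > \Theta$, the bounded geodesic image theorem \ref{bgit}, combined with the fact that each curve in $\partial U_1 \cup \cdots \cup \partial U_k$ is disjoint from $x$ and each curve in $\partial U_{k+1}$ is disjoint from $g(x)$, forces $\gamma_s$ to fix all these boundary multicurves pointwise; hence $\gamma_s$ preserves the isotopy classes of $U_1, \ldots, U_{k+1}$, so that $\gamma_s(W')$ remains disjoint from $U_{k+1}$ and one replaces $g$ with $\gamma_s g$ to close the induction on $\alpha(g)$.

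The main obstacle will be the bookkeeping in the large-projection case: one has to verify that a single threshold $\Theta$, depending only on the topology of $S$, forces $\gamma_s$ to fix all of $\partial U_1, \ldots, \partial U_{k+1}$ simultaneously. This amounts to bounding a priori the annular projections of disjoint curves (which is automatic, as disjoint curves have annular projections at distance at most $2$) and ensuring that the number of boundary multicurves in play is uniformly bounded in terms of $S$ alone, both of which are built into the finite topological type of a punctured sphere. Choosing $K$ to be a sufficiently large multiple so that $\N = DT_K$ is deep enough in the sense of Definition \ref{Ndeep} with respect to this uniform bound then closes the argument, exactly along the lines of the cut-set analyses already carried out in Lemmas \ref{Cu0}, \ref{deltaindep} and \ref{triple9}.
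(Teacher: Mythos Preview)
Your inductive scheme (fix pairwise disjoint $U_1,\ldots,U_{r-1}$, then accumulate disjointness of a lift $W$ of $\ov U_r$ one index at a time) is different from the paper's, and there is a genuine error in your large-projection step. From $d_s(x,g(x))>\Theta$ and the length-$3$ paths $x$--$a$--$b$--$g(x)$ with $a\in A:=\bigcup_{i\le k}\partial U_i$ and $b\in B:=\partial U_{k+1}$, the bounded geodesic image theorem only yields that for each pair $(a,b)$ at least one of $a,b$ lies in the star of $s$. This is equivalent to ``$A\subseteq\mathrm{star}(s)$ \emph{or} $B\subseteq\mathrm{star}(s)$'', not ``$A\cup B\subseteq\mathrm{star}(s)$'' as you assert. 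In the case where $A$ is the cut set but some curve of $B$ is not in $\mathrm{star}(s)$, the twist $\gamma_s$ need not preserve $U_{k+1}$, so replacing $W'$ by $\gamma_s(W')$ can destroy its disjointness from $U_{k+1}$. The repair is a genuine cut-set argument as in Lemmas~\ref{deltaindep} and~\ref{triple9}: when $A$ is the cut set you must also push $U_{k+1},\ldots,U_{r-1}$ forward by $\gamma_s$ (they remain pairwise disjoint and disjoint from $U_i=\gamma_s(U_i)$ for $i\le k$). But then the representatives $U_1,\ldots,U_{r-1}$ do not stay fixed through the argument, so your framing ``produce a lift of $\ov U_r$ in a component of $S\setminus\Delta$ for the \emph{fixed} multicurve $\Delta$'' has to be abandoned. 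Your ``cycle the choice of $x$'' manoeuvre in the $i(x,s)=0$ case is also not valid as stated, since the pair $(s,\gamma_s)$ supplied by Proposition~\ref{cor3.6} depends on the chosen $x$; the move there is simply to apply $\gamma_s$ to the whole configuration, as elsewhere in the paper.

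For comparison, the paper avoids having to protect a large block of representatives at once. It never fixes $U_1,\ldots,U_{r-1}$ in advance: it first glues $U_1,U_2,U_3$ pairwise disjoint via the chain of filling families $C_1-C_2-C_3-C_1'$, then for each further $k$ only arranges that $U_1,U_2,U_{k+1}$ are pairwise disjoint, via the chain $C_1-C_2-C_{k+1}-C_1'$ with the previously built triangles $C_1,C_2,C_i$ ($i\le k$) hanging off $C_1-C_2$. The cut sets here are single filling families $C_2$ or $C_{k+1}$, and applying $\gamma_s$ beyond them never touches those triangles. Once $U_1,U_2$ are both disjoint from every $U_i$, they are merged into one disconnected subsurface and the argument recurses on $r$.
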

Notice that this lemma is not at all obvious: we just know that every two indices in $\left\{\ov{U}_i\right\}_{i=1}^r$ have disjoint representatives, but this does not mean a priori that this conditions can all be satisfied simultaneously. Notice moreover that the $\ov U_i$s in the statement can be any equivalence classes of subsurfaces, not necessarily elements of a complete support set. 

\begin{proof}[Proof of Lemma \ref{cssXmcgdtn}]
We proceed by induction on $r$, the base case $r\le2$ being true by the description of the HHS structure. Then let $r\ge 3$ and choose three indices $\ov U_1,\ov U_2, \ov U_3$. Take lifts $U_1, U_2,  U_3, U_1'$ such that $U_1\perp U_2$, $U_2\perp U_3$ and $U_3\perp U_1'$. Let $g\in DT_K$ be an element mapping $U_1$ to $U_1'$. For each of these surfaces we choose a family $C_i$ of filling curves, in such a way that $C_1'=g(C_1)$. Since $U_1$ and $U_2$ are disjoint, we have a join $C_1\star C_2$, and similarly for the other disjoint pairs. Then morally we have a "path" $C_1, C_2, C_3, C_1'$, and we want to show that we can glue $C_1$ to $C_1'$, as if we were looking for a "closed lift" of this path. The situation in the curve graph is as follows:
    $$\begin{tikzcd}
    C_1\ar[r, no head]\ar[rrr,"g",bend right=20]{g}&C_2\ar[r, no head]&C_3\ar[r, no head]&C_1'
    \end{tikzcd}$$
    We proceed by induction on the complexity of $g$. If $g$ is the identity we are done; otherwise fix a curve $x\in C_1$, let $x'=g(x)$, and let $(s,\gamma_s)$ be as in Proposition \ref{cor3.6}, applied to $x$ and $g$. If $d_{\C}(x,s)\le1$ we can apply $\gamma_s$ to everything and proceed by induction. Otherwise, one between $C_2$ and $C_3$ must be fixed by $\gamma_s$ pointwise, since if not we can find a path from $x$ to $x'$ that does not intersect the star of $s$, thus violating the Bounded geodesic image Theorem \ref{bgit} (here we use the fact that consecutive $C_i$s form a join). Either way we can apply $\gamma_s$ to part of our chain and proceed by induction.\\
At the end of this process we get that $U_1,U_2,U_3$ are pairwise disjoint. Now suppose that, for $3\le k<r$ we can find representatives $U_1,\ldots, U_k$ such that $U_1,U_2,U_i$ are pairwise disjoint for all $3\le i\le k$, and we want to show that the same holds for $k+1$. As before, let $U_{k+1}$ be a representative for $\ov U_{k+1}$ which is disjoint from $U_2$ and let $U_1'$ be a representative for $\ov U_1$ disjoint from $U_{k+1}$. Let $C_1,\ldots, C_{k+1},C_1'$ be sets of filling curves for the corresponding subsurfaces. In the curve graph there is a "triangle" of the form $C_1,C_2,C_i$ for every $i=3,\ldots,k$, as shown in this schematic picture:
$$\begin{tikzcd}
    &C_3\ar[dddl, no head]\ar[dddr, no head]&&&\\
    &\vdots&&&\\
    &C_k\ar[dl, no head]\ar[dr, no head]&&&\\
    C_1\ar[rr, no head]\ar[rrrr,"g",bend right=20]{g}&&C_2\ar[r, no head]&C_{k+1}\ar[r, no head]&C_1'\\
    \end{tikzcd}$$
The same argument as before shows that we can glue $C_1$ to $C_1'$, this time without breaking the "triangles": in every inductive step, $\gamma_s$ fixes either $C_1$ (and we apply $\gamma_s$ to the whole data) or one between $C_2$ and $C_{k+1}$ (and we can apply $\gamma_s$ beyond these curves, without moving the triangles).\\
If we do this for $k=3,\ldots, r-1$ we can find  representatives $\{U_i\}_{i=1}^r$ such that $U_1\perp U_2$ and both $U_1$ and $U_2$ are disjoint from every other $U_i$. But now we can consider $U_1$ and $U_2$ as a single (possibly disconnected) subsurface and conclude by induction on $r$.
\end{proof}

In the proof of Lemma \ref{cssXmcgdtn} we actually showed that every "simplex of indices" admits a lift. Now we want to prove the uniqueness of these lifts, up to elements of $DT_K$:
\begin{lemma}\label{uniqueliftcss}
    For all large multiples $K$ the following holds. Let $\left\{\ov{U_i}\right\}_{i=1}^r$ be a collection of pairwise orthogonal indices. Any two collections of pairwise orthogonal representatives $\left\{U_i\right\}_{i=1}^r,\,\left\{U_i'\right\}_{i=1}^r$ are obtained one from the other via some element $g\in DT_K$.
\end{lemma}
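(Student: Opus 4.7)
The plan is to proceed by induction on $r$. The base case $r=1$ is immediate from the HHS description recalled above: the bijection $j\colon\mathfrak S\to\mathfrak S^{\ge 1}/DT_K$ identifies $\ov U_1$ with a single $DT_K$-orbit of subsurfaces, so any two lifts already differ by an element of $DT_K$. For the inductive step, I would first apply the hypothesis for $r-1$ to the sub-tuple $(\ov U_1,\ldots,\ov U_{r-1})$ to produce $g_1\in DT_K$ with $g_1(U_i)=U_i'$ for $i<r$; after replacing $\{U_i\}$ by $\{g_1(U_i)\}$, one may assume $U_i=U_i'$ for every $i<r$, and the remaining task is to produce $h\in DT_K$ with $h(U_i)=U_i$ for $i<r$ and $h(U_r)=U_r'$.

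Letting $W=S\setminus\bigcup_{i<r}U_i$, both $U_r$ and $U_r'$ lie in $W$, and any mapping class supported in $W$ automatically fixes each $U_i$ ($i<r$) pointwise. Hence the problem reduces to the subsurface-level analogue of Lemma \ref{Cu0}: any two subsurfaces of $W$ in the same $DT_K$-orbit lie in the same $DT_K(W)$-orbit, where $DT_K(W)$ denotes the subgroup of $DT_K$ generated by powers of Dehn twists around curves contained in $W$. Starting from any $g\in DT_K$ with $g(U_r)=U_r'$, I would induct on $\alpha(g)$: pick a filling collection $C_r$ for $U_r$ and a curve $x\in C_r$, and apply Proposition \ref{cor3.6} to obtain $(s,\gamma_s)$ with $\alpha(\gamma_s g)<\alpha(g)$. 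When $i(x,s)=0$ one mimics the first case of Lemma \ref{Cu0} by applying $\gamma_s$ throughout, invoking the inductive hypothesis on the translated configuration inside $\gamma_s(W)$ and then conjugating back by $\gamma_s$ to land in $DT_K(W)$.

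The main step is the alternative case $d_s(x,g(x))>\Theta$, handled by a cut-set argument. Every curve $y\in C_j$ with $j<r$ is disjoint in $\C$ from both $x\in U_r$ and $g(x)\in U_r'$, thanks to the orthogonalities $U_j\perp U_r$ and $U_j\perp U_r'$ (the latter uses the normalization $U_j=U_j'$). Since $d_s(x,g(x))$ is defined, both $x$ and $g(x)$ lie outside $\star(s)$; if $y$ did too, the length-one geodesics $xy$ and $g(x)y$ would avoid $\star(s)$ entirely, so the bounded geodesic image Theorem \ref{bgit} would yield $d_s(x,y),d_s(g(x),y)\le B$ and hence $d_s(x,g(x))\le 2B$, contradicting $\Theta>2B$. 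Therefore $y\in\star(s)$ and $\gamma_s$ fixes $y$; as this holds for every $y\in C_j$ and every $j<r$, the element $\gamma_s$ lies in $DT_K(W)$. Applying $\gamma_s$ to $U_r'$ then keeps the new target inside $W$ while strictly lowering the complexity of $g$, closing the induction. Beyond this cut-set verification, the only real subtlety is the complexity bookkeeping under the conjugation performed in the first case, which is exactly the issue already dealt with in Lemma \ref{Cu0}.
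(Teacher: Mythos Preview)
Your argument follows the paper's proof closely: the same outer induction on $r$, the same normalisation $U_i=U_i'$ for $i<r$, and the same inner induction on $\alpha(g)$ with the BGI cut-set argument in the large-projection case showing that $\gamma_s$ fixes every curve in $C_1,\dots,C_{r-1}$. Your detour through $DT_K(W)$ is a harmless strengthening that the paper simply omits (it works directly with ``$\gamma_s$ fixes $C_1,\dots,C_{r-1}$'' and applies $\gamma_s$ to the simplex $\{C_1,\dots,C_{r-1},C_r'\}$).

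One caution on the $i(x,s)=0$ case: it is not \emph{exactly} the situation of Lemma~\ref{Cu0}. In that lemma the object being glued is the single curve $x$, which $\gamma_s$ fixes, so after conjugating the ambient subsurface one can run the induction on $\gamma_s g$. Here the object is the whole subsurface $U_r$, and $\gamma_s$ only fixes one curve $x\in C_r$, not $U_r$ itself; so ``conjugate and use $\gamma_s g$'' does not literally apply. The paper's own proof is equally terse at this point (``apply $\gamma_s$ to everything and proceed by induction''), so you are matching its level of rigour---but the reference to Lemma~\ref{Cu0} as handling this \emph{exactly} is a bit optimistic.
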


\begin{proof}
    We proceed by induction on $r$, the base case $r=1$ being clear. If the conclusion holds for $r-1$ then, up to some element $g\in DT_K$, we can assume that $U_i=U_i'$ for $i=1,\ldots, r-1$. Now let $h\in DT_K$ be an element that maps $U_r$ to $U_r'$, and let $C_1,\ldots,C_r,C_r'$ be sets of filling curves such that $h(C_r)=C_r'$. If $h$ is not the identity, fix a point $x\in C_r$ and let $(s,\gamma_s)$ be as in Proposition \ref{cor3.6}. If $d_{\C}(x,s)\le1$ we can apply $\gamma_s$ to everything and proceed by induction on the complexity of $h$; otherwise $\gamma_s$ must fix $C_1,\ldots,C_{r-1}$ pointwise, so we can apply $\gamma_s$ to the "simplex" $\{C_1,\ldots,C_{r-1},C_r'\}$ and proceed by induction.
\end{proof}

The main reason we established Lemmas \ref{cssXmcgdtn} and \ref{uniqueliftcss} is that every complete support set for $MCG/DT_K$ lifts to a collection of pairwise orthogonal $S_4$ and $S_5$ of the maximal cardinality, which we call a \emph{complete support set} for the surface $S$; moreover, such a collection is unique up to the action of $DT_K$.

Recall that a surface is \emph{odd} if its complexity is odd, otherwise it is \emph{even} (in our case, $S_b$ is odd iff $b$ is even). Bowditch \cite[Section 6]{BowPants} pointed out that if a subsurface $U$ belongs to a complete support set for $S$, then $U$ must have one of the following shapes, which ensure that every complete support set containing $U$ can cut out at most one pair of pants. 
\begin{itemize}
    \item If $S$ is odd, then $U$ is an $S_{4}$, and each connected component of the complement $S-U$ is odd and meets $U$ in exactly one curve. 
    \item If $S$ is even then $U$ is either an $S_{4}$ with all but one of the complementary components odd, or an $S_{5}$ with all complementary components odd.
\end{itemize}

\begin{remark}[Cutting out a pair of pants]\label{rem:problema_pair_of_pants}
   Notice that the only case where $U$ does not admit a complete support set whose union is the whole $S$ is when $S$ is even and the even connected component of $S-U$ is a pair of pants. Indeed:
   \begin{itemize}
       \item If $S$ is odd, every connected component of $S-U$ is an odd sphere, and can be cut into four-holed spheres. The same holds if $S$ is even and $U$ is an $S_5$.
       \item If $S$ is even and $U$ is an $S_4$, we can fill the odd complementary components with four-holed spheres as above. Then, if the even component is not a pair of pants then it must be a sphere with at least five holes, and one can fill it with an $S_5$ and some $S_4$.
   \end{itemize}
\end{remark}

\subsection{Unambiguous domains}
Let $\sigma=(\ov U,p)$ be a hinge for $MCG/DT_K$, and let $\text{Compl}(\sigma)$ be the set of all tuples of pairwise orthogonal hinges that $\sigma$ completes to a complete support set. In other words, $\text{Compl}(\sigma)$ is the set of all facets in $\bf{Hinge}(\mathfrak{S})$ that $\sigma$ completes to a maximal simplex.
\begin{definition}
    Two hinges $\sigma, \sigma'$ are said to be \emph{equally completable}, or to have the same completions, if $\text{Compl}(\sigma)=\text{Compl}(\sigma')$.
\end{definition}
This definition clearly induces an equivalence relation which is preserved by any automorphism of the hinge graph. One would like to think that if two hinges $(\ov U, p)$ and $(\ov U', p')$ are equally completable then $\ov U=\ov U'$. This is unfortunately not the case. Indeed, if $S$ is even and $\ov U$ is the class of a subsurface $U$ which is an $S_{4}$ whose even complementary component is a pair of pants $P$, then $\ov U$ has the same completions as the class of the $S_{5}$ given by the union of $U$ and $P$. See Figure \ref{fig:s4s5} to understand the situation. Luckily, we will see that this is the only problem that could arise.

\begin{figure}[htp]
    \centering
    \includegraphics[width=0.75\textwidth]{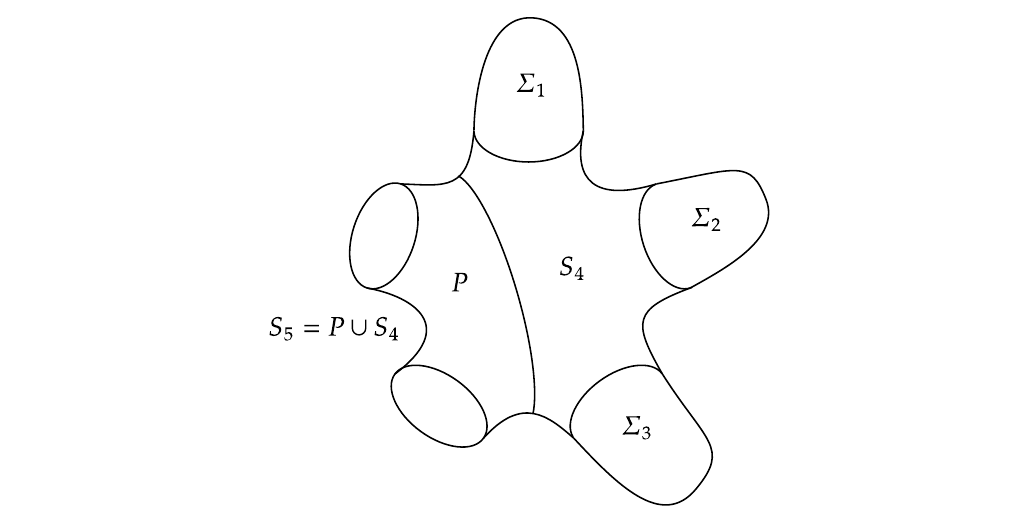}
    \caption{The classes of the $S_{4}$ and the $S_{5}$ in the Figure cannot be distinguished just by their completions, all of which lift inside the union of the odd complementary regions $\Sigma_1$, $\Sigma_2$, and $\Sigma_3$.}
    \label{fig:s4s5}
\end{figure}

\begin{definition}
    A hinge $\sigma$ is \emph{unambiguous} if any other equally completable hinge $\sigma'$ has the same support. A support $\ov U$ is unambiguous is every hinge supported on $U$ is unambiguous.
\end{definition}
We need another definition that characterises (some) unambiguous domains, and can be recognised from the hinge graph.
\begin{definition}
    A hinge $\sigma$ is \emph{minimal} if $\text{Compl}(\sigma)$ is maximal by inclusion, among completions.
\end{definition}

\begin{lemma}\label{minimalov}
For all large multiples $K$, a hinge $(\ov U,p)$ for $MCG/DT_K$ is minimal if and only if it is unambiguous and its support is the class of a four-holed sphere.
\end{lemma} 

\begin{proof}
    Let $U$ be a lift of $\ov U$. We examine all possible shapes of $U$.
    \begin{itemize}
        \item Suppose $U$ is an $S_5$, and choose some subsurface $V\sqsubsetneq U$ such that $V$ is an $S_4$ and $S-V$ does not contain a pair of pants. Such a $V$ exists, as one can choose a pair of pants $Q$ whose boundary intersects the boundary of $S-U$, and set $V=U-Q$. Every completion for $\ov U$ admits representatives that are disjoint from $U$ by Lemma \ref{cssXmcgdtn}, and therefore also from $V$. Thus every completion for $\ov U$ is also a completion for $\ov V$.  Moreover we claim that there exists a completion $\{\ov V_i\}$ for $\ov V$ but not for $\ov U$. To this purpose choose a completion $\{V_i\}$ for $V$, such that some boundary curve $\delta$ crosses one of the boundary components $\eta$ of $U$, and let $\{\ov V_i\}$ be their classes. Now recall that, as a consequence of Lemma \ref{projinlink}, if we fix some finite subset $F\subset \C$ then for all large multiples $K$ the projection map is an isometry on $g(F)$, for every mapping class $g\in MCG$. In our case, there is a finite number of $S_5$ inside $S$, up to the action of the mapping class group, and for each of these we can find $\delta$ and $\eta$ as above. Then we set $F$ as the union of these curves. 
        Now, any lift $\{V_i\}$ of $\{\ov V_i\}$ must intersect $U$. More precisely, $\eta$ must cross the boundary curve $\delta'$ corresponding to $\delta$, since
        $$d_{\C}(\delta',\eta)\ge d_{\Cdt}(\ov{\delta'},\ov\eta)=d_{\Cdt}(\ov\delta,\ov\eta)=d_{\C}(\delta,\eta)\ge2$$
        where we used that the projection is $1$-Lipschitz and that its restriction to $F$ is isometric. This shows that $\ov U$ is not minimal.
        \item Suppose $U$ is an $S_4$ which cuts out a pair of pants, as in Figure \ref{fig:s4s5}. Let $V$ be the $S_5$ given by the union of $U$ and the pair of pants cut out by $U$. Then, slightly abusing notation, we have that $\text{Compl}(\ov U)=\text{Compl}(\ov V)$. Indeed, every support $\ov W$ which completes $\ov V$ also completes $\ov W$; conversely, if $\ov W$ completes $\ov U$ then it admits a lift $W$ inside one of the complementary components of $U$, which is therefore disjoint from $V$. Thus $\ov U$ is ambiguous. Moreover, by the above argument $\ov V$ is not minimal as it is an $S_5$, and therefore neither is $\ov U$ which has the same completions as $\ov V$.
        \item The only case left is when $U$ is an $S_4$ which does not cut out a pair of pants. Choose a completion $\{U_i\}$ such that $U=S\setminus\bigcup U_i$, as in Remark \ref{rem:problema_pair_of_pants}, and let $\{\ov U_i\}$ be its projection. If $\ov V$ completes $\{\ov U_i\}$ then we may lift it to some $V\sqsubseteq U$ (here we used that, up to elements of $DT_K$, the lift of $\{\ov U_i\}$ is unique). But then $V=U$ since $U$ has minimal complexity. This proves that $\ov U$ is minimal and unambiguous.
    \end{itemize}
\end{proof}

\subsection{Minimal product regions are preserved}
\begin{remark}\label{rem:auto_hinge_minimal}
Any automorphism of the hinge graph must map minimal hinges with the same completions to minimal hinges with the same completions, therefore it acts on the set of minimal supports. Thus, if $f\colon MCG/DT_K\to MCG/DT_K$ is a quasi-isometry, and $\ov U$ is a minimal support, we can define $f_{supp}(\ov U)$ as the support of $f_{hin}(\ov U,p)$ for any hinge $(\ov U,p)$ supported on $\ov U$, where $f_{hin}$ is the map from Corollary \ref{cor:hinge_map_mcgdtn}.
\end{remark}

\begin{corollary}[Products go to products]\label{prodtoprod}
Let $f\colon MCG/DT_K\to MCG/DT_K$ be a quasi-isometry. There exists $C$, depending only on the quasi-isometry constants of $f$, with the following property. Let $\{\ov U_i\}\subseteq\mathfrak{S}$ be a complete support set made of minimal supports, and let $f_{supp}(\ov U_i)=\ov V_i$ for all $i$. Let $P_{\{\ov U_i\}}$ and $P_{\{\ov V_i\}}$ be the standard product regions defined by $\{\ov U_i\}$ and $\{\ov V_i\}$, respectively. Then $d_{Haus}\left(f\left(P_{\{\ov U_i\}}\right),  P_{\{\ov V_i\}}\right)\le C$.
\end{corollary}

\begin{proof}
Another way of stating Remark \ref{rem:auto_hinge_minimal} is that, if $(\ov U,p^\pm)$ are two hinges with the same minimal support and $f_{hin}(\ov U,p^+)=(\ov V,q^+)$, then there exists $q^-\in\partial \C V$ such that $f_{hin}(\ov U,p^-)=(\ov V,q^-)$. Thus the Flats to Flats Lemma \ref{flatstoflats} says that, if $\{(\ov U_i,p_i^\pm)\}$ is a complete support set made of minimal supports, with a choice of two points in every $\partial \C \ov U_i$, and if we set $(\ov V_i,q_i^\pm):=f_{hin}(\ov U_i,p_i^\pm)$, then $d_{Haus}(f(\mathfrak F_{\{(\ov U_i,p_i^\pm)\}}),  \mathfrak F_{\{(\ov V_i,q_i^\pm)\}})\le C_0$ for some constant $C_0$ depending only on the quasi-isometry constants of $f$. Hence
$$d_{Haus}\left(f\left(\bigcup \mathfrak F_{\{(\ov U_i,*)\}}\right),  \bigcup \mathfrak F_{\{(f_{supp}(\ov U_i),*)\}}\right)\le C_0 $$
where $\bigcup \mathfrak F_{\{(\ov U_i,*)\}}$ is the union of all standard flats supported in $\{\ov U_i\}$. 

Now, the conclusion follows if we prove the existence of some constant $C_1$ such that, for every complete support set $\{\ov U_i\}$ made of minimal supports, 
$$d_{Haus}\left(\bigcup \mathfrak F_{\{(\ov U_i,*)\}}, P_{\{\ov U_i\}}\right)\le C_1$$ 
In fact, if this is the case then we also get that 
$$d_{Haus}\left(f\left(\bigcup \mathfrak F_{\{(\ov U_i,*)\}}\right), f\left(P_{\{\ov U_i\}}\right)\right)\le C_2$$
where $C_2$ is some constant depending only on $C_1$ and the quasi-isometry constants of $f$. In turn, since $P_{\{\ov U_i\}}=\prod F_{\ov U_i}$, to prove the existence of such $C_1$ it suffices to select a close enough hierarchy line in each coordinate, as we shall do in Lemma \ref{lem:unif_dist_from_hpath_mcgdtn}.
\end{proof}

\begin{lemma}\label{lem:unif_dist_from_hpath_mcgdtn}
    There exists a constant $C_3$ such that whenever $\ov U$ is a minimal domain and $x\in F_{\ov U}$, there exists a hierarchy line $\gamma\subset F_{\ov U}$ such that $d_{F_{\ov U}}(x,\gamma)\le C_3$.
\end{lemma}

\begin{proof}
Since $P_{\{\ov U_i\}}=\prod F_{\ov U_i}$ it suffices to work in each coordinate, that is, we will show that there exists a constant $C_3$ such that whenever $\ov U\in\frakS$ is a minimal domain and $x\in F_{\ov U}$, there exists a hierarchy line $\gamma\subset F_{\ov U}$ such that $d_{F_{\ov U}}(x,\gamma)\le C_3$. Notice that, by the distance formula and the fact that $\ov U$ is a $\nest$-minimal, we have that $F_{\ov U}$ is uniformly quasi-isometric to $\C \ov U$, and hierarchy lines simply correspond to bi-infinite quasigeodesics (with certain constants). Hence we just need to show that every point $x\in \C \ov U$ is within uniformly bounded distance from a bi-infinite quasigeodesic. If $\ov U=j([\ov \Delta])$, as in Remark \ref{rem:prop_hhs_mcgdtn}.(iv), then $\C\ov U$ is uniformly quasi-isometric to the hyperbolic group $P(\ov\Delta)$, which has at least $2$ points at infinity and therefore admits bi-infinite quasigeodesics. Since a group acts transitively on itself the conclusion follows.
\end{proof}

\subsection{Terminal supports and $1$-separating curves}
We want to translate the automorphism $f_{supp}$ into an automorphism of the graph $\Cpdt$ from Section \ref{section:1tostrong}. In other words, we need a way to identify the minimal domains which are cut out by $1$-separating curves. To this purpose, we introduce the following definition.
\begin{definition}\label{1sep}
A support $\ov U$ is \emph{terminal} if it is the class of a subsurface $U$ of complexity $1$ which is cut out by a single curve. Notice that the boundary curve of such a $U$ is $1$-separating.
\end{definition}

\begin{lemma}\label{lem:term_supp_quoziente}
    For all large multiples $K$, a hinge $\sigma=(\ov U,p)$ has terminal support if and only if it is minimal and there exists a hinge $(\ov V,q)$, compatible with $\sigma$, such that any complete support set containing $(\ov V,q)$ must contain some $\sigma'$ supported in $\ov U$.
\end{lemma}

\begin{proof}
    Suppose $\ov U$ is terminal, and choose a lift $U$ of $\ov U$. Let $V$ be a support such that $U$ is one of the connected components of $S-V$; such a $V$ exists as $S-V$ has complexity at least $2$, since we assumed that $S=S_b$ with $b\ge 7$. Then every complete support set containing $\ov V$ lifts to a complete support set containing $V$, and therefore also $U$. Moreover, by the discussion in the proof of Lemma \ref{minimalov}, a terminal support is also minimal.
    
    Conversely, suppose that $\ov U$ is not terminal. If $\ov U$ is not minimal we have nothing to prove. Otherwise $\ov U$ is the class of a non-terminal $S_4$, and we want to show that every $\ov V\perp \ov U$ which is compatible with $\ov U$ admits a completion that does not contain $\ov U$. First notice that, up to the action of the mapping class group, there is a finite number of possible pairs $U\perp V$ of orthogonal subsurfaces. For each of these possibilities, let $\{V_i\}_{i=2}^\nu$ a completion for $V=V_1$, that we can choose in such a way that some boundary curve $\delta$ of $U$ crosses some boundary curve $\eta$ of some $V_i$, say, $V_2$. This is always possible because $U$, which is a non-terminal $S_4$, cannot coincide with the connected component $\Sigma$ of $S\setminus V$ it belongs to, and therefore there must be a relative boundary curve of $U$ inside $\Sigma$ that we can choose as $\delta$. Let $F$ be the finite union of all $\delta$s and $\eta$s that arise from these possibilities, and for all large multiples $K$ the projection is an isometry on $F$. Thus $\{\ov V_i\}$ is a completion for $\ov V$ that cannot contain $\ov U$, since any lift $U'$ of $\ov U$ contains a boundary curve $\delta'$ which must cross $\eta$ (we can argue precisely as in Lemma \ref{minimalov}). Hence $\ov U\pitchfork\ov V_2$ since $U'\pitchfork V_2$ for every lift $U'$. This implies that $\ov U$ cannot be an element of $\{\ov V_i\}$, and we are done.
\end{proof}

\begin{corollary}
    For all large multiples $K$, any self-quasi-isometry $f$ of $MCG/DT_K$ induces an automorphism $\phi$ of $\Cpdt$.
\end{corollary}
\begin{proof}
For every terminal support $\ov U$, let $\partial \ov U$ be the element $\ov\gamma\in \Cpdt$ such that there exist lifts $U$ and $\gamma=\partial U$. This is a one-to-one correspondence between terminal supports and vertices of $\Cpdt$ (here we are using that every $1$-separating curve cuts out exactly one $S_4$, as $S=S_b$ with $b\ge 7$). Now, since being terminal has a completely combinatorial characterisation, in view of Lemma \ref{lem:term_supp_quoziente}, $f_{supp}$ must map terminal supports to terminal supports, and we can set $\phi(\partial \ov U)=\partial f_{supp}(\ov U)$. Moreover, $f_{supp}$ preserves compatibility, which, for terminal subsurface, is equivalent to the fact that their boundaries admit disjoint representatives. Thus $\phi$ is an automorphism of $\Cpdt$.
\end{proof}

By Theorem \ref{AutoExt}, $\phi$ comes from some $\ov g\in MCG^\pm/DT_K$. Finally, we need to show that, if $f_{supp}$ and $\ov g$ agree on terminal subsurfaces then they agree on every minimal surface. More precisely we claim the following:
\begin{lemma}\label{minimalfromterminal}
    For all large multiples $K$, every minimal support $\ov U$ for $MCG/DT_K$ is uniquely determined by the terminal supports it is compatible with.
\end{lemma}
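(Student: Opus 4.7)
The plan is to adapt the pants-graph argument from the proof of Theorem \ref{qirigidpants} to the quotient. First I fix a lift $U$ of $\ov U$; by Lemma \ref{minimalov} this is an unambiguous four-holed sphere with complementary components $\Sigma_1,\dots,\Sigma_r$. For each $\Sigma_i$ I select terminal subsurfaces $T_i^1, T_i^2 \subseteq \Sigma_i$: if $\Sigma_i$ is itself an $S_4$ I take $T_i^1 = T_i^2 = \Sigma_i$, while if $\Sigma_i$ has complexity at least $2$ I choose $T_i^1, T_i^2$ so that their $1$-separating boundary curves fill $\Sigma_i$, exactly as in the pants proof. All these are disjoint from $U$, so their classes are compatible with $\ov U$, and the filling condition guarantees that $U$ is the unique minimal four-holed sphere in $S$ disjoint from every $T_i^j$.

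Now I let $\ov U'$ be any minimal support with the same compatible terminal supports as $\ov U$, and fix a lift $U'$. The goal is to produce a single $g \in DT_K$ such that $g^{-1}(U')$ is disjoint from every reference $T_i^j$; the pants-graph uniqueness then forces $g^{-1}(U') = U$, and hence $\ov U = \ov U'$. For the first stage I use that $\{\ov U', \ov T_1^1, \dots, \ov T_r^1\}$ is pairwise orthogonal, since the $T_i^1$ lie in distinct complementary components. Lemma \ref{cssXmcgdtn} together with the uniqueness of simultaneous lifts (Lemma \ref{uniqueliftcss}) then let me replace $U'$ by a $DT_K$-translate, still called $U'$, which is disjoint from every reference $T_i^1$.

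The second stage handles the remaining $T_{i_0}^2$ in each complexity-at-least-$2$ component. Here $\ov T_{i_0}^1$ and $\ov T_{i_0}^2$ are not orthogonal, since their boundaries fill $\Sigma_{i_0}$, so Lemmas \ref{cssXmcgdtn} and \ref{uniqueliftcss} do not apply directly to the full collection. Instead I argue by induction on the complexity of an element $h \in DT_K$ witnessing $h(T_{i_0}^2)$ disjoint from $U'$, in the style of the proofs of Lemmas \ref{cssXmcgdtn}, \ref{surrpair} and \ref{liftingtriples}. At each step Proposition \ref{cor3.6} produces a pair $(s, \gamma_s)$: either $s$ lies in the star of some curve of $T_{i_0}^2$, allowing $\gamma_s$ to be applied globally and reducing $\alpha(h)$, or $d_s(x, h(x)) > \Theta$ and the Bounded Geodesic Image theorem, together with the fact that the boundary curves of $U'$ and of the $T_j^1$'s form a topological wall between $T_{i_0}^2$ and $h(T_{i_0}^2)$, forces $\gamma_s$ to fix pointwise an appropriate cut set; one may then apply $\gamma_s$ only beyond this cut set while preserving the disjointness already achieved. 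Iterating over all $i_0$ yields the desired single element $g$.

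The main obstacle will be this second stage: the intersecting boundaries of $T_{i_0}^1$ and $T_{i_0}^2$ prevent a direct application of Lemma \ref{uniqueliftcss}, so the cut-set induction must be arranged so that each Dehn-twist adjustment remains compatible with every previously aligned pair. As elsewhere in the paper, this relies on taking $\Theta$—equivalently $K$—large enough that no piecewise geodesic of bounded length can bypass the star of the offending curve $s$.
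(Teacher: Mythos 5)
There is a genuine gap in your second stage. Your plan is first to use Lemmas \ref{cssXmcgdtn} and \ref{uniqueliftcss} to replace $U'$ by a translate disjoint from all of the reference surfaces $T_1^1,\dots,T_r^1$, and then, component by component, to run a cut-set induction to force disjointness from $T_{i_0}^2$ as well. But look at what the cut-set argument actually gives you in the second stage. The only things disjoint from $T_{i_0}^2$ that you can place on a chain from $T_{i_0}^2$ to $h(T_{i_0}^2)$ are $U'$ and the $T_j^1$ for $j\ne i_0$; the surface $T_{i_0}^1$ cannot appear, because its boundary crosses $\partial T_{i_0}^2$. So the possible cut sets are $\{T_j^1\}_{j\ne i_0}$ and $U'$. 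When the cut set is $\{T_j^1\}_{j\ne i_0}$ you are forced to apply $\gamma_s$ to $U'$, and nothing guarantees that $\gamma_s(U')$ is still disjoint from $T_{i_0}^1$ — the one surface you need and that you were not allowed to put on the wall. This is not merely a bookkeeping difficulty you can arrange away: the BGI theorem only tells you that \emph{some} intermediate cut set is fixed by $\gamma_s$, you do not get to choose which one, and you have no control over $\gamma_s(T_{i_0}^1)$. You flag this as the "main obstacle", but the sketch does not resolve it, and I do not see how it can be resolved within the two-stage framework.

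The paper sidesteps this entirely by reversing the order of operations: it first proves, \emph{for each complementary component $\Sigma_i$ separately and independently}, that $\ov V$ is orthogonal to $\ov\Sigma_i$, and only afterwards invokes Lemmas \ref{cssXmcgdtn} and \ref{uniqueliftcss} to combine. For a single component $\Sigma$ with complexity at least $2$, it fixes two $1$-separating curves $\alpha,\alpha'$ filling $\Sigma$ (bounding the terminal $W,W'$) and a pants decomposition $\Delta$ of $S\setminus\Sigma$, takes representatives $V\perp W$ and $V'\perp W'$, and runs the cut-set induction along the chain $C\to\alpha\to\Delta\to\alpha'\to C'$. Here the cut sets are $\{\alpha\}$, $\Delta$, $\{\alpha'\}$, all of which live inside or around the single component $\Sigma$ — there is no competing constraint from other components to preserve, so each case of the induction keeps the relevant filling property via Corollary \ref{projfilling}. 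At the end one concludes $\ov V\perp\ov\Sigma_i$; once this is established for every $i$, the family $\{\ov V\}\cup\{\ov\Sigma_i\}$ is pairwise orthogonal (the $\Sigma_i$ being pairwise disjoint to begin with), so the combinatorial lemmas apply cleanly to produce a single representative of $\ov V$ contained in $U$, and minimality finishes the proof. The moral is that the pants decomposition $\Delta$ of $S\setminus\Sigma_i$, not $U'$ and the other $T_j^1$'s, is the correct wall for the cut-set argument, and that the orthogonality lemmas should be used at the very end rather than partway through.
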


\begin{proof}
    Let $U$ be a lift of $\ov U$ and let $S\setminus U=\bigsqcup_{i=1}^4 \Sigma_{i}$. Moreover, let $\ov V$ be another support such that every terminal support $\ov T$ compatible with $\ov U$ is also compatible with $\ov V$. We claim that, for $i=1,\ldots, 4$, there exists a representative $V_i$ for $\ov V$ which is disjoint from $\Sigma_i$. If this is the case then there exists a representative $V$ which is disjoint from all $\Sigma_i$s (more precisely, we can use Lemma \ref{cssXmcgdtn} to lift the "simplex" $\{\ov V\}\cup\{\ov \Sigma_i\}_{i=1}^4$, and Lemma \ref{uniqueliftcss} shows that we can choose $\{\Sigma_i\}_{i=1}^4$ as lifts of $\{\ov \Sigma_i\}_{i=1}^4$). Therefore $V\nest U$, and equality holds since $U$ has already minimal complexity.\\
    First notice that there is a finite number of possibilities for $U$, up to the action of the mapping class group. For each of these possibilities look at its complementary components. Whenever one of these, call it $\Sigma$, is not terminal we choose two $1$-separating curves $\alpha,\alpha'$ that fill $\Sigma$ and a pants decomposition $\Delta$ for $S\setminus \Sigma$, including its boundary. Let $F$ be the finite set of curves given by the union of all these $\alpha,\alpha'$ and $\Delta$. Notice that Corollary \ref{projfilling} tells us that for all large multiples $K$ every lift of $\ov \alpha,\ov \alpha'$ inside $\link(\Delta)$ is still a pair of filling curves for $\Sigma$.\\
    Now we go back to our proof. If $\Sigma$ is already a terminal support then we can find a representative $V$ which is disjoint from $\Sigma$, and we are done. Otherwise let $\alpha, \alpha',\Delta$ be the image under some mapping class of the corresponding elements of $F$, which satisfy the property that every two lifts of $\ov \alpha,\ov \alpha'$ inside $\link(\Delta)$ fill $\Sigma$. Let $W,W'$ be the terminal subsurfaces cut out by $\alpha$ and $\alpha'$, and let $V,V'$ be some representatives of $\ov V$ such that $V\perp W$ and $V'\perp W'$. Let $g\in DT_K$ that maps $V$ to $V'$. Let $C$ be a collection of filling curves for $V$, and let $C'=g(V)$. Then in the curve graph the situation is as follows:
    $$\begin{tikzcd}
    C\ar[r, no head]\ar[rrrr,"g",bend right=20]{g}&\alpha\ar[r, no head]&\Delta\ar[r, no head]&\alpha'\ar[r, no head]&C'
    \end{tikzcd}$$
    Now if $g$ is not the identity pick some $x\in C$ and let $(s,\gamma_s)$ be as usual. If $d_{\C}(x,s)\le1$ we can apply $\gamma_s$ to everything and proceed by induction on the complexity of $g$. Otherwise we must be in one of these three cases:
    \begin{itemize}
        \item If $d_{\C}(\alpha,s)\le1$ we can apply $\gamma_s$ to everything after $\alpha$. Now $\gamma_s(\alpha)$ and $\gamma_s(\alpha')$ still fill $\gamma_s(\Sigma)$, and we can proceed.
        \item If $d_{\C}(\alpha',s)\le1$ we can apply $\gamma_s$ just to $C'$, without touching $\alpha$ and $\alpha'$.
        \item If $\Delta$ is fixed pointwise by $\gamma_s$ then we can apply $\gamma_s$ to everything after $\Delta$. Notice that $\gamma_s(\alpha')$ is still a lift of $\ov \alpha'$ in the link of $\Delta$, hence it still fills $\Sigma$ together with $\alpha$.
    \end{itemize}
    At the end of the induction we have that $C=C'$ is disjoint from both $\alpha$ and $\alpha'$, which may differ from the original curves but remain a pair of filling curves for a representative of $\ov \Sigma$. This gives us the required representative for $\ov V$.
\end{proof}

\subsection{Quasi-isometric rigidity}
We are finally ready to prove quasi-isometric rigidity of $MCG^\pm/DT_K$, which is Theorem \ref{qimcgdtn}. We subdivide the proof in two steps.
\begin{theorem}\label{selfqimcg}
    Let $S=S_{0,b}$ be a punctured sphere, with $b\ge 7$ punctures. For all large multiples $K$ the following holds. For every $T>0$ there exists $D>0$ such that every $(T,T)$-self-quasi-isometry $f$ of $MCG(S)/DT_K$ lies within distance $D$ of the left multiplication by some element $\ov g\in MCG^\pm/DT_K$, which depends only on the restriction of $f_{supp}$ to terminal supports.
\end{theorem}

\begin{proof}
Let $f$ be a self-quasi-isometry of $MCG/DT_K$, and let $f_{supp}$ be the induced map on minimal supports. With a slight abuse of notation, the restriction of $f_{supp}$ to terminal supports is an automorphism of $\Cpdt$ which comes from some element $\ov g\in MCG^\pm/DT_K$ by Theorem \ref{csstoc}. Then $f_{supp}$ and $g$ agree on terminal supports, and therefore also on every minimal support by Lemma \ref{minimalfromterminal}. Now, the Products to products Corollary \ref{prodtoprod} tells us that there exists some constant $C$, depending only on the quasi-isometry constants of $f$, such that, if $\{\ov U_i\}$ is a complete support set made of minimal supports, $f$ maps the corresponding  product region $P_{\{U_i\}}$ within Hausdorff distance at most $C$ from $\ov g(P_{\{U_i\}})$. 

We are left to prove that every point $x\in MCG/DT_K$ is the (uniform) coarse intersection of two standard product regions $P\Tilde{\cap} P'$, coming from minimal complete support sets. If this is the case then $f(x)$ will be the coarse intersection of $f(P)$ and $f(P')$, which lie at uniformly bounded distance from the coarse intersection of $\ov g(P)$ and $\ov g(P')$, which is coarsely $\ov g(x)$. Notice that it is enough to show that there is \emph{some} point $x_0\in MCG/DT_K$ which is the coarse intersection of two standard product regions with minimal supports, since $MCG/DT_K$ acts transitively on itself and maps product regions to product regions. In turn, by Corollary \ref{prodint} we are left to prove that there exist two complete support sets $\{\ov U_i\}$ and $\{\ov V_i\}$ with minimal, pairwise distinct supports (notice that we can apply Corollary \ref{prodint} since minimal supports are $S_4$, and therefore they are also minimal with respect to nesting). 

Firstly, fix a complete support set $\{\ov U_i\}$ as follows:
\begin{itemize}
    \item If $S$ is odd, choose any complete support set $\{\ov U_i\}$.
    \item If $S$ is even, let $\Tilde{U}$ be the class of an $S_5$ cut out by a single curve, and complete $\Tilde U$ to a complete support set.
\end{itemize}
In both cases, by inspection of the proof of Lemma \ref{minimalov}, one gets that $\{\ov U_i\}$ is made of minimal supports. Let $\{U_i\}$ be a lift of $\{\ov U_i\}$, and choose a pseudo-Anosov mapping class $h$ such that every boundary curve of $\{U_i\}$ crosses every boundary curve of $\{V_i\}=h\{U_i\}$. Now for all large multiples $K$ the projection is an isometry on the finite set $F$ of boundary curves of $\{U_i\}$ and $\{V_i\}$. Therefore we must have that $\ov U_i\neq \ov V_j$ for every choice of $i$ and $j$, because any two lifts $U_i'$ and $V_j'$ must have crossing boundaries. This proves the theorem.
\end{proof}

\begin{corollary}
For every $T>0$ there exists $D$ such that, if a $(T,T)$-self-quasi-isometry $f$ of $MCG/DT_K$ lies within finite distance of the identity, then it lies within distance $D$ of the identity.
\end{corollary}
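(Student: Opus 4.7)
The plan is to combine Theorem \ref{selfqimcg} with the fact, proven in \cite{dfdt}, that $MCG^\pm(S_b)/DT_K$ is acylindrically hyperbolic for large multiples $K$, which forces its FC-center to be a finite subgroup. Apply Theorem \ref{selfqimcg} to the given $(T,T)$-quasi-isometry $f$, obtaining an element $\bar g \in MCG^\pm(S_b)/DT_K$ and a constant $D_0 = D_0(T)$, depending only on $T$, such that $d(f(x), \bar g x) \le D_0$ for every $x$. By hypothesis $R := \sup_{x} d(f(x), x) < \infty$, so the triangle inequality yields
\[ \sup_{x} d(x, \bar g x) \le D_0 + R < \infty. \]

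Since the word metric on $MCG^\pm/DT_K$ is left-invariant, $d(x, \bar g x) = \|x^{-1} \bar g x\|$, so the above bound says that the conjugacy class of $\bar g$ is bounded; that is, $\bar g$ lies in the FC-center $N$ of $MCG^\pm/DT_K$. In an acylindrically hyperbolic group the FC-center is finite (it coincides with the finite radical, i.e.\ the kernel of the acylindrical action on the hyperbolic space), so $N$ is a finite subgroup depending only on the group and not on $f$. Each $h \in N$ satisfies $M_h := \sup_{x} d(x, h x) < \infty$ by the very definition of $N$, and since $N$ is finite the constant
\[ D_1 := \max_{h \in N} M_h \]
is finite. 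Setting $D := D_0(T) + D_1$, we obtain the desired uniform bound $d(f(x), x) \le d(f(x), \bar g x) + d(\bar g x, x) \le D_0 + D_1 = D$ for every $x$.

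The only subtle point is the step passing from ``$\bar g$ has bounded conjugacy class'' to a uniform bound on its displacement: although the conjugacy-class diameter $D_0 + R$ a priori depends on $f$, the group-theoretic fact that only finitely many elements can have bounded conjugacy class at all makes the bound $D_1$ intrinsic to the group. Once Lemma \ref{nofinite} becomes available in Section \ref{sec:alg} one can sharpen the conclusion further: $N$ is then trivial, so necessarily $\bar g = e$ and $D = D_0(T)$ suffices.
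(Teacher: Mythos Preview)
Your proof is correct and takes a genuinely different route from the paper's. The paper argues directly that the induced map $f_{hin}$ on hinges is the identity: since $f$ is at finite distance from the identity, each standard $1$-orthant $h_\sigma$ satisfies $d_{Haus}(f(h_\sigma),h_\sigma)<\infty$, and together with $d_{Haus}(h_{f_{hin}(\sigma)},f(h_\sigma))<\infty$ from Theorem \ref{hingesauto} this forces $f_{hin}(\sigma)=\sigma$ for every $\sigma$. Since Theorem \ref{selfqimcg} says that $\bar g$ depends only on the restriction of $f_{supp}$ to terminal supports, one may then take $\bar g=e$, giving $D=D_0(T)$ outright.

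Your argument instead bypasses the hinge machinery and is purely group-theoretic, using only that $MCG^\pm/DT_K$ is acylindrically hyperbolic and that Theorem \ref{selfqimcg} produces \emph{some} $\bar g$. Two small points are worth tightening. First, $f$ is a self-quasi-isometry of the index-$2$ subgroup $H=MCG/DT_K$ while $\bar g\in G=MCG^\pm/DT_K$, so your displacement bound only says that the $H$-conjugacy orbit of $\bar g$ is finite; you should observe that this makes $C_H(\bar g)$ finite index in $H$, hence in $G$, so the full $G$-conjugacy class is finite as well. Second, your parenthetical description of the finite radical as ``the kernel of the acylindrical action'' is not accurate (the action may well be faithful); what you actually need is that in an acylindrically hyperbolic group the FC-centre equals the unique maximal finite normal subgroup $K(G)$, which follows because any FC-element centralises some power of every loxodromic WPD and hence lies in $\bigcap_g E_G(g)=K(G)$.

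The paper's approach yields $\bar g=e$ immediately, whereas yours only pins $\bar g$ down to a finite set until Lemma \ref{nofinite} becomes available; on the other hand your argument is more portable, working for any quasi-isometrically rigid acylindrically hyperbolic group without reference to the specific HHS structure.
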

\begin{proof}
    By the previous theorem we know that $f$ lies within distance $D$ of the left multiplication by some $\ov g$, which depends only on the induced map $f_{supp}$ on terminal supports. In turn $f_{supp}$ is induced by $f_{hin}$, thus if we show that this map is the identity then $\ov g$ can be chosen to be the identity, and the corollary follows. Now, Theorem \ref{hingesauto} tells us that $d_{Haus}(h_{f_{hin}(\sigma)}, f(h_\sigma))<\infty$. But then, since $d_{Haus}(f(h_{\sigma}),h_\sigma)<\infty$ we must also have that $d_{Haus}(h_{f_{hin}(\sigma)}, h_\sigma)<\infty$. This in turn means that $f_{hin}(\sigma)=\sigma$, since $h_\sigma$ has the property that if $\sigma\neq\sigma'$ then $d_{Haus}(h_\sigma,h_{\sigma'})=\infty$. 
\end{proof}

Thus Theorem \ref{qimcgdtn} is implied by the general statement below, which follows from standard arguments (see e.g. \cite[Section 10.4]{Schwa}):

\begin{lemma}
\label{lem:schwarz}
Let $H$ be a finitely generated group. Suppose that for every $T>0$ there exists $D>0$ such that:
\begin{enumerate}
    \item Every $(T,T)$-self-quasi-isometry of $H$ lies within distance $D$ of the left multiplication by some element of $H$; 
    \item If a $(T,T)$-self-quasi-isometry of $H$ lies within finite distance of the identity then it lies within distance $D$ of the identity.
\end{enumerate}
Then $H$ is a finite extension of the group $\text{QI}(H)$ of its self-quasi-isometries up to bounded distance. Moreover, if a finitely generated group $G$ is quasi-isometric to $H$ then $G$ and $H$ are weakly commensurable, meaning that there exist two finite normal subgroups $L\unlhd H$ and $M\unlhd G$ such that the quotients $H/L$ and $G/M$ have two finite index subgroups that are isomorphic.
\end{lemma}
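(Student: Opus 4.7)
My plan is to split the conclusion into the two assertions (a) that $L_H\colon H\to\mathrm{QI}(H)$ is a surjection with finite kernel, and (b) that any finitely generated $G$ quasi-isometric to $H$ is weakly commensurable with $H$. For (a) I note that, since $H$ is finitely generated, every left multiplication $L_h$ is an isometry of $H$, so $L_H\colon h\mapsto[L_h]$ is a well-defined homomorphism. Hypothesis~(1) applied with $T=1$ makes $L_H$ surjective, and hypothesis~(2) with $T=1$ shows that $\ker L_H\subseteq B_H(1,D)$, a finite ball in the word metric. Thus $F_H:=\ker L_H$ is a finite normal subgroup of $H$ and $H/F_H\cong\mathrm{QI}(H)$.

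For (b), fix a $(T,T)$-quasi-isometry $f\colon G\to H$ with quasi-inverse $\bar f$. Conjugation $[\psi]\mapsto[f\circ\psi\circ\bar f]$ is a group isomorphism $\mathrm{QI}(G)\to\mathrm{QI}(H)$; composing with the left-multiplication map of $G$ and with the identification $\mathrm{QI}(H)\cong H/F_H$ yields a homomorphism
\[\bar\phi\colon G\longrightarrow H/F_H,\qquad g\longmapsto\bigl[\,fL_g\bar f\,\bigr].\]
Using hypothesis~(1) on $fL_g\bar f$ (whose quasi-isometry constants depend only on $T$), each class $\bar\phi(g)$ admits a lift $\phi(g)\in H$ with $L_{\phi(g)}$ within some uniform distance $D_0$ of $fL_g\bar f$. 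I would then aim to show $F_G:=\ker\bar\phi$ is finite and $\bar\phi(G)\le H/F_H$ has finite index, which together give an injection $G/F_G\hookrightarrow H/F_H$ onto a finite-index subgroup, i.e.\ weak commensurability with $L=F_H$ and $M=F_G$.

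Finiteness of $F_G$ is a constants chase: if $g\in F_G$, then $fL_g\bar f$ is within a uniform distance of the identity in $H$ (the extra error coming from the finitely many elements of $F_H$), so sandwiching by $\bar f$ and $f$ produces a uniform bound $d_G(L_g(x),x)\le D_1$; setting $x=1$ traps $F_G$ in the finite ball $B_G(1,D_1)$, and normality is automatic as $F_G$ is the kernel of a homomorphism. For finite index of the image, I would evaluate the approximate equality $L_{\phi(g)}\approx fL_g\bar f$ at the point $f(1)$ to obtain $d_H\bigl(\phi(g),\,f(g)f(1)^{-1}\bigr)\le D_2$ uniformly in $g$; since $f$ is a quasi-isometry, $f(G)$ is coarsely dense in $H$, whence so is $\phi(G)$, and projecting, $\bar\phi(G)$ is a coarsely dense subgroup of the finitely generated group $H/F_H$. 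Any coarsely dense subgroup has finite index, because finitely many translates of a finite ball cover the ambient group.

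The only nontrivial point is the last one: hypotheses~(1)--(2) are phrased only for $H$, yet the conclusion demands control over $G$; all of that control must come either from coarse density of $f(G)$ in $H$, or from conjugating displacements in $H$ back to $G$ via $f$ and $\bar f$. Once these two standard observations are in place, the remainder is pure bookkeeping with quasi-isometry constants.
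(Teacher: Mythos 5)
Your proposal is correct and follows essentially the same route as the paper: part (a) is the same map $h\mapsto[L_h]$ with hypotheses (1) and (2) giving surjectivity and finite kernel, and part (b) is the same conjugation construction $g\mapsto[fL_g\bar f]$, the same constants chase for finiteness of the kernel, and the same coarse-density argument (evaluating at $f(1)$ to compare with $f$) for finite index of the image.
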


\begin{proof}
Set $\mu:\,H\to \text{QI}(H)$ by mapping $h$ to the left multiplication by $h$. This map is surjective by Item (1); moreover it has finite kernel, since if $\mu(h)$ is within finite distance of the identity then it is within distance $D=D(1)$ of the identity, which means that $d_H(h,1)=d_H(\mu(h)(1),1)\le D$ and we conclude since balls in Cayley graphs are finite.\\
Regarding the second statement let $L$ be such that $H/L\cong \text{QI}(H)$, and let $\phi\,:G\to H$ be a quasi-isometry with quasi-inverse $\phi^{-1}$. We define a group homomorphism $\psi:\,G\to \text{QI}(H)$ by setting
$$\psi(g) (h)= \phi(g \phi^{-1}(h))$$
Notice that $\psi(g)$ is a self-quasi-isometry of $H$, whose constants $(T,T)$ depend only on the quasi-isometry constants of $\phi$ and $\phi^{-1}$. Let $D=D(T)$ as in the hypothesis. The same argument as before shows that $\psi$ has finite kernel: if $g\in \ker \psi$ then $\phi(g \phi^{-1}(1))$ is $D$-close to $1$, hence $g \phi^{-1}(1)$ is $D'$-close to $\phi^{-1}(1)$ for some other constant $D'(\phi,\phi^{-1},D)$. Thus the Lemma follows if we prove that $\psi$ is coarsely surjective.\\
For every $g\in G$ let $\theta(g)\in H$ be the element whose multiplication is $D$-close to $\psi(g)$. Then by construction the following diagram commutes:
$$\begin{tikzcd}
&H\ar{d}{\mu}\\
G\ar{ru}{\theta}\ar{r}{\psi}&\text{QI}(H)
\end{tikzcd}$$
Since $\mu$ is a quotient map it is $1$-Lipschitz and surjective. Hence it is enough to show that $\theta$ is coarsely surjective, which in turn will follow if we prove that $\theta$ lies within bounded distance from the quasi-isometry $\phi$. In fact $\theta(g)$ is $D$-close to $\phi(g \phi^{-1}(1))$, which in turn is uniformly close to $\phi(g)$ since $d_G(g,g\phi^{-1}(1))=\|\phi^{-1}(1)\|_G$ is constant.
\end{proof}

\section{Algebraic rigidity}
\label{sec:alg}
This section is devoted to the proof of Theorem \ref{thm:intro2}, which is covered by Theorems \ref{Out} and \ref{autmcg} below.

\begin{theorem}\label{Out}
    For every $b\ge 7$ and for all large multiples $K$ the following holds. Let $\phi:\,H\to H'$ be an isomorphism between finite index subgroups of $MCG^\pm(S_b)/DT_K$. Then $\phi$ is the restriction of an inner automorphism. In particular $\text{Out}(MCG^\pm(S_b)/DT_K)$ is trivial.
\end{theorem}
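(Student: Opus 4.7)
The plan is to leverage the quasi-isometric rigidity of $G := MCG^\pm(S_b)/DT_K$ together with the algebraic input that $G$ has no non-trivial finite normal subgroup (Lemma \ref{nofinite}), and to invoke machinery from \cite{Commensurating} for acylindrically hyperbolic groups (a class to which $G$ belongs by \cite{dfdt}).

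First, I would extract a candidate inner automorphism. An isomorphism $\phi:H\to H'$ between finite-index subgroups of $G$ is in particular a quasi-isometry, and since both inclusions $H, H'\hookrightarrow G$ are quasi-isometries, $\phi$ extends (up to bounded error) to a self-quasi-isometry $\Phi$ of $G$. An analogue of Theorem \ref{selfqimcg} for $MCG^\pm/DT_K$---which one should obtain by noting that $MCG/DT_K$ sits as a characteristic index-two subgroup and that the combinatorial machinery of Sections \ref{section:1tostrong}--\ref{section:strongtoall} is insensitive to the extra orientation involution---then gives an element $g\in G$ with $d_G(\Phi(x), gx)\le D$ for every $x\in G$. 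By left-invariance of the word metric, $\phi$ itself lies at bounded distance from the inner automorphism $\iota_g(x) = gxg^{-1}$.

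Next, I would upgrade ``bounded distance from $\iota_g$'' to ``equal to $\iota_g$'' on $H$. Set $\psi := \iota_g^{-1}\circ\phi:H\to g^{-1}H'g$, so that $d_G(\psi(h), h)\le D'$ for all $h\in H$. Writing $\epsilon(h) := \psi(h)h^{-1}$, left-invariance places every $\epsilon(h)$ in the finite ball $F := B_{D'}(1)\subset G$, and the homomorphism identity for $\psi$ expands as
$$\epsilon(h_1 h_2) = \epsilon(h_1)\,\bigl(h_1\,\epsilon(h_2)\,h_1^{-1}\bigr),$$
so every $H$-conjugate of each $\epsilon(h_2)$ lies in the finite set $F^{-1}F$. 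Thus each $\epsilon(h)$ has finite $H$-conjugacy class, and since $[G:H]<\infty$, also finite $G$-conjugacy class; so $\epsilon(H)$ is contained in the FC-centre of $G$. For acylindrically hyperbolic groups the FC-centre coincides with the maximal finite normal subgroup, which is trivial by Lemma \ref{nofinite}. Hence $\epsilon\equiv 1$, i.e.\ $\psi = \mathrm{id}_H$ and $\phi = \iota_g|_H$. The triviality of $\mathrm{Out}(G)$ is the special case $H = H' = G$.

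The hard part will be Step 1: namely, verifying that Theorem \ref{selfqimcg} genuinely passes to $MCG^\pm/DT_K$ and that the representative $g$ can be taken inside $G$ itself rather than merely in some abstract quasi-isometry group. Step 2 is then a routine consequence of the acylindrical-hyperbolicity package, with the only paper-specific input being Lemma \ref{nofinite}, whose proof is singled out in the outline as the remaining genuinely new algebraic content of this section.
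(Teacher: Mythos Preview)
Your argument is correct and takes a genuinely different route from the paper. The paper invokes \cite[Theorem 7.1]{Commensurating} as a black box and then verifies its hypotheses: the commensurating condition $\phi(h)\stackrel{G}{\approx} h$ for loxodromic WPD $h$ (Lemma~\ref{commensiso}, which is where QI rigidity enters), and $E_G(H)=\{1\}$, reduced via Lemma~\ref{reduction} to Lemma~\ref{nofinite}. You instead go directly: QI rigidity gives $g$ with $\phi$ at bounded distance from $\iota_g$, and the cocycle identity for $\epsilon(h)=g^{-1}\phi(h)g\cdot h^{-1}$ forces each $\epsilon(h)$ into the FC-centre, which equals the finite radical for acylindrically hyperbolic groups and hence is trivial by Lemma~\ref{nofinite}. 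Both proofs rest on exactly the same two paper-specific ingredients (Theorem~\ref{selfqimcg} and Lemma~\ref{nofinite}); your approach is more self-contained in that it bypasses Lemmas~\ref{commensiso} and~\ref{reduction} and the appeal to \cite[Theorem 7.1]{Commensurating}, at the cost of needing the (standard, e.g.\ from \cite{DGO}) identification of the FC-centre with the finite radical. Your worry in Step~1 about passing from $MCG/DT_K$ to $MCG^\pm/DT_K$ is not a real obstacle---the index-$2$ inclusion is a quasi-isometry, and indeed the paper makes exactly this implicit passage when applying Theorem~\ref{selfqimcg} inside the proof of Lemma~\ref{commensiso}.
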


We recall some definitions from \cite{Commensurating} that we will need to state the exact theorem.

\begin{definition}
Two elements $h$ and $g$ of a group $G$ are \emph{commensurable}, and we write $h \stackrel{G}{\approx} g$, if there exist $m,n\in\mathbb{Z}\setminus\{0\}$, $k\in G$ such that $kg^m k^{-1}=h^n$ (that is, if they have non-trivial conjugate powers).
\end{definition}

\begin{definition}
If a group $G$ acts by isometries on a hyperbolic space $\mathcal{S}$, an element $g\in G$ is \emph{loxodromic} if for some $x\in\mathcal{S}$ the map $\mathbb{Z}\to \mathcal{S}$, $n\mapsto g^n(x)$ is a quasi-isometric embedding. In the same setting, an element $g\in G$ is \emph{weakly properly discontinuous}, or \emph{WPD}, if for every $\varepsilon>0$ and any $x\in \mathcal{S}$ there exists $N_0=N_0(\varepsilon,x)$ such that whenever $N\ge N_0$ we have
$$\left | \left\{h\in G | \max\left\{d_{S}\left(x, h(x)\right), d_{S}\left(g^N(x), hg^N(x)\right)\right\}\le \varepsilon\right\}\right |<\infty$$
We denote by $\mathcal{L}_{WPD}$ the set of loxodromic WPD elements.
\end{definition}
The following result is a special case of \cite[Theorem 7.1]{Commensurating}. Roughly speaking, the theorem says that if $H$ is a subgroup of $G$ and both act "interestingly enough" on some hyperbolic space, then any homomorphism $\phi:H\to G$ is either (the restriction of) an inner automorphism or it maps some loxodromic WPD to an element which is not commensurable to it.

\begin{theorem}\label{7.1}
    Let $G$ be a group acting coboundedly and by isometries on a hyperbolic space $\mathcal{S}$, with loxodromic WPD elements. Let $H\le G$ be a non-virtually-cyclic 
    subgroup such that $H\cap\mathcal{L}_{WPD}\neq \emptyset$, and let $E_G(H)$ be the unique maximal finite subgroup of $G$ normalised by $H$, whose existence is proven in \cite[Lemma 5.6]{Commensurating}. Let $\phi:\,H\to G$ be a homomorphism such that whenever $h\in H\cap \mathcal{L}_{WPD}$ then $\phi(h) \stackrel{G}{\approx} h$. If $E_G(H)=\{1\}$ then $\phi$ is the restriction of an inner automorphism.
\end{theorem}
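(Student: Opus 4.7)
The plan is to construct an element $g\in G$ with $\phi(h)=ghg^{-1}$ for every $h\in H$, by extracting $g$ from the action of $\phi$ on the fixed-point pairs of loxodromic WPD elements on $\partial\mathcal{S}$, and then using the triviality of $E_G(H)$ to kill the finite ambiguity in this construction.

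First I would perform an initial normalization. Fix $h_0\in H\cap\mathcal{L}_{WPD}$. By hypothesis there exist $k_0\in G$ and nonzero integers $m_0,n_0$ with $k_0\phi(h_0)^{m_0}k_0^{-1}=h_0^{n_0}$. After replacing $\phi$ by $\mathrm{Ad}_{k_0^{-1}}\circ\phi$, the elements $\phi(h_0)$ and $h_0$ have a common nonzero power, and in particular share the same pair $\partial h_0\subset\partial\mathcal{S}$ of boundary fixed points. By WPD the setwise stabilizer $\mathrm{Stab}(\partial h_0)$ is virtually cyclic and contains both $h_0$ and $\phi(h_0)$; it suffices to prove $\phi=\mathrm{id}_H$ under this normalization.

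Next, since $H$ is non-virtually-cyclic and contains a WPD, standard acylindrical arguments (ping-pong on $\partial\mathcal{S}$, hyperbolic embeddings as in Dahmani--Guirardel--Osin) produce abundant $h\in H\cap\mathcal{L}_{WPD}$ with $\partial h$ disjoint from $\partial h_0$. For any such $h$, the hypothesis gives a conjugator $k_h\in G$ aligning $\phi(h)$ with $h$ up to powers. The key step is to show $k_h\in\mathrm{Stab}(\partial h_0)\cap\mathrm{Stab}(\partial h)$: this is forced by also applying the commensurability hypothesis to the products $h_0^N h$ (loxodromic WPD for large $N$, whose fixed-point pairs converge to an explicit configuration determined by $\partial h_0$ and $\partial h$). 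By acylindricity, $\mathrm{Stab}(\partial h_0)\cap\mathrm{Stab}(\partial h)$ is finite. Taking $h$ in a sufficiently rich family one concludes that $\phi(h)=h$ for every $h\in H\cap\mathcal{L}_{WPD}$, the finite ambiguities being absorbed into $E_G(H)=\{1\}$.

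Finally, to extend from WPD elements to all of $H$, given $h\in H$, the homomorphism property and the previous step imply that $\phi(h)h^{-1}$ centralizes every element of $H\cap\mathcal{L}_{WPD}$. But $H\cap\mathcal{L}_{WPD}$ contains two WPDs with disjoint fixed-point pairs (by the ping-pong argument above), and the centralizer in $G$ of two such elements is finite (by acylindricity). This centralizer is moreover normalized by $H$, since conjugation preserves the properties of being loxodromic and WPD. Hence it lies in $E_G(H)=\{1\}$, forcing $\phi(h)=h$. The main obstacle is the second paragraph: making sure that the conjugators $k_h$ assembled from different WPDs $h$ are compatible, so that they determine a single global element $g\in G$; the hypothesis $E_G(H)=\{1\}$ is precisely what guarantees this compatibility.
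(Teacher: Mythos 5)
The paper does not give its own proof of this statement: it is quoted as ``a special case of \cite[Theorem 7.1]{Commensurating}'', so the only ``proof'' in the paper is a citation, and there is nothing internal to compare your attempt against. What I can do is assess the attempt on its own terms.

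Your sketch has a genuine gap, and you essentially concede it yourself in your final sentence. The crux is your second paragraph, where you assert that the conjugator $k_h$ can be shown to lie in $\mathrm{Stab}(\partial h_0)\cap\mathrm{Stab}(\partial h)$, and that from this one ``concludes that $\phi(h)=h$ for every $h\in H\cap\mathcal L_{WPD}$''. Neither is justified. The commensurability relation $k_h\phi(h)^m k_h^{-1}=h^n$ only tells you that $k_h$ maps $\partial(\phi(h))$ to $\partial h$; it does not say that $\partial(\phi(h))=\partial h$, so a priori $k_h$ need not stabilize $\partial h$. The appeal to the products $h_0^N h$ is the right instinct --- their axes fellow-travel $\partial h_0$ for large $N$ --- but you do not carry out the estimate that would force alignment, nor explain how to promote ``$k_h$ lies in a finite group'' to ``$k_h=1$ and $m=n$'', which is what $\phi(h)=h$ requires. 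A further, independent issue is the reduction in the first paragraph: replacing $\phi$ by $\mathrm{Ad}_{k_0^{\pm1}}\circ\phi$ only guarantees that the new $\phi(h_0)$ shares a power with $h_0$; if the original $\phi$ is $\mathrm{Ad}_g$ for a generic $g$, the normalized map is $\mathrm{Ad}_e$ for some nontrivial $e\in E_G(h_0)$, so it is not enough to aim for $\phi=\mathrm{id}_H$ after normalization. (By contrast, your final paragraph, suitably rephrased so that what is normalized by $H$ is the common centralizer of \emph{all} of $H\cap\mathcal L_{WPD}$ rather than of two fixed elements, is a correct and standard conclusion step once the WPD case is established.) In short: the overall strategy is plausible and resembles the kind of argument used in \cite{Commensurating}, but the central alignment step is hand-waved, and filling it in is precisely the content of that theorem.
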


\begin{proof}[Outline of the proof of Theorem \ref{Out}]
We just need to verify that, for all $b\ge7$ and for all large multiples $K$, the hypotheses of Theorem \ref{7.1} are satisfied for $G=MCG^\pm(S_b)/DT_K$ and any isomorphism $\phi:\,H \to H'$ between subgroups of finite index. In \cite[Theorems 2.1 and 5.2]{dfdt} it was proven that $G$ is not virtually cyclic, it acts coboundedly and by isometries on the hyperbolic space $S=\Cdt$, and the action admits loxodromic WPD elements. In particular, every subgroup of finite index is not virtually cyclic and it contains some power of every loxodromic WPD element, which remains loxodromic WPD. In Lemma \ref{commensiso} we show that $\phi$ has the required commensurating property, that is, for every $h\in H\cap \mathcal{L}_{WPD}$ we have that  $\phi(h) \stackrel{G}{\approx} h$. Moreover, in view of the general Lemma \ref{reduction}, in order to prove that $E_G(H)=\{1\}$ it will suffice to prove that $E_G(G)=\{1\}$, i.e. that $MCG^\pm/DT_K$ has no non-trivial finite normal subgroups. This is done in Lemma \ref{nofinite}.
\end{proof}

\begin{lemma}\label{commensiso}
    Let $G=MCG^\pm(S_b)/DT_K$ for $b\ge7$. For every isomorphism $\phi:\,H\to H'$ between finite index subgroups and every element $h\in H$ of infinite order, $h$ and $\phi(h)$ are commensurable.
\end{lemma}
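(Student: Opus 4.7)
The plan is to leverage quasi-isometric rigidity (Theorem \ref{selfqimcg}) to compare $\phi$ with a left multiplication, and then extract commensurability by pigeonholing over the cyclic subgroup generated by $h$.

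First, I would promote $\phi$ to a self-quasi-isometry $\tilde{\phi}$ of $G = MCG^\pm(S_b)/DT_K$. Since $H$ and $H'$ have finite index, the inclusions $H \hookrightarrow G$ and $H' \hookrightarrow G$ are quasi-isometries; picking coset representatives gives quasi-inverses, and composing yields $\tilde{\phi}\colon G \to G$ with quasi-isometry constants $(T,T)$ depending only on $\phi$ and the indices, and with the property that $d_G(\tilde{\phi}(h_0), \phi(h_0)) \le C$ for all $h_0 \in H$, for some constant $C$.

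Second, I would apply Theorem \ref{selfqimcg} to $\tilde{\phi}$: for $K$ a large multiple, $\tilde{\phi}$ lies at bounded distance $D = D(T)$ from left multiplication by some element $g_0 \in G$. Combined with the previous step, this gives that for every $h_0 \in H$ we have $d_G(\phi(h_0), g_0 h_0) \le D + C$. Applying this with $h_0 = h^n$ and using that $\phi$ is a homomorphism, I get
\[
d_G(\phi(h)^n,\, g_0 h^n) \le D + C \qquad \text{for every } n \in \mathbb{Z}.
\]
By right-invariance of the word metric on $G$, this means that $g_0 h^n \phi(h)^{-n}$ has word length at most $D+C$, so as $n$ varies over $\mathbb{Z}$ these elements lie in the ball of radius $D+C$ around $1$ in the Cayley graph of $G$, which is finite.

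Third, a pigeonhole argument finishes the proof: there exist integers $m \neq n$ with $g_0 h^m \phi(h)^{-m} = g_0 h^n \phi(h)^{-n}$, which rearranges to $h^{n-m} = \phi(h)^{n-m}$. Since $h$ has infinite order, $n - m \neq 0$, so $\phi(h)$ and $h$ share a nontrivial power, witnessing $\phi(h) \stackrel{G}{\approx} h$ (with trivial conjugator and equal exponents). The genuinely hard content has all been absorbed into Theorem \ref{selfqimcg}; the only mild subtlety here is making sure the bounded-distance estimate $d_G(\phi(h)^n, g_0 h^n) \le D + C$ does not deteriorate with $n$, which is automatic because both $\phi(h)^n$ and $g_0 h^n$ are compared directly via a single left multiplication, not via iterating an approximate identity.
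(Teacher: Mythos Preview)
Your argument is correct and in fact a bit cleaner than the paper's. One small slip: the word metric in this setup is \emph{left}-invariant (that is what makes left multiplication by $g_0$ an isometry in Theorem \ref{selfqimcg}), not right-invariant. With left-invariance, $d_G(\phi(h)^n, g_0 h^n)\le D+C$ says that $\|\phi(h)^{-n} g_0 h^n\|\le D+C$; pigeonholing then gives $\phi(h)^{-m} g_0 h^m=\phi(h)^{-n} g_0 h^n$ for some $m\neq n$, i.e.\ $\phi(h)^{n-m}=g_0 h^{n-m} g_0^{-1}$. So you get a common power up to conjugation by $g_0$, not literally equal powers, but of course that is exactly commensurability.

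The paper reaches the same conclusion by a slightly less direct route: after observing that $\Phi$ is within $D$ of left multiplication by $g$, it notes $\Phi$ is then within $2D$ of conjugation by $g$, deduces that $\langle ghg^{-1}\rangle$ and $\langle\phi(h)\rangle$ are at bounded Hausdorff distance, and invokes a result of Hruska (\cite[Proposition 9.4]{Hruska}) on coarse intersections of subgroups to conclude that the two cyclic subgroups share an infinite subgroup. Your pigeonhole argument bypasses this citation entirely and extracts the common conjugate power directly from the bounded sequence; this is more elementary and arguably the ``right'' way to finish.
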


\begin{proof}
    Fix a finite generating set for $G$, and let $d_G$ be the corresponding word metric. Since $H$ has finite index there exists a quasi-isometry $f:\,G\to H$, which we can choose to be the identity on $H$ (for example, every $g\in G$ can be sent to one of the closest elements of $H$). Let $\Phi=\phi\circ f:\,G\to H'$, which coincides with $\phi$ on $H$ and is a self-quasi-isometry of $G$ because $\phi$ is an isomorphism between finite index subgroups. Then by Theorem \ref{selfqimcg} there exist a constant $D$ and an element $g\in G$ such that $\Phi$ is $D$-close to the left multiplication by $g$. In particular $d_G(g,1)=d_G(g,\Phi(1))\le D$, and since the word metric is invariant under left multiplication we also have that $d_G(g^{-1},1)=d_G(1,g)\le D$. Then $\Phi$ is also $2D$-close to the conjugation by $g$, since for every $k\in G$ we have that
    $$d_G(gkg^{-1},\Phi(k))\le d_G(gkg^{-1},gk)+d_G(gk,\Phi(k))=d_G(g^{-1},1)+d_G(gk,\Phi(k))\le 2D$$
    where again we used the left-invariance of the word metric. Choosing $k=h^l$ for every $l\in\mathbb{Z}$ we get that the infinite subgroups $H_1=\langle ghg^{-1}\rangle $ and $H_2=\langle \phi(h)\rangle$ lie at Hausdorff distance at most $2D$ (here we used that $\Phi|_H\equiv \phi$). But now \cite[Proposition 9.4]{Hruska} states that, whenever $G$ has a left-invariant proper metric (in our case, the word metric) and $H_1, H_2$ are two subgroups, for every $D$ there exists a constant $D'$ such that, if we denote by $N_{R}(S)$ the $R$-neighbourhood of a set $S$,
    $$ N_{2D}(H_1)\cap N_{2D}(H_2)\subseteq N_{D'}(H_1\cap H_2)$$
    Thus $N_{D'}(H_1\cap H_2)$ is infinite because it contains $H_1$. This in turn implies that $H_1\cap H_2$ is infinite, since balls in the word metric are finite. Therefore there exist common powers $gh^m g^{-1}=\phi(h)^n$, as required.
\end{proof}
\begin{lemma}\label{reduction}
    Let $G$ be a group acting coboundedly, non-elementarily on a hyperbolic space $\mathcal{S}$, and suppose the action admits loxodromic WPD elements. Let $H\le G$ be a finite index subgroup. If $E_G(G)=\{1\}$ then $E_G(H)=\{1\}$.
\end{lemma}
\begin{proof}
    For every element $g\in G$ set 
    $$E_G(g)=\left\{k\in G\,|\,\exists m,n\in \mathbb{Z}\setminus\{0\}\mbox{ s.t. }k g^m k^{-1}=g^n\right\}.$$
    It follows from the proof of \cite[Lemma 6.18]{DGO} that there exists a loxodromic WPD element $g_0$ such that $E_G(g_0)=\langle g_0\rangle \ltimes E_G(G)$ (there the notation $K(G)$ is adopted for $E_G(G)$). Since $E_G(G)=\{1\}$ we have that $E_G(g_0)=\langle g_0\rangle$.\\
    Now, in \cite[Lemma 5.6]{Commensurating} it is proved that, whenever $H$ is a non-virtually-cyclic subgroup such that $H\cap \mathcal{L}_{WPD}\neq\emptyset $, then
    $$E_G(H)=\bigcap_{h\in H\cap \mathcal{L}_{WPD}} E_G(h).$$
    Choose $k\in\mathbb{N}_{>0}$ such that $g_0^k\in H$, and notice that $E_G(g_0^k)=E_G(g_0)$ by definition. Thus $E_G(H)$ is a finite subgroup of $E_G(g_0^k)$, which is infinite cyclic, and therefore $E_G(H)=\{1\}$.
\end{proof}
The following statement is well-known to experts, but we provide a proof since we could not find a suitable reference.
\begin{lemma}
    Let $S=S_{g,b}$ be a surface of finite type with genus $g$ and $b$ punctures, with $(g,b)\not \in\{ (0,2), (0,3), (0,4), (1,0), (1,1), (1,2), (2,0)\}$. Then $MCG^\pm(S)$ has no non-trivial finite normal subgroups. In other words $E_{MCG^\pm}(MCG^\pm)=\{1\}$.
\end{lemma}

\begin{proof}
    Let $N\unlhd MCG^\pm(S)$ be a finite normal subgroup and let $f\in N$. If $f$ fixes every isotopy class of simple closed curves then it must be the identity, for example by Ivanov's Theorem \cite[Theorem 1]{Ivanov:autC} and its extension to lower genera \cite[Theorem 1]{Korkmaz}. Then suppose by contradiction that $f(\gamma)\neq\gamma$ for some curve $\gamma$, and let $T_\gamma$ be the corresponding Dehn twist. Since $N$ is normal there exists $g\in N$ such that $f T_\gamma^2=T_\gamma^2 g$. Thus, using how Dehn twists behave under conjugation, we have $T_{f(\gamma)}^{\pm 2}=f T_\gamma^2 f^{-1}=T_\gamma^2 gf^{-1}$, where the sign depends on whether $f$ is orientation preserving or reversing. This can be rewritten as 
    \begin{equation}\label{tgamma} T_\gamma^{- 2}T_{f(\gamma)}^{\pm2}=gf^{-1}.\end{equation}
    Now, referring to the table at the end of \cite[Subsection 3.5.2]{FarbMargalit} we see that $T_{\gamma}^2$ and $T_{f(\gamma)}^2$ are the generators of a subgroup isomorphic to either $\mathbb{Z}^2$ or $F_2$, hence the left-hand side of (\ref{tgamma}) has infinite order. This is impossible, since the right-hand side is an element of the finite subgroup $N$.
\end{proof}

\begin{lemma}\label{nofinite}
    For every $b\ge 7$ and for all large multiples $K$, $ MCG^{\pm}(S_b)/DT_K$ has no finite normal subgroups.
\end{lemma}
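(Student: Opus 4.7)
The plan is to mimic the argument of the preceding lemma in the quotient setting. Let $N\unlhd MCG^\pm(S_b)/DT_K$ be a finite normal subgroup with preimage $\tilde N\unlhd MCG^\pm(S_b)$, so that $\tilde N\supseteq DT_K$ and $\tilde N/DT_K\cong N$; the goal is to show $\tilde N=DT_K$. Suppose for contradiction that there is $x\in\tilde N\setminus DT_K$. By combinatorial rigidity (Theorem~\ref{CombRig}) the class $\bar x$ acts nontrivially on $\C/DT_K$, and since this action factors through the $MCG^\pm$-action on $\C$, also $x$ moves some curve on $\C$.

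The first real step will be to upgrade this to a curve $\gamma$ with $\gamma$ and $x(\gamma)$ \emph{filling} $S_b$ (equivalently $d_\C(\gamma,x(\gamma))\ge 3$). This is classical for any nontrivial $x\in MCG^\pm(S_b)$, and this is where the proof will require the most care, especially when $x$ has finite order in $MCG^\pm$: one can for instance take $\gamma$ to approximate a non-fixed filling point of $x$ on Thurston's boundary, or use a pseudo-Anosov not commensurating $x$ to produce such $\gamma$ along its axis.

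Given such a $\gamma$, for every $k\ge 1$ one computes
\begin{equation*}
    [T_\gamma^k,x]\,=\,T_\gamma^k\cdot(xT_\gamma^{-k}x^{-1})\,=\,T_\gamma^k\,T_{x(\gamma)}^{-\epsilon k},
\end{equation*}
where $\epsilon=\pm1$ is the sign of $x$. Since $\tilde N$ is normal and $x\in\tilde N$, we have $T_\gamma^k xT_\gamma^{-k}\in\tilde N$, and hence $[T_\gamma^k,x]\in\tilde N$. Its image in $MCG^\pm/DT_K$ therefore lies in the finite group $N$, and so has order dividing $|N|$. On the other hand, Thurston's construction guarantees that for $k$ large enough the element $T_\gamma^k T_{x(\gamma)}^{-\epsilon k}$ is pseudo-Anosov with translation length on $\C$ tending to infinity with $k$. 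The orbit argument used in Corollary~\ref{infinite}, which ultimately rests on Proposition~\ref{cor3.6}, then shows that for $k$ sufficiently large (depending on $K$) the projection $\C\to\C/DT_K$ restricts to a quasi-isometric embedding on an axis of this pseudo-Anosov, so its image acts loxodromically --- and in particular with infinite order --- on $\C/DT_K$, contradicting the previous sentence. The hard part will be securing the filling step; once that is in place, the remaining steps are formal analogues of those in the preceding lemma, with the $\mathbb Z^2$/$F_2$ subgroup there replaced by a loxodromic element of $MCG^\pm/DT_K$ acting on $\C/DT_K$.
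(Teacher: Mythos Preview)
Your commutator idea is natural, but the final step has a genuine quantifier problem that prevents the argument from closing. You want to show that, for the already-fixed $K$, the element $g_k=T_\gamma^k T_{x(\gamma)}^{-\epsilon k}$ has infinite order in $MCG^\pm/DT_K$ for some $k$. The justification you give is the orbit argument behind Corollary~\ref{infinite}, but that argument runs in the opposite order: one first fixes the pseudo-Anosov $g$, obtains a bound $\Theta=\Theta(g)$ on all subsurface projections along its axis, and only then takes $K$ deep enough relative to that $\Theta$. Here $g_k$ depends on $\gamma$ and $x$, which depend on the finite normal subgroup $N$, which in turn only exists once $K$ is fixed; so you cannot go back and enlarge $K$. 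Worse, the bound $\Theta(g_k)$ does not stay controlled as $k\to\infty$: the annular projection to $\gamma$ of any point on the axis versus its $g_k$-image grows linearly in $k$, so taking $k$ large makes the situation strictly harder, not easier. And you cannot simply assert that \emph{some} pseudo-Anosov survives to infinite order, since $DT_K$ itself contains pseudo-Anosovs.

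The paper's proof sidesteps this entirely by reversing the dependence. One fixes \emph{once and for all}, before choosing $K$, a pseudo-Anosov $g\in MCG^\pm$ with $E_{MCG^\pm}(g)=\langle g\rangle$; then $K$ is taken deep enough that the axis of $g$ projects isometrically to $\C/DT_K$. Now given any finite normal $N\le MCG^\pm/DT_K$, normality forces $N$ to move every point of the projected axis by at most a fixed $M$. Lifting a quadrilateral via Lemma~\ref{quadranglelift} produces, for each $\bar\phi\in N$, a representative $\psi\in MCG^\pm$ that moves both $x$ and $g^n x$ by at most $M$ in $\C$; taking $n$ large, \cite[Lemma~6.7]{DGO} forces $\psi\in E_{MCG^\pm}(g)=\langle g\rangle$, so $\bar\phi$ is a power of $\bar g$, hence trivial since $\bar\phi$ has finite order and $\bar g$ does not. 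The key difference is that the single pseudo-Anosov $g$ is chosen independently of $N$, so there is no circularity.
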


\begin{proof} For short, we denote $MCG^{\pm}(S_b)$ simply by $MCG^{\pm}$.
    The mapping class group is acylindrically hyperbolic, hence by \cite[Lemma 6.18]{DGO} there exists a pseudo-Anosov element $g\in MCG^\pm$ such that $E_{MCG^\pm}(g)=\langle g\rangle\ltimes E_{MCG^\pm}(MCG^\pm)=\langle g\rangle$. Moreover, if we fix a curve $x\in \C$, arguing as in the proof of Corollary \ref{infinite} we can find $\Theta>0$ such that $\sup_{s\in\C,n\in\mathbb{Z}}d_s(x, g^n(x))<\Theta$. Then for all large multiples $K$ the projection map $\C\to\Cdt$ is an isometry on the axis $\{g^n(x)\}_{n\in\mathbb{Z}}$, thanks to Lemma \ref{projinlink}. In particular the element $\ov g\in MCG^{\pm}/DT_K$ induced by $g$ has infinite order, since $\ov g^n$ maps $\ov x$ to the projection of $g^n(x)$, which is not $\ov x$.
    Now, we claim that every finite normal subgroup $N\le MCG^\pm/DT_K$ must be trivial. First notice that $N$ moves $\ov x$ within distance $M$ for some $M\ge0$, since it is finite. Then the whole axis $\{\ov g^n(\ov x)\}_{n\in\mathbb{Z}}$ is moved within Hausdorff distance $M$, since
    $$\sup_{n\in\mathbb{Z},\,\ov\phi\in N}d_{\Cdt}\left(\ov g^n(\ov x), \ov\phi\circ\ov g^n(\ov x)\right)=\sup_{n\in\mathbb{Z},\,\ov\psi\in N}d_{\Cdt}\left(\ov g^n(\ov x), \ov g^n\circ \ov\psi(\ov x)\right)$$
    where we used that $N$ is normal. But then by left-invariance of the word metric we get that
    $$\sup_{n\in\mathbb{Z},\,\ov\psi\in N}d_{\Cdt}\left(\ov g^n(\ov x), \ov g^n\circ \ov\psi(\ov x)\right)=\max_{\ov\psi\in N}d_{\Cdt}\left(\ov x,\ov\psi(\ov x)\right)\le M$$
    Now, let $\ov\phi\in N$ and let $\phi\in MCG^\pm$ be one of its preimages. Fix $n\in\mathbb{Z}$, let $l=[x,g^n(x)]$ be a geodesic and let $s=\phi(l)$, which is a geodesic between $\phi(x)$ and $\phi\circ g^n(x)$. Notice that $\sup_{s\in\C}d_s(\phi(x), \phi(g^n(x)))=\sup_{s\in\C}d_{\phi^{-1}(s)}(x,g^n(x))<\Theta$, therefore both $l$ and $s$ project isometrically to geodesics $\ov l=[\ov x, \ov g^n(\ov x)]$ and $\ov s=\ov\phi(\ov l)$, again by Lemma \ref{projinlink}. We can complete these two segments to a quadrilateral $\ov Q$ of vertices $\ov x, \ov \phi(\ov x), \ov \phi \circ \ov g^n(\ov x), \ov g^n(\ov x) $, by adding two geodesic segments of length at most $M$. By Lemma \ref{mgonlift} there exists a lift $Q$ of $\ov Q$, and by Lemma \ref{quadranglelift} the lifts $l',s'$ of $\ov l, \ov s$ are $DT_K$-translates of $l,s$ respectively. Up to elements of $DT_K$ we can assume that $l=l'$. Moreover, let $k\in DT_K$ be such that $k(s)=s'$. Setting $\psi=k\circ \phi$, which still induces $\ov\phi\in MCG^\pm/DT_K$, we see that the vertices of $Q$ are $x, \psi(x), \psi(g^n(x)), g^n(x)$. But since $Q$ lifts $\ov Q$ we have that $d_\C(x,\psi(x))=d_{\C/DT_K}(\ov x,\ov \phi(\ov x))\le M$, and similarly $d_\C(g^n x,\psi(g^n x))\le M$. Now, since $M$ is independent of $n$ we can choose $n$ big enough (that is, $x$ and $g^n(x)$ far enough on the axis) that $\psi$ must belong to $E_{MCG^\pm}(g)=\langle g\rangle$ because of \cite[Lemma 6.7]{DGO} which says, roughly, that coarsely stabilising a large segment of an axis is equivalent to stabilising the whole axis (as noted in \cite{DGO}, the lemma has the same proof as \cite[Proposition 6]{BF:WPD}, which has more restrictive hypotheses). Thus $\ov \phi=\ov g^m$ for some $m\in\mathbb{Z}$, and we must have that $m=0$ since $\ov\phi$ has finite order. 
\end{proof}

As a consequence of Theorem \ref{Out} we can also describe the automorphism group of $MCG/DT_K$:
\begin{theorem}\label{autmcg}
    For every $b\ge 7$ and for all large multiples $K$ the following hold:
    \begin{itemize}
    \item $\text{Aut}(MCG(S_b)/DT_K)\cong MCG^\pm(S_b)/DT_K$;
    \item $\text{Out}(MCG(S_b)/DT_K)\cong \mathbb{Z}/2\mathbb{Z}$.
    \end{itemize}
\end{theorem}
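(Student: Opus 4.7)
The plan is to prove both assertions simultaneously by showing that the conjugation homomorphism
$$\Psi\colon G^\pm\longrightarrow\mathrm{Aut}(G),\qquad g\mapsto(h\mapsto ghg^{-1}),$$
is an isomorphism, where I set $G:=MCG(S_b)/DT_K$ and $G^\pm:=MCG^\pm(S_b)/DT_K$. This map is well defined because $DT_K$ is preserved by orientation-reversing classes (an orientation-reversing mapping class sends a Dehn twist to an inverse Dehn twist), so $G$ is an index-two \emph{normal} subgroup of $G^\pm$.

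Surjectivity is a direct application of Theorem \ref{Out}: any $\phi\in\mathrm{Aut}(G)$ is in particular an isomorphism between the finite-index subgroups $H=H'=G$ of $G^\pm$, so Theorem \ref{Out} produces $g\in G^\pm$ with $\Psi(g)=\phi$.

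For injectivity, write $C:=C_{G^\pm}(G)=\ker\Psi$; the main intermediate claim is that $Z(G)=C\cap G$ is trivial. By \cite{dfdt} the group $G$ is acylindrically hyperbolic and non-elementary (for instance, Corollary \ref{infinite} together with a loxodromic pseudo-Anosov supplies a non-elementary action on $\Cdt$), hence its center $Z(G)$ is finite. Being characteristic in the normal subgroup $G\unlhd G^\pm$, $Z(G)$ is itself normal in $G^\pm$, so Lemma \ref{nofinite} forces $Z(G)=1$. Consequently $C$ injects into $G^\pm/G\cong\mathbb{Z}/2$, and if the injection were surjective, $C$ would be a finite normal subgroup of $G^\pm$ of order two (normal, since the centralizer of a normal subgroup is normal), again contradicting Lemma \ref{nofinite}. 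Thus $C=1$ and $\Psi$ is an isomorphism.

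The outer-automorphism statement is then automatic: the triviality of $Z(G)$ gives $\mathrm{Inn}(G)\cong G$, which $\Psi$ identifies with the subgroup $G\subseteq G^\pm$, so $\mathrm{Out}(G)\cong G^\pm/G\cong\mathbb{Z}/2$. The only steps requiring substantive input are Theorem \ref{Out} (itself powered by the quasi-isometric rigidity established earlier) and Lemma \ref{nofinite}; everything else is formal centralizer manipulation, and I expect no serious obstacle.
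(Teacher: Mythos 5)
Your proof is correct and follows essentially the same route as the paper: surjectivity comes straight from Theorem \ref{Out}, and injectivity is obtained by first showing the kernel meets $MCG(S_b)/DT_K$ trivially (via triviality of the center) and then killing what remains with Lemma \ref{nofinite}. The only minor deviation is in proving the center is trivial, where you cite the general fact that non-elementary acylindrically hyperbolic groups have finite center and then invoke Lemma \ref{nofinite} on the characteristic (hence normal) subgroup $Z(G)$, while the paper isolates this as Lemma \ref{centerless} and proves it directly via the $E_G(H)$ machinery of Lemma \ref{reduction}; both routes are sound and rest on the same ingredients.
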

We will need this auxiliary lemma:
\begin{lemma}\label{centerless}
For every $b\ge7$ and for all large multiples $K$, $MCG(S_b)/DT_K$ has trivial centre.
\end{lemma}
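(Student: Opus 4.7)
The plan is to show $Z(G) \subseteq E_G(G)$, where $G = MCG(S_b)/DT_K$ and $E_G(G)$ is the maximal finite normal subgroup (which exists because $G$ is acylindrically hyperbolic). Combined with Lemma \ref{nofinite}, this will force $Z(G) = \{1\}$. The underlying intuition is the general principle that a central element in an acylindrically hyperbolic group, when tested against loxodromic WPD elements, can only live in the finite radical.

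First I would verify that $G = MCG(S_b)/DT_K$ fits in the framework of \cite{Commensurating}: by \cite[Theorems 2.1 and 5.2]{dfdt}, $G$ acts coboundedly and non-elementarily on the hyperbolic space $\Cdt$ with loxodromic WPD elements, and $G$ is not virtually cyclic. In particular $G \cap \mathcal L_{WPD} \neq \emptyset$, so the hypotheses used in the proof of Theorem \ref{Out} are met for the subgroup $H=G$ itself.

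Next, given $\bar z \in Z(G)$, observe that for every loxodromic WPD element $\bar h \in G$, the relation $\bar z \bar h = \bar h \bar z$ reads $\bar z \bar h^1 \bar z^{-1} = \bar h^1$, so $\bar z \in E_G(\bar h)$ by definition. Then I would invoke the formula recalled in the proof of Lemma \ref{reduction},
\[
E_G(G) \;=\; \bigcap_{\bar h \in G \cap \mathcal L_{WPD}} E_G(\bar h),
\]
coming from \cite[Lemma 5.6]{Commensurating}, to conclude $\bar z \in E_G(G)$. Thus $Z(G)$ is a subgroup of the finite group $E_G(G)$.

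The final step, and the only point requiring a small additional argument, is to show that $E_G(G) = \{1\}$ in $G = MCG/DT_K$. Lemma \ref{nofinite} establishes this for $MCG^\pm/DT_K$; to transfer the conclusion to the index-$2$ subgroup $G$, I would argue that if $N \lhd G$ is a nontrivial finite subgroup and $r \in MCG^\pm/DT_K \setminus G$, then $rNr^{-1}$ is also normal in $G$ (since $G$ is normal in $MCG^\pm/DT_K$). The product $N \cdot rNr^{-1}$ is then a finite subgroup of $MCG^\pm/DT_K$; it is normalised by $G$ because both factors are, and it is fixed by conjugation by $r$ because $r^2 \in G$ normalises each factor. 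Hence it is a finite normal subgroup of $MCG^\pm/DT_K$, contradicting Lemma \ref{nofinite}. I expect this last reduction from $MCG^\pm/DT_K$ to $MCG/DT_K$ to be the only mildly delicate point; everything else is a direct application of the Osin-type centraliser machinery already invoked in Theorem \ref{Out}.
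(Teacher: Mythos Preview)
Your proof is correct. The paper takes a slightly different route: rather than working inside $H = MCG/DT_K$ and showing $E_H(H)=\{1\}$ via your product argument $N\cdot rNr^{-1}$, it works in the ambient group $G = MCG^\pm/DT_K$, observes that $Z(H)\le E_G(h)$ for every $h\in H\cap\mathcal L_{WPD}$, and then applies Lemma~\ref{reduction} (which already handles the finite-index passage from $E_G(G)=\{1\}$ to $E_G(H)=\{1\}$) together with Lemma~\ref{nofinite}. So the paper bypasses your last paragraph entirely by quoting Lemma~\ref{reduction}. Your argument, on the other hand, establishes along the way the slightly sharper statement that $MCG/DT_K$ itself has no nontrivial finite normal subgroup, which the paper does not isolate explicitly.
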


\begin{proof}
Let $G=MCG^\pm(S_b)/DT_K$ and $H=MCG(S_b)/DT_K$. The centre $Z(H)$ is contained in $E_{G}(h)$ for every $h\in H$, by definition of $E_G(h)$. Then
$$Z(H)\le \bigcap_{h\in H\cap \mathcal{L}_{WPD}} E_G(h)=E_G(H)$$
and we know that $E_G(H)=\{1\}$ by Lemma \ref{reduction}.
\end{proof}
\begin{proof}[Proof of Theorem \ref{autmcg}]
    Again, let $G=MCG^\pm/DT_K$ and $H=MCG/DT_K$. Theorem \ref{Out} gives a surjective map $\Phi:\,G\to \text{Aut}(H)$ mapping an element $g\in G$ to the restriction of the conjugation by $g$. Notice that $\Phi$ is injective when restricted to $H$, since this group has trivial centre by Lemma \ref{centerless}. Thus $\ker \Phi \cap H=\{1\}$, which in turn means that $\ker \Phi$ injects in the quotient $G/H\cong \mathbb{Z}/2\mathbb{Z}$. Hence $\ker \Phi$ is a finite normal subgroup of $G$, and it must be trivial by Lemma \ref{nofinite}. This proves that $G\cong \text{Aut}(H)$, and since $H$ has trivial centre we also get
    $$\text{Out}(H)=\text{Aut}(H)/\text{Inn}(H)\cong G/H\cong \mathbb{Z}/2\mathbb{Z},$$
    as required.
\end{proof}

\appendix

\section{Quasi-isometric rigidity of pants graphs of spheres}\label{section:pantsqi}
This appendix, which is as self-contained as possible, contains a proof of the following result:

\begin{theorem}[Quasi-isometric rigidity of pants graphs of spheres]\label{qirigidpants}
Let $S_b$ be a punctured sphere, with $b\ge7$. Any self-quasi-isometry $f$ of the pants graph $\mathbb{P}(S_b)$ is at uniformly bounded distance from an element of the extended mapping class group.
\end{theorem}
This partly recovers a result of Bowditch \cite[Theorem 1.4]{BowPants} with our machinery, though the proof will not be completely new since we will still rely on some results from \cite[Sections 6 and 7]{BowPants} and on the main theorem of \cite{ssgraph}. Indeed, the "source" of rigidity here will be the fact that automorphisms of a certain graph, the graph of 1-separating curves, can only be extended mapping classes \cite{BowPants,ssgraph}. Hence, our goal is to show that automorphisms of the hinge graph induce automorphisms of this other graph, Corollary \ref{hingetoc1}, and in order to show this the key thing to do is roughly the following. Since hinges are not just subsurfaces, but rather pairs $(U,p)$ where $U$ is a subsurface and $p$ is a point in the boundary of its curve graph, we have to be able to combinatorially determine which hinges have the same support subsurface. In fact, we will only do so for "enough" hinges. 

\subsection{Unambiguous subsurfaces}\label{unambiguous}
Let $\mathbb{P}(S_{b})$ be the pants graph of $S_b$, whose HHS structure is given as follows (see \cite[Theorem G]{HHSI}):
\begin{itemize}
    \item $\mathfrak{S}$ is the set of essential, non-annular subsurfaces; 
    \item $\perp$ is disjointness and $\sqsubseteq$ is inclusion;
    \item for every $U\in \mathfrak{S}$, $\C U$ is the curve graph;
    \item Projections are defined using subsurface projections.
\end{itemize}

\begin{remark}\label{rem:asymph_pants}
    Notice that if $U\in \mathfrak{S}$ is a connected subsurface then its curve graph has infinite diameter (see e.g. \cite[Proposition 3.6]{MasurMinski1}). Thus if $U\in \mathfrak{S}$ has bounded curve graph then it is the disconnected union of subsurfaces of positive complexity, and its curve graph is a non-trivial join. Hence every bounded curve graph has diameter at most $2$, and therefore the HHS structure of the pants graph satisfies the mysterious asymphoricity assumption from Remark \ref{rem:asymporicity}.
\end{remark}

Bowditch \cite[Section 6]{BowPants} pointed out that if $U$ belongs to a complete support set then $U$ must have one of the following shapes, which ensure that every complete support set containing $U$ can cut out at most one pair of pants from $S$. Recall that a surface is \emph{odd} if its complexity is odd, otherwise it is \emph{even} (in our case, $S_b$ is odd iff $b$ is even). If $S$ is odd then $U$ is an $S_{4}$ and each component of the complement is odd and meets $U$ in exactly one curve. If $S$ is even then $U$ is either:
\begin{enumerate}
    \item an $S_{4}$ with all but one of the complementary components odd;
    \item an $S_{5}$ with all complementary components odd.
\end{enumerate}
Notice that the curve graphs of both $S_4$ and $S_5$ have at least four points at infinity, so we are always in the assumptions of Theorem \ref{hingesauto}. Thus, a quasi-isometry $f$ of $\mathbb{P}(S_b)$ induces an automorphism $f_{hin}$ of the hinge graph, and we want to show that $f_{hin}$ maps hinges with the same underlying subsurface to hinges with the same underlying subsurface, at least in the vast majority of cases.\\
Given a hinge $\sigma=(U,p)$ let $\text{Compl}(\sigma)$ be the set of all tuples of pairwise orthogonal hinges that $\sigma$ completes to a complete support set. In other words, $\text{Compl}(\sigma)$ is the set of all facets in $\bf{Hinge}(\mathfrak{S})$ that $\sigma$ completes to a maximal simplex.
\begin{definition}
    Two hinges $\sigma, \sigma'$ are said to be \emph{equally completable}, or to have the same completions, if $\text{Compl}(\sigma)=\text{Compl}(\sigma')$.
\end{definition}
This definition clearly induces an equivalence relation which is preserved by any automorphism of the hinge graph. One would like to think that if two hinges are equally completable then they have the same underlying subsurface. However this is not always true: in the even case, an $S_{4}$ whose even complementary component is a pair of pants $P$ has the same completions as the $S_{5}$ given by the union of $S_{4}$ and $P$. See Figure \ref{fig:s4s5_pants} to understand the situation. Luckily we will see that this is the only problem that could arise.

\begin{figure}[htp]
    \centering
    \includegraphics[width=0.75\textwidth]{img/s4s5.pdf}
    \caption{The $S_{4}$ and the $S_{5}$ in the Figure cannot be distinguished just by their completions, all of which must lie in the union of the $\Sigma_i$'s.}
    \label{fig:s4s5_pants}
\end{figure}

\begin{definition}
    A hinge $\sigma$ is \emph{unambiguous} if any other equally completable hinge $\sigma'$ has the same support. A support $U$ is unambiguous is every hinge supported in $U$ is unambiguous.
\end{definition}
We need another definition that characterises (some) unambiguous surfaces and can be recognised from the hinge graph.
\begin{definition}
    A hinge $\sigma$ is \emph{minimal} if $\text{Compl}(\sigma)$ is maximal by inclusion, among completions.
\end{definition}
\begin{lemma}\label{minambig}
    Let $b\ge7$. In the odd case, every hinge is minimal and unambiguous. In the even case, a hinge is minimal if and only if it is unambiguous and its support is a four-holed sphere.
\end{lemma}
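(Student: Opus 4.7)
The plan is to combine Bowditch's classification of supports in a complete support set (recalled just before the statement of the lemma) with the principle that two hinges are equally completable exactly when their supports differ only by absorbing or expelling a pair of pants from a complementary component. The reason is that a pair of pants is the only non-trivial subsurface that cannot itself support a hinge, so its presence or absence in the complement is invisible to the set of orthogonal hinge supports; any other modification changes that set and hence the completions.

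For the odd case, Bowditch forces every support to be an $S_4$ whose four complementary components are all of odd positive complexity, so none of them is a pair of pants. Given $\sigma=(U,p)$ and $\sigma'=(U',p')$ with $\text{Compl}(\sigma)\subseteq\text{Compl}(\sigma')$, I pick any completion $\{V_i\}$ of $\sigma$: both $\{U,V_1,\ldots,V_{\nu-1}\}$ and $\{U',V_1,\ldots,V_{\nu-1}\}$ are then complete support sets, and the region $S\setminus\bigcup V_i$ leaves no pants available into which $U$ could be exchanged for a different $S_4$. Hence $U=U'$, simultaneously giving unambiguity (by taking equality in the hypothesis) and minimality.

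For the even case, the direction $(\Leftarrow)$ runs along the same lines: an unambiguous $S_4$-support cannot be refined further, and unambiguity rules out the only remaining freedom of absorbing a pants complement. For $(\Rightarrow)$ I prove by contrapositive. If $\sigma=(U,p)$ is $S_5$-supported, I pick an interior pair of pants $P\subset U$ sharing at least one boundary component with a genuine boundary curve $c_j$ of $U$ in $S$; such a $P$ exists because for $b\ge 7$ an $S_5$-support in $S$ is forced to have at least one genuine boundary curve. Then $U''=U\setminus P$ is an $S_4$-support whose single even complementary component is strictly larger than a pants, as it extends across $c_j$ to absorb the adjacent complementary component $C_j$ of $U$; thus $U''$ is unambiguous. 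Replacing the hinge covering $C_j$ in a completion of $\sigma$ by an $S_4$-hinge whose support fuses $P$ with a pair of pants of $C_j$ incident to $c_j$ (a valid Bowditch $S_4$-support by inspection) produces a tuple which completes $U''$ but fails to complete $U$, since the new hinge meets $U$ through $P$; this gives $\text{Compl}(\sigma)\subsetneq\text{Compl}(\sigma'')$. If instead $\sigma$ is an ambiguous $S_4$ with pants complement $P$, then $V=U\cup P$ is an $S_5$-support with $\text{Compl}((V,p'))=\text{Compl}(\sigma)$ (because $P$ is adjacent only to $U$ and thus invisible to either completion set), and applying the previous step to $V$ produces the desired strictly-dominating hinge.

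The main subtlety, and where $b\ge 7$ enters most tangibly, is verifying that the ``extra'' completion tuple produced in the $S_5$ step is genuinely valid, namely that the fused hinge has one of Bowditch's admissible shapes and that the augmented family remains maximal pairwise-disjoint. This reduces to a short complexity and boundary count using the oddness of $C_j$ prescribed by Bowditch; the hypothesis $b\ge 7$ excludes sporadic low-complexity configurations where suitable interior pants might fail to exist or the fused hinge might fall outside Bowditch's list.
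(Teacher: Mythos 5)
Your proposal is essentially the paper's argument: classify the possible Bowditch shapes, use a completion that exactly packs $S\setminus U$ to establish unambiguity and minimality when the complementary components leave no room for a shift, and modify a completion by absorbing a pants of $U$ to show non-minimality in the $S_5$ and pants-complemented $S_4$ cases (the latter reduced to the former exactly as in the paper).

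The only substantive difference is in the $S_5$ step. The paper replaces the $S_4$ hinge $W$ sitting on the other side of $c_j$ by the $S_5$ $W\cup P$; this is slightly cleaner than your fused $S_4$ hinge $P\cup Q$, because with the paper's choice the rest of the tuple is left untouched and the new family is manifestly a completion of $U\setminus P$.

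Two points to tighten. First, for $U''=U\setminus P$ to be a connected $S_4$, the pair of pants $P$ must have exactly two boundary circles on $\partial U$ (one of which is a genuine curve $c_j$, the other possibly a puncture), not merely ``at least one'' as your phrasing suggests; with a single boundary circle on $\partial U$ the complement $U\setminus P$ is either an $S_6$ or disconnected. Second, the ``subtlety'' you flag at the end — verifying that the fused hinge is a Bowditch-admissible shape — is actually unnecessary: a complete support set is, by definition, any family of $\nu$ pairwise-orthogonal indices with unbounded associated hyperbolic spaces, where $\nu$ is the maximal possible cardinality. Once you have checked pairwise orthogonality, the cardinality count, and that each piece is a non-pants (hence non-annular, essential) subsurface, the family is automatically a complete support set, and Bowditch's shape classification of its members then holds as a consequence rather than as a hypothesis to be verified.
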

\begin{proof}
    Let $\sigma=(U,p)$ be a hinge. We examine all possible shapes of $U$ to determine in which cases $\sigma$ is minimal and/or unambiguous. In the odd case $U$ is always an $S_{4}$ and each component of the complement is odd. Then we may find a completion $\{U_i\}$ whose union is $S\setminus U$: it suffices to choose a pants decomposition of every complementary component $\Sigma_i$, which must contain an even number of pairs of pants (since $\Sigma_i$ must be odd), and then match these pants in couples to get some four-holed spheres whose union covers $\Sigma_i$. Now $U$ must be unambiguous and minimal, since any other $V$ completing $\{U_i\}$ must be inside $S\setminus \bigcup U_i=U$, and therefore coincide with $U$ which has already minimal complexity. In the even case there are two possibilities:
    \begin{enumerate}
        \item Suppose $U$ is an $S_{5}$. Since $b\ge 7$ we know that $U$ does not coincide with $S$, so set $S\setminus U=\bigsqcup_{j=1}^k\Sigma_j$ with $k\le5$. Cover every $\Sigma_j$ with four-holed spheres. Moreover, choose a pair of pants $P\subset U$ whose boundary touches some four-holed sphere $W\subseteq\Sigma_1$, and let $U'=U\setminus P$. Then $U$ is not minimal: any completion of $U$ works also for $U'$, but if we replace $W$ with $W'=W\cup P$ we get a completion for $U'$ but not for $U$. See Figure \ref{fig:cutP} to understand the situation.
        \item Suppose $U$ is an $S_{4}$. Cover the odd complementary components with four-holed spheres. If the even component is not a pair of pants then we can cover it, and argue that $U$ must be minimal and unambiguous as in the odd case. Otherwise $U$ is ambiguous, since we are precisely in the case described in Figure \ref{fig:s4s5_pants}. This also means that $U$ has the same completions of a five-holed sphere, which is not minimal as we have already showed.
    \end{enumerate}
\end{proof}

\begin{figure}[htp]
    \centering
    \includegraphics[width=0.75\textwidth]{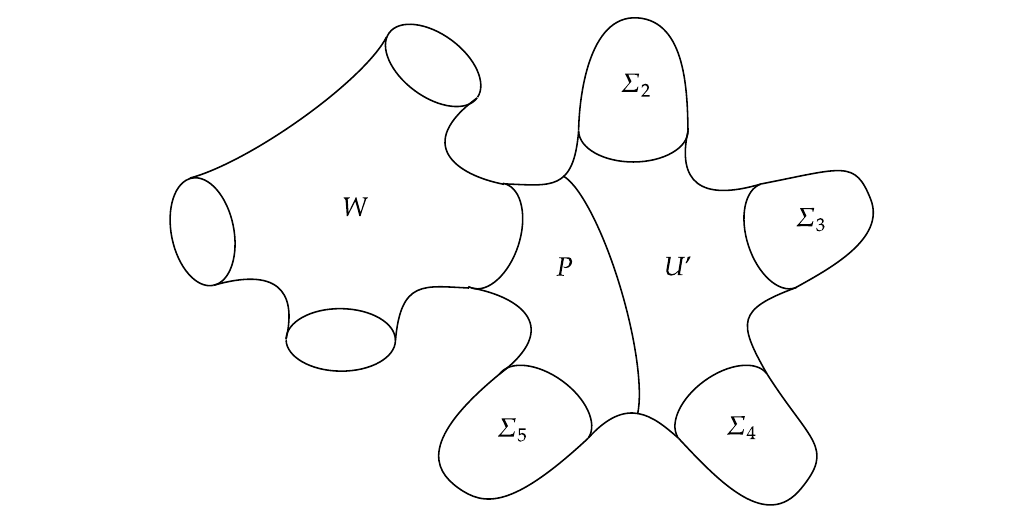}
    \caption{Replacing $W$ with $W'=W\cup P$ we get a completion for $U'$ but not for $U=U'\cup P$.}
    \label{fig:cutP}
\end{figure}

\begin{remark}\label{rem:auto_hinge_minimal_pants}
Any automorphism of the hinge graph must map minimal hinges with the same completions to minimal hinges with the same completions, therefore it acts on the set of minimal supports. Thus if $U$ is a minimal support we can define $f_{supp}(U)$ as the support of $f_{hin}(U,p)$ for any hinge $(U,p)$ supported in $U$.
\end{remark}

\begin{corollary}[Products go to products]\label{prodtoprod_pants}
Let $b\ge7$ and let $f$ be a self-quasi-isometry of the pants graph $\mathbb{P}(S_{b})$. There exists $C$, depending only on the quasi-isometry constants of $f$, with the following property. Let $\{U_i\}\subseteq\mathfrak{S}$ be a complete support set made of minimal supports, and let $f_{supp}(U_i)=V_i$ for all $i$. Let $P_{\{U_i\}}$ and $P_{\{V_i\}}$ be the standard product regions defined by $\{U_i\}$ and $\{V_i\}$, respectively. Then $d_{Haus}\left(f\left(P_{\{U_i\}}\right),  P_{\{V_i\}}\right)\le C$.
\end{corollary}

\begin{proof}
Another way of stating Remark \ref{rem:auto_hinge_minimal_pants} is that, if $(U,p^\pm)$ are two hinges with the same minimal support and $f_{hin}(U,p^+)=(V,q^+)$, then there exists $q^-\in\partial \C V$ such that $f_{hin}(U,p^-)=(V,q^-)$. Thus the Flats to Flats Lemma \ref{flatstoflats} says that, if $\{(U_i,p_i^\pm)\}$ is a complete support set made of minimal supports, with a choice of two points in every $\partial \C U_i$, and if we set $(V_i,q_i^\pm):=f_{hin}(U_i,p_i^\pm)$, then $d_{Haus}(f(\mathfrak F_{\{(U_i,p_i^\pm)\}}),  \mathfrak F_{\{(V_i,q_i^\pm)\}})\le C_0$ for some constant $C_0$ depending only on the quasi-isometry constants of $f$. Hence
$$d_{Haus}\left(f\left(\bigcup \mathfrak F_{\{(U_i,*)\}}\right),  \bigcup \mathfrak F_{\{(f_{supp}(U_i),*)\}}\right)\le C_0 $$
where $\bigcup \mathfrak F_{\{(U_i,*)\}}$ is the union of all standard flats supported in $\{U_i\}$. 

Now, the thesis follows if we prove the existence of some constant $C_1$ such that, for every complete support set $\{U_i\}$ made of minimal supports, 
$$d_{Haus}\left(\bigcup \mathfrak F_{\{(U_i,*)\}}, P_{\{U_i\}}\right)\le C_1$$ 
In fact, if this is the case then we also get that 
$$d_{Haus}\left(f\left(\bigcup \mathfrak F_{\{(U_i,*)\}}\right), f\left(P_{\{U_i\}}\right)\right)\le C_2$$
where $C_2$ is some constant depending only on $C_1$ and the quasi-isometry constants of $f$. In turn, since $P_{\{U_i\}}=\prod F_{U_i}$, to prove the existence of such $C_1$ it suffices to select a close enough hierarchy line in each coordinate, as we shall do in Lemma \ref{lem:unif_dist_from_hpath}.
\end{proof}

\begin{lemma}\label{lem:unif_dist_from_hpath}
There exists $C_3\ge0$ such that whenever $U\in\frakS$ is a minimal domain and $x\in F_U$, there exists a hierarchy line $\gamma\subset F_U$ such that $d_{F_U}(x,\gamma)\le C_3$.
\end{lemma}

\begin{proof}
    Since $P_{\{U_i\}}=\prod F_{U_i}$ it suffices to work in each coordinate, that is, we will show that there exists a constant $C_3$ such that whenever $U\in\frakS$ is a minimal domain and $x\in F_U$, there exists a hierarchy line $\gamma\subset F_U$ such that $d_{F_U}(x,\gamma)\le C_3$. Notice that, by the distance formula and the fact that $U$ is an $S_4$, and therefore a $\nest$-minimal support, we have that $F_U$ is uniformly quasi-isometric to $\C U$, and hierarchy lines simply correspond to bi-infinite quasigeodesics (with certain constants). Moreover $MCG(U)$ acts on $\C U$ coboundedly and by isometries, hence every point can be moved within uniformly bounded distance from a fixed quasigeodesic.
\end{proof}

\subsection{Automorphism of terminal subsurfaces}
\begin{definition}\label{1sep_pants}
A support $U$ is \emph{terminal} if it has complexity $1$ and it is cut out by a single curve. We say that the boundary curve of a terminal support is \emph{$1$-separating}, and we denote by $\Cp$ the full subgraph of $\C$ spanned by $1$-separating curves.
\end{definition}
\begin{lemma}
A hinge $\sigma=(U,p)$ has terminal support if and only if it is minimal and there exists a hinge $(V,q)$, compatible with $\sigma$, such that any complete support set containing $(V,q)$ must contain some $\sigma'$ which has the same completions as $\sigma$ (i.e., it has the same support). In particular, having terminal support is preserved by automorphisms of the hinge graph.
\end{lemma}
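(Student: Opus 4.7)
The plan is to establish both directions of the biconditional; the final preservation under hinge graph automorphisms is then immediate, since minimality, compatibility, and the quantification over completions are all intrinsic to the hinge graph.

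For the forward direction, assume $U$ is terminal, so $\Sigma := S \setminus U$ is a connected subsurface of complexity $b-5$. I will exhibit an explicit witness $(V,q)$. When $b=7$, take $V = \Sigma$, which is an $S_5$-type minimal support in the even case of $S$ (its unique complementary component is $U$, of odd complexity $1$); any completion of $V$ must fill $S \setminus V = U$, and the only minimal support inside $U$ is $U$ itself. When $b \geq 8$, take $V$ to be an $S_4$ adjacent to $U$: choose $V$ with boundary $\{\partial U, c''\}$ containing exactly two punctures of $\Sigma$. A direct inspection of its complementary components (namely $U$ of complexity $1$ and $\Sigma \setminus V$ of complexity $b-7$) shows that $V$ is a valid minimal support in both parities. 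Then $S \setminus V = U \sqcup (\Sigma \setminus V)$, and using $\nu = \lfloor (b-2)/2 \rfloor$ together with the computation $\nu_{\Sigma \setminus V} = \nu - 2$, any completion of $V$ needs $\nu - 1$ further disjoint minimal supports; at most $\nu - 2$ of them can fit inside $\Sigma \setminus V$, so at least one must lie in the $U$-component, and must equal $U$ itself.

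For the backward direction, I will argue by contrapositive: assume $U$ is $S_4$ with $k \geq 2$ boundary curves, so $S \setminus U = \Sigma_1 \sqcup \dots \sqcup \Sigma_k$ has at least two components. For any compatible hinge $(V,q)$, the support $V$ lies in some $\Sigma_i$, and the task is to construct a completion of $V$ avoiding $U$. The key is to exploit the multiplicity of boundary curves of $U$: in the even case of $S$, I replace $U$ by the $S_5$-type minimal support $U \cup P_j$, where $P_j$ is a pair of pants in some $\Sigma_j$ ($j \neq i$) attached along $c_j$; this absorbs $U$ into a strictly larger valid minimal support, and the remaining $\nu - 2$ slots can be filled inside the other $\Sigma_l$'s. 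In the odd case of $S$, where $S_5$-supports are not minimal, I instead produce $\nu - 1$ pairwise disjoint $S_4$'s inside $S \setminus V$ none of which equals $U$, by choosing a different essential curve that splits $S \setminus V$ into two $S_4$-regions; this is possible because the extra complexity of $S \setminus V$ coming from $k \geq 2$ provides enough room to arrange the regions so that no constituent $S_4$ coincides with $U$.

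The main obstacle is the backward direction: one has to verify that the alternative completions so constructed genuinely consist of valid minimal supports, i.e., that their complementary components satisfy the correct parity conditions from the preceding subsection. This reduces to a case analysis on the complexities of the $\Sigma_j$'s and on the parity of $b$, but requires no new ideas beyond the re-routing described above.
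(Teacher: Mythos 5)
The paper's proof of this lemma is a one-line citation: it is Bowditch's \cite[Lemma 6.3]{BowPants}, supplemented by the observation (from the discussion of Lemma \ref{minambig}) that a terminal hinge is automatically minimal. Your proposal does something genuinely different: it reproves Bowditch's characterization from scratch by explicit construction of witnesses and completions. That is a legitimate alternative route, and your forward direction is essentially correct (the witness $V$ for $b=7$ and the adjacent $S_4$ for $b\geq 8$ both work, and the count $\nu(\Sigma\setminus V)=\nu-2$ is right), though you should also say a word about why terminal implies minimal, since the ``only if'' direction of the biconditional requires it — this is easy for $b\geq 7$ since a terminal $S_4$'s lone complementary component has complexity $b-5\geq 2$, so it is never a pair of pants and hence never ambiguous.

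However, your backward direction has a genuine gap. In the even case you propose replacing $U$ by the $S_5$ support $U\cup P_j$ with $P_j$ a pair of pants in some $\Sigma_j$, $j\neq i$, attached along $c_j$. For $U\cup P_j$ to be a valid $S_5$ support, \emph{all} of its complementary components must be odd; since $U$ itself has exactly one even complementary component $\Sigma_{j_0}$, this forces $j=j_0$. But nothing prevents $V$ from lying inside $\Sigma_{j_0}$. Concretely, take $b=9$ and $U$ an $S_4$ with two punctures and two boundary curves $c_1,c_2$ bounding $\Sigma_1$ (an $S_4$, odd) and $\Sigma_2$ (an $S_5$, even, $j_0=2$). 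This $U$ is minimal (its even component is not a pair of pants) and non-terminal, and a compatible $V$ may well be a terminal $S_4$ inside $\Sigma_2$, so $i=j_0=2$. Then the only $j\neq i$ is $j=1$, and $U\cup P_1$ has $\Sigma_2$ as an even complementary component — not a valid $S_5$. The conclusion (that a completion of $V$ avoiding $U$ exists) is still true, but by a different partition of $S\setminus V$ (e.g.\ an $S_4$ through $\partial V$ that absorbs punctures from several $\Sigma_l$'s), not by the construction you describe. Similarly, the odd-case sentence (``a different essential curve that splits $S\setminus V$ into two $S_4$-regions'') does not parse when $S\setminus V$ has complexity larger than two. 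The final disclaimer that ``the case analysis requires no new ideas'' therefore understates the issue: in at least one configuration the stated construction must be replaced outright, not merely verified.
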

\begin{proof}
    This is just a restatement in our context of \cite[Lemma 6.3]{BowPants}. We just have to notice that a terminal hinge is minimal, which follows from the discussion of Lemma \ref{minambig}.
\end{proof}
\begin{corollary}\label{hingetoc1}
If $b\ge 7$, any automorphism of the hinge graph induces an automorphism of $\Cp$.
\end{corollary}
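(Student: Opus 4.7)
The plan is to use the preceding lemma to transport $f_{hin}$ to a bijection of terminal hinges, then to descend to terminal supports via $f_{supp}$ from Section~\ref{unambiguous}, and finally to identify terminal supports with the $1$-separating curves of $\Cp$ through the boundary map $U \mapsto \partial U$.

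First, I would note that the preceding lemma characterises terminal support in purely combinatorial terms, so $f_{hin}$ restricts to a bijection of the set of terminal hinges. Since a terminal hinge has complexity-one support $S_{0,4}$ and is minimal (as observed in that proof), Lemma~\ref{minambig} makes it unambiguous, so $f_{supp}$ is well-defined on the set $\mathcal T$ of terminal supports and is a bijection (its inverse coming from $f_{hin}^{-1}$). I would then check that every $1$-separating curve $c$ bounds a unique $S_{0,4}$ in $S_b$ that does belong to some complete support set: the single complementary component has complexity $b-5$, whose parity satisfies the requirement of Section~\ref{unambiguous} in both the odd case ($b$ even, so $b-5$ odd) and the even case ($b$ odd, vacuously satisfied). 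Hence $c \mapsto U_c$ identifies the vertex set of $\Cp$ with $\mathcal T$.

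The heart of the proof, and the main obstacle, is to show that the resulting bijection of $\Cp$-vertices respects adjacency. Two $1$-separating curves are adjacent in $\Cp$ iff they are disjoint in $S_b$; for $b \ge 7$ this is equivalent to orthogonality of the two terminal supports, since an essential curve contained inside an $S_{0,4} \subset S_b$ cannot itself be $1$-separating unless $b = 4$, so nesting of terminal supports is impossible. It then remains to match orthogonality with compatibility for terminal pairs, i.e.\ to argue that any two disjoint terminal $S_{0,4}$'s are jointly completable to a full support set. For $b=7$ the two $S_{0,4}$'s already fill $S_b$ up to a pair of pants and hence form a complete support set on their own; for $b \ge 8$ the complement is a punctured sphere-with-two-boundaries of complexity $b-5$, which can be decomposed into valid $S_{0,4}$- or $S_{0,5}$-shaped pieces respecting the classification from Section~\ref{unambiguous}. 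Once orthogonality and compatibility are aligned for terminal pairs, hinge-graph adjacency of any two hinges over a disjoint terminal pair translates exactly to $\Cp$-adjacency of their boundary curves, and applying the same reasoning to $f_{hin}^{-1}$ shows that the induced vertex bijection of $\Cp$ is in fact a graph automorphism.
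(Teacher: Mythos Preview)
Your proof is correct and follows essentially the same approach as the paper, which simply records that the preceding lemma forces terminal supports to terminal supports and that for terminal subsurfaces compatibility coincides with disjointness of their boundary curves; you have just unpacked these two observations in detail. One small slip: the complement of two disjoint terminal $S_{0,4}$'s in $S_b$ has $b-6$ punctures and $2$ boundary components, hence complexity $b-7$ rather than $b-5$, but this does not affect the argument.
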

\begin{proof}
Any automorphism must map terminal supports to terminal supports and must preserve compatibility (which, for terminal subsurface, is equivalent to disjointness of their boundaries).
\end{proof}

\begin{proof}[Proof of Theorem \ref{qirigidpants}]
The previous discussion shows that $f$ induces an automorphism of $\Cp$, which is the restriction of some extended mapping class $g\in MCG^\pm$ when $b\ge 7$ (see \cite[Section 7]{BowPants} and \cite{ssgraph} for a proof, which is ultimately an application of Ivanov's Theorem). In other words, $f_{supp}$ and $g$ agree on terminal subsurfaces, and we want to show that they coincide on every minimal support $U$. We recall that, as showed in the proof of Lemma \ref{minambig}, every complementary components $\Sigma$ of $U$ has complexity at least $1$, i.e. $U$ does not cut out any pair of pants. Now, if $\Sigma$ has complexity $1$ then it is a terminal subsurface; otherwise $\Sigma$ has complexity at least $2$, and therefore there exist two terminal subsurfaces inside $\Sigma$ whose boundary curves fill $\Sigma$. Thus $U$ is the unique minimal support that is disjoint from all these terminal subsurfaces, and therefore $f_{supp}$ and $g$ must agree on $U$ since they both preserve disjointness.\\
By Corollary \ref{prodtoprod_pants} if $\{U_i\}$ is a complete support set made of minimal supports then $f$ maps the corresponding  product region $P_{\{U_i\}}$ within uniformly bounded Hausdorff distance from $g(P_{\{U_i\}})$, since they are both uniformly Hausdorff close to $P_{f_{supp}\{U_i\}}=P_{g\{U_i\}}$.\\
We are left to prove that every point $x\in\mathbb{P}(S_b)$ is the (uniform) coarse intersection of two standard product regions $P\Tilde{\cap} P'$, coming from minimal complete support sets. If this is the case then $f(x)$ will be the coarse intersection of $f(P)$ and $f(P')$, which lie at uniformly bounded distance from the coarse intersection of $g(P)$ and $g(P')$, which is coarsely $g(x)$. In order to prove this, it is enough to show that there is some point $x\in \mathbb{P}(S_b)$ which is the coarse intersection of two standard product regions with minimal supports, since the mapping class group acts cocompactly on $\mathbb{P}(S_b)$. In turn, by Corollary \ref{prodint} we are left to prove that there exist two complete support sets $\{U_i\}$ and $\{V_i\}$, with minimal supports, whose indices are pairwise distinct (notice that we can apply Corollary \ref{prodint} since minimal indices are $S_4$, and therefore they are also minimal with respect to nesting). If the surface is odd choose any complete support set $\{U_i\}$, which is already minimal. In the even case let $\Tilde{U}$ be an $S_5$ cut out by a single curve, and let $\{U_i\}$ be a support set that covers the complement of $\Tilde{U}$. Since $\{\Tilde{U}\}\cup\{U_i\}$ covers the surface, every $U_i$ must be minimal (this is the same argument as in the proof of Lemma \ref{minambig}). Now if we replace $\Tilde{U}$ with a terminal $S_4$ contained in $\Tilde{U}$, call it $U_1$, we get a complete support set $\{U_i\}$ made of minimal supports. In both cases, we can choose a pseudo-Anosov mapping class $\phi$ such that every boundary curve of $\{U_i\}$ crosses every boundary curve of $\{V_i\}=\phi\{U_i\}$. Therefore $U_i\neq V_j$ for every choice of $i$ and $j$.
\end{proof}

\bibliography{biblio}
\bibliographystyle{alpha}

\end{document}